\def\co{\colon\thinspace}
\newcommand{\sQ}{\mathcal{Q}}
\newcommand{\bN}{\mathbb{N}}
\newcommand{\bZ}{\mathbb{Z}}
\newcommand{\bQ}{\mathbb{Q}}
\newcommand{\bC}{\mathbb{C}}
\newcommand{\bF}{\mathbb{F}}
\newcommand{\pq}{\frac{p}{q}}
\newcommand{\pqzero}{\frac{p_0}{q_0}}
\newcommand{\pqone}{\frac{p_1}{q_1}}
\newcommand{\overzero}{\frac{1}{0}}
\newcommand{\del}{\delta}
\newcommand{\si}{\sigma}
\newcommand{\lm}{\lambda_M}
\newcommand{\ord}{\operatorname{ord}}
\newcommand{\longto}{\longrightarrow}
\newcommand{\into}{\hookrightarrow}
\newcommand{\half}{\textstyle\frac{1}{2}}
\newcommand{\wmax}{w_{\max}}
\newcommand{\wmin}{w_{\min}}
\newcommand{\Supp}{\operatorname{Supp}}
\newcommand{\Br}{\mathbf{\Sigma}}
\newcommand{\fibre}{\varphi}
\newcommand{\boldtau}{\boldsymbol\tau}
\newcommand{\HFhat}{\widehat{\operatorname{HF}}}
\newcommand{\Khred}{\widetilde{\operatorname{Kh}}}
\newcommand{\Kh}{\operatorname{Kh}}
\newcommand{\Khsig}{\widetilde{\operatorname{Kh}}_\si}
\newcommand{\rk}{\operatorname{rk}}
\newcommand{\fix}{\operatorname{Fix}}
\newcommand{\End}{\operatorname{End}}
\newtheorem{theorem}{Theorem}[section]
\newtheorem{definition}[theorem]{Definition}
\newtheorem{corollary}[theorem]{Corollary}
\newtheorem{proposition}[theorem]{Proposition}
\newtheorem{remark}[theorem]{Remark}
\newtheorem{lemma}[theorem]{Lemma}
\newtheorem{property}[theorem]{Property}
\newtheorem*{namedtheorem}{\theoremname}
\newcommand{\theoremname}{testing}
\newenvironment{named}[1]{\renewcommand{\theoremname}{#1}
        \begin{namedtheorem}}
        {\end{namedtheorem}}
\newcommand{\unknot}
{\raisebox{-0.055in}{\includegraphics[scale=0.125]{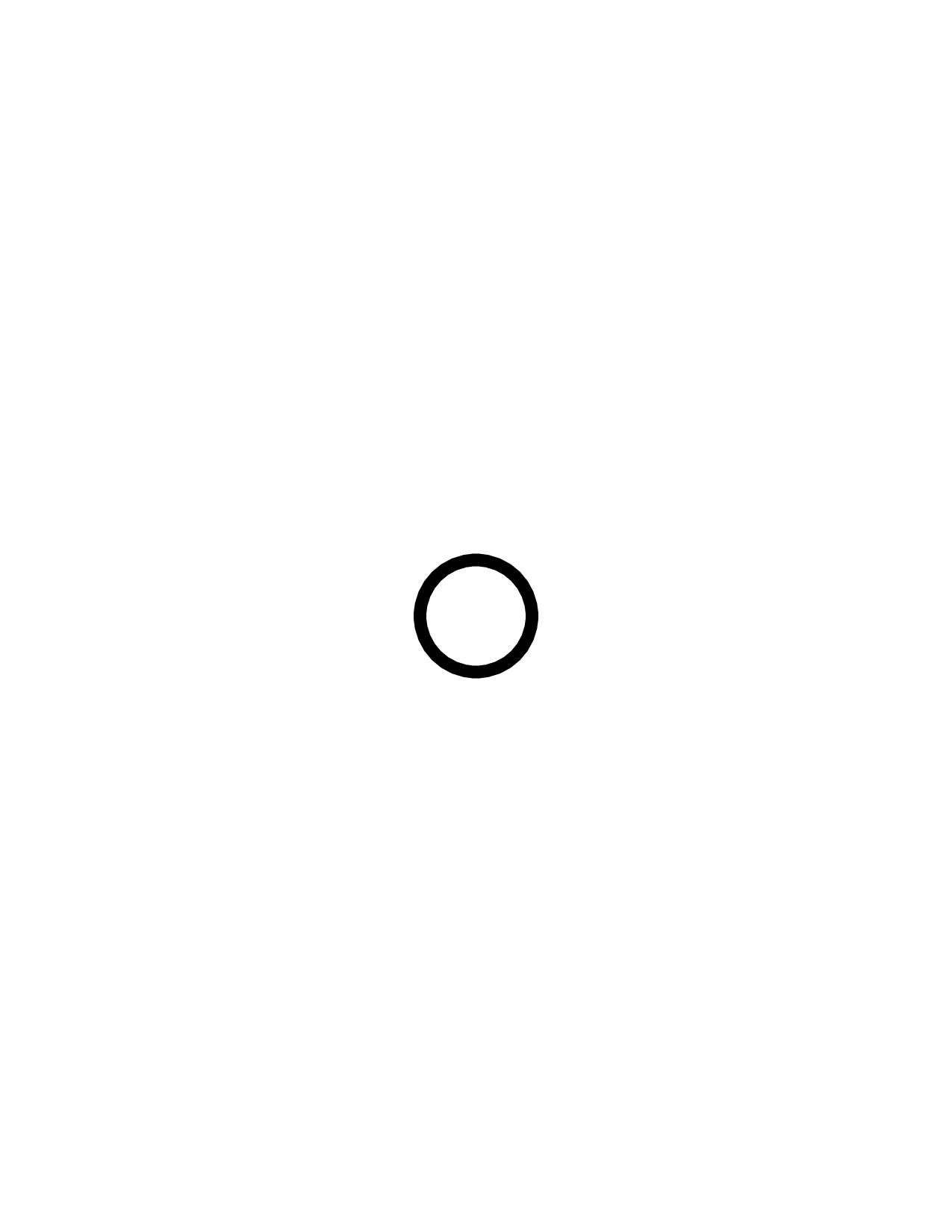}}}
\newcommand{\positive}
	{\raisebox{-2pt}{\includegraphics[scale=0.1]{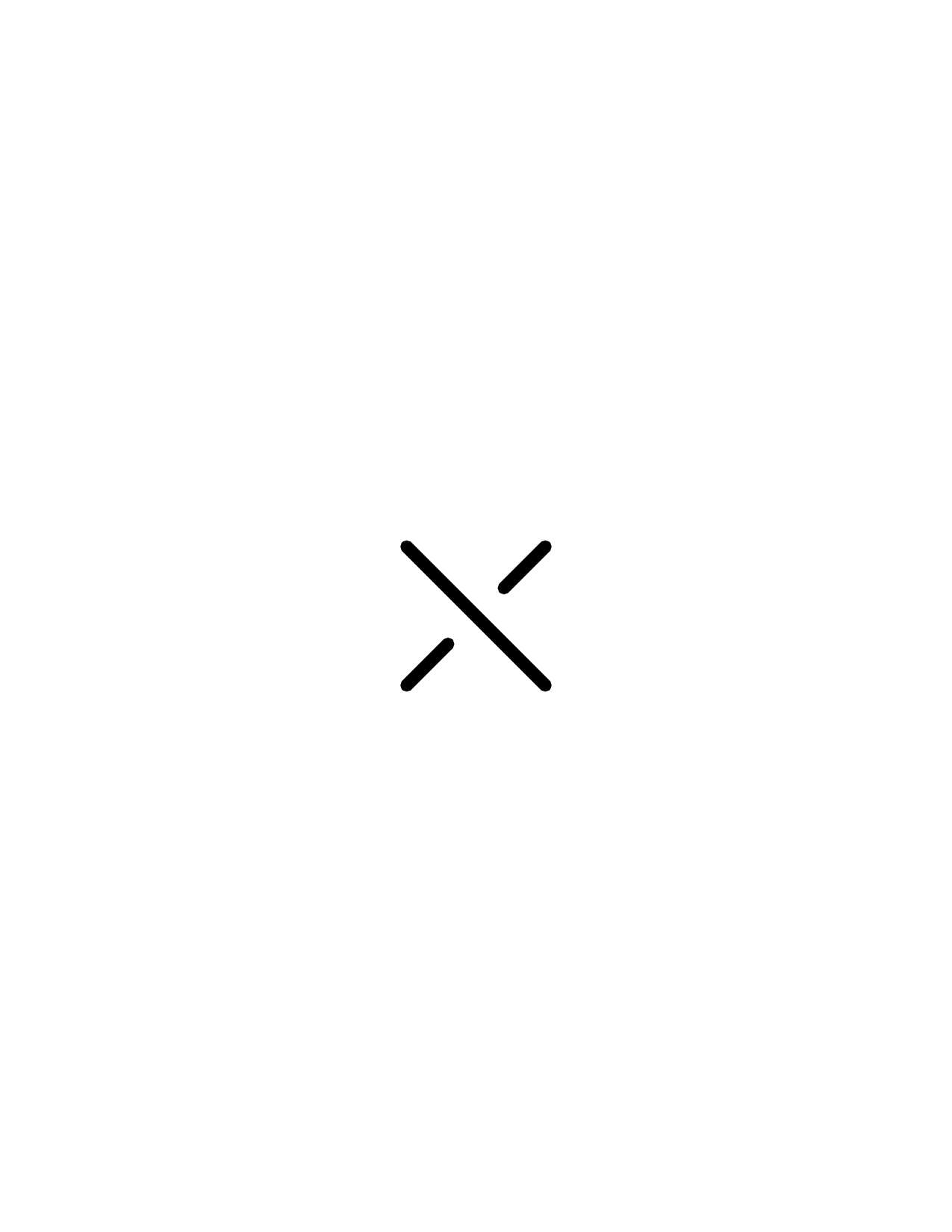}}}
\newcommand{\negative}
	{\raisebox{-2pt}
	{\includegraphics[scale=0.1]{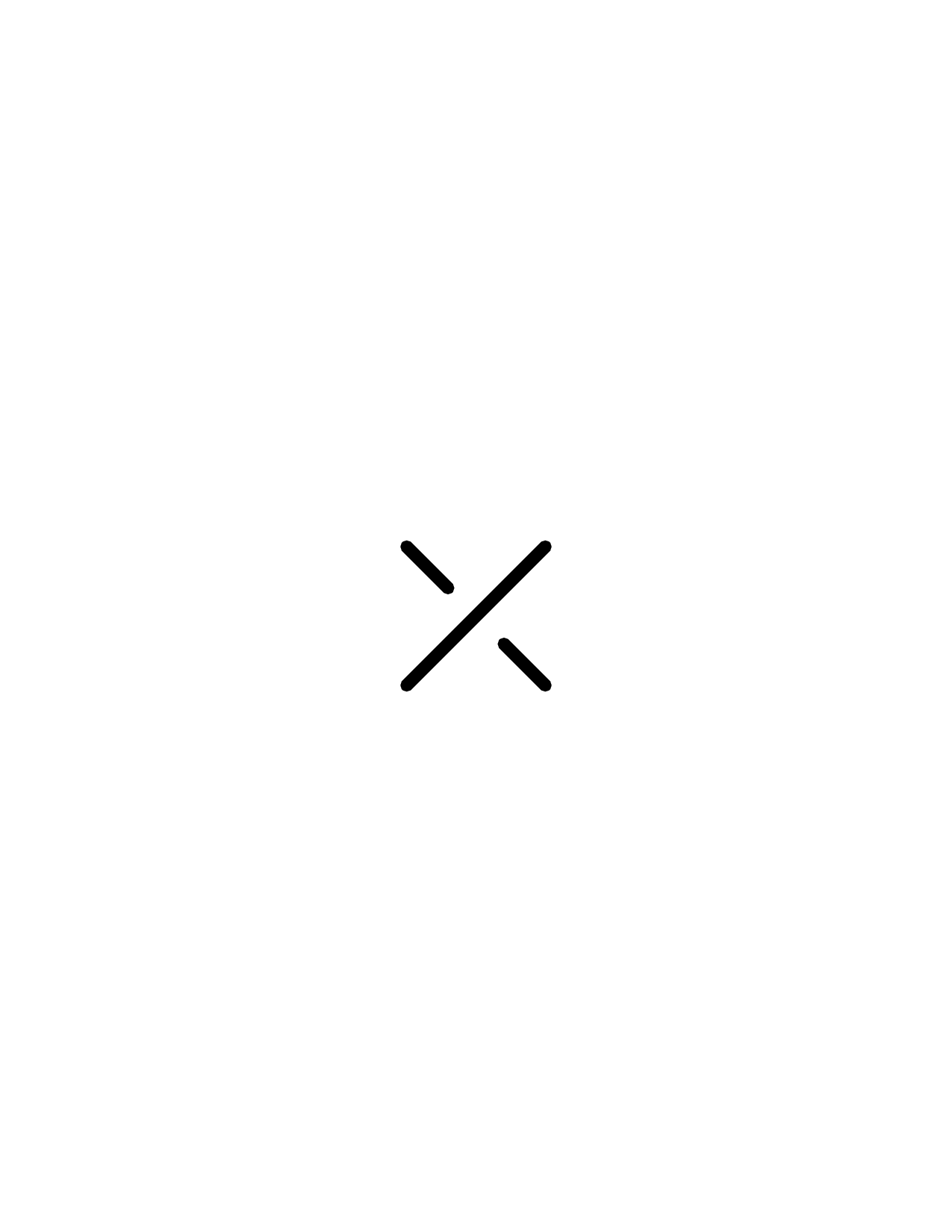}}}
\newcommand{\rightcross}
	{\raisebox{-2pt}
	{\includegraphics[scale=0.1]{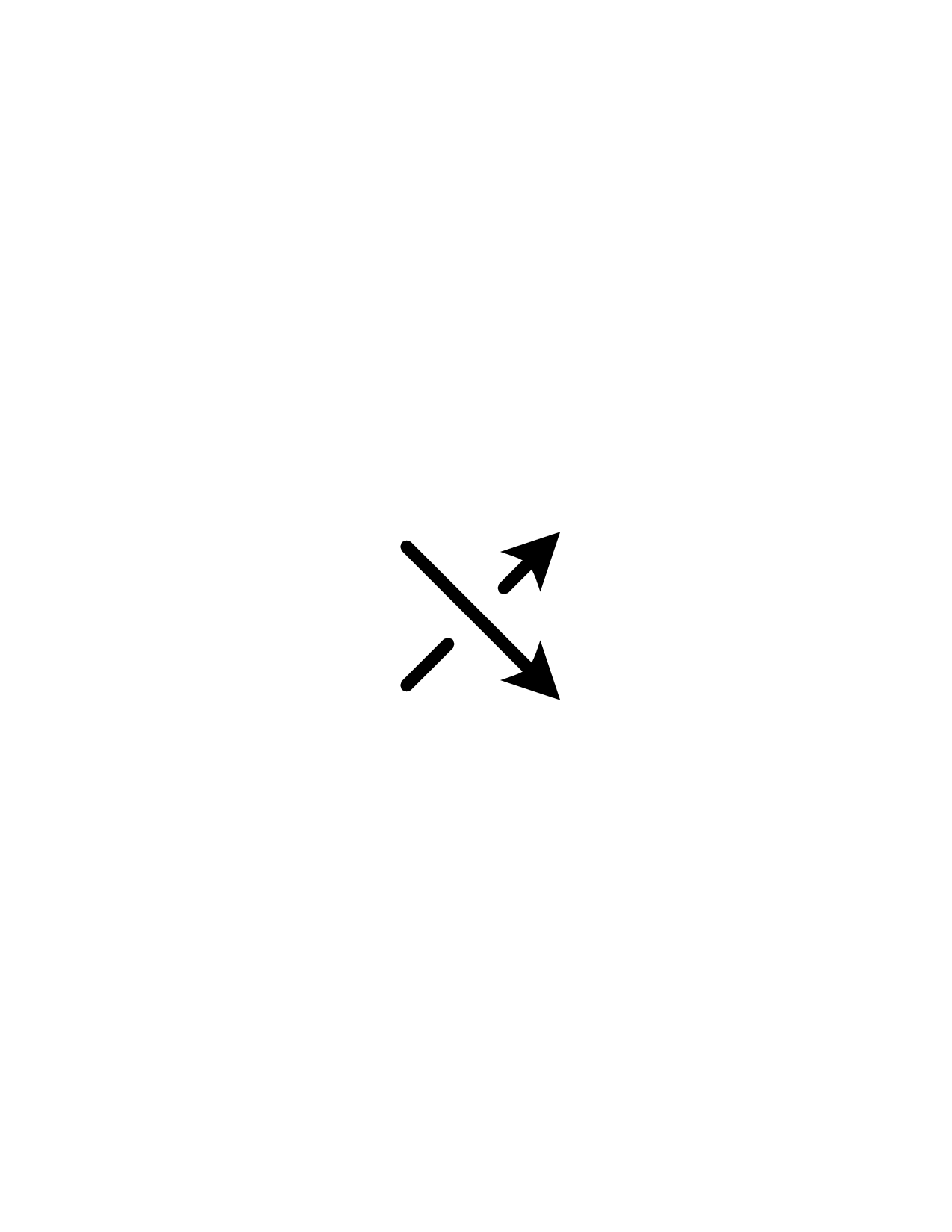}}}
\newcommand{\leftcross}
	{\raisebox{-2pt}
{\includegraphics[scale=0.1]{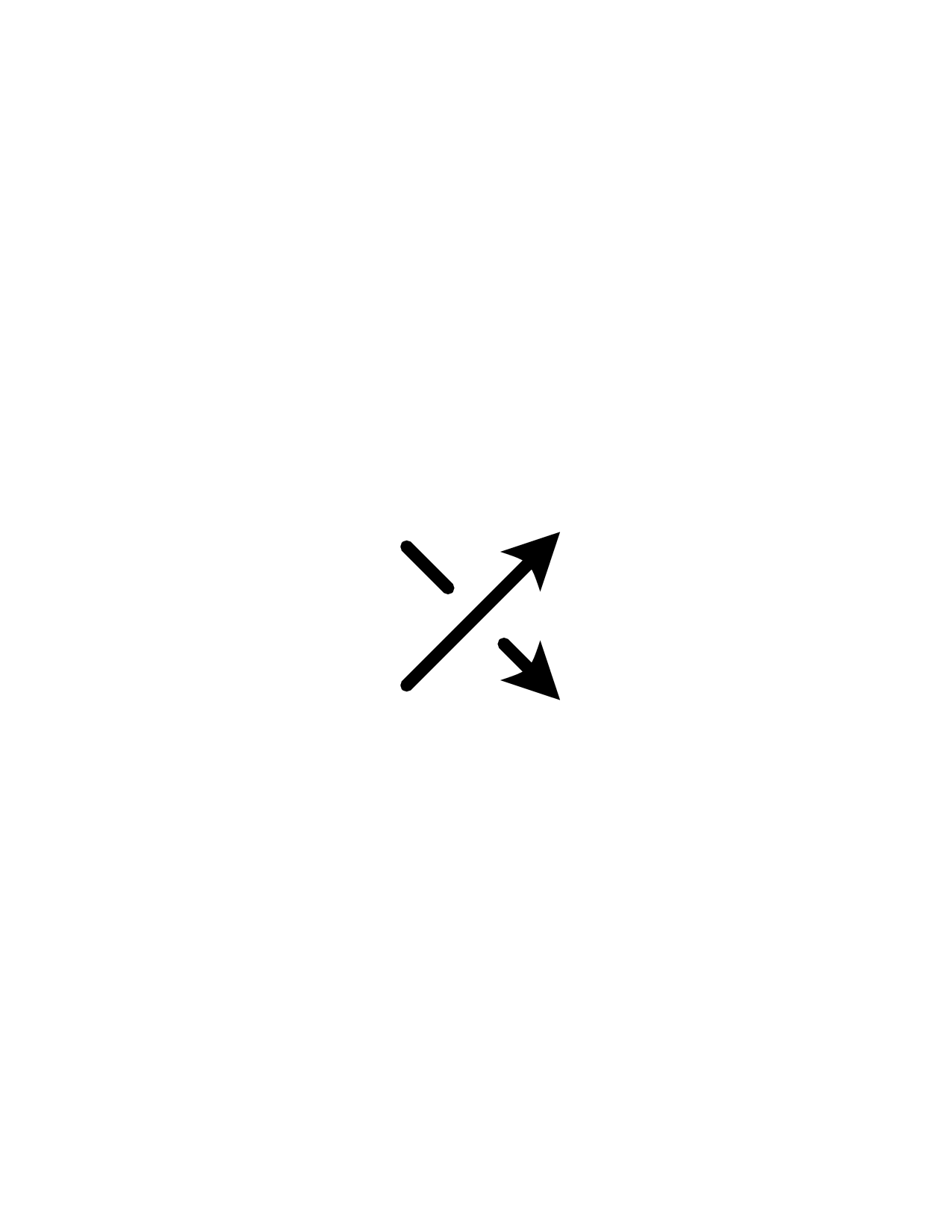}}}
\newcommand{\zero}
	{\raisebox{-2pt}
	{\includegraphics[scale=0.1]{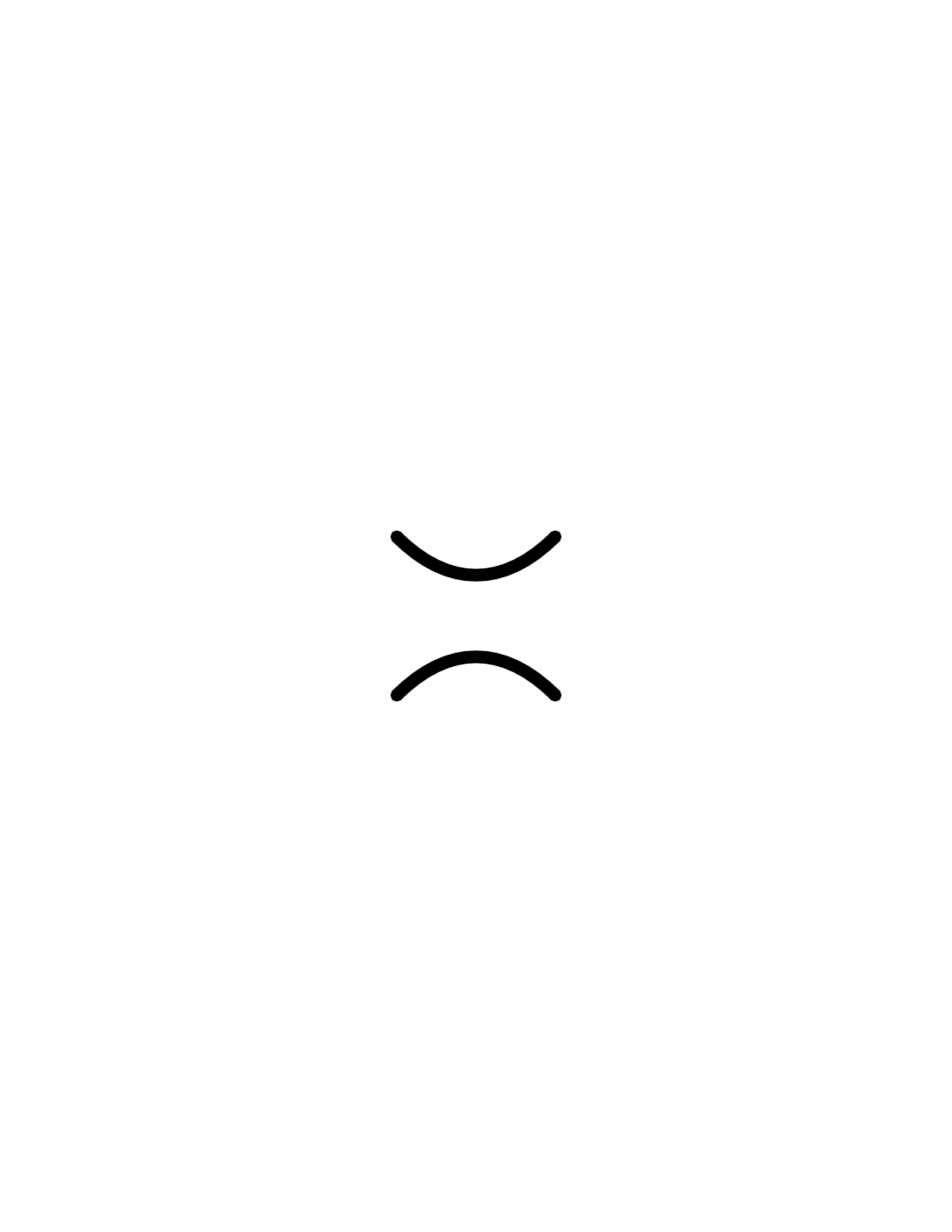}}}
\newcommand{\one}
	{\raisebox{-2pt}
	{\includegraphics[scale=0.1]{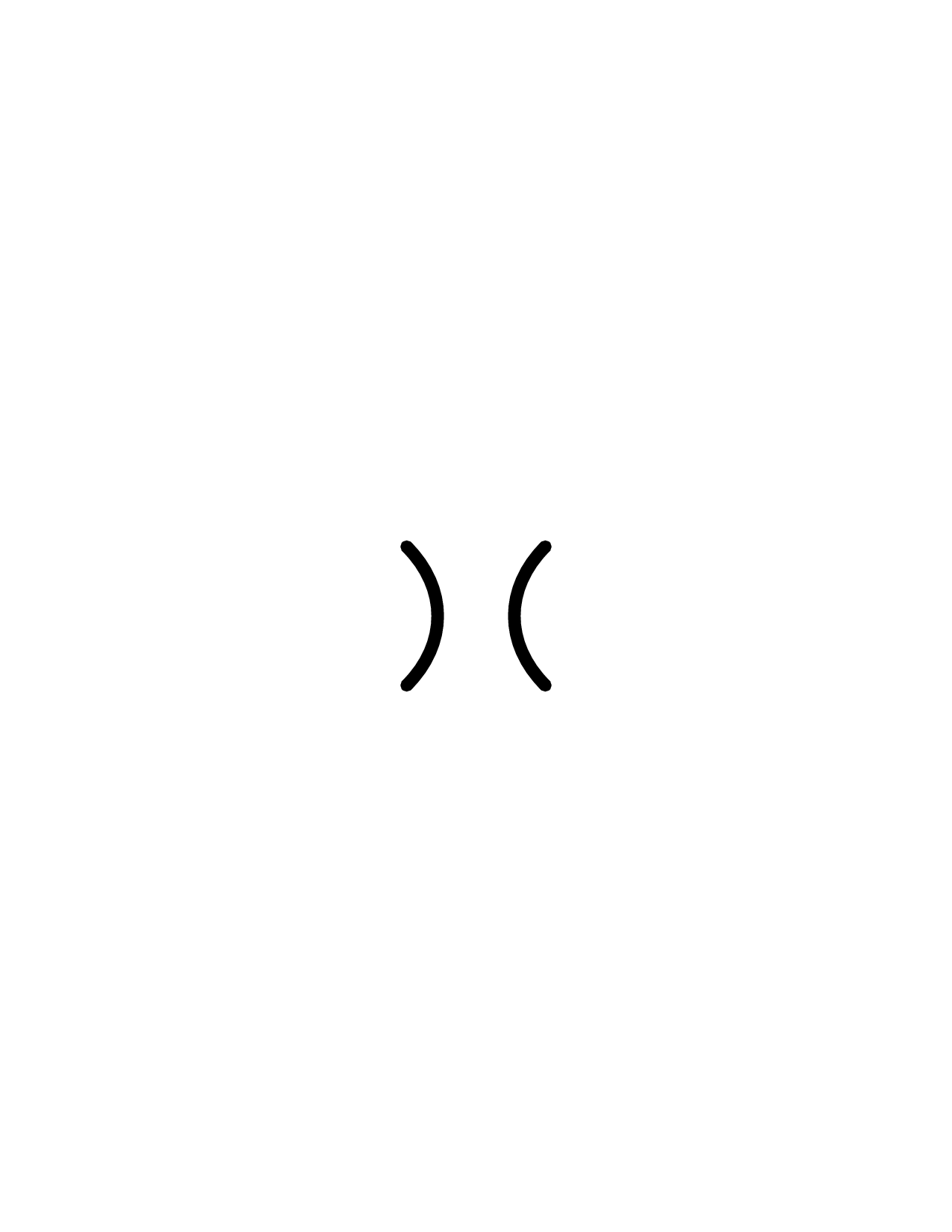}}}
\title{Surgery obstructions from Khovanov homology}
\author{Liam Watson}
\thanks{Supported by a Canada Graduate Scholarship (NSERC)}
\address{Department of Mathematics, UCLA, 520 Portola Plaza, Los Angeles, CA 90095.}
\urladdr{http://www.math.ucla.edu/~lwatson}
\email{lwatson@math.ucla.edu}
\date{First version: July 8, 2008. This version: August 22, 2011.}
\begin{document}

\begin{abstract}
For a 3-manifold with torus boundary admitting an appropriate involution, we show that Khovanov homology provides obstructions to certain exceptional Dehn fillings. For example, given a strongly invertible knot in $S^3$, we give obstructions to lens space surgeries, as well as obstructions to surgeries with finite fundamental group. These obstructions are based on homological width in Khovanov homology, and in the case of finite fundamental group depend on a calculation of the homological width for a family of Montesinos links. \\

\noindent{\sc Keywords:} Khovanov homology, homological width, two-fold branched cover, tangles, Dehn surgery, exceptional surgery, finite filling\\

\noindent {\sc Mathematics Subject Classification (2010):}  57M12, 57M27, 57M50 
\end{abstract}

\maketitle


\section{Introduction}\label{sec:introduction}

In his pioneering work on the geometry and topology of 3-manifolds, Thurston showed that a hyperbolic manifold $M$ with torus boundary admits a finite number of exceptional Dehn fillings \cite{Thurston1980,Thurston1982}. That is, those closed manifolds obtained from $M$ by attaching a solid torus to the boundary that are non-hyperbolic. Since then, the question of understanding and classifying exceptional surgeries has received considerable attention (see surveys by Gordon \cite{Gordon1991} and Boyer \cite{Boyer2002}).

Perhaps the simplest non-hyperbolic manifold is a lens space. Restricting to complements of knots in $S^3$, Moser \cite{Moser1971} showed that torus knots always admit lens space surgeries. Subsequently, Bailey and Rolfsen \cite{BR1977} constructed an example of a lens space surgery on a non-torus knot (a particular cable of the trefoil), and Fintushel and Stern \cite{FS1980} obtained further examples, including hyperbolic knots, that admit lens space surgeries. 

In \cite{Berge} Berge gives a list of knots in $S^3$ that admit lens space surgeries. These knots are referred to as Berge knots, and it has since been conjectured that this list is complete. That is, if a knot in $S^3$ admits a lens space surgery then it must be a Berge knot; this has become known as the Berge conjecture. Since Berge knots are genus 2, they are strongly invertible by a result of Osborne \cite{Osborne1981}. Consequently, the Berge conjecture may be restated in two steps: first show that any knot admitting a lens space surgery is strongly invertible, then show that a strongly invertible knot admitting a lens space surgery is a Berge knot.

The complement of a strongly invertible knot admits a tangle associated with the quotient of the strong inversion. This {\em associated quotient tangle} (see Definition \ref{def:associated-tangle}) can be a useful object when studying surgery on the knot in question. Indeed, surgeries on the strongly invertible knot correspond in a natural way to closures of the associated quotient tangle by a rational tangle. In this way, the manifold obtained by surgery is naturally a two-fold branched cover of $S^3$, branched over an appropriate closure of the associated quotient tangle. While both parts of the restatement of the Berge conjecture given above remain open, in light of the second it may be useful to use invariants of knots in $S^3$ to study the branch sets associated with such surgeries.   

In this setting Khovanov homology \cite{Khovanov2000} may be used to give information about the manifold obtained via surgery in the cover:

\begin{named}{Theorem \ref{thm:lens}}Let $T$ be the associated quotient tangle of a strongly invertible knot $K$ in $S^3$. If $T$ has the property that the links obtained from it by attaching rational tangles have thick Khovanov homology, then $K$ does not admit a non-trivial lens space surgery. Moreover, under mild hypothesis on $T$ it suffices to verify only a finite number of branch sets associated with integer surgeries to obtain the conclusion for all possible fillings.\end{named}
This depends on a result that combines work of Hodgson and Rubinstein \cite{HR1985} and Lee \cite{Lee2005}: lens spaces arise only as the branched covers of homologically thin links  (see Theorem \ref{thm:lens-bound}). As a result of a particular stable behaviour of Khovanov homology for branch sets associated with such surgeries (see Lemma \ref{lem:general-stability}), it suffices to calculate a finite collection of Khovanov homology groups to apply this obstruction. Indeed, we will see in application that it is often enough to calculate one or two Khovanov homology groups and that the required genericity hypothesis alluded to on $T$ (see Definition \ref{def:generic}) is easily satisfied.  

Recently, the question of lens space surgeries has been treated by Ozsv\'ath and Szab\'o \cite{OSz2005-lens} and Rasmussen \cite{Rasmussen2004-1} from the point of view of Heegaard Floer homology. Indeed, some progress on the Berge conjecture has been made by way of Heegaard Floer homology (see the programs put forth by Baker, Grigsby and Hedden \cite{BGH2008,Hedden2007} and Rasmussen \cite{Rasmussen2007}). Moreover, Ozsv\'ath and Szab\'o \cite{OSz2005-branch} have shown that there is a close relationship (by way of a spectral sequence) between the Khovanov homology of a link and the Heegaard Floer homology of the two-fold branched cover of $S^3$, branched over the link. From this point of view, it is natural to ask how the obstructions from the two theories might be related. 
  
The work of Ozsv\'ath and Szab\'o \cite{OSz2005-lens} gives, more generally, obstructions to L-space surgeries. Interesting examples of L-spaces include two-fold branched covers of links with thin Khovanov homology (see \cite{OSz2005-branch} and Proposition \ref{prp:branch-over-thin}), as well as manifolds admitting elliptic geometry \cite{OSz2005-lens}. In a similar vein, it can be shown that the branch set associated with a manifold with finite fundamental group (viewed as a two-fold branched cover of $S^3$) has relatively simple Khovanov homology.
\begin{named}{Theorem \ref{thm:finite-bound}}
If the two-fold branched cover of $L$ has finite fundamental group, then the reduced Khovanov homology of $L$ is supported in at most two adjacent diagonals. 
\end{named}
As a result, we obtain the following:
\begin{named}{Theorem \ref{thm:finite}}
Let $T$ be the associated quotient tangle of a strongly invertible knot $K$ in $S^3$. If $T$ has the property that the links obtained from it by attaching rational tangles have reduced Khovanov homology supported in more than two adjacent diagonals, then $K$ does not admit a non-trivial surgery with finite fundamental group. Moreover, under mild hypothesis on $T$ it suffices to verify only a finite number of branch sets associated with integer surgeries to obtain the conclusion for all possible fillings. 
\end{named}

One may view the obstructions given above as a rough correspondence between the geometry of a two-fold branched cover and the Khovanov homology of the branch set: simple manifolds (in terms of geometry) tend to have simple branch sets (in terms of Khovanov homology). That such a correspondence exists is interesting in light of the fact that Khovanov homology, while relatively strong as an invariant of knots in $S^3$, lacks a complete geometric interpretation. As a result, this invariant has seen relatively few geometric applications despite receiving considerable attention since its inception. That said, the applications that have arisen have been particularly interesting. Perhaps most notable, Rasmussen \cite{Rasmussen2010} obtained a combinatorial proof of the Milnor conjecture using Khovanov homology. As such, the search for further applications of the theory is of central interest. 

It should be pointed out that the obstructions given here apply to a wider class of manifolds with torus boundary than complements of knots in $S^3$: any manifold that has the structure of a two-fold branched cover may be studied by way of the Khovanov homology of the branch set (see Theorem \ref{thm:general}). 

\subsection*{Organization of the paper} Section \ref{sec:kh} gives a brief review of Khovanov homology, and in particular the skein exact sequence, as the grading conventions used in this work are non-standard and adapted to the study of homological width. We prove a degenerate case of a version of the skein exact sequence due to Manolescu and Ozsv\'ath \cite{MO2007} (compare Proposition \ref{prp:mo} and Proposition \ref{prp:mo-perturbed}), and introduce the $\si$-normalized Khovanov homology. This is a useful $\bZ$-graded object in the context of this work, and seems to be a natural and interesting object in its own right.  

In Section \ref{sec:fillings} we review the required elements of surgery theory on 3-manifolds, and introduce the notion of a simple, strongly invertible knot manifold (Definition \ref{def:simple-strong}) and the associated quotient tangle (Definition \ref{def:associated-tangle}). This is precisely the family of manifolds for which fillings may be studied by way of Khovanov homology. 

Section \ref{sec:width} establishes the stability of Khovanov homology for branch sets associated with integer surgery (see Lemma \ref{lem:general-stability}). As a result, we may define the maximal and minimal width of the Khovanov homology for the branch set of an integer filling (Definition \ref{def:max-min}), and this is used to give upper bounds for width for the branch set of an arbitrary filling in terms of the branch sets associated with integer fillings. In pursuing this, we establish an interesting characterization of the trivial knot, among strongly invertible knots, from Khovanov homology (Theorem \ref{thm:trivial-knot}). We also discuss quasi-alternating links as they arise naturally in this context (see Theorem \ref{thm:quasi-alternating}), and in particular we demonstrate that L-spaces arising from large surgery on a Berge knot may be realized as the two-fold branched cover of a quasi-alternating link (see Proposition \ref{prp:Berge-quasi-alternating}). 

Section \ref{sec:estimates} treats lower bounds for width for the branch set of an arbitrary filling in terms of the branch sets associated with integer fillings. This relies on a notion of generic tangle introduced in Definition \ref{def:generic}.

Section \ref{sec:obstructions} applies the above material to the main results of this paper. We prove upper bounds for the width of the Khovanov homology of a branch set associated with a lens space surgery (Theorem \ref{thm:lens-bound}), as well as for the width of a finite filling (Theorem \ref{thm:finite-bound}). These bounds, combined with the stable behaviour of the associated Khovanov homology groups developed in Section \ref{sec:width} give rise to our main results on surgery obstructions (Theorem \ref{thm:lens} and Theorem \ref{thm:finite}). 

Finally, we turn to examples and applications in Section \ref{sec:examples}. As a first example, we recover the fact that the figure eight does not admit finite fillings. We also  show that surgery on the $(-2,p,p)$-pretzel knot does not yield a manifold with finite fundamental group (for $p\in\{5,7,\ldots,31\}$). Finite fillings on this family of Montesinos links was left unresolved in Mattman's extensive study of the problem using character variety methods \cite{Mattman2000} (Mattman's classification has very recently been completed by Ichihara and Jong applying Heegaard Floer homology techniques \cite{IJ2008}, and independently treated by Futer, Ishikawa, Kabaya, Mattman and Shimokawa \cite{FIKMS2009}).  

The characterization of the trivial knot (among strongly invertible knots) in terms of Khovanov homology (Theorem \ref{thm:trivial-knot}) seems interesting in light of the connection between Khovanov homology and Heegaard Floer homology for two-fold branched covers, together with the fact that knot Floer homology detects the trivial knot \cite{OSz2004-genus}. Indeed, it may be reasonable to guess that obstructions from Khovanov homology are closely related to those arising in Heegaard Floer homology. For this reason, we attempt to compare and contrast our obstructions with the powerful obstructions of Heegaard Floer homology through a final example:  We exhibit that the knot $14^n_{11893}$ (see Figure \ref{fig:14n11893-inversion}) does not admit surgeries with finite fundamental group. Since the Alexander polynomial of this knot does not rule out L-space surgeries (an obstruction that follows from Heegaard Floer homology \cite{OSz2005-lens}), this establishes that Khovanov homology obstructions can, in certain settings, be stronger than those provided by the Alexander polynomial. We remark that, while not pursued in this work, the obstructions given here may be applied to certain strongly invertible knots in manifolds other than $S^3$. In particular, one may consider knots in non-L-spaces, a setting wherein methods from Heegaard Floer homology can  be considerably more difficult to apply. 

\subsection*{On conventions and calculations} Throughout we will use Rolfsen's notation \cite{Rolfsen1976} for knots with 10 or fewer crossings, and {\tt Knotscape} notation \cite{Knotscape} for knots with more than 10 crossings, as has become standard in the literature. Calculations of the Khovanov homology groups given in this work were performed using the program {\tt JavaKh} by Bar-Natan and Green \cite{JavaKh}. 

\subsection*{Acknowledgements} This work benefited greatly from conversations with Michel Boileau, Steve Boyer, Matt Hedden, Patrick Ingram, Thomas Mattman, Peter Ozsv\'ath, Luisa Paoluzzi and Jeremy Van Horn-Morris. We are particularly indebted to the anonymous referee who made numerous suggestions that led to improvements throughout the paper.


\section{Khovanov homology}\label{sec:kh}

We begin by reviewing Khovanov homology \cite{Khovanov2000} to fix notation and conventions. We work with the reduced version of the theory with coefficients in the field $\bF=\bZ/2\bZ$. For definitions see Khovanov's work \cite{Khovanov2003}, as well as work of Shumakovitch \cite{Shumakovitch2004}. 

\subsection{Grading conventions.}To a link $L\into S^3$, reduced Khovanov homology associates a bi-graded group (or, $\bF$-vector space) $\Khred(L)$ with primary grading $\delta$ and secondary grading $q$. This grading is non-standard: to recover the original conventions from $\delta$ and $q$, the  primary (homological) grading is given by $u'=\delta+2q$, and secondary (quantum) grading by $q'=2q$ (compare Theorem \ref{thm:kh} below). With this grading convention, recall that an observation due to Shumakovitch \cite{Shumakovitch2004} ensures that, over $\bF$, the full Khovanov homology decomposes as two copies of the reduced theory. That is, $\Kh(L)\cong\Khred(L)\{+1\}\oplus\Khred(L)\{-1\}$ where $\{\cdot\}$ shifts the quantum grading $q'$. 

Unless otherwise specified, we will consider $\Khred(L)$ as a {\em relatively} $\bZ\oplus\bZ$-graded group. This has the advantage that $\Khred(L)$ becomes an invariant of unoriented links, resulting from the fact that the absolute grading in Khovanov homology is obtained by applying an overall shift to the cube of resolutions defining the chain complex \cite{Khovanov2000}.  

As an absolutely graded group, this homology theory categorifies to the Jones polynomial \cite{Jones1985} in the following sense:

\begin{theorem}[Khovanov \cite{Khovanov2003}]\label{thm:kh} Let $u=\delta+q$. Then there is a unique absolute $\bZ\oplus\frac{1}{2}\bZ$-grading (in $(u,q)$) on $\Khred(L)$ with the property that \[V_L(t)=\textstyle{\sum}_{u,q}(-1)^ut^q\rk\Khred^u_q(L),\] where $V_L(t)\in\bZ[t^{\frac{1}{2}},t^{-\frac{1}{2}}]$ is the Jones polynomial of the oriented link $L$. \end{theorem}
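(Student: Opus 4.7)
The plan is to deduce this as a routine reparametrization of Khovanov's original categorification theorem. In the standard conventions, Khovanov establishes that $\Khred(L)$ carries a unique absolute bi-grading in the homological and quantum variables $(u', q')$ whose graded Euler characteristic is the Jones polynomial,
\[ V_L(t) = \sum_{u',q'} (-1)^{u'} t^{q'/2} \rk \Khred^{u', q'}(L). \]
I would begin from this, and then apply the conversion formulas from Section \ref{sec:kh}: $q = q'/2$ and $\delta = u' - q'$, so that the variable $u = \delta + q$ of the theorem coincides with $u' - q'/2$.

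Since these formulas give an invertible affine change of indices, Khovanov's absolute bi-grading pulls back to a well-defined absolute $(u, q)$-bi-grading on $\Khred(L)$ taking values in $\bZ \oplus \frac{1}{2}\bZ$. A direct substitution in Khovanov's Euler-characteristic identity rewrites it, after elementary sign bookkeeping using $(-1)^{-k}=(-1)^k$, as the claimed formula $V_L(t) = \sum (-1)^u t^q \rk \Khred^u_q(L)$.

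For uniqueness, note that the relative $(u, q)$-bi-grading on $\Khred(L)$ is intrinsic to the chain-level construction recalled at the start of Section \ref{sec:kh}, so any other absolute lift differs from the constructed one by a global shift $(a, b) \in \bZ \oplus \frac{1}{2}\bZ$. Such a shift rescales the Euler characteristic by the monomial $(-1)^a t^b$. Since $V_L(t)$ is a fixed, nonzero Laurent polynomial, matching coefficients forces $a \equiv 0 \pmod 2$ and $b = 0$, and the absolute grading is therefore uniquely determined.

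The main subtlety is verifying the integrality constraint $u \in \bZ$ (as opposed to $\frac{1}{2}\bZ$). This requires observing that on the support of $\Khred(L)$ the parities of $u'$ and of $q'/2$ are coupled through the number of components of $L$, so that $u = u' - q'/2$ is always an integer. This parity bookkeeping is essentially the only point beyond formal reparametrization that needs attention.
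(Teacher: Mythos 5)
The paper does not prove this statement; it is quoted with attribution to Khovanov~\cite{Khovanov2003}, followed only by the remark that the graded Euler characteristic is independent of the coefficient field. Your strategy of reparametrizing Khovanov's original categorification theorem is therefore a reasonable way to fill this in, but the sign step does not close as ``elementary bookkeeping.'' With $u = u'-q'/2$ and $q=q'/2$, substituting into $\sum(-1)^{u'}t^{q'/2}\rk$ gives $\sum(-1)^{u+q}t^q\rk$, which differs from the target $\sum(-1)^u t^q\rk$ by a residual factor $(-1)^q$. That factor is not removed by $(-1)^{-k}=(-1)^k$, and it is not even a sign when $q\in\frac{1}{2}\bZ\smallsetminus\bZ$. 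To make the substitution clean you need either to begin from a version of Khovanov's identity whose sign is already $(-1)^{u'-q'/2}$ (equivalently, carry the variable change $q\mapsto -t^{1/2}$ into the Jones polynomial), or to notice that the printed conversion $u'=\delta+2q$ cannot be taken literally: on the thin trefoil of Figure~\ref{fig:Kh-two-examples}, $\delta$ is constant across the three generators while $u'-q'$ (for the standard homological and quantum gradings) is not, so the intended relation is the usual diagonal $\delta = q'/2-u'$, under which $u = 2q - u'$ and $(-1)^u=(-1)^{u'}$ holds exactly because $2q=q'\in\bZ$.

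The uniqueness argument is also incomplete. You correctly compute that a global shift by $(a,b)$ rescales the Euler characteristic by $(-1)^a t^b$, and deduce $b=0$ and $a\equiv 0\pmod 2$; but ``$a$ even'' does not give $a=0$. A shift of $u$ (equivalently, of $\delta$) by $2$ preserves $\sum(-1)^u t^q\rk\Khred^u_q(L)$ exactly, so the Euler-characteristic identity alone determines the absolute bigrading only modulo even shifts of $\delta$. Some further normalization --- not supplied in your argument --- is required before one can assert the uniqueness stated in the theorem.
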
 

We remark that the universal coefficient theorem ensures that the graded Euler characteristic (giving rise to the Jones polynomial) is invariant of the coefficient field.  

According to our grading conventions, the usual Euler characteristic $\chi(\Khred(L))=\textstyle{\sum}_\del(-1)^\del\rk\Khred^\del(L)$ is obtained by collapsing the $q$ grading. Note that this is only well defined up to sign as $\del$ is a relative integer grading; we fix the convention $\chi\ge0$. Recall that $\det(L)=|H_1(\Br(S^3,L);\bZ)|$ (with the convention that $\det(L)=0$ when this group is infinite) where $\Br(S^3,L)$ is the two-fold branched cover of $S^3$, branched over $L$.

\begin{proposition}\label{prp:Kh-Euler} With the above notation and conventions, $\chi(\Khred(L))=\det(L)$.\end{proposition}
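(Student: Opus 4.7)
The plan is to obtain the result as an evaluation of the polynomial identity in Theorem \ref{thm:kh} at $t=-1$, combined with the classical identity $\det(L)=|V_L(-1)|$.

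First I would substitute $t=-1$ into the identity of Theorem \ref{thm:kh}, yielding
\[
V_L(-1)=\sum_{u,q}(-1)^{u+q}\rk\Khred^u_q(L).
\]
With the convention $u=\delta+q$ and the fact that $2q\in\bZ$ (the bigrading takes values in $\bZ\oplus\frac{1}{2}\bZ$), the exponent simplifies via $u+q=\delta+2q\equiv\delta\pmod 2$, so $(-1)^{u+q}=(-1)^\delta$. Summing first over $q$ collapses the secondary grading and gives
\[
V_L(-1)=\sum_{\delta}(-1)^{\delta}\rk\Khred^{\delta}(L),
\]
which up to an overall sign is the Euler characteristic $\chi(\Khred(L))$. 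The sign ambiguity stems from the fact that $\delta$ is only a relative grading in our conventions; it is absorbed by our standing convention $\chi\ge 0$.

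Next I would invoke the classical fact that $|V_L(-1)|$ equals the order of $H_1(\Br(S^3,L);\bZ)$ whenever this group is finite, and $V_L(-1)=0$ otherwise. This is a standard consequence of the relationship between the Jones polynomial at $-1$ and the Alexander polynomial at $-1$ (itself equal, up to sign, to $\det(2V+V^T)$ for any Seifert matrix $V$), together with the fact that the first homology of the two-fold branched cover is presented by $V+V^T$. Combining with the previous paragraph,
\[
|\chi(\Khred(L))|=|V_L(-1)|=\det(L),
\]
and the sign convention fixes $\chi(\Khred(L))=\det(L)$, with both sides zero in the degenerate case where $H_1$ is infinite.

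There is no real obstacle here beyond careful bookkeeping: the only subtle point is checking that the grading change from the usual $(u',q')$ convention to the $(\delta,q)$ convention does not disturb the signs when specializing $t=-1$, which is precisely where the identity $u+q\equiv \delta\pmod 2$ comes in.
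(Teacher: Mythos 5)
Your proposal is correct and follows essentially the same route as the paper's own proof: both rewrite $\chi(\Khred(L))$ from the $\delta$-grading to the $(u,q)$-grading (using $\delta=u-q$, equivalently $u+q=\delta+2q$), recognize the resulting alternating sum as $V_L(-1)$ via Theorem~\ref{thm:kh}, and finish with the classical identity $\det(L)=|V_L(-1)|$, the sign being handled by the convention $\chi\geq 0$. The only difference is presentational — you unpack the change of grading and the classical determinant identity a bit more explicitly than the paper's two-line argument does.
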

\begin{proof} By definition, $\chi(\Khred(L))=\textstyle\big|\sum_{\del,q}(-1)^\del\rk\Khred^\del_q(L)\big|=\textstyle\big|\sum_{u,q}(-1)^{u}(-1)^{q}\rk\Khred^u_q(L)\big|,$ 
and the result follows from the well known identity $\det(L)=|V_L(-1)|$.
\end{proof}

\subsection{Homological width} Forgetting the $q$-grading as in Proposition \ref{prp:Kh-Euler} yields $\textstyle\Khred(L)\cong \bigoplus_{\delta=1}^k\bF^{b_\delta}$ for non-negative integers $b_i$ (where $b_1$ and $b_k$ are positive), which we will often denote by $\Khred(L)\cong\bF^{b_1} \oplus \cdots\oplus\bF^{b_k}$ as a means of representing the $\delta$-grading.  As a result, we arrive naturally at the following:
\begin{definition}\label{def:width} The homological width of $L$ is given by  the difference in highest and lowest $\delta$-grading plus one (for any choice of absolute $\delta$-grading). We donote this positive integer by $w(L)$. Links for which $w=1$ are called thin (or homologically thin), while links with $w>1$ are termed thick (or homologically thick).
\end{definition}
\begin{remark}\label{rmk:width-and-positive-betti-numbers}
Note that for $\Khred(L)\cong\bF^{b_1} \oplus \cdots\oplus\bF^{b_k}$ an equivalent definition for $w(L)=k$ is the number of $\delta$-gradings supporting the reduced Khovanov homology provided $b_i>0$ for all $i$. This definition can be useful in practice, though it is not known that this condition on rank is satisfied for the Khovanov homology of links in general;  in principal this value may be lower than the homological width. In this work, the support of the Khovanov homology of a link (see Section \ref{sub:support}) will always refer to the interval between the maximum and minimum $\delta$-gradings, rather than the subset of this interval on which the Khovanov homology is non-zero.
\end{remark}

A result due to Lee shows that non-split alternating links give a family of thin links \cite{Lee2005}; observing that $\chi(\Khred(L))=|\sum_{\delta=1}^k(-1)^\delta b_\delta|$ and $\rk\Khred(L)=\sum_{\delta=1}^kb_\delta$ gives rise to examples of thick links.

\begin{figure}\begin{center}
\labellist\small
	\pinlabel $1$ at 161 236 
	\pinlabel $1$ at 161 307 
	\pinlabel $1$ at 161 340
\endlabellist
\includegraphics[scale=0.35]{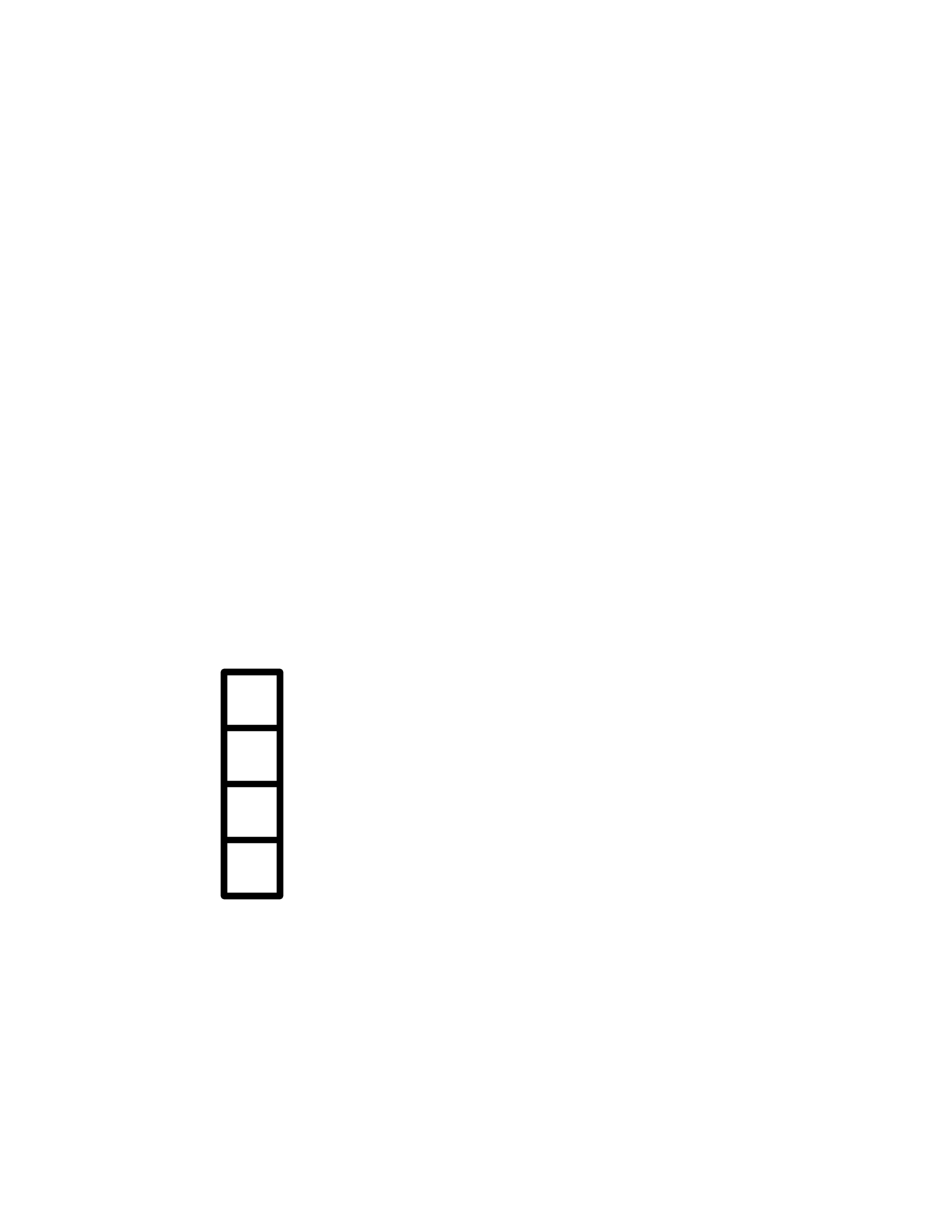} \qquad
\raisebox{0pt}{\includegraphics[scale=0.1875]{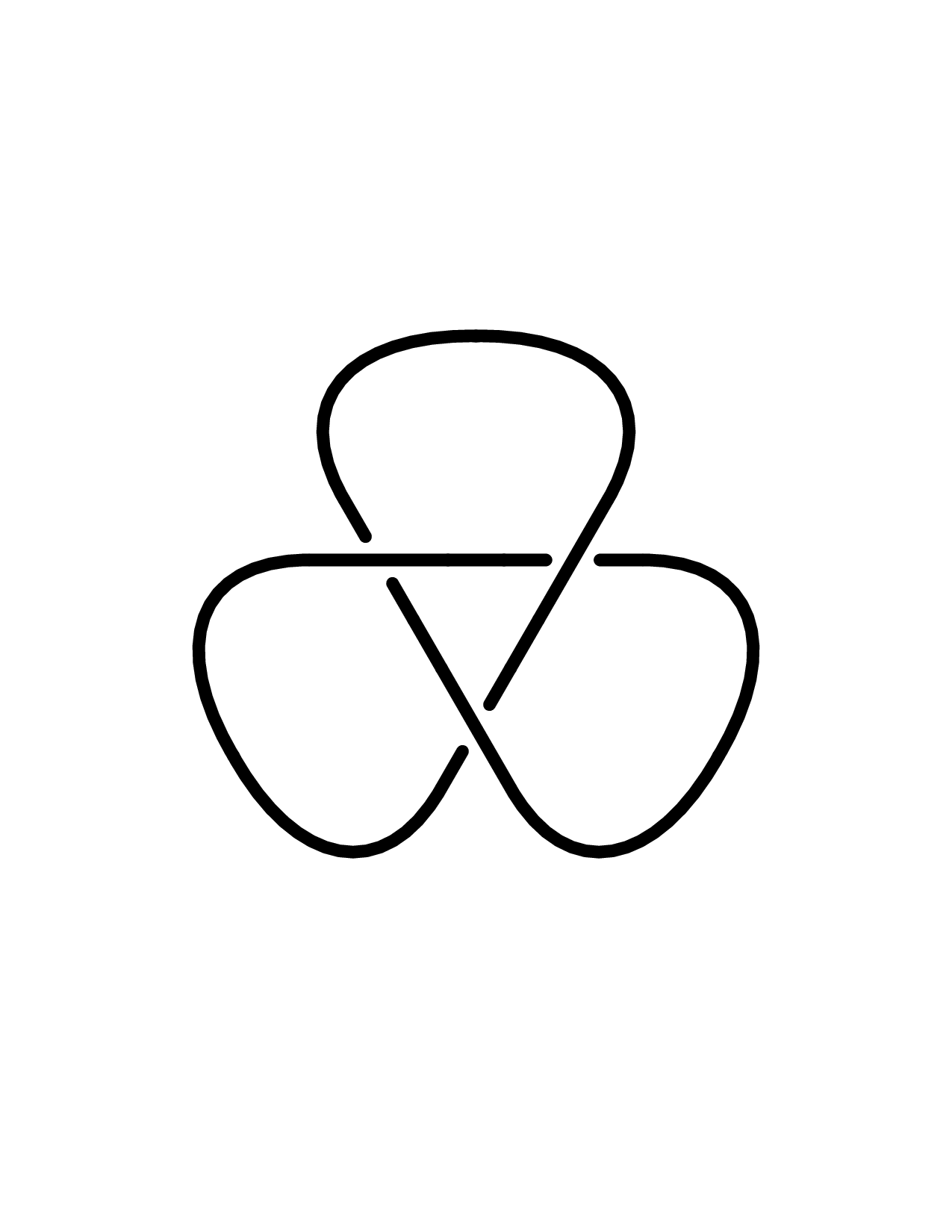}}
\qquad\qquad\qquad
\labellist\small
	\pinlabel $1$ at 161 236 
	\pinlabel $1$ at 161 269
	\pinlabel $1$ at 161 340
	
	\pinlabel $1$ at 196 269
	\pinlabel $1$ at 196 340
	\pinlabel $1$ at 196 378
	\pinlabel $1$ at 196 451
\endlabellist
\includegraphics[scale=0.35]{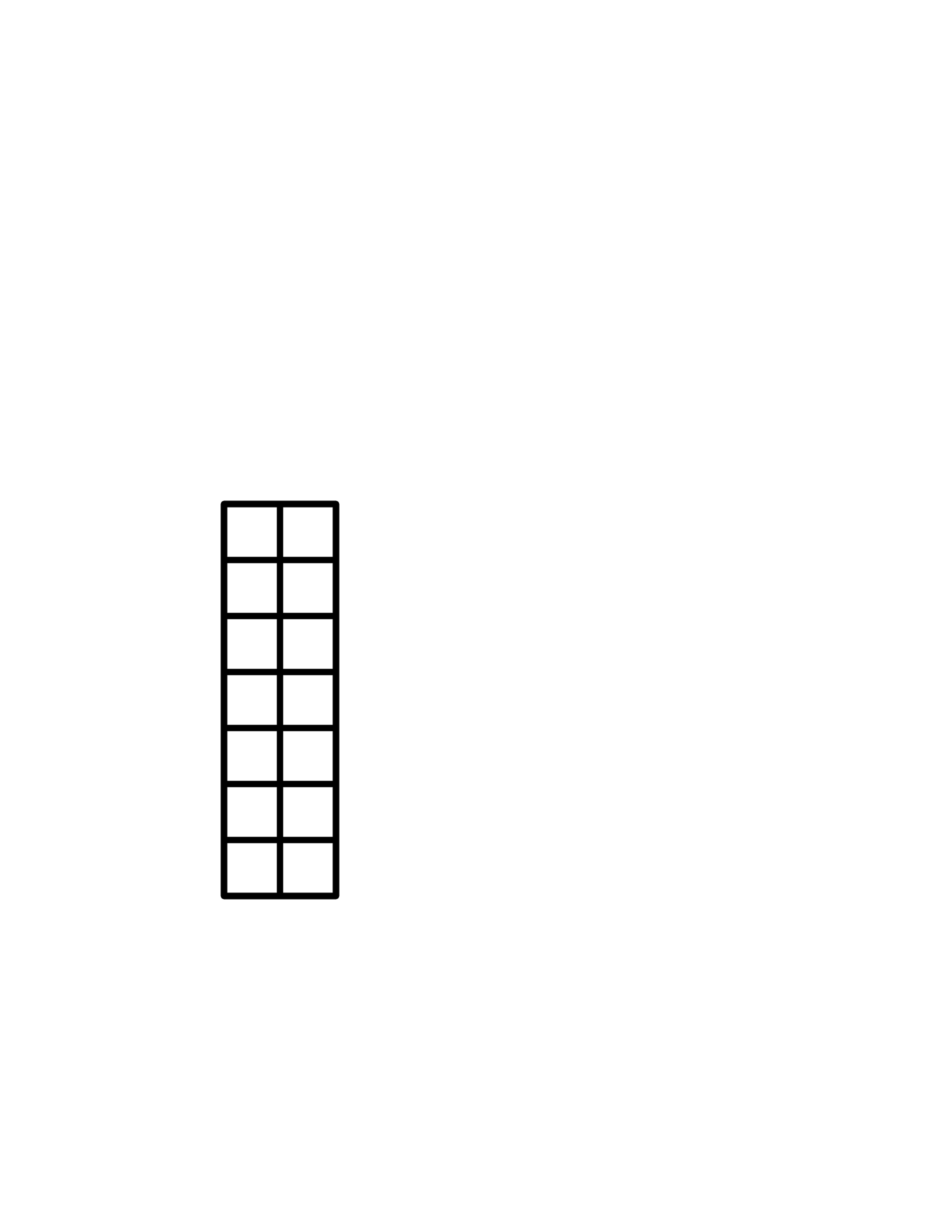} \qquad
\raisebox{0pt}{\includegraphics[scale=0.25]{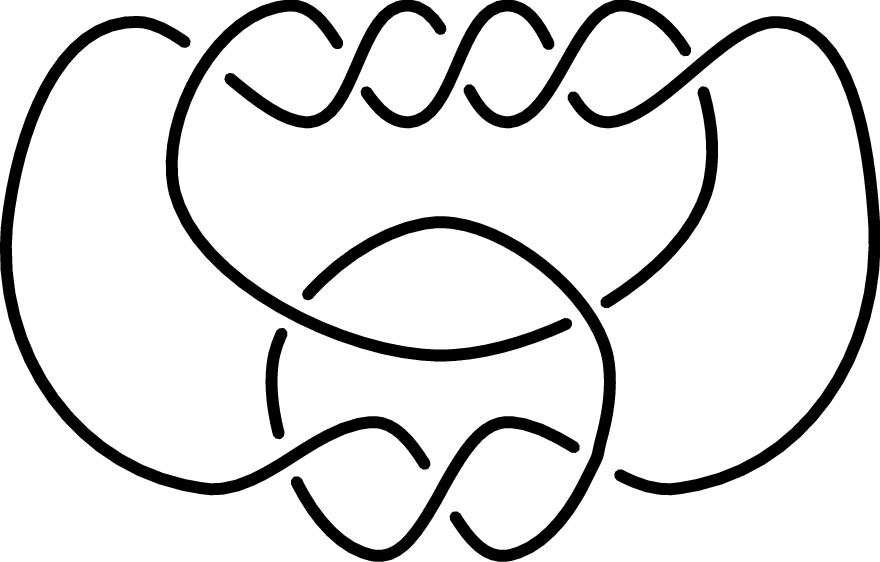}}
\caption{The reduced Khovanov homology of the trefoil (left) with $w=1$, and the knot $10_{124}$ (right) with $w=2$. The primary relative grading ($\delta$) is read horizontally, and the secondary relative grading ($q$) is read vertically. The values at a given bi-grading give the ranks of the abelian group (or $\bF$-vector space) at that location; trivial groups are left blank.}\label{fig:Kh-two-examples}\end{center}\end{figure}

\begin{proposition}\label{prp:homologically-thick} Any link $L$ with $\det(L)=0$ must have $w(L)>1$.\end{proposition}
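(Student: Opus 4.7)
The plan is to proceed by contradiction, turning Proposition \ref{prp:Kh-Euler} into the key leverage point. Suppose toward a contradiction that $w(L)=1$, so that by Definition \ref{def:width} the reduced Khovanov homology $\Khred(L)$ is supported in a single $\delta$-grading, say $\Khred(L)\cong\bF^{b_1}$ with $b_1\ge 1$ (strict positivity is built into the definition: for $w(L)$ to be defined at all, there must be at least one nonzero summand).

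With all of the homology sitting in a single $\delta$-grading, the Euler characteristic cannot benefit from any cancellation: I would compute
\[
\chi(\Khred(L))=\sum_{\delta}(-1)^{\delta}b_{\delta}=\pm b_1,
\]
so that $|\chi(\Khred(L))|=b_1\ge 1$. On the other hand, Proposition \ref{prp:Kh-Euler} identifies $\chi(\Khred(L))$ with $\det(L)$, and the hypothesis $\det(L)=0$ then forces $\chi(\Khred(L))=0$, contradicting $b_1\ge 1$. This contradiction gives $w(L)>1$.

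There is no real obstacle here; the statement is essentially a direct consequence of Proposition \ref{prp:Kh-Euler} together with the observation that an Euler characteristic supported in a single grading has absolute value equal to the total rank. The only small point worth noting is the normalization issue: $\chi$ is only defined up to sign because $\delta$ is a relative grading, but the convention $\chi\ge 0$ fixed before Proposition \ref{prp:Kh-Euler} makes the comparison with $\det(L)=0$ unambiguous.
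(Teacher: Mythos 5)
Your proof is correct and is essentially the contrapositive of the paper's argument: both hinge on Proposition \ref{prp:Kh-Euler} together with the positivity of $\rk\Khred(L)$. The only stylistic difference is that the paper makes the non-vanishing of $\Khred(L)$ explicit by citing the non-triviality of the Jones polynomial, whereas you fold it into the remark that $w(L)$ being defined already forces $b_1\ge 1$; the substance is the same.
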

\begin{proof}Since $V_L(t)$ is a non-zero polynomial \cite[Theorem 15]{Jones1985}, it follows that $\rk\Khred(L)>0$ for any link $L$. In particular there is at least one $b_\delta\ne0$. Therefore if $\det(L)=\chi\big(\Khred(L)\big)=0$ there must be at least two such gradings supporting non-trivial groups. \end{proof}

This work is principally concerned with the invariant $w(L)$ arising from the primary grading $\delta$. This grading counts the number of diagonals (of slope 1) supporting the reduced Khovanov homology when the gradings $(u,q)$ from Theorem \ref{thm:kh} are considered. As such, the $\delta$-grading used here corresponds to other occurrences of $\delta$ in the literature on homological width (for example, in work of Manolescu and Ozsv\'ath \cite{MO2007}). 

\subsection{Mapping cones}

The skein exact sequence for reduced Khovanov homology relates the homology of a link with a fixed crossing $\positive$ to the homology of the links obtained from the 0-resolution $\zero$ and the 1-resolution $\one$. For a link $L(\rightcross)$ with distinguished positive crossing we have that
\[\longto \Khred\left(L(\one)\right)[\textstyle-\frac{1}{2}c,\frac{1}{2}(3c+2)]\longto \Khred\left(L(\positive)\right) \longto \Khred\left(L(\zero)\right)[-\half,\half]\longto \] Here, $[\cdot,\cdot]$ is a shift in the bi-grading via $\Khred^\delta_q(L)[i,j]=\Khred^{\delta-i}_{q-j}(L)$, and $c=n_-(L(\one))-n_-(L(\positive))$, the difference in the number of negative crossings for some choice of orientation on the affected components of the resolution $L(\one)$ (these grading conventions are consistent with \cite{MO2007,Rasmussen2005}). Note that the connecting homomorphism raises the primary grading by 1. Similarly, for a link with distinguished negative crossing $L(\leftcross)$ we have 
\[\longto \Khred\left(L(\zero)\right)[\half,-\half]\longto \Khred\left(L(\negative)\right) \longto \Khred\left(L(\one)\right)[\textstyle-\frac{1}{2}(c+1),\frac{1}{2}(3c+1)]\longto \]

Omitting grading shifts for the moment, and simplifying with the notation $\positive$ for $L(\positive)$, these exact sequences are often represented by exact triangles of the form
\[\xymatrix@C=15pt@R=10pt{
	 & {\Khred(\positive)}\ar[dr] & \\
	{\Khred(\one)}\ar[ur] && {\Khred(\zero)}\ar@{-->}_{[1,0]}[ll] \\
}\]
Since we are working over a field, the homology $\Khred(L)$ is completely determined by the groups $\Khred(\zero)$ and $\Khred(\one)$, together with the connecting homomorphism. This leads directly to the notion of a mapping cone (see \cite[Section 4]{OSz2005-branch}, for example), which is a useful point of view in the present setting. That is, we have \[\Khred(\positive)\cong H_*\left(\Khred(\zero)\to\Khred(\one)\right)\] where the connecting homomorphism raises $\delta$-grading by one as above. 

Replacing the grading shifts, we have
\begin{align*}
\Khred(\rightcross) &\cong H_*\left( \Khred(\zero)[-\half,\half]\to\Khred(\one)[\textstyle-\frac{1}{2}c,\frac{1}{2}(3c+2)]\right) \\
\Khred(\leftcross) &\cong H_*\left( \Khred(\one)[\textstyle-\frac{1}{2}(c+1),\frac{1}{2}(3c+1)]\to\Khred(\zero)[\half,-\half]\right)
\end{align*} 

The singly $\delta$-graded group will be useful in many instances, and in this setting the mapping cones simplify to yield 
\begin{align*}
\Khred(\rightcross) &\cong H_*\left( \Khred(\zero)[-\half]\to\Khred(\one)[\textstyle-\frac{1}{2}c]\right) \\
\Khred(\leftcross) &\cong H_*\left( \Khred(\one)[\textstyle-\frac{1}{2}(c+1)]\to\Khred(\zero)[\half]\right)
\end{align*} 
where $[\cdot]$ shifts the $\delta$-grading.  

\subsection{Normalization and Support}\label{sub:support}

In calculations involving the skein exact sequence absolute gradings are essential. Therefore, we will generally need to fix an orientation, although the final result (as a relatively graded group) will not depend on this choice. 

In particular, $w(L)$ depends only on $\Khred(L)$ as a relatively graded group, however determining this quantity in practice will make use of absolute gradings. For this reason we introduce the notion of support, denoted by $\Supp(\Khred(L))$, as an absolutely $\bZ$-graded quantity. We define the support $\Supp(\Khred(L))$ as the gradings $\delta_1,\ldots,\delta_k$ having fixed an absolute $\bZ$-grading for $\Khred(L)\cong \bigoplus_{\delta=1}^k\bF^{b_\delta}$. In particular,  
the mapping cone \[\Khred(\rightcross) \cong H_*\left( \Khred(\zero)[-\half]\to\Khred(\one)[\textstyle-\frac{1}{2}c]\right)\] 
may be presented as
\[\Khred(\rightcross)\cong H_*\left(
\raisebox{15pt}{\xymatrix@C=15pt@R=12pt{
	\bF^{b_1}\ar[dr] & \bF^{b_2}\ar[dr] & {\cdots}\ar[dr] & \bF^{b_k}\\
	\bF^{b_1'}& \bF^{b_2'} & \cdots & \bF^{b_k'} 
}}\right)\] where $\textstyle\Khred(\zero)[-\half]\cong\bigoplus_{\delta=1}^k\bF^{b_\delta}$ and $\textstyle\Khred(\one)[\textstyle-\frac{1}{2}c]\cong\bigoplus_{\delta=1}^k\bF^{b'_\delta}$ for $b_i,b_i'\ge0$, since the connecting homomorphism raises $\delta$-grading by 1, provided \[\Supp\left(\Khred(\one)[\textstyle-\frac{1}{2}c]\right)\subseteq\Supp\left(\Khred(\zero)[-\half]\right).\] 

The following will be a useful absolutely $\bZ$-graded object:
\begin{definition} The $\si$-normalized Khovanov homology is an absolutely $\bZ$-graded theory defined by $\Khsig(L)=\Khred(L)[-\frac{\si(L)}{2}]$, where $\si(L)$ denotes the signature of the link $L$.\end{definition}

Note that for 2-component links, the $\si$-normalized Khovanov homology is an invariant of unoriented links. This turns out to be a natural absolute grading to consider, despite the fact that we are ultimately interested in the relative grading. Of course, $\Khsig(L)$ and $\Khred(L)$ coincide as relatively $\bZ$-graded groups. 

\subsection{The Manolescu-Ozsv\'ath exact sequence}

As a singly graded theory, there is a useful special case in which the skein exact sequence simplifies nicely in terms of the $\si$-normalization.

\begin{proposition}[{Manolescu-Ozsv\'ath \cite[Proposition 5]{MO2007}}]\label{prp:mo}
Let $L=L(\positive)$ be a link with some distinguished crossing, and set $L_0=L(\zero)$ and $L_1=L(\one)$. If $\det(L_0),\det(L_1)>0$ and $\det(L)=\det(L_0)+\det(L_1)$ then 
\[\textstyle \Khsig(L)=H_\ast\left(\Khsig(L_0)\to\Khsig(L_1)\right).\]
\end{proposition}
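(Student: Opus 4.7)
The plan is to reinterpret the skein exact sequence in the $\si$-normalized grading and reduce the proposition to a pair of signature identities, which are then extracted from the determinant hypothesis. Starting from the mapping-cone presentation already recorded in the excerpt,
\[
\Khred(L) \cong H_*\bigl(\Khred(L_0)[-\tfrac{1}{2}] \to \Khred(L_1)[-\tfrac{c}{2}]\bigr),
\]
with connecting map raising $\delta$ by one, I would apply the overall $\delta$-shift by $-\si(L)/2$ and compare term by term with the desired cone $H_*(\Khsig(L_0) \to \Khsig(L_1))$. Using the convention $\Khsig = \Khred[-\si/2]$, this reduces the proposition to verifying the two signature identities
\[
\si(L_0) = \si(L) + 1, \qquad \si(L_1) = \si(L) + c,
\]
where $c = n_-(L_1) - n_-(L)$. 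Everything else is formal: the cone differential still raises $\delta$ by one, so once these identities hold the proposition drops out.

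To attack the two identities I would combine two ingredients. First, the classical signature-change estimates for resolving a positive crossing (Murasugi, refined by Gordon--Litherland) constrain $\si(L_0) - \si(L)$ and $\si(L_1) - \si(L)$ to short a priori finite ranges depending on $c$ and on the sign of the distinguished crossing. Second, the determinant hypothesis $\det(L) = \det(L_0) + \det(L_1)$ pins down the extremal case within those ranges. The cleanest route I see for the second step is through the Goeritz form: the distinguished crossing corresponds to a generator of a Goeritz matrix for $L$, resolving the crossing amounts to deleting or contracting that generator, and a Kirchhoff-type expansion of the determinant shows that the additivity of $\det$ holds exactly when the signatures of $G(L)$, $G(L_0)$, $G(L_1)$ sit in the extremal configuration. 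Combining with the Gordon--Litherland formula $\si(L) = \si(G(L)) - \mu(L)$ and tracking how the correction $\mu$ changes under each of the two resolutions then yields the two required identities.

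The main obstacle will be precisely this Goeritz-form bookkeeping: the a priori signature inequalities leave several consistent triples of signatures, and only the positivity of $\det(L_0), \det(L_1)$ together with their exact additivity rules out all but the desired one. An alternative route would be to pass to the two-fold branched covers $\Br(S^3,L)$, $\Br(S^3,L_0)$, $\Br(S^3,L_1)$: the orders of $H_1$ recover the three determinants, and a Mayer--Vietoris-style computation, combined with the fact that the intersection form on a suitable four-manifold bounding these covers simultaneously governs $|H_1|$ and $\si$, gives a parallel path to the same signature identities. Once those are in hand, the remainder of the argument is a routine comparison of grading shifts in the mapping cone.
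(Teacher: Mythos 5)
This proposition is imported from Manolescu--Ozsv\'ath (the paper cites it rather than proving it), so there is no ``paper's own proof'' in the strict sense; the closest thing is the proof of the degenerate variant, Proposition~\ref{prp:mo-perturbed}, which the paper states ``closely follows the argument in \cite{MO2007}.'' Measured against that, your reconstruction is essentially the same approach and is correct in outline: the reduction to the two signature identities $\si(L_0)=\si(L)+1$ and $\si(L_1)=\si(L)+c$ is exactly what one needs, and the route through Goeritz matrices and the Gordon--Litherland formula $\si(L)=\operatorname{signature}(G)-\mu(L)$ is the one taken in \cite{MO2007} and mirrored in Proposition~\ref{prp:mo-perturbed}.

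Two remarks on where your sketch is vaguer than the actual argument. First, the ``Kirchhoff-type expansion'' and ``deleting or contracting a generator'' language is morally right but overcomplicates things: the paper's (and MO's) proof simply writes $G$, $G_0$, $G_1$ in block form with $G_1$ diagonalized, completes the square to extract a scalar $\beta$, and observes $\det(G)=\beta\det(G_1)$, $\det(G_0)=(\beta-1)\det(G_1)$. Second, your worry about ``several consistent triples of signatures'' that the determinant hypothesis must winnow is overstated: the additivity $|\beta|=|\beta-1|+1$ together with $\det(L_0)>0$ forces $\beta>1$ directly, so both $\beta$ and $\beta-1$ are positive and one immediately reads off $\operatorname{signature}(G)=\operatorname{signature}(G_0)=\operatorname{signature}(G_1)+1$; there is no case analysis left over, and no appeal to Murasugi-type a priori bounds is needed. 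The branched-cover/four-manifold alternative you mention is plausible heuristically but is not what either source does, and it would require you to independently control the $\mu(L)$ correction term, which the Goeritz calculation handles for free.
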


In the standard notation, this takes the form \[\textstyle \Khred(L)[-\frac{\si}{2}]=H_\ast\left(\Khred(L_0)[-\frac{\si_0}{2}]\to\Khred(L_1)[-\frac{\si_1}{2}]\right).\] where $\sigma=\sigma(L)$, $\sigma_0=\sigma(L_0)$ and $\sigma_1=\sigma(L_1)$ (see \cite{MO2007}). Notice that in this setting the orientation of the resolved crossing does not play a role so that a single expression replaces the pair of exact sequences.

\subsection{On the signature of a link}

We briefly review the work of Gordon and Litherland, constructing the signature of a link via the Goeritz matrix \cite{GL1978}. The conventions we adopt are those of Manolescu and Ozsv\'ath \cite{MO2007}, since our interest will be in proving a degenerate form of Proposition \ref{prp:mo}.

The complement of a projection of a link $L$ is divided into regions that may be coloured black and white in an alternating fashion to obtain the checkerboard colouring. Denote the white regions by $R_0,R_1,\ldots,R_n$. We may assume that every crossing $c$ of the diagram for $L$ is incident to distinct white regions, and assign an incidence number $\mu(c)$ and type by the conventions of Figure \ref{fig:Goeritz-conventions}.
\begin{figure}[ht!]
  \begin{center}
  \labellist \small \pinlabel {$\mu=+1$} at 305 310 \endlabellist
  \includegraphics[scale=0.25]{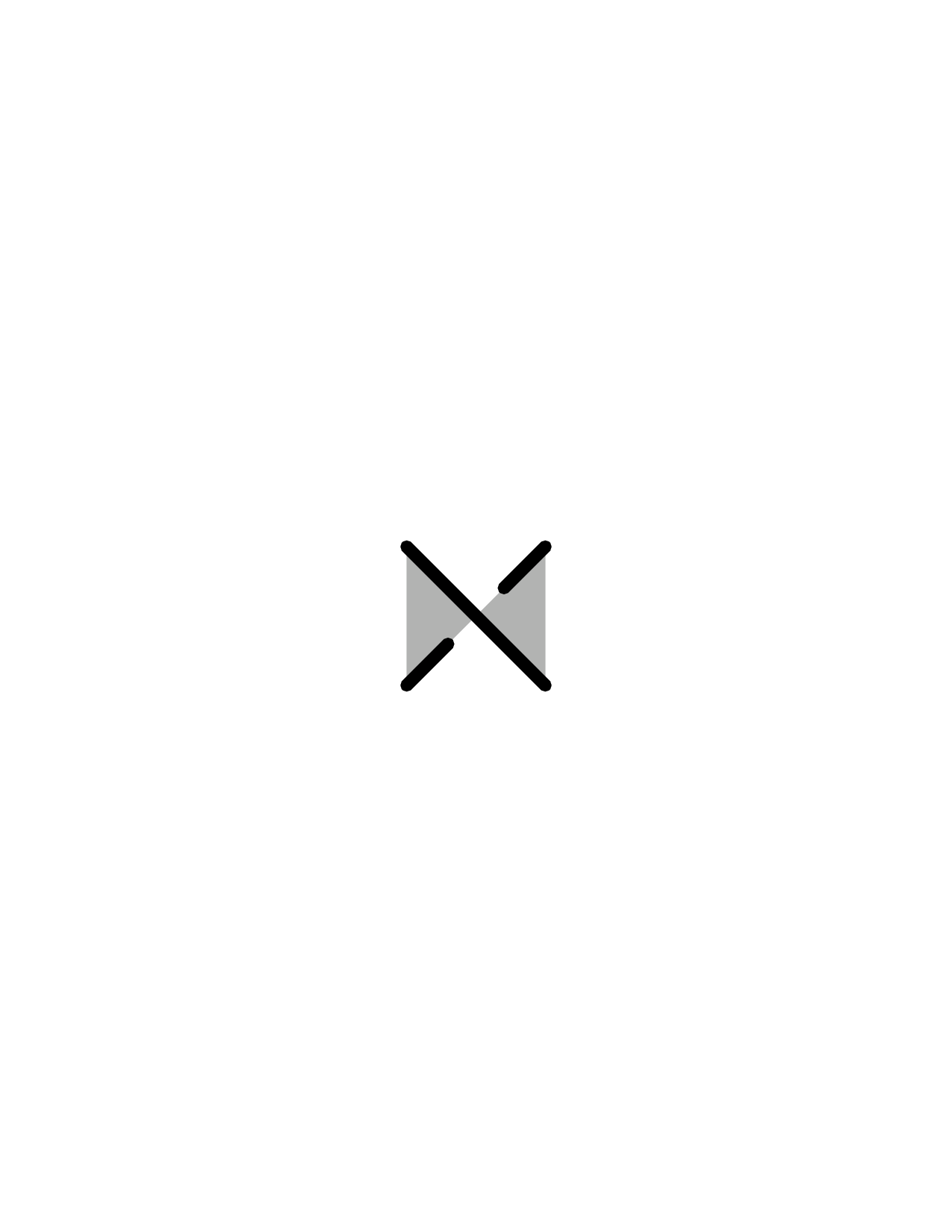}\qquad\qquad
  \labellist \small  \pinlabel {$\mu=-1$} at 305 310 \endlabellist
  \includegraphics[scale=0.25]{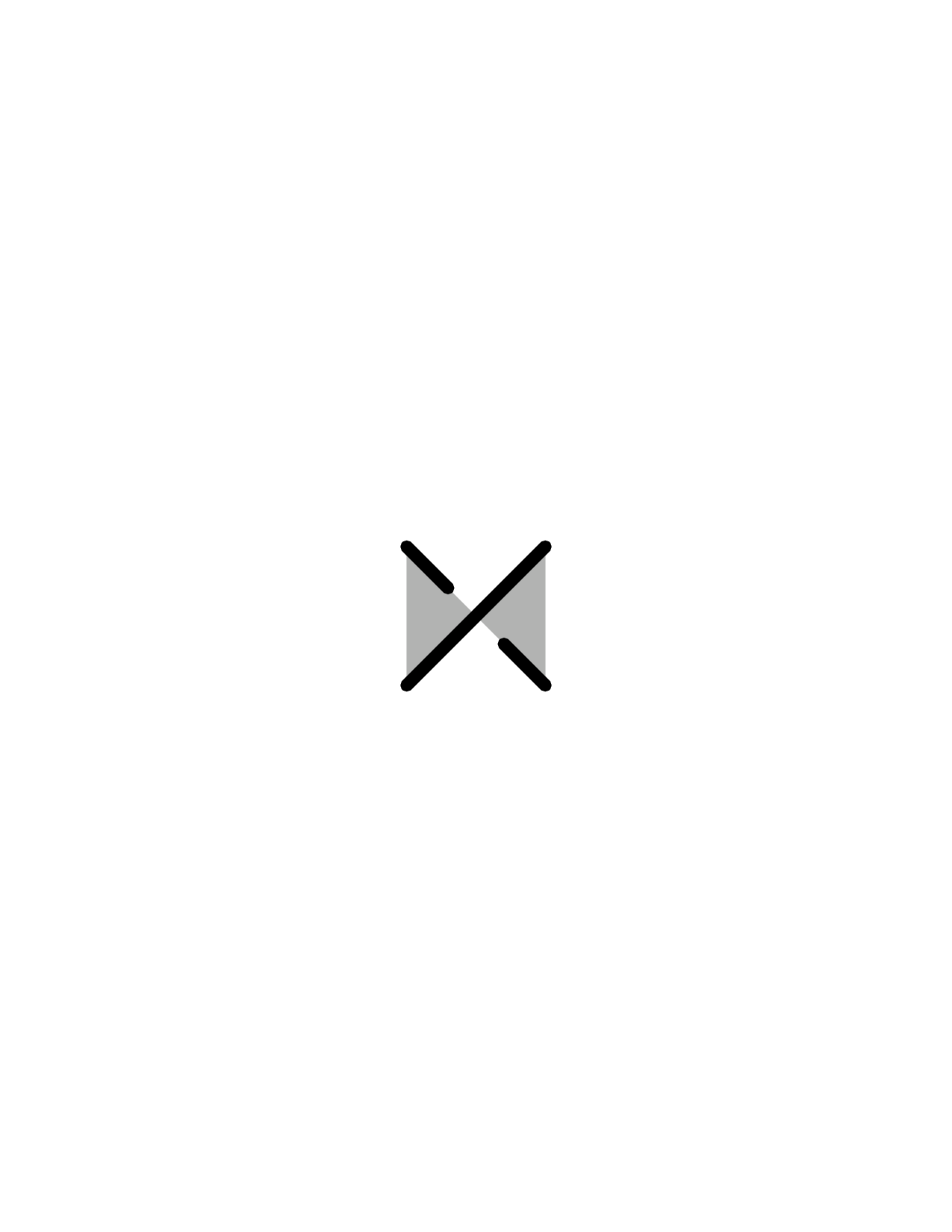}\qquad\qquad
  \labellist \small \pinlabel {Type I} at 305 310 \endlabellist
  \includegraphics[scale=0.25]{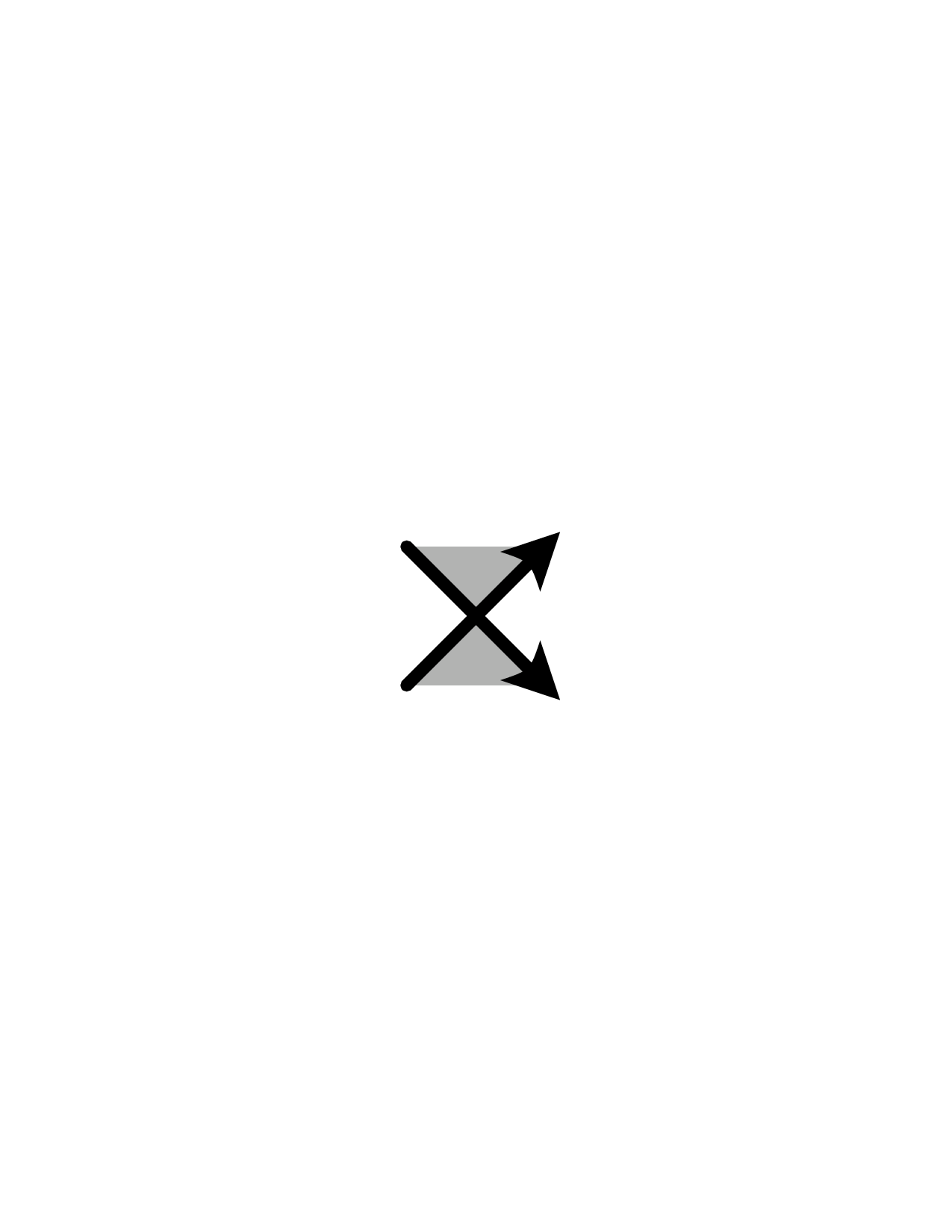}\qquad\qquad
  \labellist \small \pinlabel {Type II} at 305 310 \endlabellist
  \includegraphics[scale=0.25]{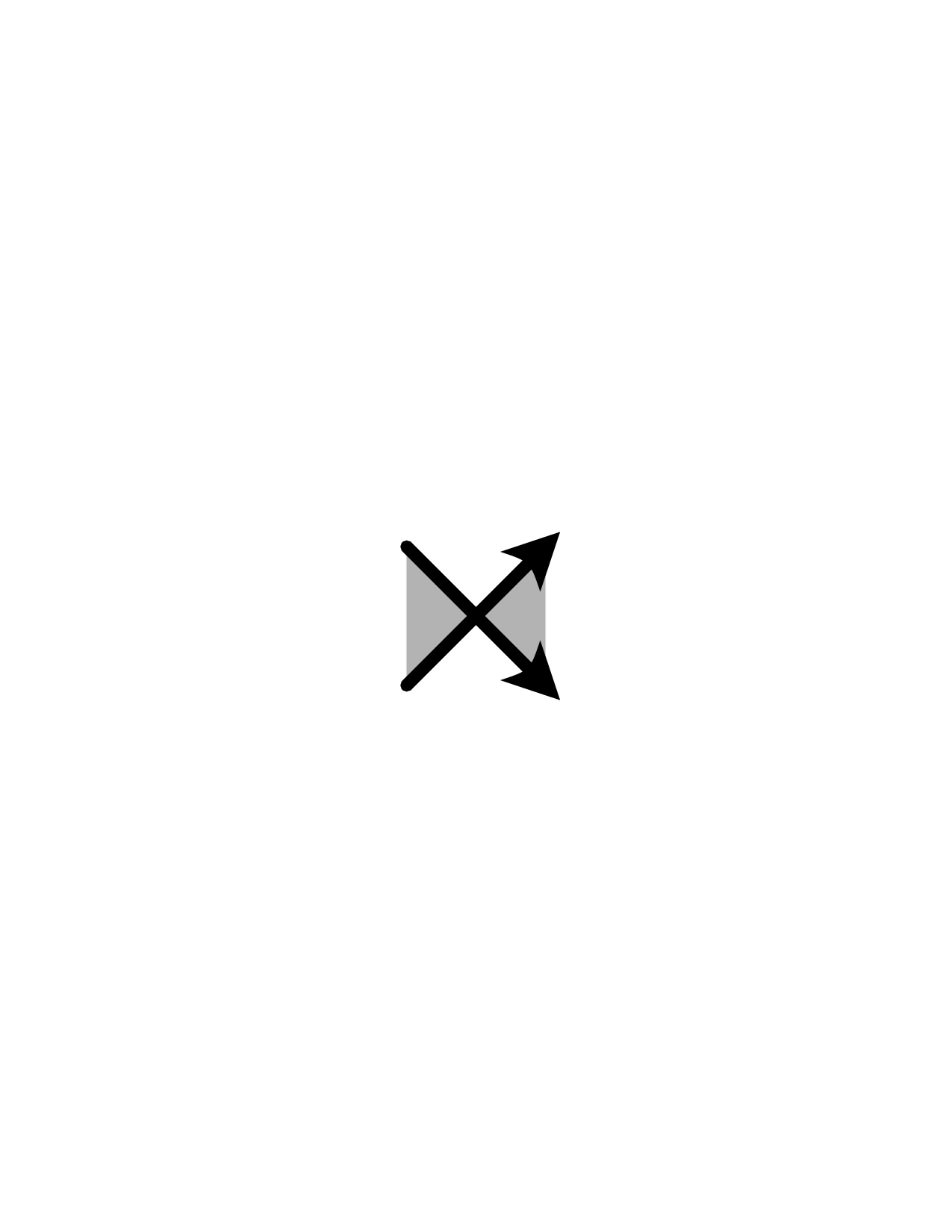}
        \caption{Incidence numbers and crossing types.}
    \label{fig:Goeritz-conventions}
  \end{center}
\end{figure}

The incidence number of the diagram for $L$ is obtained by taking the sum of incidences over crossings of type II. Setting \[\mu(L)=\sum_{c\, {\rm of\, type\, II}}\mu(c),\] the Goeritz matrix of $G$ for the diagram of $L$ is the $n\times n$ symmetric matrix 
\[g_{ij}=\begin{cases}
-\sum_{c\in R_{ij}}\mu(c) & i\ne j \\
-\sum_{i\ne k}g_{ik} & i=j
\end{cases}\]  
where the second sum starts at $k=0$ and $R_{ij}=\overline{R_i}\cap\overline{R_j}$ for $i,j\in\{1,\ldots,n\}$.

From the work of Gordon and Litherland \cite{GL1978}, the signature of the link $L$ is given by $\sigma(L)=\operatorname{signature}(G)-\mu(L)$ and $\det(L)=|\det(G)|$.

\subsection{Degenerations}
We now prove a degenerate version of Manolescu and Ozsv\'ath's exact sequence when one of determinants of the pair of resolutions vanishes. Once again, a single expression is obtained in each case.
\begin{proposition}\label{prp:mo-perturbed}
Using the same conventions as Proposition \ref{prp:mo}, if $\det(L_0)=0$ and $\det(L)=\det(L_1)\ne 0$ then \[\textstyle \Khsig(L)=H_\ast\left(\Khsig(L_0)[-\textstyle\frac{1}{2}]\to\Khsig(L_1)\right).\] Similarly, if $\det(L_1)=0$ and $\det(L)=\det(L_0)\ne0$ then
\[\textstyle \Khsig(L)=H_\ast\left(\Khsig(L_0)\to\Khsig(L_1)[\textstyle\frac{1}{2}]\right).\]
\end{proposition}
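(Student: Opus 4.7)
The plan is to reduce the proposition to a pair of signature identities and then establish those identities via the Gordon--Litherland formula, following the strategy Manolescu--Ozsv\'ath use for the non-degenerate Proposition~\ref{prp:mo}. Starting from the singly $\delta$-graded skein triangle
\[\Khred(L(\rightcross)) \cong H_*\bigl(\Khred(L_0)[-\half] \to \Khred(L_1)[-\tfrac{c}{2}]\bigr)\]
(together with its analogue for a negative crossing) and substituting $\Khred(\cdot) = \Khsig(\cdot)[\tfrac{\sigma}{2}]$, the mapping cone becomes
\[\Khsig(L) \cong H_*\Bigl(\Khsig(L_0)\bigl[\tfrac{\sigma(L_0) - \sigma(L) - 1}{2}\bigr] \to \Khsig(L_1)\bigl[\tfrac{\sigma(L_1) - \sigma(L) - c}{2}\bigr]\Bigr).\]
The first claim therefore reduces to the signature identities $\sigma(L_0) = \sigma(L)$ and $\sigma(L_1) = \sigma(L) + c$; the second claim reduces, by swapping the roles of the two resolutions, to the analogous identities $\sigma(L_0) = \sigma(L) + 1$ and $\sigma(L_1) = \sigma(L) + c + 1$.

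To establish the signature identities, I would choose the checkerboard colouring so that the distinguished crossing is of type II with the $0$-resolution merging two distinct white regions. The Goeritz matrix $G_0$ of $L_0$ is then a principal contraction of $G = G_L$, while $G_1$ differs from $G$ by a rank-one perturbation of a single diagonal entry. Under the non-degenerate Manolescu--Ozsv\'ath hypothesis $\det(L) = \det(L_0) + \det(L_1)$, Laplace expansion along the distinguished row of $G$ combined with the Cauchy interlacing theorem for the inclusion $G_0 \subset G$ pins $\operatorname{sig}(G_0)$ and $\operatorname{sig}(G_1)$ to specific values relative to $\operatorname{sig}(G)$; feeding these through the Gordon--Litherland correction by $\mu$ (which varies in a controlled way with the crossing sign and type) yields the expected identities $\sigma(L_0) = \sigma(L) + 1$ and $\sigma(L_1) = \sigma(L) + c$. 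In the degenerate regime $\det(L_0) = 0$, the matrix $G_0$ acquires a one-dimensional kernel, and interlacing shows that $\operatorname{sig}(G_0)$ drops by exactly one from its non-degenerate value; this produces $\sigma(L_0) = \sigma(L)$ and hence the extra $[-\half]$ shift on $\Khsig(L_0)$, while $\sigma(L_1) = \sigma(L) + c$ is preserved because $\det(L_1) \neq 0$. The symmetric case $\det(L_1) = 0$ is handled by the parallel argument, with $\operatorname{sig}(G_1)$ now shifting; the opposite sign of the shift $[\half]$ reflects that $L_1$ sits at the target rather than the source of the cone arrow in the skein triangle for the chosen crossing sign.

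The main obstacle will be controlling the sign of the interlacing correction: I must verify that when a principal submatrix becomes singular, the signature changes by exactly $-1$ and not by $0$ or $\pm 2$. This comes down to the sign of the eigenvalue absorbed by the rank-one extension $G_0 \hookrightarrow G$, which can be read off from the incidence number $\mu$ at the resolved crossing and the assumption that $\det(G)$ is non-zero with the prescribed sign. A secondary bookkeeping nuisance is the compatibility of conventions across the four sub-cases (crossing sign paired with which resolution is singular); these are related by the obvious symmetries of the skein triangle and of the Gordon--Litherland formula, so once the first sub-case is verified the others follow with only the sign of the shift requiring re-examination.
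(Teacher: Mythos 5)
Your reduction of the proposition to signature identities is the right first step and matches the paper exactly: substituting $\Khred(\cdot) = \Khsig(\cdot)[\frac{\sigma}{2}]$ into the skein mapping cone, the first degenerate case becomes equivalent to $\sigma_0 = \sigma$ and $\sigma_1 = \sigma + c$ (with the analogous pair $\sigma_0 = \sigma+1$, $\sigma_1 = \sigma + c + 1$ for the second).

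Where the proposal falls short is in establishing those identities. The paper does not use interlacing: following Manolescu--Ozsv\'ath, it fixes the colouring so that the distinguished positive crossing is type II with $\mu = +1$, under which convention it is the \emph{1}-resolution that merges white regions, so $G_1$ is the smaller principal submatrix while $G$ and $G_0$ have the same size and differ in a single diagonal entry. Diagonalizing $G_1$ and completing the square in the quadratic forms for $G$ and $G_0$ gives $\det(G) = \beta\det(G_1)$ and $\det(G_0) = (\beta-1)\det(G_1)$; the degeneracy $\det(L_0) = 0 \neq \det(L_1)$ then forces $\beta = +1$, and the decomposition reads off $\operatorname{signature}(G) = \operatorname{signature}(G_0) + 1 = \operatorname{signature}(G_1) + 1$ with no sign ambiguity left to resolve. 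Your proposal has the roles of the resolutions reversed (you take $G_0$ as the principal submatrix and $G_1$ as the diagonal perturbation), a colouring convention whose consistency with a type-II positive crossing you do not verify and whose effect on the Gordon--Litherland corrections $\mu(L_i)$ you do not track. More fundamentally, the statement that ``$\sigma(L_1) = \sigma(L) + c$ is preserved because $\det(L_1) \neq 0$'' is not an argument: Proposition~\ref{prp:mo} is proved under the hypothesis $\det(L_0) > 0$, which is precisely what fails here, so its conclusion about $\operatorname{signature}(G_1)$ relative to $\operatorname{signature}(G)$ cannot simply be imported. Your acknowledged ``main obstacle,'' pinning the sign of the signature change under the rank-one perturbation, is in fact the whole content of the proof; interlacing alone bounds that change only to $\{0, \pm 2\}$, and selecting the correct value requires the determinant-sign bookkeeping that, once carried out, essentially reproduces the completion-of-squares argument the paper gives directly.
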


\begin{proof}
The proof closely follows the argument in \cite{MO2007} establishing Proposition \ref{prp:mo}, and as such we will adopt the same notation. Throughout, $\sigma=\sigma(L)$, $\sigma_0=\sigma(L_0)$ and $\sigma_1=\sigma(L_1)$. There are 2 orientations to consider in each case, hence 4 cases to consider in total. 

\begin{figure}[ht!]
  \begin{center}
    \mbox{
      \subfigure{\includegraphics[scale=0.25]{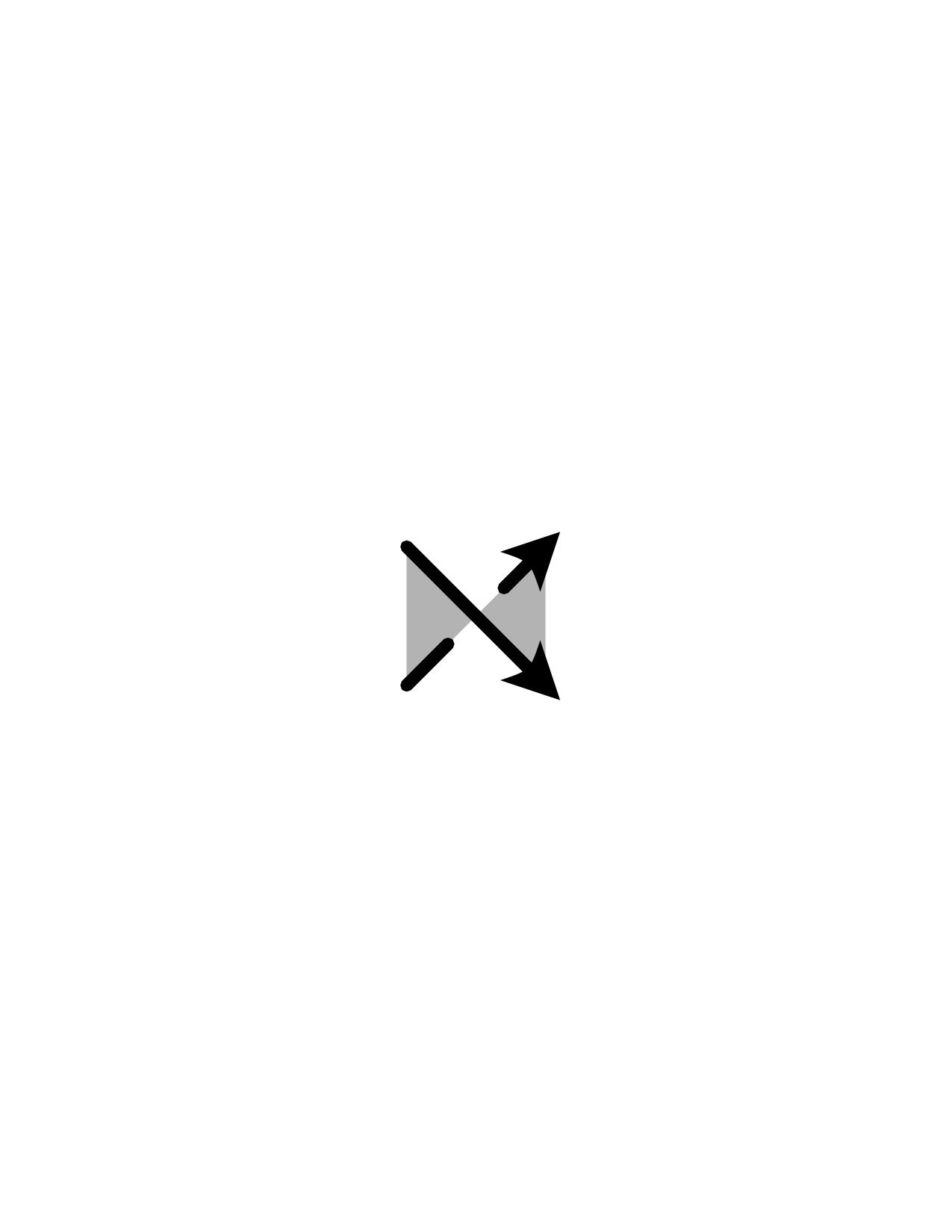}} \qquad\qquad\qquad
      \subfigure{\includegraphics[scale=0.25]{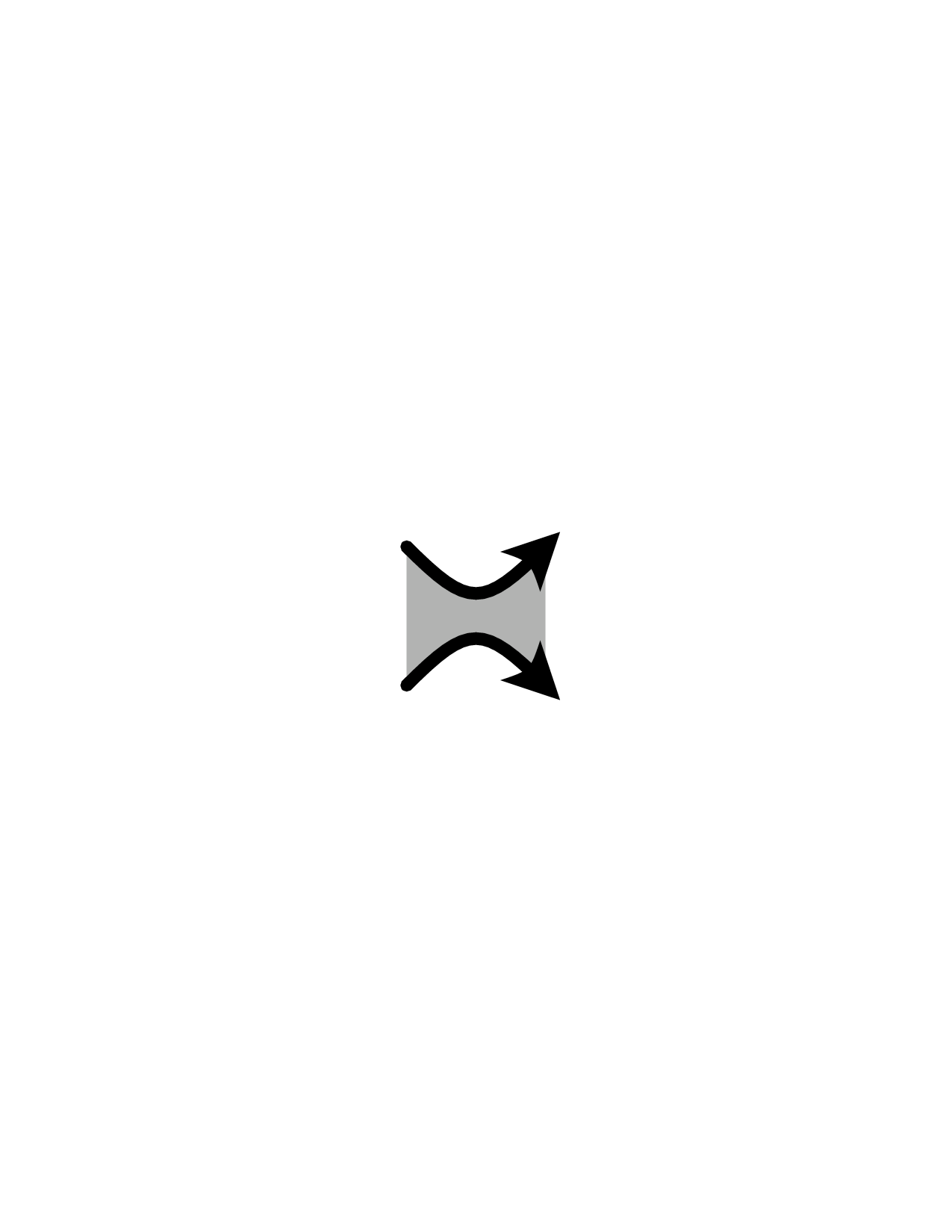}} \qquad\qquad\qquad
      \subfigure{\includegraphics[scale=0.25]{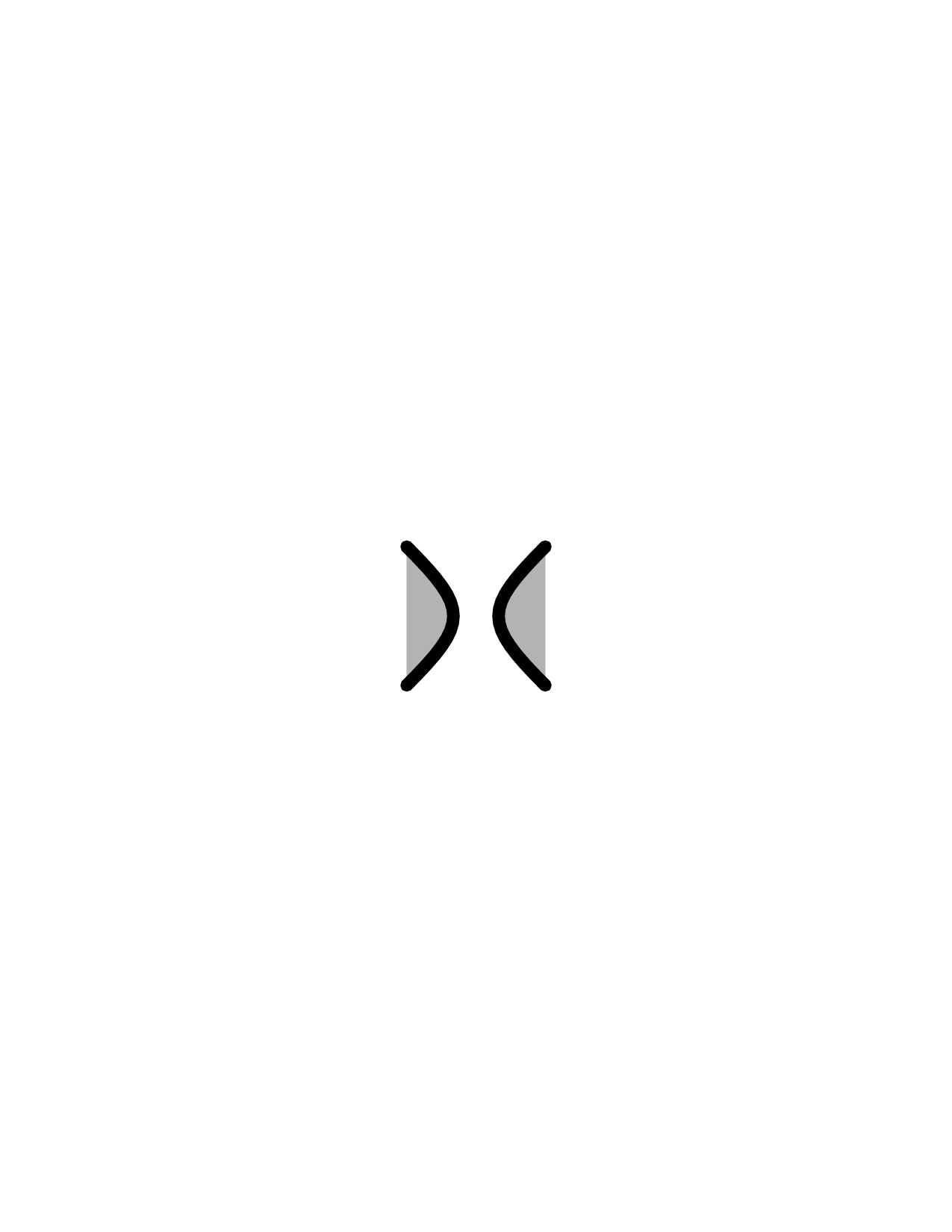}} 
      }
    \caption{Colouring conventions for case 1: $L$, $L_0$ (the oriented resolution) and $L_1$ (the unoriented resolution) at the resolved positive crossing. For case 2 the white and black regions are exchanged to yield the dual colouring.}
    \label{fig:positive-colouring}
  \end{center}
\end{figure}

Case 1: Suppose the distinguished crossing is positive, with $\det(L_0)=0$, and fix a checkerboard colouring of the diagram for $L$ as in Figure \ref{fig:positive-colouring} so that the distinguished crossing is of type II with incidence $\mu=+1$. Now writing $G_1$ for the Goeritz matrix of $L_1$, we have 
\[ G=\begin{pmatrix} a & v \\ v^T & G_1 \end{pmatrix}\quad\textrm{and}\quad G_0=\begin{pmatrix} a-1 & v \\ v^T & G_1 \end{pmatrix}\]
where $G$ and $G_0$ are the Goeritz matrices of $L$ and $L_0$ respectively. As in \cite{MO2007}, we assume without loss of generality that $G_1$ is diagonal (with diagonal entries $\alpha_1,\ldots,\alpha_n$) and write the bilinear form associated with $G$ as 
\[\bigg(a-\sum_{i=1}^n\frac{v_i^2}{\alpha_i}\bigg)x_0^2+\sum_{i=1}^n\alpha_i\left(x_i+\frac{v_i}{\alpha_i}x_0\right)^2.\] 
Similarly, the bilinear form associated with $G_0$ may be written as 
\[\bigg(a-1-\sum_{i=1}^n\frac{v_i^2}{\alpha_i}\bigg)x_0^2+\sum_{i=1}^n\alpha_i\left(x_i+\frac{v_i}{\alpha_i}x_0\right)^2\]
so that setting \[\beta= a-\sum_{i=1}^n\frac{v_i^2}{\alpha_i}\] we obtain 
\[\det(G)=\beta\det(G_1)\ \textrm{and}\ \det(G_0)=(\beta-1)\det(G_1).\]
Now since $0=\det(L_0)=|\det(G_0)|=|\beta-1|\det(L_1)$ and $\det(L_1)\ne0$, we have that $
\beta=+1$ and \[\operatorname{signature}(G)=\operatorname{signature}(G_0)+1=\operatorname{signature}(G_1)+1.\]
Using the Gordon-Litherland formula for the signature we have that
\begin{align*}
\sigma &= \operatorname{signature}(G)-\mu \\
&= \operatorname{signature}(G_0)+1 -(\mu_0+1) \\
&= \sigma_0
\end{align*}
where $\mu=\mu(L)$ and $\mu_0=\mu(L_0)$, while writing $\mu_1=\mu(L_1)$ gives
\begin{align*}
\sigma &= \operatorname{signature}(G)-\mu \\
&= \operatorname{signature}(G_1)+1 -(\mu_1+c+1) \\
&= \sigma_1-c
\end{align*}
as in \cite{MO2007}, noting that the incidence and type of a crossing determines its sign. Now since 
\[\textstyle\Khred\left(L\right)\cong H_*\left(\Khred\left(L_0\right)[-\frac{1}{2}]\to\Khred\left(L_1\right)[-\frac{c}{2}]\right)\]
we have $-1=\sigma-\sigma_0-1$ and $-c=\sigma-\sigma_1$ so that 
\[\textstyle\Khred\left(L\right)[-\frac{\sigma}{2}]\cong H_*\left(\Khred\left(L_0\right)[-\frac{\sigma_0+1}{2}]\to\Khred\left(L_1\right)[-\frac{\sigma_1}{2}]\right).\]
In terms of the $\si$-normalization, \[\textstyle \Khsig(L)=H_\ast\left(\Khsig(L_0)[-\textstyle\frac{1}{2}]\to\Khsig(L_1)\right)\] as claimed.

The remaining cases follow from a similar argument. \end{proof}


\section{Fillings, involutions and tangles}\label{sec:fillings}

We review the basic notions of Dehn surgery that will be required. The material of this section is for the most part standard, see for example Boyer \cite{Boyer2002} and Rolfsen \cite{Rolfsen1976}. 

Let $M$ be a compact, connected, orientable 3-manifold with torus boundary. A slope in $\partial M$ is a primitive class $\alpha\in H_1(\partial M;\bZ)/\pm1$, that is, the isotopy class of an essential simple closed curve in $\partial M$. Since $H_1(\partial M;\bZ)\cong \bZ\oplus \bZ$, the slopes  in $\partial M$ may be parameterized by reduced rational numbers $\{\pq\}\in\bQ\cup\{\frac{1}{0}\}$ once a basis $(\alpha_0,\beta_0)$ for $H_1(\partial M;\bZ)$ has been fixed. That is, any slope may be written in the form $p\alpha_0+q\beta_0$ for relatively prime integers $p$ and $q$, so that the slope  $\alpha_0$ is represented by $\frac{1}{0}$. There is some redundancy in this description that may be taken care of by fixing the convention $q\ge0$, say. Notice that, as a basis for $H_1(\partial M;\bZ)$, we have that $\alpha_0$ and $\beta_0$ intersect geometrically in a single point. More generally, it will be useful to measure the distance between any two slopes by their minimal geometric intersection number, denoted $\Delta(\alpha,\beta)=\left|\alpha\cdot\beta\right|,$ for any $\alpha,\beta\in H_1(M;\bZ)/\pm1$. 

For any slope $\alpha$, denote by $M(\alpha)$ the result of Dehn filling along $\alpha$. For example, given a knot  $K\into S^3$, denote the complement $M=S^3\smallsetminus\nu(K)$ where $\nu(K)$ is an open tubular neighbourhood of the knot $K\into S^3$. In this setting there is a preferred basis for surgery provided by the knot meridian $\mu$, and the longitude of the knot $\lambda$ resulting from the fact that $K$ bounds an oriented surface (a Seifert surface) in $S^3$.  We may choose orientations on $\mu$ and $\lambda$ so that $\mu\cdot\lambda=1$, and this convention will be assumed throughout. 

Now if $\alpha$ is a slope in the boundary of the knot complement, we may write $\alpha=\pm(p\mu+q\lambda)$ for $q\ge 0$. This gives rise to the notation $M(\alpha)=S^3_{p/q}(K)$ for Dehn filling, referred to as surgery on $K$, and fixes the convention $S^3_{1/0}(K)\cong S^3$ for the trivial surgery. By nature of this construction, we have that $\big|H_1(S^3_{p/q}(K);\bZ)\big|=\big|H_1(M(\alpha);\bZ)\big|=\Delta(\alpha,\lambda)$ (see, more generally, Lemma \ref{lem:cm} below).  

\subsection{The rational longitude}

For $M$ as above, suppose that $H_1(M;\bQ)\cong\bQ$. Such manifolds $M$ will be referred to as {\em knot manifolds}. Unless stated otherwise, we will generally make the additional assumption that a knot manifold $M$ is irreducible. However, this is not an essential hypothesis in the following discussion, or in the proof of Lemma \ref{lem:cm} below.

Let $i\co\partial M\into M$ be the inclusion map, inducing a homomorphism $i_*\co H_1(\partial M;\bQ)\to H_1(M;\bQ).$ Omitting the coefficients for brevity, consider the long exact sequence 
\[\xymatrix@C=15pt@R=5pt{
	{\cdots}\ar[r] &
	{H_2(M)}\ar[r] &
	{H_2(M,\partial M)}\ar[r] &
	{H_1(\partial M)}\ar[r]^-{i_*} &
	{H_1(M)}\ar[r] &
	{H_1(M,\partial M)}\ar[r] &
	{\cdots} 
}\] 
Since $\partial M$ is connected, the inclusion $i$ induces an isomorphism $H_0(\partial M)\cong H_0(M)$ so that
 \[\xymatrix@C=15pt@R=5pt{
	{0}\ar[r] &
	{H_2(M)}\ar[r] &
	{H_2(M,\partial M)}\ar[r] &
	{H_1(\partial M)}\ar[r]^-{i_*} &
	{H_1(M)}\ar[r] &
	{H_1(M,\partial M)}\ar[r] &
	{0} 
}\]
Since we are working over a field, by duality we have  $H_2(M)\cong H^1(M,\partial M)\cong H_1(M,\partial M)$ and $H_2(M,\partial M)\cong H^1(M)\cong H_1(M)$ hence
\[\xymatrix@C=15pt@R=5pt{
	{0}\ar[r] &
	{H_1(M,\partial M)}\ar[r] &
	{H_1(M)}\ar[r] &
	{H_1(\partial M)}\ar[r]^-{i_*} &
	{H_1(M)}\ar[r] &
	{H_1(M,\partial M)}\ar[r] &
	{0} 
}\]
Now we observe that $\rk(i_*)=1$, since $b_1(\partial M)=2$ so that $\rk(i_*)=2-\rk(i_*)$ by exactness.
As a result, $i_*\co H_1(\partial M;\bZ)\to H_1(M;\bZ)$ carries a free summand of $H_1(\partial M;\bZ)\cong\bZ\oplus\bZ$ injectively to  $H_1(M;\bZ)\cong \bZ\oplus H$  (for some finite abelian group $H$). Moreover, $\ker(i_*)$ must be generated generated by $k\lm$, for some primitive class $\lm\in H_1(\partial M;\bZ)$, and non-zero integer $k$. 

Note that this class is uniquely defined, up to sign, and hence determines a well-defined slope in $\partial M$. This gives a preferred slope in $\partial M$ for any knot manifold, and in turn motivates the following definition.

\begin{definition} For any knot manifold $M$, the rational longitude $\lm$ is the unique slope with the property that $i_*(\lm)$ is finite order in $H_1(M;\bZ)$.\end{definition}

More geometrically, the rational longitude $\lm$ is characterized among all slopes by the property that a non-empty, finite collection of like-oriented parallel copies of $\lm$ bounds an essential surface in $M$. As with the preferred longitude for a knot in $S^3$, the rational longitude controls the first homology of the manifold obtained by Dehn filling.  

\begin{lemma}\label{lem:cm} For every knot manifold $M$ there is a constant $c_M$ (depending only on $M$) such that \[|H_1(M(\alpha);\bZ)|=c_M\Delta(\alpha,\lm).\] 
\end{lemma}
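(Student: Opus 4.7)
The plan is to first reduce the computation to a purely algebraic question by identifying $H_1(M(\alpha); \bZ)$ with a quotient of $H_1(M; \bZ)$. Writing $M(\alpha) = M \cup_{\partial M} V$ with $V \cong S^1 \times D^2$, a Mayer-Vietoris argument using that $\alpha$ bounds the meridional disk of $V$ and that any slope $\beta$ with $\alpha \cdot \beta = \pm 1$ generates $H_1(V;\bZ)$ yields the standard identification $H_1(M(\alpha); \bZ) \cong H_1(M; \bZ)/\langle i_*(\alpha)\rangle$.

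Next I would choose a basis $(\lm, \mu_0)$ for $H_1(\partial M; \bZ)$ extending the rational longitude with $\lm \cdot \mu_0 = 1$, and a splitting $H_1(M; \bZ) \cong \bZ \oplus T$ with $T$ the torsion subgroup. By the discussion preceding the lemma, $i_*(\lm)$ lies in the kernel of the map to $H_1(M;\bQ)$ and is therefore torsion; write $i_*(\lm) = (0, t_0)$. On the other hand, $\mu_0$ cannot lie in $\ker(i_*) = \langle k\lm\rangle$, and since the image of $i_*$ has rank $1$, $i_*(\mu_0) = (n, t_1)$ must have $n \neq 0$. The integer $|n|$ and the order $|T|$ are both invariants of $M$: replacing $\mu_0$ by $\mu_0 + r\lm$ for $r \in \bZ$ changes only $t_1$, and $T$ is intrinsic.

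For an arbitrary slope $\alpha = p\lm + q\mu_0$ one has $\Delta(\alpha,\lm) = |q|$ and $i_*(\alpha) = (qn,\, pt_0 + qt_1)$. When $q = 0$, so $\alpha = \pm \lm$, the image $i_*(\alpha)$ is torsion and the quotient is infinite; the convention $|H_1| = 0$ on infinite groups matches $c_M \cdot \Delta(\lm,\lm) = 0$. When $q \neq 0$, I would compute the order of $(\bZ \oplus T)/\langle (qn, s)\rangle$ for arbitrary $s \in T$ via the short exact sequence
\[
0 \longto (qn\bZ \oplus T)/\langle (qn, s)\rangle \longto (\bZ \oplus T)/\langle (qn, s)\rangle \longto \bZ/qn \longto 0,
\]
noting that the leftmost group is isomorphic to $T$ (via $(qnm, x) \mapsto x - ms$). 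This gives order $|qn|\cdot |T|$, so $|H_1(M(\alpha);\bZ)| = |n|\cdot |T|\cdot |q|$, and setting $c_M = |n|\cdot |T|$ completes the proof.

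The argument is almost entirely algebraic once the Mayer-Vietoris reduction is made; the only mildly delicate point is the Smith-normal-form-style count in the last step and verifying that the factor $|n|\cdot |T|$ is indeed independent of the auxiliary choices of basis and splitting. The conceptual content of the lemma is exactly the organizing statement that the rational longitude is the unique slope that makes the quotient infinite, and that its ``defect'' controls the first homology of every other filling linearly via the intersection pairing.
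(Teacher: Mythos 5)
Your proof is correct and follows the same strategy as the paper's: identify $H_1(M(\alpha);\bZ)$ with $H_1(M;\bZ)/\langle i_*(\alpha)\rangle$, work in a basis of $H_1(\partial M;\bZ)$ adapted to $\lm$, and observe that the free part of $i_*(\alpha)$ scales linearly with $\Delta(\alpha,\lm)$ while the torsion contribution is bounded. The only notable difference is cosmetic: you count $|H_1(M;\bZ)/\langle i_*(\alpha)\rangle|$ via a short exact sequence, whereas the paper uses the determinant of a presentation matrix; the paper also goes on to prove the identity $|\ell|=\ord_H i_*(\lm)$ via the intersection pairing $H_2(M,\partial M;\bZ)\otimes H_1(M;\bZ)\to\bZ$, giving the intrinsic expression $c_M=(\ord_H i_*(\lm))\,|H|$, but this is not needed for the lemma itself and you correctly sidestep it.
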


\begin{proof}
Orient $\lm$ and fix a curve $\mu$ dual to $\lm$ so that $\mu\cdot\lm=1$. This provides a choice of basis $(\mu,\lm)$ for the group $H_1(\partial M;\bZ)\cong\bZ\oplus\bZ$. Under the homomorphism induced by inclusion we have $i_\ast(\mu)=(\ell,u)$ and $i_\ast(\lm)=(0,h)$ as elements of $H_1(M;\bZ)\cong\bZ\oplus H$. Note that for any other choice of class $\mu'$ such that $\mu'\cdot\lm=1$ we have $\mu'=\mu+n\lm$ so that $i_\ast(\mu')=(\ell,u+nh)$.

We claim that $|\ell| =\ord_Hi_\ast(\lm)$.     

Let $\zeta$ generate a free summand of $H_1(M;\bZ)$ so that (the free part of) the image of $\mu$ is $\ell\zeta$ where $i_\ast(\mu)=(\ell,u)\in\bZ\oplus H$. Noting that $H_2(M,\partial M;\bZ)\cong\bZ$ by duality and universal coefficients, let $\eta$ generate $H_2(M,\partial M;\bZ)$ so that $\eta\cdot\zeta=\pm1$ under the intersection pairing $H_2(M,\partial M;\bZ)\otimes H_1(M;\bZ)\to \bZ$.

Now suppose $k=\ord_Hi_\ast(\lm)$. The long exact sequence in homology gives
\[\xymatrix@C=15pt@R=5pt{
	{\cdots}\ar[r] &
	{H_2(M;\bZ)}\ar[r] &
	{H_2(M,\partial M;\bZ)}\ar[r]^-{\partial} &
	{H_1(\partial M;\bZ)}\ar[r]^-{i_*} &
	{H_1(M;\bZ)}\ar[r] &
	{\cdots} \\
	&& {\theta}\ar@{|->}[r] & {k\lm}\ar@{|->}[r] & 0 & 
}\] 
so there is a class $\theta\in H_2(M,\partial M;\bZ)$ with image $k\lm$, where $\theta=a\eta$ for some integer $a\ne 0$. Therefore $k\lm=a\partial\eta$, hence $\partial\eta=\frac{k}{a}\lm$. But since $i_*(\frac{k}{a}\lm)=i_*(\partial\eta)=0$, it must be that $|\frac{k}{a}|=|k|$ so that $|a|=1$. As a result, $\theta = \pm\eta$. In particular, up to a choice of sign $\partial\eta=k\lm$ as an element of $H_1(\partial M;\bZ)$. Now
\[|k|=|\mu\cdot k\lm|=|\mu\cdot\partial\eta|=|\ell\zeta\cdot\eta|=|\ell|\]
as claimed.

For a given slope $\alpha$ write $\alpha=a\mu+b\lm$ so that $i_\ast(\alpha)=(a\ell, au+bh)$. Then  \[H_1(M(\alpha);\bZ)\cong H_1(M;\bZ)/(a\ell,au+bh)\]
has presentation matrix of the form 
\[\begin{pmatrix} 
a\ell & 0  \\
au+bh & Ir
\end{pmatrix}\]
where $r=(r_1,\ldots, r_n)$ specifies the finite abelian group $H=\bZ/r_1\bZ\oplus\cdots\oplus\bZ/r_n\bZ$. Therefore $|H_1(M(\alpha);\bZ)|=a\ell r_1\cdots r_n$. Setting $c_M =\ell r_1\cdots r_n = (\ord_H i_\ast(\lm))|H|$ and noting that $a=\Delta(\alpha,\lm)$ proves the lemma.
\end{proof}

\subsection{Strong inversions and associated quotient tangles}

A knot manifold is called strongly invertible if there is an involution $f\co M\to M$ with 1-dimensional fixed point set intersecting the boundary torus transversely in exactly 4 points. More precisely, \[\fix(f)\cong I\amalg I \amalg \underbrace{S^1\amalg\cdots\amalg S^1}_k\] where $k\ge0$. A knot is called strongly invertible if its complement is strongly invertible.

\begin{definition}\label{def:simple-strong}
Given an irreducible knot manifold $M$ with $H_1(M;\bQ)=\bQ$, suppose that there is a strong inversion $f\in\End(M)$ with the property that $M/f$ is homeomorphic to a ball. Such $M$ will be called a simple, strongly invertible knot manifold. 
\end{definition}  

As a result, any simple strongly invertible knot manifold is naturally the two-fold branched cover of a tangle $\Br(B^3,\tau)$, where $\tau$ is a pair of properly embedded arcs (together with a possibly empty, finite collection of closed components) in $B^3$ given by the image of $\fix(f)$ in the quotient.

\begin{definition}\label{def:associated-tangle} To any simple, strongly invertible knot manifold $M$, the associated quotient tangle $T=(B^3,\tau)$ is obtained by taking $\tau=\operatorname{image}(\fix(f))$. \end{definition}

In this setting, equivalence of tangles is taken up to homeomorphism of the pair (in the sense of Lickorish \cite{Lickorish1981}), and need not fix the boundary in general. Note that the solid torus is a simple, strongly invertible knot manifold. Indeed, according to Lickorish, a tangle is rational if and only if the two-fold branched cover is a solid torus \cite{Lickorish1981}. Using this correspondence between rational tangles and solid tori, a strong inversion may always be extended across a surgery torus giving rise to an involution on any Dehn filling of a simple, strongly invertible knot manifold (moreover, the resulting branch set may be recovered, see Proposition \ref{prp:pq}). This fact is generally attributed to Montesinos \cite{Montesinos1975}. In particular, we have:

\begin{figure}[ht!]
\begin{center}
\raisebox{0pt}{\includegraphics[scale=0.50]{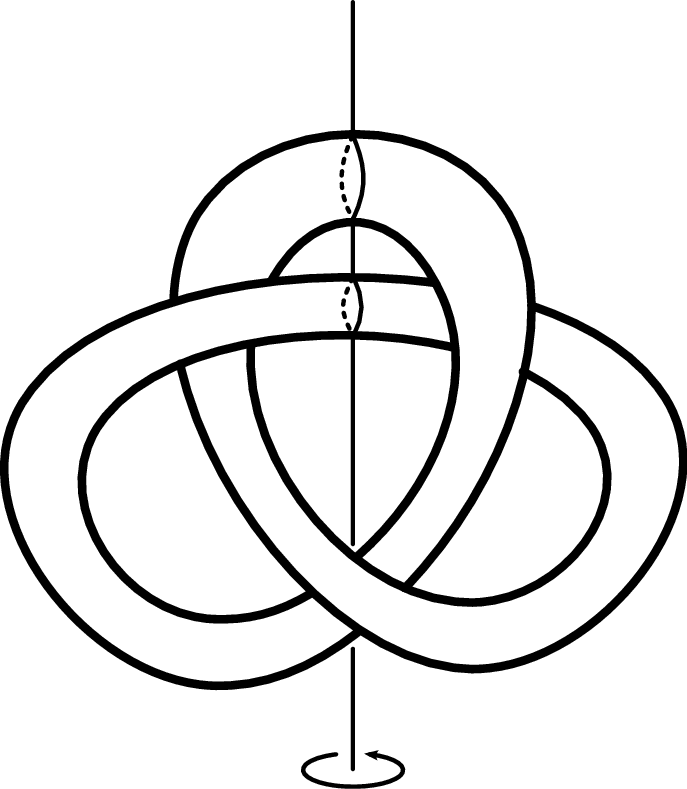}}\qquad
\raisebox{-50pt}{\includegraphics[scale=0.50]{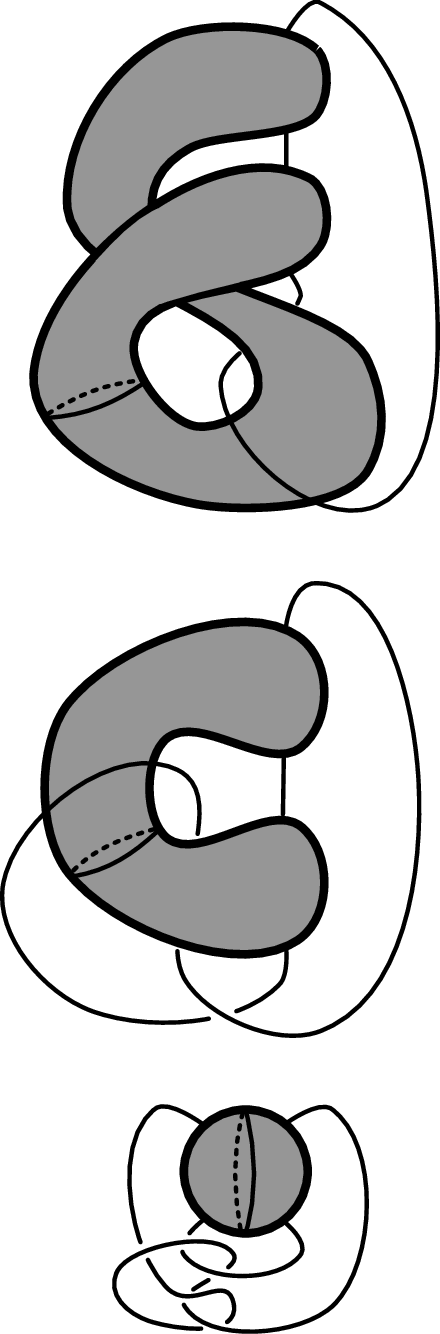}}\qquad
\labellist
	\pinlabel \rotatebox{-90}{$\cong$} at 190 486 
\endlabellist
\raisebox{-30pt}{\includegraphics[scale=0.50]{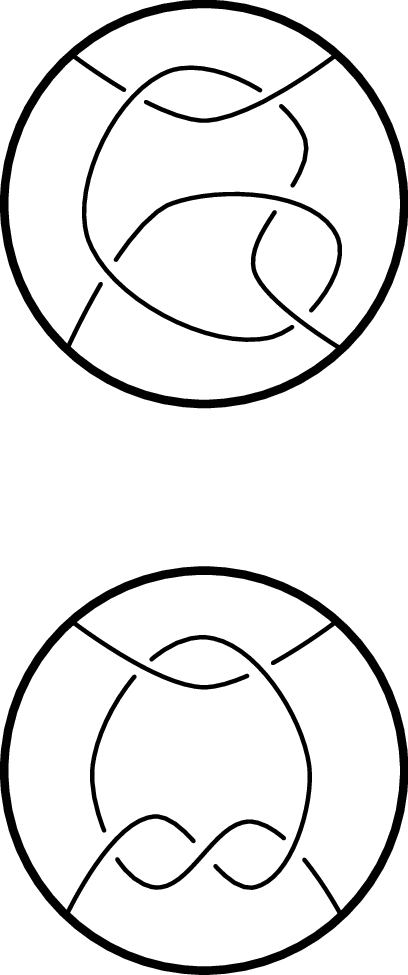}}
\end{center}
\caption{The trefoil with its strong inversion (left), an isotopy of a fundamental domain for the involution (centre), and two homeomorphic views of the tangle associated with the quotient (right). Notice that both representatives of the tangle have the property that $\tau(\overzero)$ is the trivial knot, giving a branch set for the trivial surgery. With a little more care, one may keep track of the image of the preferred longitude in the quotient; see Bleiler \cite{Bleiler1985}, for example. }
\label{fig:trefoil-inversion}\end{figure}

\begin{proposition}\label{prp:si} When $K\into S^3$ is strongly invertible, the complement $M\smallsetminus\nu(K)$ is simple.\end{proposition}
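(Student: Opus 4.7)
The plan is to reduce the claim to the classical fact that the quotient of $(S^3, K)$ by a strong inversion is $(S^3, \alpha)$ for some trivial arc $\alpha$, after which simplicity of $M = S^3 \setminus \nu(K)$ follows immediately. We need to verify the three conditions of Definition \ref{def:simple-strong}. That $M$ is an irreducible knot manifold with $H_1(M;\bQ) \cong \bQ$ is standard (Alexander's theorem together with Alexander duality). A strong inversion $f$ is given by hypothesis. The real content is showing $M/f \cong B^3$.

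The first step is to extend $f \co M \to M$ to an involution $\hat f$ of $S^3$ preserving $K$ setwise. The restriction $f|_{\partial M}$ is an orientation-preserving involution of $T^2$ with $4$ fixed points, hence a hyperelliptic involution, acting as $-I$ on $H_1(\partial M;\bZ)$. In particular $f|_{\partial M}$ preserves the meridional class of $K$, so after identifying $\nu(K) \cong D^2 \times S^1$ compatibly, we may extend equivariantly across $\nu(K)$ by the standard involution $(z,w) \mapsto (\bar z, \bar w)$. This produces $\hat f \co S^3 \to S^3$ with $K$ invariant and with $1$-dimensional fixed point set (the fixed arcs of $f$ in $M$ close up through two arcs in $\nu(K)$, together with any circle components of $\fix(f)$ inside $M$).

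With $\hat f$ in hand, the Smith conjecture (in the form proved by Bass, Morgan--Shalen, Thurston, et al.) applies: $\hat f$ is conjugate to the standard rotational involution of $S^3$ about an unknotted circle $U$. In particular $\fix(\hat f) = U$ is a single unknot (so the putative $S^1$ components of $\fix(f)$ in Definition \ref{def:simple-strong} do not occur for knots in $S^3$), the quotient $S^3/\hat f$ is homeomorphic to $S^3$, and the projection $p\co S^3 \to S^3$ is a double cover branched along an unknot. Because $K$ meets the axis $U$ transversely in the $2$ points of $K \cap \fix(f)$, its image $p(K)$ is a properly embedded arc $\alpha \subset S^3$ with endpoints on the branch set. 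Therefore
\[
M/f \;=\; p(S^3 \setminus \nu(K)) \;=\; S^3 \setminus \nu(\alpha),
\]
and the complement of a tame arc in $S^3$ is a $3$-ball, giving $M/f \cong B^3$.

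The main obstacle is the invocation of the Smith conjecture, which is a deep theorem; everything else is bookkeeping. One could instead try to argue directly that $M/f$ is a compact, simply connected $3$-manifold with $2$-sphere boundary (using that $\partial M / f \cong S^2$ since $f|_{\partial M}$ is hyperelliptic, and analyzing $\pi_1$ via the orbifold structure) and then apply the Poincar\'e conjecture. A minor subtlety in the extension step is checking that the equivariant gluing of $\nu(K)$ exists, which reduces to the fact that all hyperelliptic involutions of $T^2$ preserving a given slope are conjugate; this is routine.
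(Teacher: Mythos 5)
Your argument is correct and follows essentially the same route as the paper: extend the strong inversion across the surgery solid torus to an involution of $S^3$, invoke the classification of involutions of the $3$-sphere to see the quotient is $S^3$, and conclude that the image of $\partial M$ is a sphere cutting this $S^3$ into two balls. The only cosmetic differences are that the paper cites Waldhausen's 1969 theorem on involutions of $S^3$ --- the precise and lighter ingredient, predating the full Smith conjecture --- and that it reads off $M/f\cong B^3$ directly from the separating sphere rather than passing through the image arc $\alpha = p(K)$; both are the same Schoenflies-type fact. (Your proposed alternative route via the Poincar\'e conjecture would be considerably heavier than needed.)
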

\begin{proof} 
Extending $f$ to $S^3$, across the surgery torus of the trivial surgery, gives the standard involution on $S^3$ by definition of strong invertibility (recall that any involution of $S^3$ is conjugate to the standard one as proved by Waldhausen \cite{Waldhausen1969}, see also \cite{HR1985}). The quotient of this involution is $S^3$, decomposed along a sphere obtained by the quotient of the torus $\partial M$. Since $S^3$ decomposes into a pair of 3-balls for any smooth embedding $S^2\into S^3$, $M/f$ must therefore be homeomorphic to $B^3$. \end{proof}

For example, the trefoil is a strongly invertible knot, and the associated quotient tangle is constructed in Figure \ref{fig:trefoil-inversion}. While complements of strongly invertible knots in $S^3$ provide the primary source of examples of simple, strongly invertible knot manifolds, we remark that the latter is certainly a much larger class. For example, the exterior of a generalized torus knot -- those manifolds Seifert fibred over the disk with two cone points -- always provides such a manifold. 

\begin{proposition}[Montesinos \cite{Montesinos1976}]\label{prp:Montesinos} Let $Y$ be a Seifert fibre space with base orbifold $S^2(p,q,r)$. Then $Y\cong M(\alpha)$ where $M$ is a simple strongly invertible knot manifold and $M$ has Seifert fibre structure with base orbifold $D^2(p,q)$.  
\end{proposition}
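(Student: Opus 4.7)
The strategy is to realize $M$ as the exterior of an exceptional fibre of $Y$ and then exhibit the required strong inversion explicitly. Let $\phi_r$ denote an exceptional fibre of multiplicity $r$ in $Y$, and let $V=\nu(\phi_r)$ be a fibred solid torus neighbourhood. Set $M=Y\smallsetminus\operatorname{int}(V)$. Then $M$ inherits a Seifert fibration over $D^2(p,q)$ and, taking $\alpha$ to be the slope of the meridian of $V$ in $\partial M$, we have $M(\alpha)\cong Y$. So it remains to verify that $M$ is a simple, strongly invertible knot manifold in the sense of Definition \ref{def:simple-strong}.

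For the knot manifold condition, I would compute $H_1(M;\bQ)$ from the standard presentation of $\pi_1(M)$: generators $a_1,a_2,x,h$ (the first two for the cone points, $x$ for the boundary, $h$ for the generic fibre) with $h$ central and relations $a_1^p h^{\alpha_1}=a_2^q h^{\alpha_2}=1$ and $a_1 a_2 x h^{\alpha_0}=1$. Abelianizing and tensoring with $\bQ$, the first two relations eliminate $a_1$ and $a_2$ in favour of $h$, while the third expresses $x$ in terms of the remaining generators, so that $H_1(M;\bQ)$ is generated by the image of $h$ and has rank one. Irreducibility of $M$ is a standard fact about Seifert fibrations over a disk with at least two cone points.

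For the strong inversion, I would invoke the familiar involution on a Seifert fibered solid torus: the orbifold $D^2(p,q)$ admits a reflection $\iota$ in the diameter joining its two cone points, and $\iota$ lifts to an orientation-preserving involution $f$ of $M$ which acts on each fibre (regular or exceptional) by complex conjugation. The fixed set of $f$ sits over the fixed arc of $\iota$: over each regular point of this arc we pick up the two fixed points of complex conjugation, and the exceptional fibres contribute arcs through each cone point. Assembled, $\fix(f)$ consists of a pair of properly embedded arcs in $M$ meeting $\partial M$ transversely in $4$ points, so $f$ is a strong inversion.

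For the final condition $M/f\cong B^3$, the plan is to choose a pair of pants $P\subset D^2(p,q)$ carved out symmetrically with respect to $\iota$, so that $M$ decomposes equivariantly as $(P\times S^1)\cup V_1\cup V_2$, where $V_1,V_2$ are fibred solid torus neighbourhoods of the two exceptional fibres and the gluing tori are $f$-invariant. The quotient of $P\times S^1$ by the product of the pair-of-pants reflection and complex conjugation is diffeomorphic to $D^2\times I$, a $3$-ball; the quotient of each $V_i$ by the restriction of $f$ is also a $3$-ball, by Lickorish's characterization of rational tangles as precisely those whose two-fold branched cover is a solid torus \cite{Lickorish1981}. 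Each interface torus is mapped to itself by $f$ and quotients to a disk, so the three $3$-balls glue along disks and yield $M/f\cong B^3$. The main obstacle is arranging the equivariant decomposition carefully enough that the interfaces really quotient to disks rather than to more complicated surfaces; once this symmetric positioning of $P$ is set up, the assembly is routine.
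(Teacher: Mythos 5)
Your overall strategy coincides with the paper's: realize $M$ as the exterior of the exceptional fibre $\phi_r$, lift the reflection $\rho(z)=\bar z$ on $D^2(p,q)$ to a strong inversion $f$, and then verify $M/f\cong B^3$. Where you diverge is in the last step. The paper argues \emph{indirectly}: fill $M$ along a regular fibre $\fibre$ to obtain a connect sum of lens spaces (Heil), observe that the extended involution on this closed manifold has quotient $S^3$ (Hodgson--Rubinstein), and then, as in the proof of Proposition~\ref{prp:si}, conclude that $M/f$ is one of the two balls into which the quotient sphere $\partial M/f\into S^3$ separates $S^3$. You instead attempt a direct equivariant decomposition. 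That route can be made to work, but your account of it contains a concrete error.

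The problem is the interface. With your decomposition $M=(P\times S^1)\cup V_1\cup V_2$ along the boundary tori of the $V_i$, each gluing torus carries the involution $(z,w)\mapsto(\bar z,\bar w)$, which has \emph{four} fixed points. The quotient of a torus by such an involution is the pillowcase, topologically $S^2$, not a disk. Consequently $(P\times S^1)/f$ has three spherical boundary components and cannot be $D^2\times I$: a quick Euler characteristic check gives $\chi\bigl((P\times S^1)/f\bigr)=3$ (half the Euler characteristic of three spheres), whereas $\chi(D^2\times I)=1$. The piece is in fact a thrice-punctured ball $B^3\smallsetminus(\mathring{B}_1\cup\mathring{B}_2)$, and the assembly is two balls glued to two of its three boundary \emph{spheres}, which does yield $B^3$ --- so your conclusion survives, but the intermediate identifications ``$D^2\times I$'' and ``glue along disks'' are false and need to be corrected. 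Note that the paper's decomposition of $M$ into \emph{two} solid tori $V_1\cup_A V_2$ along a vertical annulus $A$ (the lift of the imaginary diameter) avoids this issue cleanly: $f|_A$ has only two fixed points, $A/f$ really is a disk, and $M/f\cong(V_1/f)\cup_{A/f}(V_2/f)\cong B^3\cup_{D^2}B^3\cong B^3$. You appear to have been half-remembering that picture while carving along tori instead of along the annulus.
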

\begin{proof}
Let $M$ be a knot manifold endowed with a Seifert fibre structure and suppose that the base orbifold is $D^2(p,q)$, the disk with two cone points. We may assume that $D^2=\{z\in\bC:|z|\le1\}$, and that the cone points $p,q$ lie on the real axis in the interior of $D^2$. Note that such a Seifert fibre space is a union of solid tori along an essential annulus that corresponds to the lift of the imaginary axis in the interior of $D^2$. As we have noted previously, the solid torus admits a strong inversion, and such a strong inversion may be chosen so that it fixes the singular fibre of any Seifert fibre structure on the solid torus. In particular, the solid torus as a Seifert fibre space has base orbifold $D^2$ with a single cone point, and the strong inversion corresponds to a reflection in the real axis. Now the reflection $\rho(z)=\bar{z}$ in the real axis (fixing the cone points $p,q$) lifts to a strong inversion on $M$, and $\rho$ fixes the singular fibres. 

Choose a regular fibre $\fibre\subset\partial M$. By a theorem of Heil, the Dehn filling $M(\fibre)$ must be a connect sum of lens spaces \cite{Heil1974}. Further, extending the strong inversion across the surgery torus gives a strong inversion on $M(\fibre)$, the quotient of which is $S^3$ (with branch set a connect sum of 2-bridge links \cite{HR1985}). As a result, $M/f\cong B^3$ as in the proof of Proposition \ref{prp:si}. 

Now suppose that $Y$ is Seifert fibred, with base orbifold $S^2(p,q,r)$. Removing a tubular neighbourhood of a singular fibre yields a knot manifold $M$ that is Seifert fibred with base orbifold $D^2(p,q)$. Such an $M$ must be simple and strongly invertible.
\end{proof}

Two examples that will be particularly useful in the sequel are the twisted $I$-bundle over the Klein bottle (this is the unique manifold admitting a $D^2(2,2)$ Seifert structure) and the complement of the trefoil knot in $S^3$ (admitting a $D^2(2,3)$ Seifert structure; this is unique up to mirrors).

\subsection{Tangles and Dehn filling}
Consider a simple strongly invertible knot manifold $M$ and  associated quotient tangle $T$. 
Fix a representative for $T$ in which the four endpoints of the tangle lie on the equatorial great circle of $\partial B^3$, as in Figure \ref{fig:arcs}.
Having fixed such a representative, the associated quotient tangle has a pair of distinguished arcs $(\gamma_\overzero,\gamma_0)$ in the boundary of the tangle, as illustrated in Figure \ref{fig:arcs}, that meet in a single point. The hemisphere containing each arc lifts to an essential annulus in $\partial M=\Br(\partial B^3,\partial\tau)$, so that the pair $(\gamma_\overzero,\gamma_0)$ lifts to a (unoriented) basis for $H_1(\partial M;\bZ)$. By fixing an orientation so that $\widetilde{\gamma}_\overzero\cdot\widetilde{\gamma}_0=1$, we obtain a basis for Dehn fillings of $M$.

\begin{figure}[ht!] 
\labellist 
	\pinlabel $T$ at 245 503 
	\pinlabel {$\gamma_\overzero$} at 338 503
	\pinlabel {$\gamma_0$} at 245 417
\endlabellist \centering 
\includegraphics[scale=0.4]{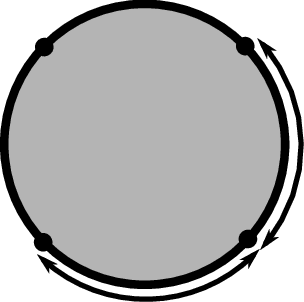} 
\caption{The arcs $\gamma_\overzero$ and $\gamma_0$ in the boundary of $T$.} \label{fig:arcs} 
\end{figure}

We will make use of an action of the 3-strand braid group $B_3=\langle \si_1,\si_2|\si_1\si_2\si_1=\si_2\si_1\si_2\rangle$ on the space of tangles. Braids in this setting are depicted horizontally, read from left to right, with standard generators \[\si_1=\raisebox{-8pt}{\includegraphics[scale=0.30]{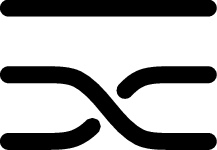}}\qquad\qquad\si_2=\raisebox{-8pt}{\includegraphics[scale=0.30]{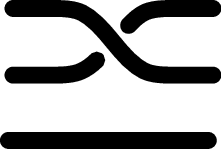}}\] For a given braid $\beta\in B_3$ the action 
is defined by taking $T^\beta$ as depicted in Figure \ref{fig:action}.
\begin{figure}[ht!] 
\labellist 
\pinlabel $T$ at 234 451
\pinlabel $\beta$ at 359 431
\endlabellist 
\centering 
\includegraphics[scale=0.25]{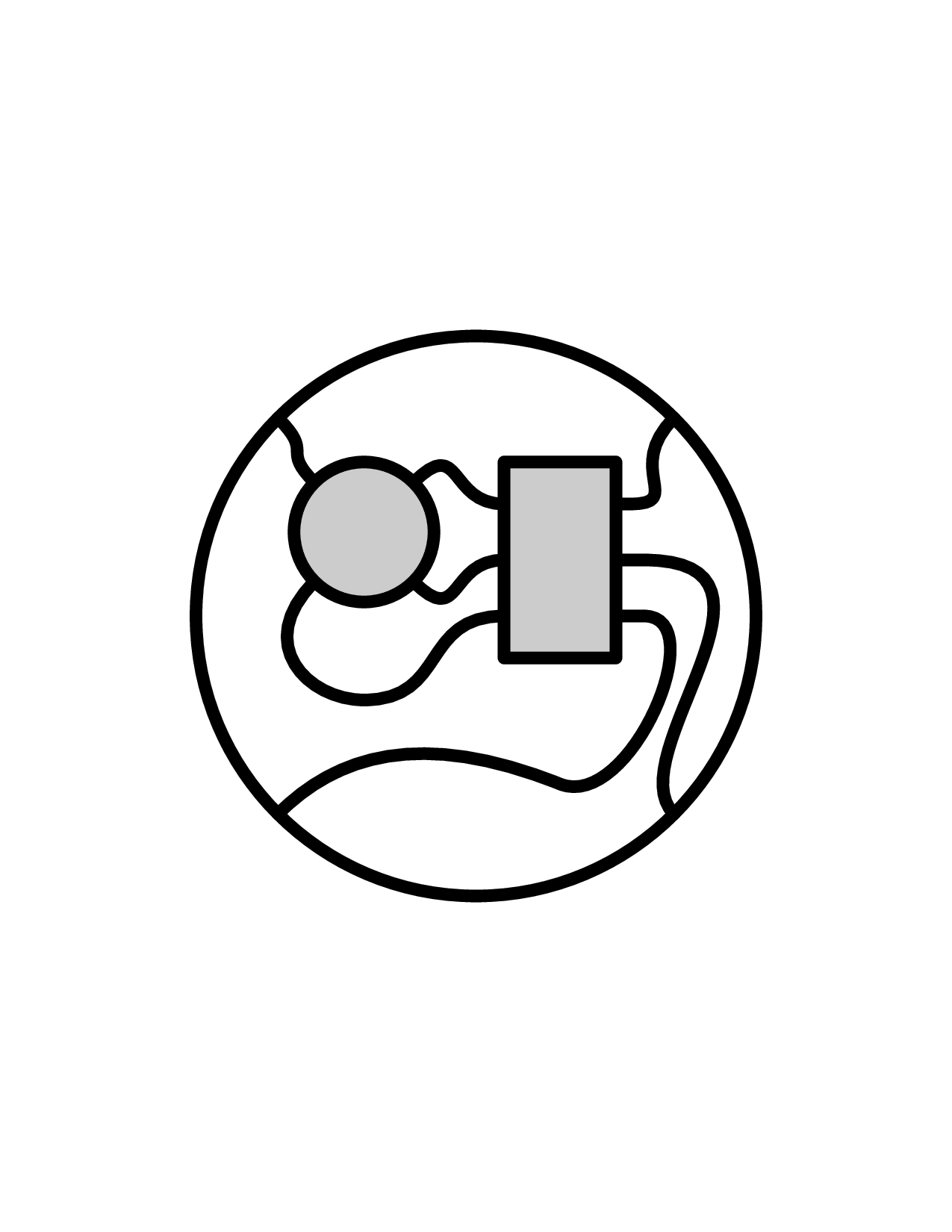} 
\caption{The tangle $T^\beta$ resulting from a tangle $T$ and a 3-braid $\beta$.} 
\label{fig:action} 
\end{figure}

It is straightforward to verify that this is a well defined action on tangles. Notice that this specifies a homeomorphism of the given tangle, and as such this action is trivial when considering tangles up to homeomorphism (though the choice of diagram for a fixed tangle may be altered dramatically). In fact, this may be viewed as a change of framing.

Let $\pq=[a_1,\ldots,a_r]$ be the continued fraction expansion with $a_1\ge 0$ and $a_i>0$ for $i>1$ when $\pq\ge0$ (when $\pq\le0$, $a_1\le 0$ and $a_i<0$ for $i>1$). To $\pq$ we associate the braid 
\begin{equation*} \beta =
\begin{cases}\si_2^{a_1}\si_1^{-a_2}\cdots\si_1^{-a_r} & r\ {\rm even} \\
\si_2^{a_1}\si_1^{-a_2}\cdots\si_2^{a_r} & r\ {\rm odd}\end{cases}\end{equation*} 
Notice that only positive powers of $\si_2$ and negative powers of $\si_1$ occur in the braid word for $\beta$. Define the {\em odd-} and {\em even-} closures of $T$ (also referred to as the {\em numerator} and {\em denominator} closures, respectively), as in Figure \ref{fig:closures}. Now observing that $0=[0]$, and fixing the convention $\overzero=[\ ]$ (with {\em length} $r=0$), denote $\tau(\pq)$ the link obtained by the even or odd closure of $T^\beta$ depending on whether $r$ is even or odd (a particular example is shown in Figure \ref{fig:continued-fraction-1}). 
\begin{figure}[ht!]
  \begin{center}
    \mbox{
      \subfigure{\labellist \pinlabel {$\tau(0)=$} at 150 397 \pinlabel $T$ at 305 397 \endlabellist 
				\raisebox{-9pt}{\includegraphics[scale=0.25]{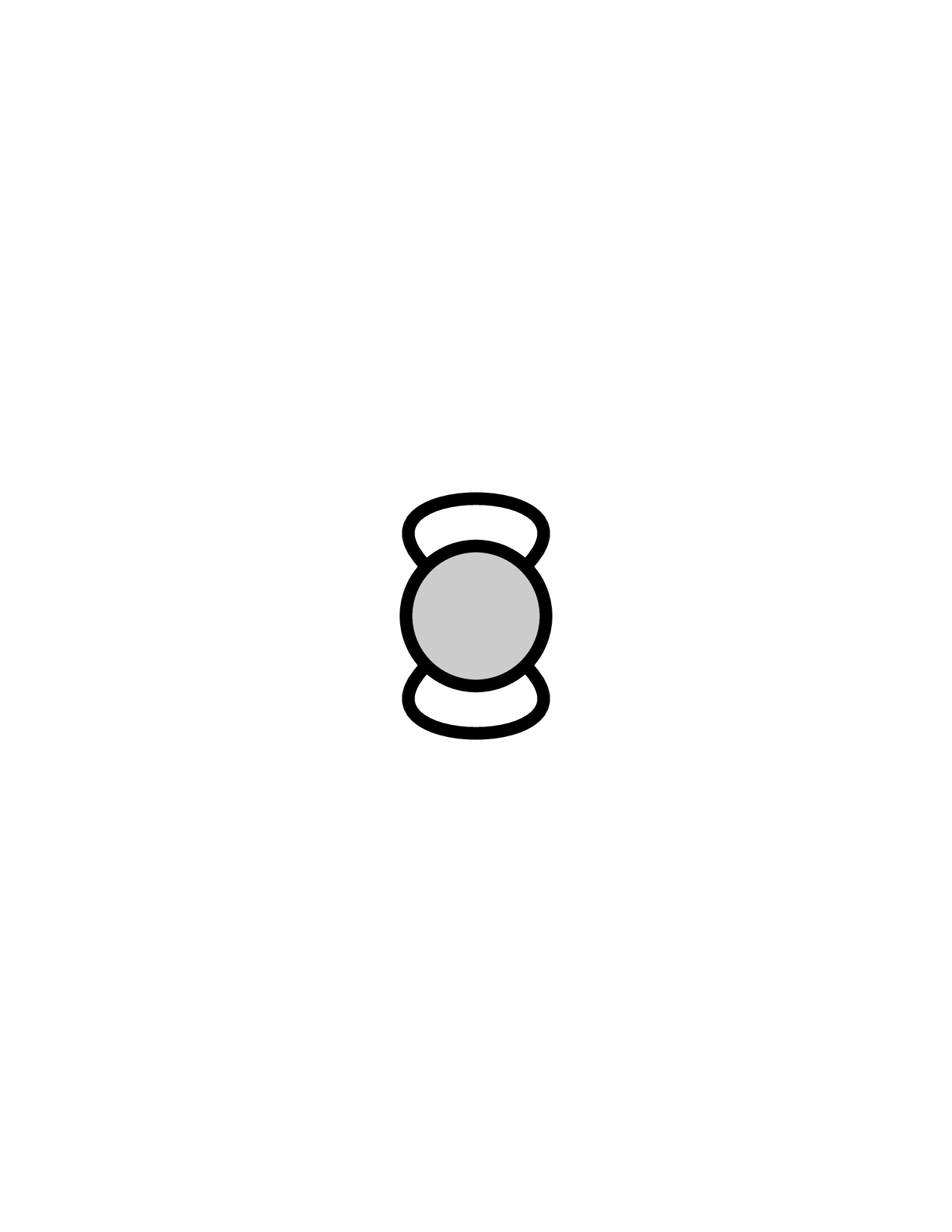}}} \qquad\qquad\qquad
      \subfigure{\labellist \pinlabel {$\tau(\overzero)=$} at 120 397 \pinlabel $T$ at 305 397 \endlabellist 
				\includegraphics[scale=0.25]{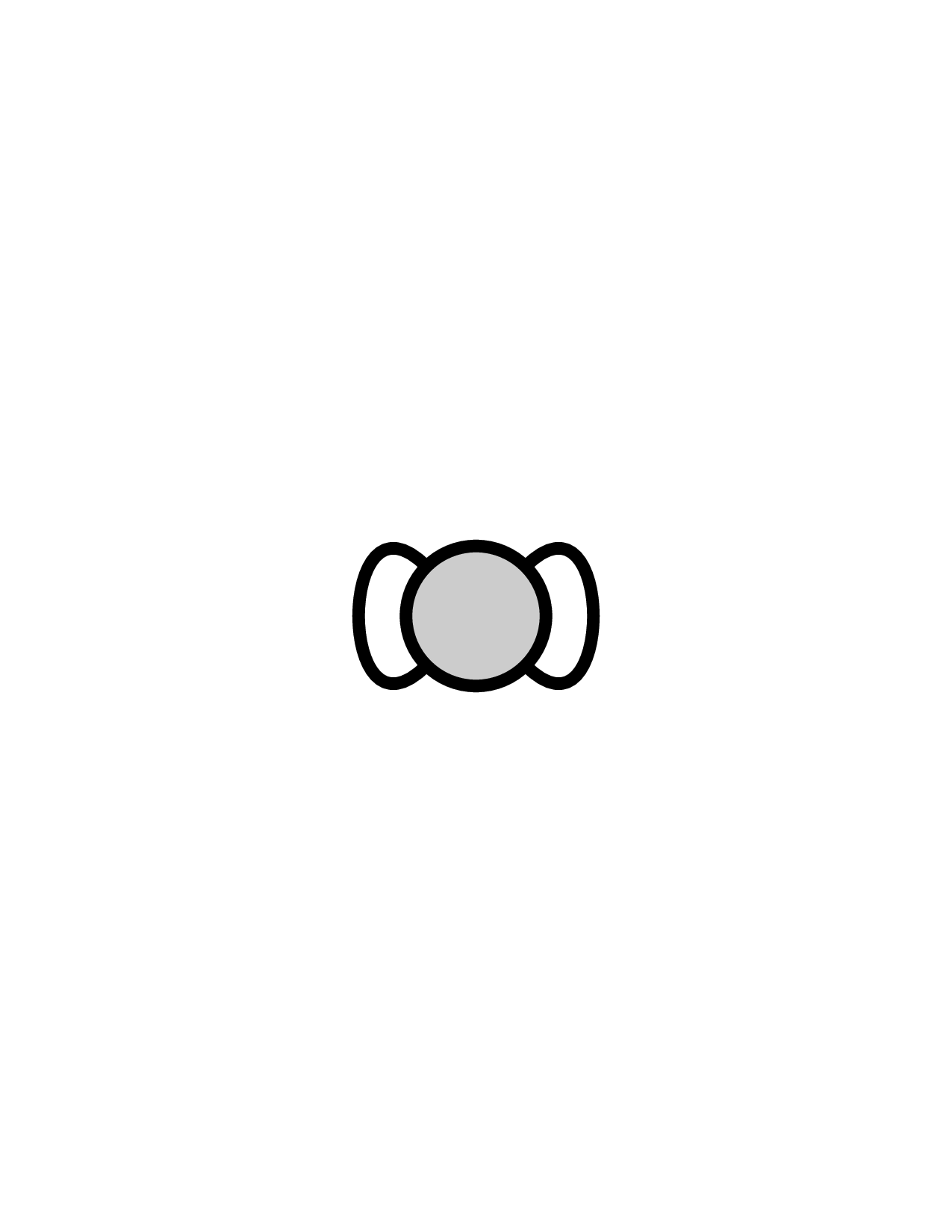}} 
      }
    \caption{The odd-closure $\tau(0)$ and the even-closure $\tau(\overzero)$ of the tangle $T$.}
    \label{fig:closures}
  \end{center}
\end{figure}

Now the strong inversion on $M$ extends to an involution on a Dehn filling of $M$, giving rise to a two-fold branched cover of $S^3$, branched over a link that we may now make explicit.

\begin{proposition}\label{prp:pq}
Let $M$ be a simple strongly invertible knot manifold. For a given slope $\alpha=p\widetilde{\gamma}_\overzero+q\widetilde{\gamma}_0$ we have that $\Br(S^3,\tau(\pq))\cong M(\alpha)$. 
\end{proposition}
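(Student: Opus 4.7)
The plan is to extend the strong inversion $f$ across the Dehn filling solid torus and then identify the branch set of the quotient. First I would observe that the restriction $f|_{\partial M}$ is the standard elliptic involution on the torus, acting on $H_1(\partial M;\bZ)$ by $-1$ with four fixed points; consequently every slope can be isotoped to be $f|_{\partial M}$-invariant, so the strong inversion $f$ extends over the surgery solid torus $V$ to an involution $\tilde{f}$ on $M(\alpha)$ whose fixed point set is the union of $\Fix(f)$ with a pair of properly embedded arcs in $V$. Since $M/f \cong B^3$ by hypothesis, and $V/\tilde{f}|_V \cong B^3$ by the standard classification of strong inversions on the solid torus, the quotient $M(\alpha)/\tilde{f}$ is obtained by gluing two $3$-balls along the sphere $\partial M / f|_{\partial M} \cong S^2$, and is therefore $S^3$. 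This realizes $M(\alpha)$ as a two-fold branched cover of $S^3$ over a link obtained from $\tau$ by capping off its four endpoints with a rational tangle $R$.

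Next I would identify $R$ in terms of $\alpha$. The classical Conway/Montesinos correspondence \cite{Montesinos1975} asserts that, with the basis $(\widetilde\gamma_\overzero,\widetilde\gamma_0)$ on $H_1(\partial M;\bZ)$ induced by the arcs $\gamma_\overzero$ and $\gamma_0$, the rational tangle whose two-fold branched cover is $V$ with meridian $\alpha = p\widetilde\gamma_\overzero + q\widetilde\gamma_0$ is precisely the Conway tangle of fraction $\pq$. I would verify this by induction on the length $r$ of the continued fraction expansion $\pq=[a_1,\ldots,a_r]$. The base cases $\overzero = [\,]$ and $0 = [0]$ are immediate: the trivial even- and odd-closures of $T$ lift to $M$ filled along slopes $\widetilde\gamma_\overzero$ and $\widetilde\gamma_0$ respectively, matching the closures displayed in Figure \ref{fig:closures}. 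For the inductive step, I would show that each occurrence of $\si_2^{\pm1}$ or $\si_1^{\mp1}$ in the braid $\beta$ corresponds, after passing to the cover, to a Dehn twist of $\partial V$ along $\widetilde\gamma_\overzero$ or $\widetilde\gamma_0$; the induced reparametrization of slopes enacts exactly the elementary fraction transformations prescribed by one step of the continued fraction algorithm.

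The main difficulty will be keeping all sign and orientation conventions consistent between (i) the continued fraction convention adopted in the paper, (ii) the braid-group action on tangles depicted in Figure \ref{fig:action}, and (iii) the basis $(\widetilde\gamma_\overzero, \widetilde\gamma_0)$ oriented so that $\widetilde\gamma_\overzero \cdot \widetilde\gamma_0 = 1$. Once these conventions are pinned down on the two base closures, the general claim follows by combining the induction above with the well-known fact that a Dehn twist of a surgery torus changes the filling slope by the prescribed integral shear.
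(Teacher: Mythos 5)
Your proposal is correct and follows essentially the same route as the paper's sketch: establish the base cases $\tau(0)$ and $\tau(\overzero)$ as lifting to fillings along $\widetilde\gamma_0$ and $\widetilde\gamma_\overzero$, then show that the braid generators $\si_2$ and $\si_1^{-1}$ lift to Dehn twists about $\widetilde\gamma_\overzero$ and $\widetilde\gamma_0$ (the paper records these as the matrices $\begin{pmatrix}1&0\\1&1\end{pmatrix}$ and $\begin{pmatrix}1&1\\0&1\end{pmatrix}$), and conclude via the continued-fraction recursion. The only cosmetic difference is that you spend a paragraph re-deriving the Montesinos extension of the involution across the surgery torus, which the paper treats as standing background (attributed to Montesinos just before the proposition statement).
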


\begin{proof}[Sketch of proof]
First observe that $\Br(S^3,\tau(0))\cong M(\widetilde{\gamma}_0)$ and $\Br(S^3,\tau(\overzero))\cong M(\widetilde{\gamma}_\overzero)$. 

Now consider the action of $\si_2$. We claim that this half twist (viewed as an action on the disk with 2 marked points) lifts to a Dehn twist along the curve $\widetilde{\gamma}_\overzero$. Indeed, the two-fold branched cover of the disk specified by a small neighbourhood of $\gamma_{\overzero}$ is an essential annulus in $\partial M$ (c.f \cite[Chapter 10]{Rolfsen1976}). In terms of the basis $(\widetilde{\gamma}_\overzero,\widetilde{\gamma}_0)$, this Dehn twist may be expressed $\begin{pmatrix}1&0\\1&1\end{pmatrix}$. Similarly, the action of $\si_1^{-1}$ lifts to a Dehn twist about $\widetilde{\gamma}_0$; this takes the form $\begin{pmatrix}1&1\\0&1\end{pmatrix}$. In particular, we have that $\Br(S^3,\tau(n))\cong M(n\widetilde{\gamma}_\overzero+\widetilde{\gamma}_0)$ and $\Br(S^3,\tau(\frac{1}{n}))\cong M(\widetilde{\gamma}_\overzero+n\widetilde{\gamma}_0)$.

In general, for $\pq=[a_1,\ldots,a_r]$, the action of the associated braid may be written (in the case $r$ is even) as \[\begin{pmatrix}1&1\\0&1\end{pmatrix}^{a_r}\cdots\begin{pmatrix}1&1\\0&1\end{pmatrix}^{a_2}\begin{pmatrix}1&0\\1&1\end{pmatrix}^{a_1}\]
(the case $r$ odd differs only in the first matrix of this product). We leave it to the reader to check that the first column of the resulting matrix is $\begin{pmatrix}q\\ p\end{pmatrix}$ so that we have specified the filling slope $\alpha=p\widetilde{\gamma}_\overzero+q\widetilde{\gamma}_0$ as desired. Details may be found in Rolfsen \cite[Chapter 10]{Rolfsen1976}, see also Montesinos \cite{Montesinos1975}.
\end{proof}

\begin{corollary}\label{crl:basis}
Given a basis $(\alpha,\beta)$ for surgery in $\partial M$ there is a choice of representative for $T$ so that $(\gamma_\overzero,\gamma_0)$ lifts to $(\alpha,\beta)$. \end{corollary}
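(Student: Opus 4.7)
The plan is to leverage the $B_3$-action on tangle diagrams introduced just above, together with the fact that this action realizes all changes of basis of $H_1(\partial M;\bZ)$ at the level of the cover. The proof of Proposition \ref{prp:pq} already identifies the effect of the generators: $\si_2$ lifts to a Dehn twist about $\widetilde{\gamma}_\overzero$, with matrix $\begin{pmatrix}1&0\\1&1\end{pmatrix}$ in the basis $(\widetilde{\gamma}_\overzero,\widetilde{\gamma}_0)$, while $\si_1^{-1}$ lifts to a Dehn twist about $\widetilde{\gamma}_0$ with matrix $\begin{pmatrix}1&1\\0&1\end{pmatrix}$. So once I have a tangle representative realizing the initial basis $(\widetilde{\gamma}_\overzero,\widetilde{\gamma}_0)$, applying a braid $\boldtau$ produces a new representative $T^{\boldtau}$ whose boundary arcs $(\gamma_\overzero,\gamma_0)$ lift to the image of $(\widetilde{\gamma}_\overzero,\widetilde{\gamma}_0)$ under the corresponding matrix.

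The key algebraic step is that the two matrices above, together with their inverses (arising from $\si_2^{-1}$ and $\si_1$), generate $SL(2,\bZ)$; this is the standard presentation of the mapping class group of the torus. Given an arbitrary basis $(\al,\be)$ for $H_1(\partial M;\bZ)$ with $\al\cdot\be=1$, the change-of-basis matrix from $(\widetilde{\gamma}_\overzero,\widetilde{\gamma}_0)$ to $(\al,\be)$ lies in $SL(2,\bZ)$, so I can write it as a word in the generators above; reading off the corresponding word in $\si_1^{\pm 1},\si_2^{\pm 1}$ produces a braid $\boldtau\in B_3$, and the claim is that $T^{\boldtau}$ is the desired representative.

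The main subtlety concerns signs and orientations: a basis in the sense of Section \ref{sec:fillings} is defined only up to $\pm 1$, while the matrix identifications above use oriented bases. I would resolve this by first fixing orientations so that $\al\cdot\be=+1$ (available after relabeling) and noting that a global sign change on $(\widetilde{\gamma}_\overzero,\widetilde{\gamma}_0)$ can be realized by the hyperelliptic involution, corresponding to a homeomorphism of the tangle $T$. In practice this step is routine and should not require more than an observation, so the essential content of the corollary is really the surjectivity of the braid action onto $SL(2,\bZ)$ established in the preceding proposition. I expect no genuine obstacle beyond bookkeeping; the corollary is essentially a repackaging of Proposition \ref{prp:pq}.
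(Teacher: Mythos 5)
Your proposal is correct, and it rests on the same underlying machinery as the paper---namely the $B_3$-action on tangle diagrams from Proposition~\ref{prp:pq} and the fact that $\si_2$, $\si_1^{-1}$ lift to Dehn twists with matrices $\begin{pmatrix}1&0\\1&1\end{pmatrix}$, $\begin{pmatrix}1&1\\0&1\end{pmatrix}$---but it reaches the conclusion by a somewhat different route. You appeal to the algebraic fact that these two matrices generate $SL(2,\bZ)$, then write the change-of-basis matrix from $(\widetilde{\gamma}_\overzero,\widetilde{\gamma}_0)$ to $(\alpha,\beta)$ as a word in the generators and apply the corresponding braid. The paper instead argues constructively: it expresses $\alpha=p\widetilde{\gamma}'_\overzero+q\widetilde{\gamma}'_0$ for the given representative, forms the link $\tau(\pq)$ via the continued-fraction braid, and observes that \emph{removing the closure arcs} of Figure~\ref{fig:closures} yields the reframed tangle $T^\beta$, a representative in which $\alpha$ already corresponds to one of $\widetilde{\gamma}_\overzero,\widetilde{\gamma}_0$; a final twist along $\alpha$ (using $\alpha\cdot\beta=1$) then aligns $\beta$. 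The paper's argument has the advantage of avoiding the group-generation step and staying entirely within the continued-fraction framework already set up; yours has the advantage of making transparent that the action is simply transitive on oriented bases. One small caveat on your last paragraph: invoking the hyperelliptic involution for the overall sign is not really needed, since $-I$ already lies in the subgroup generated by the two elementary matrices; the sign ambiguity inherent in slopes (classes in $H_1(\partial M;\bZ)/\pm1$) is absorbed as soon as you pick orientations with $\alpha\cdot\beta=+1$, exactly as you note.
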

\begin{proof}For any choice of representative of $T$, write $\alpha=p\widetilde{\gamma}'_\overzero+q\widetilde{\gamma}'_0$. In terms of this representative then, $M(\alpha)=\Br(S^3,\tau(\pq))$. However, by removing the arcs forming the closure as in Figure \ref{fig:closures}, the resulting tangle may be viewed as a reframing if $T$, and yields a representative compatible with $\alpha$. By twisting along $\alpha$ (i.e. by half twists in the quotient), this representative may be made compatible with $(\alpha,\beta)$ since $\alpha\cdot\beta=1$. \end{proof}

\subsection{Compatibility and preferred representatives} As a result of Corollary \ref{crl:basis}, for any choice of basis $(\alpha,\beta)$ for Dehn surgery on a simple strongly invertible knot manifold, a {\em compatible} representative for the associated quotient tangle exists so that $\alpha=\widetilde{\gamma}_\overzero$ and $\beta=\widetilde{\gamma}_0$. Notice that, as a result of Lemma \ref{lem:cm}, we have that \[\det(\tau(\textstyle\pq))=c_M\Delta(p\widetilde{\gamma}_\overzero+q\widetilde{\gamma}_0,\lm)=c_M\Delta(p\alpha+q\beta,\lm)\] once a basis for Dehn surgery, and compatible associated quotient tangle have been fixed. In particular, given a strongly invertible knot in $S^3$ there is always a choice of associated quotient tangle for which $S^3_{p/q}(K)=\Br(S^3,\tau(\textstyle\pq)).$
Such a representative will be referred to as the {\em preferred} representative for the associated quotient tangle.

Note that $S^3_{p/q}(K)\cong -S^3_{-p/q}(K^\star)$ where $-Y$ denotes the manifold $Y$ with orientation reversed, and $K^\star$ denotes the mirror image of $K$. More generally, \[\textstyle\Br(S^3,\tau(\pq))\cong-\Br(S^3,\tau(\pq)^\star)\cong-\Br(S^3,\tau^\star(-\pq)),\] and as a consequence we will only need to consider non-negative surgery coefficients and continued fractions (up to taking mirrors).

\subsection{Some properties of continued fractions}

There are three fundamental properties for continued fractions relating to Dehn filling that will be essential for the inductive arguments that follow. Since it will always be possible to restrict to non-negative surgery coefficients by passing to the mirror image, we will state these properties for non-negative continued fractions only. 

Therefore, assume that $\pq=[a_1,\ldots,a_r]$ is non-negative, with $a_1\ge0$ and $a_i>0$ for all $i>1$

\begin{property}\label{pro:floor} $\lfloor\pq\rfloor=a_1$ and $\lceil\pq\rceil=a_1+1$. \end{property}
 
\begin{proof} It is immediate from the definition of $\pq$ as a continued fraction that $a_1\le\pq<a_1+1$ for $\pq=[a_1,\ldots,a_r]$.\end{proof} 
 
\begin{property}\label{pro:end} $[a_1,\ldots,a_r,1]=[a_1,\ldots,a_r+1]$.\end{property}
  
\begin{proof} By partial evaluation $[a_1,\ldots,a_r,1]=[a_1,\ldots,a_r+\textstyle\frac{1}{1}]=[a_1,\ldots,a_r+1]$.\end{proof}  

\begin{figure}[ht!]
\begin{center}
\labellist \small \pinlabel $T$ at 100 592 \pinlabel $\simeq$ at 490 580 \endlabellist
\raisebox{0pt}{\includegraphics[scale=0.35]{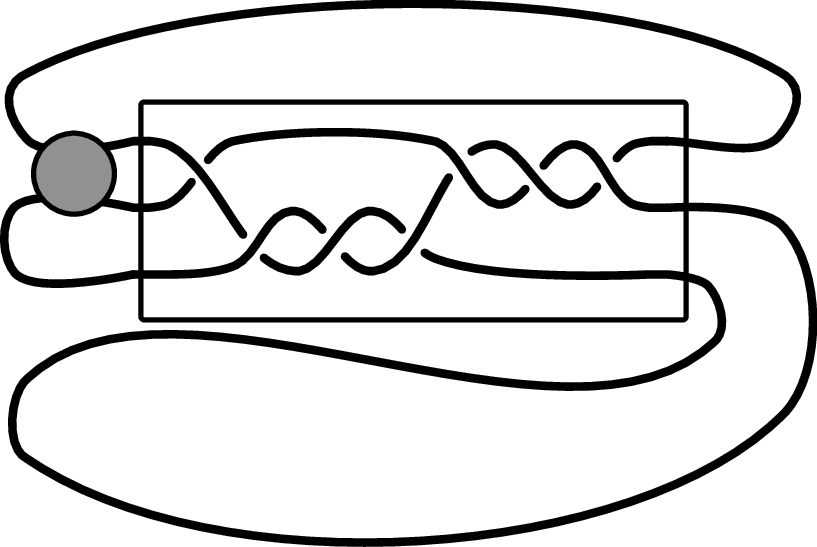}}\qquad
\labellist \small \pinlabel $T$ at 165 510 \endlabellist
\raisebox{18pt}{\includegraphics[scale=0.35]{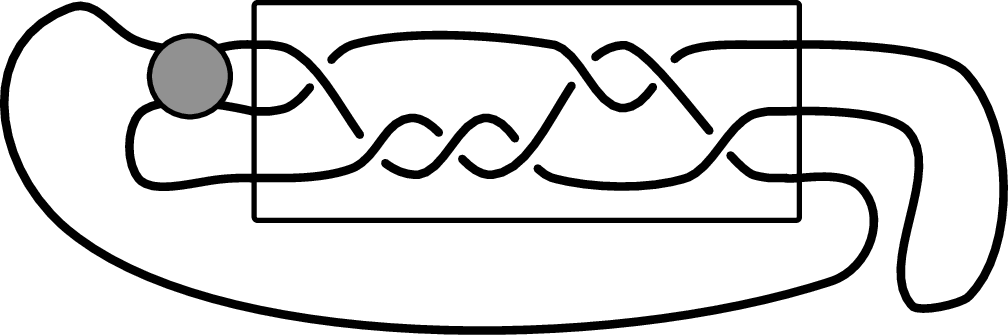}}
\end{center}
\caption{The link $\tau(\frac{13}{10})$ obtained from the odd-closure with the fraction $[1,3,3]$ (left), is isotopic to the link obtained from the even-closure with the fraction $[1,3,2+1]=[1,3,2,1]$ (right).}
\label{fig:continued-fraction-1}\end{figure}
  
It is important to note that this equality of continued fractions manifests itself as isotopic links when forming $\tau(\pq)$, for any tangle. This results from the fact that the even- and odd-closures replace one another, as is illustrated in a particular case in Figure \ref{fig:continued-fraction-1}.   

Finally, we turn to the behaviour of $\tau(\pq)$ under resolutions. 
\begin{definition}\label{def:terminal-crossing} The terminal crossing of $\tau(\pq)$ is the last crossing added by the action of $\beta\in B_3$ specified by the continued fraction. That is, the terminal crossing corresponds to the last generator in the braid word $\beta=\si_2^{a_1}\cdots\si_\epsilon^{a_r}$ (where $\si_\epsilon$ is either $\si_2$ or $\si_1^{-1}$, depending on the parity of $r$).\end{definition}
Our convention will be that the terminal crossing of $\tau(\pq)$ is resolved to obtain the 0-resolution $\tau(\pqzero)$ and the 1-resolution $\tau(\pqone)$. Notice that the 0-resolution is given by one of $[a_1,\ldots,a_{r-1}]$ or $[a_1,\ldots,a_{r-1},a_r-1]$ depending on the parity of $r$, and the 1-resolution is given by the other. By Property \ref{pro:end} we may assume without loss of generality that $a_r>1$ when $\pq=[a_1,\ldots,a_r]$.    

\begin{figure}[ht!]
\begin{center}
\[\xymatrix@C=15pt@R=0pt{
	&{\labellist \small 
	\pinlabel $T$ at 26 475 
\endlabellist
\raisebox{0pt}{\includegraphics[scale=0.32]{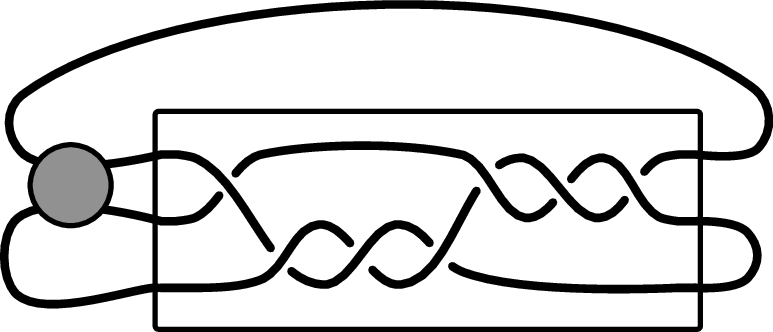}}}\ar@/_2pc/[dl]_{0}\ar@/^2pc/[dr]^{1} & \\
	{\labellist \small \pinlabel $T$ at 128 460 \endlabellist
\raisebox{0pt}{\includegraphics[scale=0.32]{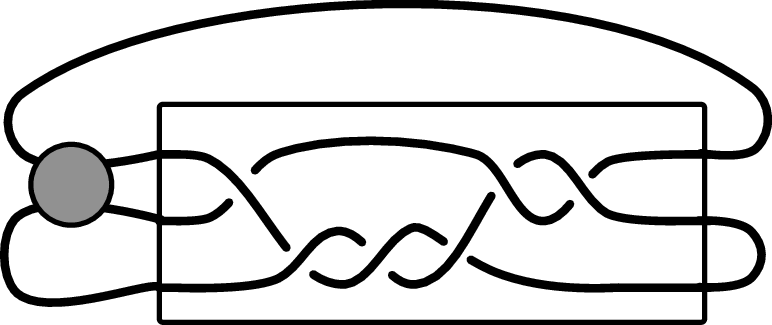}}} && {\labellist \small \pinlabel $T$ at 127 460 \endlabellist
\raisebox{-50pt}{\includegraphics[scale=0.32]{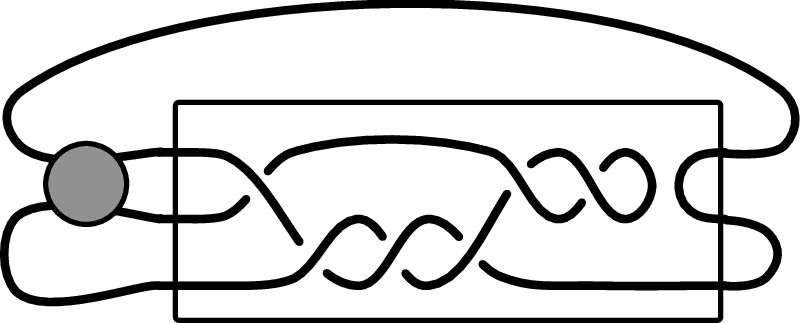}}} }\]
\end{center}
\caption{Resolving the terminal crossing of $\tau(\frac{13}{10})=\tau[1,3,3]$ gives 0-resolution with $\pqzero=[1,3,2]=\frac{9}{7}$ and 1-resolution with $\pqone=[1,3]=\frac{4}{3}$. }
\label{fig:continued-fraction-2}\end{figure}

\begin{property}\label{pro:resolve} $\pq=\frac{p_0+p_1}{q_0+q_1}$ where $\pqzero$ and $\pqone$ are the continued fractions associated with the 0- and 1-resolution of the terminal crossing, respectively.\end{property}
\begin{proof}
Recall that a continued fraction may be recursively defined by convergents $\frac{h_n}{k_n}$ where $h_{-1}=0,h_0=1$ and $h_n=a_nh_{n-1}+h_{n-2}$ for $n>1$, and  $k_{-1}=1,k_0=0$ and $k_n=a_nk_{n-1}+k_{n-2}$ for $n>1$. Now write $\frac{h_{r-1}}{k_{r-1}}=\pqzero$ and $\frac{h_{r}}{k_{r}}=\pqone$, then $\frac{p_0+p_1}{q_0+q_1}=\frac{h_{r}+h_{r-1}}{k_{r}+k_{r-1}}=[a_1,\ldots,a_r-1,1]$, so that applying Property \ref{pro:end} we have $\frac{p_0+p_1}{q_0+q_1}=[a_1,\ldots,a_r]=\pq$ as claimed.
\end{proof}

A particular example of Property \ref{pro:resolve} is illustrated in Figure \ref{fig:continued-fraction-2}. When $\pq=[a_1,\ldots,a_r]$ we will use the notation $\tau(\pq)=\tau[a_1,\ldots,a_r]$ for the closure where convenient.


\section{Branch sets and width bounds}\label{sec:width}

This section has two principle aims. First, we investigate the relationship between Dehn fillings of simple, strongly invertible knot manifolds and quasi-alternating links. The remainder of the section is devoted to establishing upper bounds on the homological width of branch sets associated with Dehn surgery on strongly invertible knots in $S^3$.  

\subsection{Quasi-alternating links}\label{sub:qa}

Given a strongly invertible knot $K\into S^3$, with fixed strong inversion, let $T=(B^3,\tau)$ be the associated quotient tangle. Choosing the preferred representative for $T$, we have that $\det(\tau(n))=|H_1(\Br(S^3,\tau(n));\bZ)|=|H_1(S^3_{n}(K);\bZ)|=n$ for $n>0$. Moreover, resolving the terminal crossing of $\tau(n+1)$ we have $\det(\tau(n+1))=\det(\tau(n))+\det(\tau(\overzero))$. This leads naturally to the notion of a quasi-alternating link.

\begin{definition}\label{def:qa}The set of quasi-alternating links $\sQ$ is the smallest set of links containing the trivial knot, and closed under the following relation: if $L$ admits a projection with distinguished crossing $L(\positive)$ so that \[\det(L(\positive))=\det(L(\zero))+\det(L(\one))\] for which $L(\zero),L(\one)\in\sQ$, then $L=L(\positive)\in\sQ$ as well. 
\end{definition} 

Ozsv\'ath and Szab\'o show that non-split, alternating links are quasi-alternating, and that $\Br(S^3,L)$ is an L-space whenever $L$ is quasi-alternating \cite{OSz2005-branch}. Recall that an L-space is a rational homology sphere $Y$ for which $\rk\HFhat(Y)=|H_1(Y;\bZ)|$ where $\HFhat$ denotes the Heegaard Floer homology of $Y$ (with $\bF$ coefficients). Manolescu and Ozsv\'ath have shown that quasi-alternating links are homologically thin \cite{MO2007}, generalizing Lee's result that non-split alternating links are thin \cite{Lee2005}. More generally:
\begin{proposition}\label{prp:branch-over-thin}The two-fold branched cover of a thin link is always an L-space.\end{proposition}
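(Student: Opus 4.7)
The plan is to combine the Ozsv\'ath--Szab\'o spectral sequence from reduced Khovanov homology to Heegaard Floer homology of the branched double cover with the basic fact that ``thin'' links realise the equality $\rk\Khred(L)=\det(L)$.

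First, I would check that $\Br(S^3,L)$ is a rational homology sphere, which is needed for the very definition of L-space. This is immediate from Proposition~\ref{prp:homologically-thick}: if $L$ is thin then $w(L)=1$, so necessarily $\det(L)\ne 0$, and hence $|H_1(\Br(S^3,L);\bZ)|=\det(L)$ is finite and nonzero.

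Next I would extract the crucial numerical consequence of thinness. Since $\Khred(L)$ is supported in a single $\delta$-grading, the Euler characteristic and the total rank coincide (up to sign, and we have already fixed $\chi\ge 0$). By Proposition~\ref{prp:Kh-Euler}, this gives
\[
\rk\Khred(L)\;=\;\chi(\Khred(L))\;=\;\det(L).
\]
Note that mirroring a link reflects its Khovanov homology across the $\delta=0$ axis (equivalently, reverses the sign of $\delta$), so the rank is mirror-invariant and thinness is preserved under mirroring; thus the same identity holds for $\bar L$.

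Now I would invoke the spectral sequence of Ozsv\'ath--Szab\'o \cite{OSz2005-branch}, whose $E_2$-page is (a shift of) $\Khred(\bar L)$ and which converges to $\widehat{\operatorname{HF}}(\Br(S^3,L))$, working throughout with $\bF$-coefficients. Since successive pages can only lose rank, this yields
\[
\rk\widehat{\operatorname{HF}}(\Br(S^3,L))\;\le\;\rk\Khred(\bar L)\;=\;\det(L)\;=\;\bigl|H_1(\Br(S^3,L);\bZ)\bigr|.
\]
Combined with the universal lower bound $\rk\widehat{\operatorname{HF}}(Y)\ge |H_1(Y;\bZ)|$ valid for any rational homology sphere, equality is forced, and therefore $\Br(S^3,L)$ is an L-space by definition.

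There is no serious obstacle here: the argument is a direct rank comparison. The only subtle bookkeeping is the mirror convention for the spectral sequence and confirming that the coefficient field $\bF=\bZ/2\bZ$ is the one in play for both theories, which it is throughout this paper.
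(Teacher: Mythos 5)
Your argument is essentially identical to the paper's: both deduce $\rk\Khred(L)=\det(L)$ from thinness via Proposition~\ref{prp:Kh-Euler} and then invoke the Ozsv\'ath--Szab\'o spectral sequence bound $\det(L)\le\rk\HFhat(\Br(S^3,L))\le\rk\Khred(L)$ to force equality. You merely unpack the cited inequality into its two halves and record the mirror and rational-homology-sphere bookkeeping explicitly, which the paper leaves implicit.
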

\begin{proof}When $L$ is a thin link, $\rk\Khred(L)=\det(L)$ as a consequence of Proposition \ref{prp:Kh-Euler}. However, as a result of the spectral sequence relating Khovanov homology and Heegaard Floer homology for two-fold branched covers, $\det(L)\le\rk\HFhat(\Br(S^3,L))\le\rk\Khred(L)$ \cite[Corollary 1.2]{OSz2005-branch}. Since $\det(L)=|H_1(\Br(S^3,L);\bZ)|$, the result follows.\end{proof}
Note however that the converse is false: the Poincar\'e homology sphere arises as the two-fold branched cover of $10_{124}$ (shown in Figure \ref{fig:Kh-two-examples}). This manifold is an L-space with thick branch set.

Let $M$ be a simple, strongly invertible, knot manifold. Suppose $\alpha$ and $\beta$ are a pair of slopes in $\partial M$ with $\alpha\cdot\beta=+1$. Fix a compatible representative for the associated quotient tangle $T=(B^3,\tau)$ with the property that $M(\alpha)=\Br(S^3,\tau(\frac{1}{0}))$ and $M(\beta)=\Br(S^3,\tau(0))$.

\begin{proposition}\label{prp:qa-base-case}If $\tau(\frac{1}{0})$ and $\tau(0)$ are quasi-alternating, and $\alpha\cdot\lm,\beta\cdot\lm>0$, then $\tau(1)$ is quasi-alternating as well.\end{proposition}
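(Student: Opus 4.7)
The plan is to apply Definition \ref{def:qa} directly to the link $\tau(1)$, using the terminal crossing of its $[1]$-closure as the distinguished crossing. The argument splits into a topological step (identifying the two resolutions) and an arithmetic step (verifying additivity of determinants via Lemma \ref{lem:cm}).

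First I would identify the two resolutions. Since $1 = [1]$ has length $r=1$, the braid word is a single positive power of $\si_2$, so $\tau(1)$ has exactly one terminal crossing in the sense of Definition \ref{def:terminal-crossing}. By Property \ref{pro:resolve}, writing $1 = \frac{0+1}{1+0}$ exhibits the $0$- and $1$-resolutions of this terminal crossing as $\tau(0)$ and $\tau(\tfrac{1}{0})$ (in some order). By hypothesis both resolutions are quasi-alternating, so the recursive clause of Definition \ref{def:qa} will apply provided we can check the determinant equation
\[
\det\bigl(\tau(1)\bigr) \;=\; \det\bigl(\tau(0)\bigr) + \det\bigl(\tau(\tfrac{1}{0})\bigr).
\]

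Next I would verify this identity using the compatibility of the representative with the basis $(\alpha,\beta)$ and Lemma \ref{lem:cm}. By Proposition \ref{prp:pq} and the hypothesis that $T$ is a compatible representative, $\Br(S^3,\tau(\tfrac{p}{q})) \cong M(p\alpha + q\beta)$, so Lemma \ref{lem:cm} gives
\[
\det\bigl(\tau(\tfrac{p}{q})\bigr) \;=\; c_M\,\Delta(p\alpha+q\beta,\lm).
\]
Applying this to $\tfrac{p}{q}\in\{\tfrac{1}{0},0,1\}$ reduces the desired equality to
\[
\Delta(\alpha+\beta,\lm) \;=\; \Delta(\alpha,\lm) + \Delta(\beta,\lm).
\]
Choosing an orientation on $\lm$ and orienting $\alpha,\beta$ so that both $\alpha\cdot\lm$ and $\beta\cdot\lm$ are positive (this is exactly the hypothesis $\alpha\cdot\lm,\beta\cdot\lm > 0$), the algebraic intersection numbers of $\alpha$, $\beta$, and $\alpha+\beta$ with $\lm$ share a common sign, so the minimal geometric intersection numbers add without cancellation. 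This yields the determinant identity.

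The expected obstacle is essentially bookkeeping: one must make sure that the hypothesis $\alpha\cdot\lm,\beta\cdot\lm>0$ really does upgrade to an honest statement about the same sign of algebraic intersections (since slopes are only defined up to sign, one has to fix representatives carefully), and that the terminal crossing of $\tau(1)$ is a genuine crossing of the diagram (which forces the mild requirement that neither resolution is the whole link $\tau(1)$, guaranteed by $\alpha\cdot\lm,\beta\cdot\lm > 0$, since this ensures $\det(\tau(\tfrac{1}{0})),\det(\tau(0)) > 0$ and hence the resolutions are distinct from $\tau(1)$). Once this is in place, the conclusion follows from Definition \ref{def:qa}.
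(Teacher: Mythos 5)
Your proposal is correct and follows essentially the same route as the paper: identify the terminal crossing of $\tau(1)$, reduce the determinant identity via Lemma \ref{lem:cm} to additivity of $\Delta(\cdot,\lm)$, and observe that the hypothesis $\alpha\cdot\lm,\beta\cdot\lm>0$ prevents cancellation in $|\alpha\cdot\lm + \beta\cdot\lm|$. The paper simply carries out the $c_M\Delta$ computation directly in a chain of equalities rather than isolating the geometric-intersection claim, but the content and key use of the positivity hypothesis are identical.
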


\begin{remark}Note that the quasi-alternating hypothesis ensures that $\det(\tau(\overzero))$ and $\det(\tau(0))$ must be non-zero, hence neither $\alpha$ nor $\beta$ coincides with the rational longitude.\end{remark}

\begin{proof}[Proof of Proposition \ref{prp:qa-base-case}]
We need to calculate $\det(\tau(1))$. To this end, by applying Lemma \ref{lem:cm} we have that
\begin{align*}
\det(\tau(1)) &= |H_1(M(\alpha+\beta);\bZ)| \\
&= c_M\Delta(\alpha+\beta,\lm) \\
&= c_M|(\alpha+\beta)\cdot\lm| \\
&= c_M|\alpha\cdot\lm+\beta\cdot\lm| \\
&= c_M|\alpha\cdot\lm|+c_M|\beta\cdot\lm| \\
&= c_M\Delta(\alpha,\lm)+c_M\Delta(\beta,\lm) \\
&= |H_1(M(\alpha);\bZ)|+|H_1(M(\beta);\bZ)| \\
&= \det(\tau(\textstyle\frac{1}{0})) + \det(\tau(0)), 
\end{align*}
which verifies that $\tau(1)$ is a quasi-alternating link, since both $\tau(\overzero)$ and $\tau(0)$ are quasi-alternating by hypothesis.
\end{proof}

\begin{remark}The condition on intersection with $\lm$ may be relaxed at the expense of taking mirrors. For any $M(\alpha)$ and $M(\beta)$ with quasi-alternating branch sets $\tau(\overzero)$ and $\tau(0)$ respectively, we can ensure positive intersection with $\lm$ at the expense of $\alpha\cdot\beta=\pm1$. In the case that $\alpha\cdot\beta=-1$, the same argument works by passing to mirrors. Any quasi-alternating link has quasi-alternating mirror image and as a result if $\tau(\overzero)$ and $\tau(0)$ are quasi-alternating then one of $\tau(-1)$ or $\tau(1)$ is quasi-alternating. \end{remark}

\begin{definition}A triad of links $(\tau(\frac{1}{0}),\tau(0),\tau(1))$ corresponds to a triple of slopes $(\alpha,\beta,\alpha+\beta)$ in the boundary of $M=\Br(B^3,\tau)$ where $\alpha\cdot\beta=1$, $\alpha\cdot\lm>0$, and $\beta\cdot\lm>0$.\end{definition}

The requirement that $\alpha$ and $\beta$ intersect positively with $\lm$ is stronger than necessary, since it is attainable up to taking mirrors. However, with this assumption we have: 

\begin{theorem}\label{thm:quasi-alternating}A triad of links, for which $\tau(\frac{1}{0})$ and $\tau(0)$ are quasi-alternating, gives rise to an infinite family of quasi-alternating links $\tau(\pq)$, for $\pq\ge0$.\end{theorem}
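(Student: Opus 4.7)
The plan is to induct on the sum $s(\pq) = a_1 + \cdots + a_r$ of entries in the continued fraction expansion $\pq = [a_1,\ldots,a_r]$, with base cases supplied by the hypothesis together with Proposition \ref{prp:qa-base-case}. Specifically, $\tau(\overzero)$ and $\tau(0) = \tau([0])$ are quasi-alternating by hypothesis (and have $s = 0$), while $\tau(1) = \tau([1])$ is quasi-alternating by Proposition \ref{prp:qa-base-case} (with $s = 1$). For the inductive step, fix $\pq = [a_1,\ldots,a_r]$ with $s(\pq) \ge 2$; by Property \ref{pro:end} we may assume $a_r > 1$. Let $\pqzero$ and $\pqone$ denote the continued fractions associated with the $0$- and $1$-resolution of the terminal crossing of $\tau(\pq)$. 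By the discussion preceding Property \ref{pro:resolve}, one of these is $[a_1,\ldots,a_{r-1}]$ and the other is $[a_1,\ldots,a_r-1]$; each is non-negative and has strictly smaller entry-sum, so by induction both $\tau(\pqzero)$ and $\tau(\pqone)$ are quasi-alternating.

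It remains to verify the determinant identity required by Definition \ref{def:qa}. Working with the compatible representative for $T$ associated with the triad, so that $\tau(\pq)$ is a branch set for $M(p\alpha + q\beta)$, Lemma \ref{lem:cm} gives
\[
\det(\tau(\textstyle\pq)) \;=\; c_M\,\Delta(p\alpha + q\beta,\lm).
\]
Because $\alpha\cdot\lm > 0$ and $\beta\cdot\lm > 0$ by the definition of a triad, and because $p,q \ge 0$ with $(p,q)\ne(0,0)$, the intersection $(p\alpha + q\beta)\cdot\lm = p(\alpha\cdot\lm) + q(\beta\cdot\lm)$ is strictly positive, so the absolute value is superfluous. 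The same observation applies to the pairs $(p_0,q_0)$ and $(p_1,q_1)$, all of whose entries are non-negative (as these are the numerators and denominators of non-negative continued fractions). Combining these with Property \ref{pro:resolve}, which asserts $p = p_0 + p_1$ and $q = q_0 + q_1$, yields
\[
\det(\tau(\textstyle\pq)) \;=\; c_M\bigl[(p_0+p_1)(\alpha\cdot\lm) + (q_0+q_1)(\beta\cdot\lm)\bigr] \;=\; \det(\tau(\textstyle\pqzero)) + \det(\tau(\textstyle\pqone)).
\]
Since both resolutions are quasi-alternating by the inductive hypothesis and have positive determinant, Definition \ref{def:qa} applied to the terminal crossing shows that $\tau(\pq)$ is quasi-alternating, completing the induction.

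The only real subtlety is bookkeeping: one must be sure the resolutions always lie in the non-negative range where the induction hypothesis applies, and that Property \ref{pro:end} may be invoked freely to keep $a_r > 1$ without altering either $s(\pq)$ or the isotopy type of $\tau(\pq)$. The geometric content—that determinants add under the resolution of the terminal crossing—collapses to the elementary fact that a positive linear functional on $H_1(\partial M;\bZ)$ is additive on the non-negative cone, which is precisely why the triad condition $\alpha\cdot\lm,\beta\cdot\lm>0$ (rather than merely $\alpha\ne\lm\ne\beta$) is imposed in the definition.
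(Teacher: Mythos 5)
Your proof is correct, and it streamlines the paper's argument by trading a nested induction for a single one. The paper first establishes the integer case $\tau(n)$ by induction on $n$, then runs an outer induction on the length $r$ of the continued fraction, invoking a second inner induction on the last entry $a_r$ to reduce $[a_1,\ldots,a_r]$ to $[a_1,\ldots,a_{r-1}+1]$ via Property \ref{pro:end}. Your choice of the entry-sum $s(\pq)=a_1+\cdots+a_r$ as the induction variable collapses all of this into one pass: after normalizing via Property \ref{pro:end} to ensure $a_r>1$ (a move which you correctly note leaves $s$ unchanged), both resolutions of the terminal crossing strictly decrease $s$, so the inductive hypothesis applies immediately. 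The geometric content --- that determinants add under resolution, which you rightly isolate as the positivity of the linear functional $\ \cdot\ \cdot\lm$ on the non-negative cone in $H_1(\partial M;\bZ)$, via Lemma \ref{lem:cm} --- is identical to the paper's. One minor bookkeeping point worth a sentence, which the paper also glosses over: the sign of the terminal crossing alternates with the parity of $r$, but Definition \ref{def:qa} is insensitive to which resolution is labelled $0$ and which is labelled $1$, so this causes no trouble. Your version is arguably cleaner to read, at the cost of burying the integer case (which is conceptually the base of the paper's argument) inside the general induction.
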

\begin{proof} First observe that $\tau(n)$ is quasi-alternating for every $n\ge 0$. This is immediate by induction in $n$, since $\tau(0)$ is quasi-alternating and 
\begin{align*}
\det(\tau(n)) &= |H_1(M(n\alpha+\beta);\bZ)| \\
&= c_M\Delta(n\alpha+\beta,\lm) \\
&= c_M|(n\alpha+\beta)\cdot\lm| \\
&= c_M|n\alpha\cdot\lm+\beta\cdot\lm| \\
&= c_M|\alpha\cdot\lm|+c_M|(n-1)\alpha+\beta\cdot\lm| \\
&= c_M\Delta(\alpha,\lm)+c_M\Delta((n-1)\alpha+\beta,\lm) \\
&= |H_1(M(\alpha);\bZ)|+|H_1(M((n-1)\alpha+\beta);\bZ)| \\
&= \det(\tau(\textstyle\frac{1}{0})) + \det(\tau(n-1)), 
\end{align*}
for $n>0$ as in Proposition \ref{prp:qa-base-case}.

For $\tau(\pq)$, we need a second induction in the length of the continued fraction $\pq=[a_1,\ldots,a_r]$. The base case $r=1$ is the observation above that $\tau(n)$ is quasi-alternating, applying Property \ref{pro:floor}.

Suppose then that $\tau(\pq)$ is quasi-alternating for all $\pq\ge0$ that may be represented by a continued fraction of length $r-1$. By resolving the terminal crossing and applying Property \ref{pro:resolve} for $\pq=[a_1,\ldots,a_r]$,
\begin{align*}
\det(\tau(\textstyle\pq)) 
&= |H_1(M(p\alpha+q\beta);\bZ)| \\
&= c_M\Delta(p\alpha+q\beta,\lm) \\
&= c_M|(p\alpha+q\beta)\cdot\lm| \\
&= c_M|(p_0+p_1)\alpha\cdot\lm+(q_0+q_1)\beta\cdot\lm| \\
&= c_M|(p_0\alpha+q_0\beta)\cdot\lm|+c_M|(p_1\alpha+q_1\beta)\cdot\lm| \\
&= c_M\Delta(p_0\alpha+q_0\beta,\lm)+c_M\Delta(p_1\alpha+q_1\beta,\lm) \\
&= |H_1(M(p_0\alpha+q_0\beta);\bZ)|+|H_1(M(p_1\alpha+q_1\beta);\bZ)| \\
&= \det(\tau(\textstyle\pqzero)) + \det(\tau(\textstyle\pqone))
\end{align*}
where $\pqzero$ and $\pqone$ are the continued fractions $[a_1,\ldots,a_{r-1}]$ and $[a_1,\ldots,a_{r}-1]$. This gives a continued fraction of length $r-1$ for which the corresponding link must be quasi-alternating by the induction hypothesis, and a continued fraction $[a_1,\ldots,a_{r}-1]$ with $r^{\rm th}$ entry reduced by one. 

Since $[a_1,\ldots,a_{r-1},1]=[a_1,\ldots,a_{r-1}+1]$ by Property \ref{pro:end}, repeating the above argument $a_r-1$ times (i.e. a second induction in $a_r$ as in the case $\tau(n)$) completes the induction.
\end{proof}

\subsection{Branch sets for L-spaces obtained from Berge knots}

Theorem \ref{thm:quasi-alternating} gives a tool with which to build large classes of quasi-alternating links and study the overlap between certain classes of L-spaces. In particular, we have that any quasi-alternating knot gives rise to an infinite family of quasi-alternating links, analogous to the behaviour of L-spaces. For example, it is well known that any sufficiently large surgery on a torus knot, or more generally a Berge knot, gives rise to an L-space \cite{OSz2005-lens}.  

\begin{proposition}\label{prp:Berge-quasi-alternating} For large enough integer surgery coefficient $N$, the branch set for $S^3_N(K)$ is quasi-alternating whenever $K$ is a Berge knot. Moreover, for every $\pq\ge N$ the branch set associated with $\pq$-surgery on $K$ must be quasi-alternating. \end{proposition}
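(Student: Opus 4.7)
The plan is to apply Theorem \ref{thm:quasi-alternating} after reframing so that the Berge surgery slope becomes the denominator closure in an associated quotient tangle. Fix a strong inversion on the Berge knot $K$, let $T=(B^3,\tau)$ be an associated quotient tangle, and let $s$ denote an integer slope for which $S^3_s(K)$ is a lens space. After replacing $K$ by its mirror if necessary, we may assume $s>0$. By Schubert's classical description, every lens space is a two-fold branched cover of $S^3$ branched over a two-bridge link; hence the branch set of $S^3_s(K)$ is alternating, and so quasi-alternating by the result of Ozsv\'ath--Szab\'o \cite{OSz2005-branch}. The unknot is of course quasi-alternating as well.

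Next I would invoke Corollary \ref{crl:basis} to choose a representative of $T$ compatible with the basis $(\alpha,\beta)=(\mu,s\mu+\lambda)$, where $(\mu,\lambda)$ is the meridian--longitude basis. In this framing Proposition \ref{prp:pq} identifies $\tau(\tfrac{1}{0})$ with the branch set of $M(\alpha)=S^3$, namely the unknot, and $\tau(0)$ with the two-bridge branch set of $M(\beta)=S^3_s(K)$; both are quasi-alternating by the previous paragraph. Since the rational longitude for a knot complement in $S^3$ is $\lm=\lambda$, we have $\alpha\cdot\lm=1$ and $\beta\cdot\lm=s>0$, and by construction $\alpha\cdot\beta=1$. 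Thus $(\tau(\tfrac{1}{0}),\tau(0),\tau(1))$ is a triad in the sense of the definition preceding Theorem \ref{thm:quasi-alternating}.

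Theorem \ref{thm:quasi-alternating} then yields that $\tau(\tfrac{p}{q})$ is quasi-alternating for every $\tfrac{p}{q}\ge 0$ in the chosen basis. To translate back, observe that $p\alpha+q\beta=(p+qs)\mu+q\lambda$, so the standard surgery coefficient is $s+\tfrac{p}{q}$; the range $\tfrac{p}{q}\ge 0$ corresponds precisely to the surgeries $S^3_{P/Q}(K)$ with $P/Q\ge s$. Setting $N=s$ gives both the existence statement (taking $P/Q=N$) and the ``moreover'' clause at once.

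I do not anticipate any serious obstacle. The argument is largely a bookkeeping exercise once the right basis is chosen; the only conceptual content is recognizing that the inductive construction in Theorem \ref{thm:quasi-alternating} is seeded, in an appropriately reframed tangle, by the unknot on one side and the two-bridge branch set of the lens space on the other. The one step worth scrutinizing is the basis change, where Corollary \ref{crl:basis} and Proposition \ref{prp:pq} must be combined to identify $\tau(0)$ as the two-bridge branch set of $S^3_s(K)$.
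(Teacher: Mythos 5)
Your proof is correct and follows essentially the same line as the paper's: invoke Osborne to get strong invertibility, reframe the associated quotient tangle so that $\tau(\overzero)$ is the unknot and $\tau(0)$ is the branch set of the lens space surgery, note both seeds are quasi-alternating, and run Theorem \ref{thm:quasi-alternating}.

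One small slip worth flagging: you appeal to ``Schubert's classical description'' to conclude that the branch set of $S^3_s(K)$ is two-bridge. Schubert's result goes the other way --- two-bridge links have lens space double covers --- and does not by itself tell you that the branch set produced by \emph{your particular} involution on the lens space is two-bridge. What you need is the converse/uniqueness statement that every involution on a lens space with $S^3$ quotient has two-bridge branch set, which is due to Hodgson and Rubinstein \cite{HR1985} (the citation the paper uses). Everything else --- the choice of compatible basis via Corollary \ref{crl:basis}, the explicit verification that $\alpha\cdot\lm=1>0$ and $\beta\cdot\lm=s>0$ so the triad hypothesis holds, and the translation $p\alpha+q\beta=(p+qs)\mu+q\lambda$ giving the range $\pq\ge N$ --- is correct and, if anything, slightly more careful about the triad hypotheses than the paper's own proof.
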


\begin{proof} For any Berge knot $K$ there is some integer $N$, positive up to taking mirrors, with the property that $S^3_N(K)$ is a lens space \cite{Berge}. As a result, $(\mu,N\mu+\lambda,(N+1)\mu+\lambda)$ gives a triad of slopes, in terms of the preferred basis $(\mu,\lambda)$ for $K$.

Moreover, since Berge knots are strongly invertible \cite{Osborne1981}, there is an associated quotient tangle $T=(B^3,\tau)$ with representative so chosen so that $\tau(\frac{1}{0})$ is unknotted, and $S^3_N(K)=\Br(S^3,\tau(0))$. By construction, both branch sets are quasi-alternating: the trivial knot $\tau(\overzero)$ and some non-split 2-bridge link $\tau(0)$ since $\Br(S^3,\tau(0))$ is a lens space \cite{HR1985}. 

Now applying Theorem \ref{thm:quasi-alternating}, $\tau(\pq)$ must be quasi-alternating for every $\pq\ge0$, so that the L-space $S^3_{(Nq+p)/q}(K)$  is branched over $S^3$ with quasi-alternating branch set $\tau(\pq)$.
\end{proof}

As a result, many of the L-spaces arising as surgery on a Berge knot are also obtained as two-fold branched covers of quasi-alternating links. This implies in particular that the corresponding branch sets have thin Khovanov homology by work of Manolescu and Ozsv\'ath \cite{MO2007}. Although this cannot be the case for all possible fillings when $K$ is non-trivial (see Theorem \ref{thm:trivial-knot}), it turns out that in terms of homological width, the branch set corresponding to a filling of a Berge knot cannot be too much more complicated.

\begin{proposition}\label{prp:width-bound-Berge} Surgery on a Berge knot has branch set with width at most 2.\end{proposition}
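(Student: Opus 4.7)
My approach is to extend the quasi-alternating range from Proposition \ref{prp:Berge-quasi-alternating} by an induction using the Khovanov skein exact sequence and careful tracking of absolute $\delta$-gradings via the $\sigma$-normalization $\Khsig$.

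I would start by fixing the reframed representative of the associated quotient tangle $T=(B^3,\tau)$ for the Berge knot $K$ used in the proof of Proposition \ref{prp:Berge-quasi-alternating}, so that $\tau(\overzero)$ is the unknot and $\tau(0)$ is the $2$-bridge link branched-covered by the lens space surgery $S^3_N(K)$. Both of these closures are quasi-alternating, and Proposition \ref{prp:Berge-quasi-alternating} already yields $w(\tau(\pq))=1$ for every $\pq\ge 0$ in this representative; this handles every L-space filling of $K$ in one stroke.

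For fillings with $\pq<0$ in this framing, I would induct on the length of the continued fraction expansion, resolving the terminal crossing at each step. The base case $\pq=-1$ is clean: the two terminal resolutions are $\tau(0)$ and $\tau(\overzero)$, both thin, so the mapping cone
\[
\Khred(\tau(-1))\cong H_\ast\bigl(\Khred(\tau(0))[s_0]\to\Khred(\tau(\overzero))[s_1]\bigr)
\]
is automatically supported on at most two adjacent $\delta$-gradings. In the inductive step for $\pq<-1$, one of the two resolutions of $\tau(\pq)$ falls in the quasi-alternating half-plane $\pq\ge 0$, hence is thin with its $\delta$-grading pinned by the signature via the Manolescu--Ozsv\'ath characterization; the other resolution has width at most $2$ by the inductive hypothesis. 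The heart of the argument is then a signature computation in $\Khsig$ verifying that the pair of grading shifts $[s_0],[s_1]$ align these two pieces on a common pair of adjacent diagonals, preventing the cone from spreading onto a third diagonal.

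The main obstacle is the rational-longitude slope $\pq=-N$, where $\det(\tau(-N))=0$ and the standard Manolescu--Ozsv\'ath exact sequence of Proposition \ref{prp:mo} does not apply. Here I would invoke the degenerate exact sequence of Proposition \ref{prp:mo-perturbed}, whose $\tfrac{1}{2}$-shift of $\Khsig$ is exactly what is needed to compensate for the missing determinant; a Gordon--Litherland calculation then confirms that the resulting support remains confined to two adjacent $\delta$-gradings. Once the induction crosses this degenerate slope, it proceeds uniformly along the Farey tessellation and establishes $w(\tau(\pq))\le 2$ for every surgery slope on $K$.
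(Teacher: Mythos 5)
Your setup — fixing the representative with $\tau(\overzero)$ the unknot and $\tau(0)$ the two-bridge link covered by the lens space $S^3_N(K)$, and invoking Proposition~\ref{prp:Berge-quasi-alternating} so that $w(\tau(\pq))=1$ for all $\pq\ge 0$ — matches the paper. But the inductive step for negative slopes is broken. You claim that for $\pq<-1$, resolving the terminal crossing produces one resolution in the ``quasi-alternating half-plane $\pq\ge 0$''; this is false. Writing $\pq=[a_1,\ldots,a_r]$ with $a_1\le 0$ and $a_i<0$ for $i>1$, the two resolutions $[a_1,\ldots,a_{r-1}]$ and $[a_1,\ldots,a_r+1]$ both remain strictly negative for $\pq<-1$ (the slope $\overzero$ only appears as a resolution of the integer slopes $[a_1]$). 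So neither piece of your mapping cone is known to be thin, and the recursion provides no control on the $\delta$-support. Without a thin piece pinned by Proposition~\ref{prp:mo}, a mapping cone of two width-$2$ groups can a priori have width $3$, and your ``signature computation'' sketch does not supply the missing argument that the supports overlap on two adjacent diagonals.

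The paper avoids this trap entirely by never resolving along negative continued fractions. It first proves a general upper-bound result (Proposition~\ref{prp:width-upper-bound}): $w(\tau(\pq))\le\wmax$ for all $\pq\in\bQ$, where $\wmax=\max_n w(\tau(n))$, with the continued-fraction induction done only for $\pq\ge 0$ and all negative slopes handled by passing to the mirror $\tau^\star$. It also shows, via the iterated mapping cone in Lemma~\ref{lem:general-stability} (specifically the negative-surgery form, Lemma~\ref{lem:general-stability-negative}), that $\Khred(\tau(-n))\cong H_*\bigl(\bF[\bZ/n\bZ]\to\Khred(\tau(0))\bigr)$ with both source and target each supported in a single $\delta$-grading, forcing $w(\tau(n))\le 2$ for $n<0$; this is what the citation of Lemma~\ref{lem:max-min} encodes. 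Combining $\wmax\le 2$ with Proposition~\ref{prp:width-upper-bound} then gives the bound for every slope, degenerate one included. Your instinct to use Proposition~\ref{prp:mo-perturbed} at the slope where the determinant vanishes is correct and is exactly what the paper's Proposition~\ref{prp:width-upper-bound} does internally, but as a fix for your argument you also need to replace the false thinness claim in the inductive step by the actual support-inclusion argument (supports of $\Khsig(\tau(\pqzero))$ and $\Khsig(\tau(\pqone))$ differ only at one end) that the paper develops.
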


The proof of this statement (given in Section \ref{sub:upper}) is a consequence of a particular stable behaviour for branch sets associated with Dehn surgery, which we develop in the following sections.

\subsection{A mapping cone for integer surgeries}

Given a strongly invertible knot $K\into S^3$, with fixed strong inversion, let $T=(B^3,\tau)$ be the preferred representative of the associated quotient tangle. Therefore, $\tau(\overzero)$ is the trivial knot, and $S^3_0(K)\cong\Br(S^3,\tau(0))$. As a result, $\Khred(\tau(\overzero))\cong \bF$, and $w(\tau(0))>1$ since $\det(\tau(0))=0$ (see Proposition \ref{prp:homologically-thick}). Notice that $\tau(0)$ is a two component link. 

\labellist 
\pinlabel $T$ at 305 397 \endlabellist 
\parpic[r]{$\begin{array}{c}\raisebox{0pt}{\includegraphics[scale=0.25]{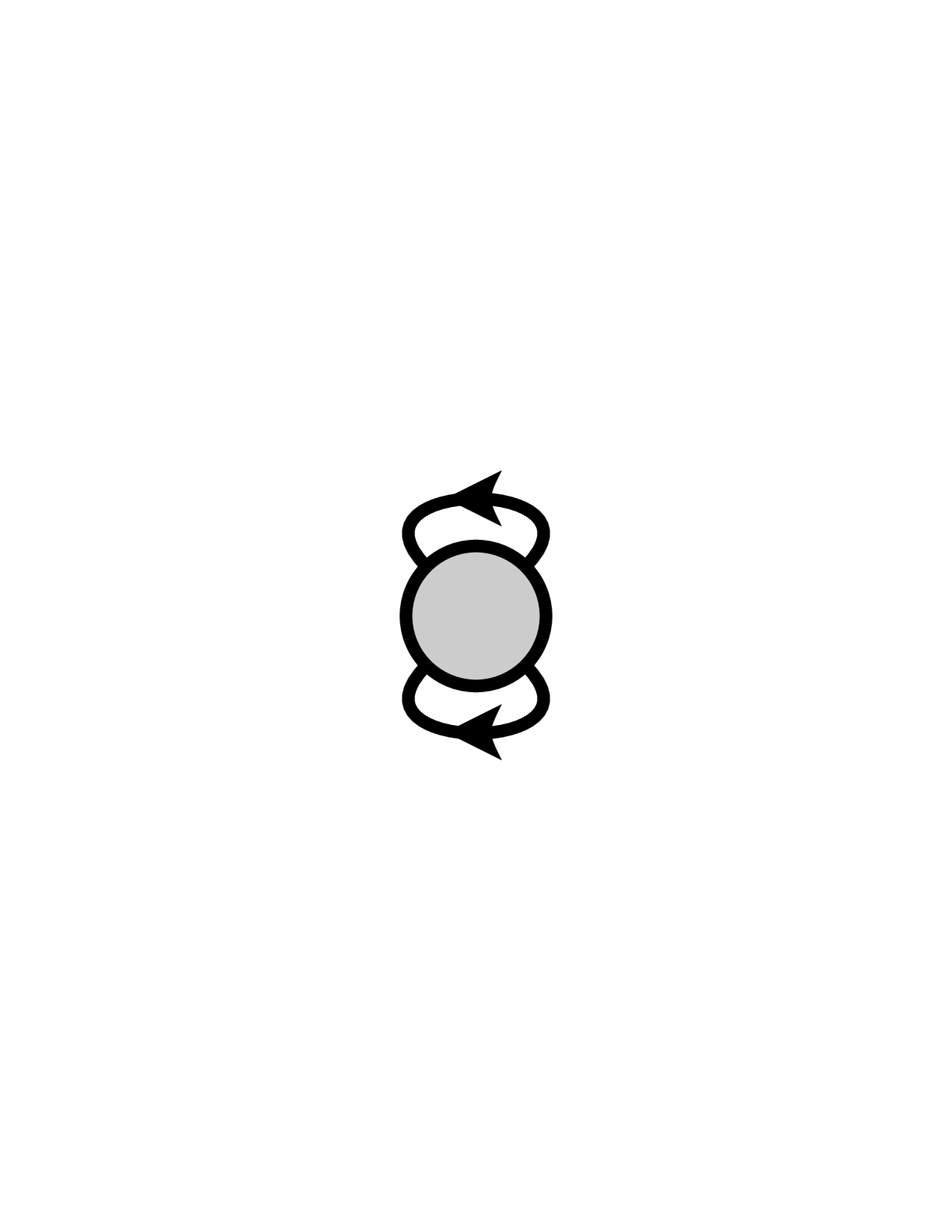}}\end{array}$}
In the interest of studying the Khovanov homology of the branch sets associated with integer surgery, we choose the orientation on $\tau(0)$ shown on the right. That this is possible follows from the fact that $\tau(\overzero)$ is the trivial knot; that such a choice is copacetic results from the fact that $\Khred(\tau(0))$, in the present context, is a relatively bi-graded group. Recall that only the absolute grading depends on orientation, as in Section \ref{sec:kh}, so we are free to fix any orientation. With this orientation on $\tau(0)$, there is a natural constant related to a fixed diagram for a representative of the associated quotient tangle $c_\tau=n_-(\tau(\textstyle\overzero))-n_-(\tau(0))$. Since $\tau(\overzero)$ has a single component, $c_\tau$ is independent of choice of orientation on $\tau(\overzero)$. 

For example, we may rewrite the mapping cones in Khovanov homology as
\[\textstyle\Khred(\tau(1))\cong 
H_*\left(\Khred(\tau(0))[-\frac{1}{2},\frac{1}{2}]\to\Khred(\tau(\overzero))[-\frac{c_\tau}{2},\frac{3c_\tau+2}{2}]\right)\] since $c=n_-(\tau(\overzero))-n_-(\tau(1))=n_-(\tau(\overzero))-n_-(\tau(0))=c_\tau$, and 
\[\textstyle\Khred(\tau(-1))\cong 
H_*\left(\Khred(\tau(\overzero))[-\frac{c_\tau}{2},\frac{3c_\tau-2}{2}]\to\Khred(\tau(0))[\frac{1}{2},-\frac{1}{2}]\right)\] since $c=n_-(\tau(\overzero))-n_-(\tau(-1))=n_-(\tau(\overzero))-n_-(\tau(0))-1=c_\tau-1$. Notice that in this second case there is an overall $[1,0]$ shift  (which may be ignored, as our interest is in the relative gradings and not the absolute gradings) so that 
\[\textstyle\Khred(\tau(-1))\cong 
H_*\left(\Khred(\tau(\overzero))[-\frac{c_\tau}{2},\frac{3c_\tau-2}{2}][-1,0]\to\Khred(\tau(0))[-\frac{1}{2},-\frac{1}{2}]\right)[1,0]\] which allows comparison of the homology of $\tau(\pm 1)$ in terms of $\Khred(\tau(0))$ and the new generator $\Khred(\tau(\overzero))\cong\bF$. More generally, we have:
\begin{lemma}\label{lem:general-stability} For any integer $m$, and positive integer $n$, \[\textstyle\Khred(\tau(m+n))\cong H_*\left(\Khred(\tau(m))\to\bigoplus_n\Khred(\tau(\overzero))\right)\] as a relatively $\bZ\oplus\bZ$-graded group, where the integer $m$ may be interpreted as a change of framing. More precisely, there exist explicit constants $x$ and $y$ and an identification \[\textstyle\bigoplus_{q=0}^{n-1}\Khred(\tau(\textstyle\overzero))[x,y][0,q]\cong\bF[\bZ/n\bZ]\] as graded $\bF$-vector spaces so that \[\textstyle\Khred(\tau(m+n))\cong H_*\left(\Khred(\tau(m))\to\bF[\bZ/n\bZ]\right).\]\end{lemma}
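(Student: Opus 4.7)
The plan is to proceed by induction on $n$, iterating the mapping cone form of the skein exact sequence applied to the terminal crossing of $\tau(m+n)$. For the base case $n=1$, Property \ref{pro:resolve} identifies the two resolutions of the terminal crossing of $\tau[m+1]$ as $\tau(m)$ (the $0$-resolution) and $\tau(\overzero)$ (the $1$-resolution); since the added crossing is the last $\sigma_2$ in the defining braid word, the skein mapping cone from Section \ref{sec:kh} yields
\[
\Khred(\tau(m+1)) \cong H_*\bigl(\Khred(\tau(m))[-\half,\half] \to \Khred(\tau(\overzero))[-\tfrac{c_1}{2},\tfrac{3c_1+2}{2}]\bigr),
\]
with $c_1 = n_-(\tau(\overzero)) - n_-(\tau(m+1))$. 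This is precisely the claim for $n=1$.

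For the inductive step, assume the statement for $n-1$. The same skein sequence applied to the terminal crossing of $\tau(m+n)$ yields
\[
\Khred(\tau(m+n)) \cong H_*\bigl(\Khred(\tau(m+n-1))[-\half,\half] \to \Khred(\tau(\overzero))[-\tfrac{c_n}{2},\tfrac{3c_n+2}{2}]\bigr),
\]
and substituting the inductive hypothesis for $\Khred(\tau(m+n-1))$ produces a nested mapping cone. Because we work over the field $\bF$, every chain complex is quasi-isomorphic to its homology, and nested mapping cones compose to a single mapping cone whose codomain is assembled from those at each stage. Thus $\Khred(\tau(m+n))$ is the homology of a single cone from $\Khred(\tau(m))$ into a complex whose underlying graded vector space is a direct sum of $n$ copies of $\Khred(\tau(\overzero))$.

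To identify this codomain with $\bF[\bZ/n\bZ]$ up to an overall shift $[x,y]$, one checks that adjoining a single positive $\sigma_2$ crossing at step $k$ alters $c_k$ by a fixed amount (one compares $n_-$ for $\tau(m+k-1)$ and $\tau(m+k)$, which differ by the introduction of one crossing). Combined with the additional shift $[-\half,\half]$ picked up by the ``earlier'' copies as they pass through successive outer cones, the net effect is that consecutive copies of $\Khred(\tau(\overzero))$ sit in $q$-gradings differing by a uniform increment, and in the same $\delta$-grading. The $n$ copies therefore assemble as $\bigoplus_{q=0}^{n-1}\Khred(\tau(\overzero))[x,y][0,q]$, which under the identification $\Khred(\tau(\overzero))\cong\bF$ is $\bF[\bZ/n\bZ]$ as a graded vector space. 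The observation that $m$ may be interpreted as a change of framing records that altering $m$ simply reframes the base of the stabilization without affecting the structural form of the iterated cone.

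The main obstacle is tracking the bigrading shifts carefully enough to verify that the $n$ accumulated summands interleave in consecutive $q$-gradings rather than overlapping or appearing irregularly. This is a combinatorial bookkeeping task, reducing to the observation that $c_k$ depends linearly on $k$ with the expected slope; once this is in place the conceptual content of the lemma is entirely captured by the skein sequence together with the composition of mapping cones over a field.
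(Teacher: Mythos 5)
Your proposal follows essentially the same approach as the paper's proof: induction on $n$, iterating the mapping cone from the skein exact sequence applied to the terminal crossing, followed by tracking the grading shifts to show the $n$ copies of $\Khred(\tau(\overzero))$ assemble into a truncated polynomial ring sitting in a single $\delta$-grading. The base case and inductive substitution match the paper's argument, and you do arrive at the correct structure in the end.

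One step deserves a more careful statement. You write that working over $\bF$ means nested mapping cones ``compose to a single mapping cone whose codomain is assembled from those at each stage,'' and later conclude that, since the copies of $\Khred(\tau(\overzero))$ sit in the same $\delta$-grading, they ``therefore assemble as $\bigoplus_{q=0}^{n-1}\Khred(\tau(\overzero))[x,y][0,q]$.'' The unwinding of an iterated cone over a field gives a cone into a \emph{complex} built from those $n$ copies, not automatically a cone into the direct sum with zero differential. The missing logical link --- and the crux of the argument in the paper --- is that the connecting homomorphism raises $\delta$-grading by one, so once you know the $n$ copies of $\bF$ all lie in the same $\delta$-grading there simply cannot be a nonzero differential between any two of them; only then does the codomain collapse to a direct sum and the iterated cone to a single cone from $\Khred(\tau(m))$. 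You gather the right ingredients (same $\delta$-grading, $q$-gradings climbing by one), but the inference from equal $\delta$-grading to vanishing internal differential should be stated explicitly rather than appearing as a ``therefore'' after the bookkeeping. Similarly, the base case is really an application of the skein exact sequence at a positive terminal crossing (as in the mapping cone formulas recalled in Section~\ref{sec:kh}); Property~\ref{pro:resolve} is an arithmetic identity about continued fractions and does not by itself identify which resolution is the $0$- versus the $1$-resolution, so that citation is imprecise, though harmless.
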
 
\begin{proof}
This amounts to careful iterated application of the mapping cone for resolution of a positive crossing applied to the $n$ positive crossings in $\tau(m+n)$. When $n=1$ we have
\[\textstyle\Khred(\tau(m+1))\cong H_*\left(\Khred(\tau(m))[-\frac{1}{2},\frac{1}{2}]\to\Khred(\tau(\overzero))[-\frac{k_\tau}{2},\frac{3k_\tau+2}{2}]\right)\] where $k_\tau=c_\tau+m$. Set $[x,y]=[-\frac{k_\tau}{2},\frac{3k_\tau+2}{2}]$. Now when $n=2$ we obtain 
\begin{align*}
\textstyle\Khred(\tau(m+2)) &\cong \textstyle H_*\left(\Khred(\tau(m+1))[-\frac{1}{2},\frac{1}{2}]\to\Khred(\tau(\overzero))[-\frac{k_\tau+1}{2},\frac{3(k_\tau+1)+2}{2}]\right) \\ 
&\cong \textstyle H_*\left(\Khred(\tau(m+1))[-\frac{1}{2},\frac{1}{2}]\to\Khred(\tau(\overzero))[x,y][-\half,\half][0,1]\right) 
\end{align*}
or, by unpacking the group $\Khred(\tau(m+1))$ as in the previous case, 
\begin{align*}
&\textstyle\Khred(\tau(m+2)) \\& \cong\textstyle H_*\left(H_*\left(\Khred(\tau(m))[-\frac{1}{2},\frac{1}{2}]\to\Khred(\tau(\overzero))[x,y]\right)[-\frac{1}{2},\frac{1}{2}]\to\Khred(\tau(\overzero))[x,y][-\half,\half][0,1]\right)\end{align*} as an iterated mapping cone. Said another way, this expression is simply the repeated application of the long exact sequence. This simplifies considerably however, since the two occurrences of the group $\Khred(\tau(\overzero))\cong\bF$ appear in the same $\delta$-grading. Since the differential of the mapping cone (or, the connecting homomorphism of the long exact sequence) raises $\delta$-grading by one, there cannot be a differential between the copies of $\Khred(\tau(\overzero))$. As result, 
\begin{align*}
&\textstyle\Khred(\tau(m+2)) \\
&\textstyle \cong H_*\left(\Khred(\tau(m))[-1,1]\to\Khred(\tau(\overzero))[x,y][-\half,\half]\oplus\Khred(\tau(\overzero))[x,y][-\half,\half][0,1]\right) \\
&\textstyle \cong H_*\left(\Khred(\tau(m))[-1,1]\to\bigoplus_{q=0}^1\Khred(\tau(\overzero))[x,y][-\half,\half][0,q]\right) \\
&\textstyle \cong H_*\left(\Khred(\tau(m))[-\half,\half]\to\bigoplus_{q=0}^1\Khred(\tau(\overzero))[x,y][0,q]\right)[-\half,\half]
\end{align*}
Now suppose for induction that 
\[\textstyle \Khred(\tau(m+n-1)) \cong H_*\left(\Khred(\tau(m))[-\half,\half]\to\bigoplus_{q=0}^{n-2}\Khred(\tau(\overzero))[x,y][0,q]\right)[-\frac{n-2}{2},\frac{n-2}{2}]\]
and consider the group
\[\textstyle\Khred(\tau(m+n)) \cong H_*\left( \Khred(\tau(m+n-1))[-\half,\half] \to \Khred(\tau(0))[-\frac{c}{2},\frac{3c+2}{2}]\right)\]
where $c=n_-(\tau(\overzero))+n-1-n_-(\tau(m+n-1))=n_-(\tau(\overzero))-n_-(\tau(m))+n-1=k_\tau+n-1$. Then
\begin{align*}
&\textstyle\Khred(\tau(m+n)) \\
& \cong\textstyle H_*\left(\Khred(\tau(m+n-1))[-\half,\half] \to \Khred(\tau(0))[-\frac{k_\tau+n-1}{2},\frac{3(k_\tau+n-1)+2}{2}]\right) \\
& \cong\textstyle H_*\left( \Khred(\tau(m+n-1))[-\half,\half] \to \Khred(\tau(0))[-\frac{k_\tau}{2},\frac{3k_\tau+2}{2}][0,n-1][-\frac{n-1}{2},\frac{n-1}{2}]\right) \\
& \cong\textstyle H_*\left(H_*\left(\Khred(\tau(m))[-\half,\half]\to\bigoplus_{q=0}^{n-2}\Khred(\tau(\overzero))[x,y][0,q]\right)[-\frac{n-2}{2},\frac{n-2}{2}][-\half,\half] \right.\\
&\qquad\qquad\qquad\qquad\textstyle\left.\to\Khred(\tau(\overzero))[x,y][0,n-1][-\frac{n-1}{2},\frac{n-1}{2}]\right) \\
&\textstyle\cong H_*\left(\Khred(\tau(m))[-\half,\half]\to\bigoplus_{q=0}^{n-1}\Khred(\tau(\overzero))[x,y][0,q]\right)[-\frac{n-1}{2},\frac{n-1}{2}]
\end{align*}
noting once again that each of the occurrences of $\Khred(\tau(\overzero))$ differs only in the secondary grading. 

Now as a relatively graded group, we are free to ignore the overall grading shift $[-\frac{n-1}{2},\frac{n-1}{2}]$. Moreover, since $\Khred(\tau(\overzero))\cong\bF$, fixing an identification \[\textstyle\bigoplus_{q=0}^{n-1}\Khred(\tau(\textstyle\overzero))[x+\half,y-\half][0,q]\cong\bF[q]/q^n\cong\bF[\bZ/n\bZ]\] we have that \[\textstyle\Khred(\tau(m+n))\cong H_*\left(\Khred(\tau(m))\to\bF[\bZ/n\bZ]\right)\] as a relatively $\bZ\oplus\bZ$-graded group.
\end{proof}

\begin{remark}\label{rmk:integer-surgery} As stated, this lemma might be viewed in terms of Heegaard Floer homology. In particular, the long exact sequence for integer surgeries may be stated \[\cdots\longto\HFhat(S^3_m(K))\longto\HFhat(S^3_{m+n}(K))\longto\textstyle\bigoplus_n\HFhat(S^3)\longto\cdots\] where \[\textstyle\bigoplus_n\HFhat(S^3)\cong\bF[\bZ/n\bZ]\] when viewed with twisted coefficients (c.f. \cite[Theorem 3.1]{OSz2008}). We have given an analogous statement in terms of the Khovanov homology of the associated branch sets in the case when $K$ is strongly invertible, a fact that is particularly interesting in light of \cite[Theorem 1.1]{OSz2005-branch} relating Khovanov homology and Heegaard Floer homology for two-fold branched covers by a spectral sequence.\end{remark}

Before turning to consequences of Lemma \ref{lem:general-stability}, we note that a similar statement is forced to exist for negative surgeries. Indeed, consider $\Khred(\tau(m-n))$ for any integer $m$, and positive integer $n$. Setting $m'=m-n$ we have that $\Khred(\tau(m'))\cong\Khred(\tau(m-n))$ and \[\Khred(\tau(m))\cong \Khred(\tau(m'+n))\cong H_*\left(\Khred(\tau(m-n))\to\bF[\bZ/n\bZ] \right).\]
It follows that:
\begin{lemma}\label{lem:general-stability-negative} For any integer $m$, and positive integer $n$, \[\textstyle\Khred(\tau(m-n))\cong H_*\left(\bigoplus_n\Khred(\tau(\overzero))\to\Khred(\tau(m))\right)\] as a relatively $\bZ\oplus\bZ$-graded group, where the integer $m$ may be interpreted as a change of framing. More precisely, there exist an explicit constants $x'$ and $y'$ (different than above) and an identification \[\textstyle\bigoplus_{q=0}^{n-1}\Khred(\tau(\textstyle\overzero))[x',y'][0,q]\cong\bF[\bZ/n\bZ]\] so that \[\textstyle\Khred(\tau(m-n))\cong H_*\left(\bF[\bZ/n\bZ]\to\Khred(\tau(m))\right)\]\end{lemma}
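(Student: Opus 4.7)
The plan is to deduce this lemma directly from Lemma \ref{lem:general-stability} by a rotation of the mapping cone exact triangle, as hinted at in the discussion immediately preceding the statement. Set $m' = m-n$, so that $m = m'+n$. Then Lemma \ref{lem:general-stability} applied to the integer $m'$ and the positive integer $n$ yields a relatively $\bZ\oplus\bZ$-graded isomorphism
\[\textstyle\Khred(\tau(m)) \cong H_*\left(\Khred(\tau(m-n)) \to \bF[\bZ/n\bZ]\right),\]
together with the explicit grading identification of $\bigoplus_{q=0}^{n-1}\Khred(\tau(\overzero))[x,y][0,q]$ with $\bF[\bZ/n\bZ]$ for the constants $x,y$ produced in that proof.

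Next, I would unpack this isomorphism as an exact triangle of (relatively) graded $\bF$-vector spaces. Writing $f\co \Khred(\tau(m-n)) \to \bF[\bZ/n\bZ]$ for the connecting map whose mapping cone computes $\Khred(\tau(m))$, the triangle has the form
\[\textstyle\Khred(\tau(m-n)) \xrightarrow{f} \bF[\bZ/n\bZ] \to \Khred(\tau(m)) \to \Khred(\tau(m-n))[1],\]
where $[1]$ refers to the $\delta$-grading shift induced by the connecting homomorphism (which raises $\delta$ by one, consistent with the conventions of Section \ref{sec:kh}). Rotating this triangle once produces
\[\textstyle\bF[\bZ/n\bZ] \to \Khred(\tau(m)) \to \Khred(\tau(m-n))[1] \to \bF[\bZ/n\bZ][1],\]
and reading off the corresponding mapping cone gives
\[\textstyle\Khred(\tau(m-n))[1] \cong H_*\left(\bF[\bZ/n\bZ] \to \Khred(\tau(m))\right).\]

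Finally, since Lemma \ref{lem:general-stability-negative} is a statement about relatively graded groups, the overall $[1]$ shift on the left is inessential: absorbing this shift, together with the shift introduced by rewriting $\bF[\bZ/n\bZ]$ in terms of $\bigoplus_{q=0}^{n-1}\Khred(\tau(\overzero))$, into new constants $x'$ and $y'$ yields the stated identification
\[\textstyle\bigoplus_{q=0}^{n-1}\Khred(\tau(\overzero))[x',y'][0,q]\cong \bF[\bZ/n\bZ]\]
and the desired isomorphism
\[\textstyle\Khred(\tau(m-n)) \cong H_*\left(\bF[\bZ/n\bZ] \to \Khred(\tau(m))\right).\]
The only genuine bookkeeping step is recording how the grading shifts from Lemma \ref{lem:general-stability} interact with the rotation, which is routine since we work relatively; I do not expect any essential obstacle beyond this tracking.
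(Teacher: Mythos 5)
Your proposal is correct and follows exactly the route the paper intends. The paper itself does not write out a formal proof of Lemma \ref{lem:general-stability-negative}: in the paragraph preceding the lemma it performs the substitution $m'=m-n$, invokes Lemma \ref{lem:general-stability} to write $\Khred(\tau(m))\cong H_*\left(\Khred(\tau(m-n))\to\bF[\bZ/n\bZ]\right)$, and then states the lemma with ``It follows that,'' leaving the triangle rotation implicit; the subsequent remark records exactly the fact that the resulting $\bF[\bZ/n\bZ]$ sits one $\delta$-grading lower than in the positive case (your overall $[1]$ shift, absorbed into $x',y'$) and mentions, as an alternative, a direct argument mimicking Lemma \ref{lem:general-stability}. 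You have simply made the intended rotation-of-the-exact-triangle step explicit, which is a fine way to fill in the ``it follows.''
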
 
\begin{remark}
In fact, it should be immediately clear that in this case the group \[\textstyle\bigoplus_{q=0}^{n-1}\Khred(\tau(\textstyle\overzero))[x',y'][0,q]\cong\bF[q^{-1}]/q^{-n}\cong\bF[\bZ/n\bZ]\] must lie in grading $\delta-1$ relative to the group \[\textstyle\bigoplus_{q=0}^{n-1}\Khred(\tau(\textstyle\overzero))[x+\half,y-\half][0,q]\cong\bF[q]/q^n\cong\bF[\bZ/n\bZ]\] of Lemma \ref{lem:general-stability} in grading $\delta$. Alternatively, Lemma \ref{lem:general-stability-negative} may be proved directly by an argument nearly identical to the argument of Lemma \ref{lem:general-stability}, up to renaming constants. 
\end{remark}

\subsection{Width stability.}

There are two essential consequences that we derive from Lemma \ref{lem:general-stability}. Similar properties exist for branch sets associated with negative surgeries, and these may be easily inferred by the reader. We will not state these, opting instead to pass to positive surgeries on the mirror to avoid negative coefficients. 

\begin{lemma}\label{lem:split} For $N\gg0$ the exact sequence for $\Khred(\tau(N+1))$ splits so that, ignoring gradings, \[\Khred(\tau(N+1))\cong\Khred(\tau(N))\oplus\bF.\]\end{lemma}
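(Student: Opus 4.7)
The plan is to apply Lemma \ref{lem:general-stability} in two different ways: once with $n=1$ to translate the splitting question into the vanishing of a single differential into $\bF$, and once with $n$ large to force that vanishing through a linear lower bound on rank.

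Taking $m = N$ and $n = 1$ in Lemma \ref{lem:general-stability} gives
\[\Khred(\tau(N+1)) \cong H_{*}\bigl(\Khred(\tau(N)) \xrightarrow{d_{N}} \bF\bigr)\]
as a relatively $\bZ \oplus \bZ$-graded group, where $d_{N}$ raises $\delta$-grading by one. Since the target is $\bF$, the map $d_{N}$ is either zero or surjective. In the first case the mapping cone is the direct sum (ignoring gradings) $\Khred(\tau(N)) \oplus \bF$ and has rank $\rk \Khred(\tau(N)) + 1$; in the second case $\Khred(\tau(N+1)) \cong \ker d_{N}$ has rank $\rk \Khred(\tau(N)) - 1$. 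So the lemma amounts to showing that $d_{N} = 0$ for $N \gg 0$.

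To force this, apply Lemma \ref{lem:general-stability} with $m = 0$ and $n = N$ to get
\[\Khred(\tau(N)) \cong H_{*}\bigl(\Khred(\tau(0)) \xrightarrow{f_{N}} \bF[\bZ/N\bZ]\bigr).\]
A straightforward rank-nullity calculation on this two-term complex yields
\[\rk \Khred(\tau(N)) = \rk \Khred(\tau(0)) + N - 2 \rk(f_{N}) \geq N - \rk \Khred(\tau(0)),\]
because $\rk(f_{N})$ is bounded by the fixed quantity $\rk \Khred(\tau(0))$. Hence $\rk \Khred(\tau(N))$ grows at least linearly in $N$.

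Combining these, set $r_{N} = \rk \Khred(\tau(N))$ and $a_{N} = r_{N+1} - r_{N} \in \{\pm 1\}$. Then $r_{N} - r_{0} = \sum_{i=0}^{N-1} a_{i}$, so if $k_{N}$ denotes the number of indices $i \in [0, N-1]$ with $a_{i} = -1$, we have $r_{N} = r_{0} + (N - 2k_{N}) \geq N - \rk \Khred(\tau(0))$, forcing $k_{N}$ to be bounded independently of $N$. Consequently only finitely many $a_{N}$ can equal $-1$, and for $N \gg 0$ we have $a_{N} = +1$, i.e., $d_{N} = 0$ and the exact sequence splits as claimed.

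The main obstacle is really just setting up the dichotomy carefully; the grading shifts appearing in Lemma \ref{lem:general-stability} would be cumbersome to track, but since the conclusion only concerns splitting as ungraded groups, the entire argument reduces to a clean rank count once the iterated mapping cone of Lemma \ref{lem:general-stability} is in hand.
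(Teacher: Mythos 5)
Your proof is correct, and it takes a genuinely different route from the paper. The paper's argument tracks the \emph{secondary} $q$-grading: comparing the two mapping-cone presentations of $\Khred(\tau(N+1))$, it observes that the generator $q^N \in \bF[\bZ/(N+1)\bZ]$ lives in a $q$-grading that escapes the finite $q$-range of $\Khred(\tau(0))$ once $N$ is large; since the connecting homomorphism preserves $q$-grading, that generator cannot be hit and must survive in homology, which forces the splitting. Your argument instead forgets the $q$-grading entirely and runs a pure rank count: from the $(m,n)=(0,N)$ presentation and rank-nullity you extract the linear lower bound $\rk\Khred(\tau(N)) \ge N - \rk\Khred(\tau(0))$, while the $(m,n)=(N,1)$ presentation shows each step changes the rank by exactly $\pm 1$; together these bound the total number of $-1$ steps by $\rk\Khred(\tau(0))$, so eventually every step is $+1$. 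What you gain is robustness — the argument uses only that the source $\Khred(\tau(0))$ has fixed finite rank, not that the differential respects a secondary grading — and the bookkeeping of the shift constants $[x,y]$ in Lemma \ref{lem:general-stability} can be dropped from the start. What the paper's version gains is explicitness about \emph{which} generator survives (the one of maximal $q$-grading), which is exactly the information needed in the companion Lemma \ref{lem:diagonal} asserting that the surviving generators all lie on a single $\delta$-diagonal; your rank argument does not by itself locate the surviving summand. So both proofs establish the lemma as stated, but the paper's grading-aware version is the natural companion to Lemma \ref{lem:diagonal}, whereas yours is the more self-contained proof of the splitting alone.
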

\begin{proof}
Let $m=N$ and $n=1$ in the notation of Lemma \ref{lem:general-stability}, so that \[\Khred(\tau(N+1))\cong H_*\left(\Khred(\tau(N))\to\Khred(\tau(\textstyle\overzero))\right)\cong H_*\left(\Khred(\tau(N))\to\bF\right).\]
On the other hand, with $m=0$ and $n=N+1$ we have that \[\Khred(\tau(N+1))\cong H_*\left(\Khred(\tau(0))\to\bF[q]/q^{N+1}\right).\] Comparing these two expressions, the generator represented by $q^N$ in the second corresponds to $\bF$ in the first.   Since the differential preserves the secondary $q$-grading, for $N\gg0$ the generator represented by $q^N$ cannot be in the image of the differential, hence \[\Khred(\tau(N+1))\cong\Khred(\tau(N))\oplus\bF\] as claimed. 
\end{proof}

\begin{lemma}\label{lem:diagonal} Up to overall shift the generators $\Khred(\tau(\overzero))\cong \bF$, when they survive in homology, are all supported in a single relative $\delta$-grading. \end{lemma}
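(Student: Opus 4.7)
The plan is to deduce the statement directly from the explicit form of the iterated mapping cone constructed in Lemma \ref{lem:general-stability}. That lemma presents the contribution of $\Khred(\tau(\overzero))$ to $\Khred(\tau(m+n))$ as
\[
\bigoplus_{q=0}^{n-1}\Khred(\tau(\overzero))[x,y][0,q]\cong \bF[\bZ/n\bZ],
\]
sitting in the mapping cone
\[
\Khred(\tau(m))[-\tfrac{1}{2},\tfrac{1}{2}]\longto \bigoplus_{q=0}^{n-1}\Khred(\tau(\overzero))[x,y][0,q]
\]
with an overall shift $[-\tfrac{n-1}{2},\tfrac{n-1}{2}]$. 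The first step, and the entire content of the lemma, is the observation that the bracket shifts $[0,q]$ for $q=0,\ldots,n-1$ affect only the secondary $q$-grading, leaving the primary $\delta$-grading untouched. Hence all $n$ copies of $\bF$ in $\bF[\bZ/n\bZ]$ occupy a single $\delta$-grading $\delta_0$, depending only on the overall shifts $x$ and $-\tfrac{n-1}{2}$.

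Next I would examine the differential of the mapping cone. Since this map raises $\delta$ by exactly one, its image inside $\bF[\bZ/n\bZ]$ is concentrated entirely in the $\delta$-grading $\delta_0$. Passing to homology kills some subset of these $n$ generators (those that are hit by the differential, together with their preimages in $\Khred(\tau(m))$), but every generator from $\bF[\bZ/n\bZ]$ that survives into $\Khred(\tau(m+n))$ still lies in the single $\delta$-grading $\delta_0$. This proves the statement for branch sets coming from positive surgery.

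Finally, for branch sets coming from negative surgery the same argument goes through verbatim, now invoking Lemma \ref{lem:general-stability-negative}: there the analogous summand $\bF[\bZ/n\bZ]$ sits in a single $\delta$-grading (which, as noted in the remark immediately after that lemma, differs by one from the positive case), and the same bookkeeping shows that the surviving generators are all concentrated there.

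There is no real obstacle beyond grading bookkeeping: the lemma is essentially a corollary of the structural description in Lemma \ref{lem:general-stability}, together with the general fact that a differential which preserves the $q$-grading and increases $\delta$ by one cannot mix generators sharing the same $\delta$. The only care required is to track that the shifts $[0,q]$ are responsible for the $\bF[\bZ/n\bZ]$ structure entirely through the secondary grading, so that no $\delta$-spread is introduced.
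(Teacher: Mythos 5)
Your proof is correct and takes essentially the same approach as the paper, which dispatches the lemma with the one-line remark that it is ``immediate from the identification with the truncated polynomial ring in Lemma \ref{lem:general-stability}.'' You have simply unpacked that remark: the shifts $[0,q]$ move only the secondary grading, so the $n$ copies of $\Khred(\tau(\overzero))$ all sit in one $\delta$-grading, and taking homology of the mapping cone can only delete generators, not move them.
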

\begin{proof} Immediate from the identification with the truncated polynomial ring in Lemma \ref{lem:general-stability}.\end{proof} 	

As a result of Lemma \ref{lem:split}, the width of the $\tau(n)$ may be calculated for all $n$ once some finite collection of the values is known. Moreover, these quantities must be bounded, in light of Lemma \ref{lem:diagonal}.

\begin{definition}\label{def:max-min}For a given strongly invertible knot and preferred associated quotient tangle, define $\wmax=\max_{n\in\bZ}\left\{w(\tau(n))\right\}$ and $\wmin=\min_{n\in\bZ}\left\{w(\tau(n))\right\}$.\end{definition}

\subsection{A characterization of the trivial knot}

\begin{lemma}\label{lem:trivial-knot-width} Let $T=(B^3,\tau)$ be the preferred associated quotient tangle for some strongly invertible knot in $S^3$. If $w(\tau(N))=1$ for $|N|\gg0$ then $T=(B^3,\tau)$ is the tangle associated with the trivial knot.  \end{lemma}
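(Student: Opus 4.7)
The plan is to analyze the mapping cone structure from Lemma \ref{lem:general-stability} to extract strong structural constraints on $\Khred(\tau(0))$, then apply Heegaard Floer detection of the unknot via $0$-surgery.

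First, by Proposition \ref{prp:Kh-Euler} and the hypothesis $w(\tau(N)) = 1$, one has $\rk\Khred(\tau(N)) = \det(\tau(N)) = |N|$ for $|N| \gg 0$, with all generators concentrated in a single $\delta$-grading. Apply Lemma \ref{lem:general-stability} with $m = 0$ and $n = N$: $\Khred(\tau(N))$ is the homology of the mapping cone $\Khred(\tau(0)) \to \bF[\bZ/N\bZ]$, whose target is supported in a single $\delta$-grading $d_0$ (Lemma \ref{lem:diagonal}). Since the connecting homomorphism raises the $\delta$-grading by one, any generator of $\Khred(\tau(0))$ sitting in a grading outside $\{d_0 - 1, d_0\}$ would survive in the wrong $\delta$-grading. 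Hence $\Khred(\tau(0))$ is supported in exactly the two consecutive $\delta$-gradings $\{d_0 - 1, d_0\}$, with $w(\tau(0)) = 2$ exactly by Proposition \ref{prp:homologically-thick} since $\det(\tau(0)) = 0$. The restriction of the differential to the $d_0 - 1$ summand must be injective, and a rank count yields $\rk\Khred(\tau(0)) = 2r$ with $r$ generators in each $\delta$-grading.

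Next, one runs the analogous analysis on the negative side via Lemma \ref{lem:general-stability-negative}, which produces a dual surjection $\bF[\bZ/N\bZ] \twoheadrightarrow \Khred(\tau(0))_{d_0}$. Combined with the fact that mapping cone differentials preserve the secondary $q$-grading and that $\bF[\bZ/N\bZ]$ has exactly one generator at each of $N$ consecutive $q$-gradings, each bigrading of $\Khred(\tau(0))$ carries at most one generator. Using the stable recursion $\Khred(\tau(N+1)) \cong \Khred(\tau(N)) \oplus \bF$ (Lemma \ref{lem:split}) for large $N$, where the additional $\bF$ sits at a $q$-grading shifted by one from the previous step, and comparing the resulting Jones polynomial via Theorem \ref{thm:kh}, one forces exactly a single $q$-grading to be occupied in each of the two $\delta$-gradings of $\Khred(\tau(0))$. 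Thus $r = 1$ and $\Khred(\tau(0)) \cong \bF^{2}$.

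Finally, the Ozsv\'ath--Szab\'o spectral sequence \cite[Corollary~1.2]{OSz2005-branch} yields
\[
\rk\HFhat(S^3_0(K)) \;=\; \rk\HFhat(\Br(S^3,\tau(0))) \;\leq\; \rk\Khred(\tau(0)) \;=\; 2.
\]
For any knot $K$ in $S^3$ one has $\rk\HFhat(S^3_0(K)) \geq 2$, with equality if and only if $K$ is the trivial knot (by Ozsv\'ath--Szab\'o genus detection in knot Floer homology, which reduces the bound to $g(K) = 0$). Hence $K$ is the trivial knot, and $T$ is the associated quotient tangle of the trivial knot.

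The main obstacle is the second step: showing $r = 1$ rather than $r \geq 1$ requires careful tracking of how the stable addition of $\bF$ from Lemma \ref{lem:split} interacts with the $q$-grading range occupied by $\bF[\bZ/N\bZ]$. An alternative route that bypasses this delicate $q$-grading bookkeeping is to apply unknot/unlink detection theorems for Khovanov homology directly to identify $\tau(0)$ with the two-component unlink, then conclude $S^3_0(K) \cong S^1 \times S^2$ and invoke Gabai's Property R theorem to force $K$ trivial.
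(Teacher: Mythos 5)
Your approach is genuinely different from the paper's: you try to pin down $\Khred(\tau(0))$ via the mapping cone and then pass to $0$-surgery, whereas the paper never computes $\Khred(\tau(0))$ at all. It uses the hypothesis $w(\tau(\pm N))=1$ together with Proposition \ref{prp:branch-over-thin} to conclude that $S^3_{\pm N}(K)$ is an L-space for $|N|\gg 0$, then invokes the Ozsv\'ath--Szab\'o fact that a positive L-space surgery on $K$ forces $g(K)=\boldtau(K)$, while the same applied to the mirror gives $g(K)=\boldtau(K^\star)=-\boldtau(K)$; hence $g(K)=0$. This sidesteps any bigraded bookkeeping.

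The first part of your argument is sound: the mapping cone from Lemma \ref{lem:general-stability}, the fact that the connecting homomorphism raises $\delta$-grading by one, and $\det(\tau(0))=0$ do force $\Khred(\tau(0))$ to live on exactly two adjacent $\delta$-gradings with equal ranks $r$, with the lower diagonal injecting into $\bF[\bZ/N\bZ]$. But the step you flag yourself --- forcing $r=1$ --- is a real gap. All the constraints you derive (injectivity of the lower-diagonal map, surjectivity of the dual map onto the upper diagonal, at most one generator per $(\delta,q)$-bigrading) are consistent with $r>1$: one can place $r$ generators at distinct $q$-gradings in each diagonal, and the rank identity $\rk\Khred(\tau(\pm N))=N$ holds for any such $r$ since $\rk\ker + \rk\operatorname{coker} = (N-r)+r$. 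The appeal to Lemma \ref{lem:split} and the Jones polynomial does not close this, as the recursion $\Khred(\tau(N+1))\cong\Khred(\tau(N))\oplus\bF$ is purely rank information and holds regardless of $r$, and $V_{\tau(0)}(-1)=0$ is automatic from the two equal-rank diagonals. Your proposed alternative (unlink detection for $\tau(0)$ followed by Property~R) runs into exactly the same obstruction, because unlink detection from Khovanov homology presupposes $\rk\Khred(\tau(0))=2$, which is the very fact in question. As written, the proof does not establish the lemma; the paper's route through L-space surgeries and the concordance invariant $\boldtau$ is both shorter and avoids this obstacle.
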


\begin{proof} Suppose that $\wmin=1$  with $w(\tau(N))=1$ for all $|N|$ sufficiently large. Then by Proposition \ref{prp:branch-over-thin}, $S^3_{\pm N}(K)=\Br(S^3,\tau(\pm N))$ must be an L-space for all $N$ sufficiently large. However, if $S^3_N(K)$ is an L-space, for $N$ large enough in absolute value, then $K$ is the trivial knot. 

To see this, note that since $S^3_N(K)$ is an L-space for $N\gg0$ we have that $g(K)=\boldtau(K)$, where $\boldtau(K)$ is the Ozsv\'ath-Szab\'o concordance invariant of $K$, by \cite[Proposition 3.3]{OSz2005-lens}. On the other hand, $S^3_{-N}(K)\cong-S^3_{N}(K^\star)$ is an L-space as well, so that $g(K^\star)=\boldtau(K^\star)$. However, it is a standard property of $\boldtau$ that $\boldtau(K^\star)=-\boldtau(K)$ \cite[Lemma 3.3]{OSz2003-four-ball}. Therefore, since $g(K)=g(K^\star)$ we have shown that $\boldtau(K)=g(K)=-\boldtau(K)$ hence $g(K)=0$ and $K$ must be the trivial knot (thus $\tau(0)\simeq\unknot\sqcup\unknot$ with width 2).
\end{proof}

We remark that this leads to a characterization of the trivial knot, among strongly invertible knots, by way of Khovanov homology. 

\begin{theorem}\label{thm:trivial-knot} Let $K$ be a strongly invertible knot in $S^3$ with associated quotient tangle $(B^3,\tau)$. Then $w(\tau(N))=1$ for all $|N|$ sufficiently large if and only if  $K$ is the trivial knot.  \end{theorem}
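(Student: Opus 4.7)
The plan is to establish the theorem as an equivalence. One direction, namely that $w(\tau(N)) = 1$ for $|N|$ sufficiently large implies $K$ is the trivial knot, is precisely Lemma \ref{lem:trivial-knot-width}; its proof uses Proposition \ref{prp:branch-over-thin} to deduce that $S^3_N(K)$ and $S^3_{-N}(K)\cong -S^3_N(K^\star)$ are L-spaces for large $N$, applies the Ozsv\'ath--Szab\'o identity $g(K)=\boldtau(K)$ valid under positive L-space surgeries together with $\boldtau(K^\star)=-\boldtau(K)$ and $g(K)=g(K^\star)$, and concludes $g(K)=0$.

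For the converse, suppose $K$ is the trivial knot. Then $M = S^3 \setminus \nu(K)$ is a solid torus, and the text records (crediting Lickorish \cite{Lickorish1981}) that the associated quotient tangle $(B^3, \tau)$ is then a rational tangle. Since every rational closure of a rational tangle is a $2$-bridge link, each $\tau(N)$ with $N \in \bZ$ is a $2$-bridge knot or link, hence alternating. For $N \ne 0$ one has $|H_1(S^3_N(K);\bZ)| = |N|$, so $\det(\tau(N)) = |N| > 0$ by Proposition \ref{prp:Kh-Euler}, and in particular $\tau(N)$ is non-split. Lee's theorem (quoted in the paper immediately after Definition \ref{def:width}) then gives $w(\tau(N)) = 1$ for all $N \ne 0$, which is certainly the case for all $|N|$ sufficiently large.

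The substance of the theorem is contained in Lemma \ref{lem:trivial-knot-width}; the converse is a short assembly of classical facts (Lickorish's correspondence between rational tangles and solid tori, the alternating nature of $2$-bridge links, and Lee's thinness theorem). Accordingly, the only potential obstacle in the converse is the possible ambiguity in the choice of representative of the associated quotient tangle. This is not a real issue: the property of being a rational tangle, and the property that each $\tau(N)$ is a $2$-bridge link, are invariants of the tangle up to the equivalence used in Definition \ref{def:associated-tangle}, so the argument applies to any representative — and, in particular, to the preferred representative whose existence was established in Corollary \ref{crl:basis}.
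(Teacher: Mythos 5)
Your proof is correct and takes essentially the same route as the paper: both directions rest on showing that the branch sets $\tau(N)$ are non-split two-bridge links for $N\ne0$ (hence alternating, hence thin by Lee), and both cite Lemma \ref{lem:trivial-knot-width} for the converse. The only variations are cosmetic: you reach the two-bridge conclusion via Lickorish's rational-tangle characterization of the solid torus where the paper invokes Hodgson--Rubinstein's theorem that lens spaces arise only as branched covers of two-bridge links (two faces of the same classical fact), and the identity $\det(\tau(N))=|H_1(S^3_N(K);\bZ)|$ that you attribute to Proposition \ref{prp:Kh-Euler} is actually the standing convention recalled just before that proposition rather than its content.
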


\begin{proof} Non-zero surgery on the trivial knot results in a lens space, and lens spaces arise as two-fold branched covers of a non-split two-bridge links  \cite{HR1985}. These branch sets are alternating, hence thin, by a result due to Lee \cite{Lee2005}. We return to this point, and expand on it, in Section \ref{sec:obstructions}.

The converse follows immediately from Lemma \ref{lem:trivial-knot-width}.\end{proof}

Note that it is a finite check to decide $w(\tau(n))$ for all $n$, so this is a reasonable detection of the trivial knot in the presence of a strong inversion. Of course, this result (as an application of Lemma \ref{lem:trivial-knot-width}) depends on a characterization from Heegaard Floer homology (compare \cite{HW2010}). Taken together with Remark \ref{rmk:integer-surgery}, this observation gives a further strengthening of the relationship between the two theories in the context of Dehn surgery and two-fold branched covers.

\subsection{Changes in width for integer surgeries}

To begin, recall that there are two equivalent definitions of homological width, under the additional assumption that $b_\delta>0$ for every $\delta$ in the expression $\Khred(L)\cong\bigoplus_{\delta=1}^k\bF^{b_\delta}$ (see Remark \ref{rmk:width-and-positive-betti-numbers}). In this setting, we may define $w(L)=k$ as the number of $\delta$-gradings supporting non-trivial groups in reduced Khovanov homology.   

\begin{remark}\label{rmk:blank-diagonals}Allowing for blank diagonals (i.e. for $b_i=0$ where $0<i<w+1$) gives rise to the possibility for the number of diagonals supporting non-trivial groups to be strictly lower than the homological width. This phenomenon has been conjectured not to occur, and indeed, in every example we consider (see Section \ref{sec:examples}) the two definitions for width coincide. Since it will simplify some (though not all) arguments to assume that this is the case for $\Khred(\tau(n))$, we will do so in the sequel and opt to include this assumption in our notion of generic tangle introduced in Definition \ref{def:generic}. In practice, we will see that this assumption is not restrictive. \end{remark}

\begin{lemma}\label{lem:max-min}  Let $T=(B^3,\tau)$ be the preferred associated quotient tangle for some strongly invertible knot is $S^3$. Under the assumption that there are no blank diagonals in $\Khred(\tau(n))$ for all $n\in\bZ$, the maximum and minimum widths differ by at most 1: either $\wmax=\wmin$ or $\wmax=\wmin+1$. Moreover, whenever $\wmax=\wmin+1$ for the tangle associated with a non-trivial knot there is a unique integer $\ell$ where the width changes so that $w(\tau(n))$ is constant for integers $n\le\ell$ and for integers $n\ge\ell+1$.\end{lemma}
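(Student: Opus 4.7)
The starting point is the mapping cone of Lemma~\ref{lem:general-stability}. Taking the base $m=0$ and letting $n$ vary, we obtain $\Khred(\tau(k+1))\cong H_*(\Khred(\tau(k))\to \bF)$ for each integer $k\ge 0$, and by Lemma~\ref{lem:diagonal} each of these $\bF$ summands sits in the same relative $\delta$-grading, which we call $d$. The analogue from Lemma~\ref{lem:general-stability-negative} gives $\Khred(\tau(k-1))\cong H_*(\bF\to \Khred(\tau(k)))$ with $\bF$ in the adjacent $\delta$-grading $d-1$. Since the connecting map raises $\delta$ by $1$, each step's differential has rank at most $1$: it either vanishes (adding a copy of $\bF$ in grading $d$ for $k\to k+1$, or in grading $d-1$ for $k\to k-1$) or kills one generator of $\Khred(\tau(k))$ in grading $d-1$ (respectively $d$) together with the $\bF$.

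Writing $\Khred(\tau(n))\cong \bigoplus_\delta \bF^{b_\delta(n)}$, this yields two monotonicity facts: for $n\ge 0$ the rank $b_d(n)$ is non-decreasing and $b_{d-1}(n)$ is non-increasing; for $n\le 0$ these monotonicities reverse; and $b_i(n)$ is constant in $n$ for $i\notin\{d-1,d\}$. The no-blank-diagonals hypothesis forces $\Supp(\Khred(\tau(n)))$ to be an interval, so the only possible changes to this interval as $n$ varies are \emph{(i)} $d$ appearing in the support (possible only when $d\notin\Supp(\Khred(\tau(0)))$ and $d$ is adjacent to it; width $+1$), and \emph{(ii)} $d-1$ dropping out of the support (possible only when $d-1$ lies at the boundary of the current support; width $-1$). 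In particular $|w(\tau(n+1))-w(\tau(n))|\le 1$ for all $n$.

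By Lemma~\ref{lem:split} the differential is eventually zero in both directions, so $b_d(n)\to\infty$ as $n\to+\infty$ and $b_{d-1}(n)\to\infty$ as $n\to-\infty$; hence $w(\tau(n))$ stabilizes to values $W^+$ and $W^-$. Combining the monotonicity with the no-blank hypothesis, one sees that on each half-axis only one of (i), (ii) can actually occur: if an event of type (i) at grading $d$ were followed later by a type (ii) event at grading $d-1$ on the same half-axis, the resulting support would contain $d$ but not $d-1$, leaving a gap and violating the hypothesis (and symmetrically). Consequently $w(\tau(n))$ restricted to $\{n\ge 0\}$ is piecewise constant with at most one jump from $w_0:=w(\tau(0))$ to $W^+$, and similarly on $\{n\le 0\}$. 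A short case check on the position of $d$ relative to $\Supp(\Khred(\tau(0)))=[a,b]$ (the no-blank hypothesis at $\tau(\pm 1)$ forces $d\in\{a,a+1,\ldots,b+1\}$) shows that $(W^+,W^-)$ is always one of $(w_0,w_0+1)$, $(w_0+1,w_0)$, $(w_0-1,w_0)$, $(w_0,w_0-1)$, $(w_0,w_0)$, or $(w_0-1,w_0-1)$, the last arising only when $w_0=2$ and $d=a+1=b$. In every case $\wmax-\wmin\le 1$.

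Now suppose $\wmax=\wmin+1$. Since the width sequence is constant-then-jump on each half-axis, the transition point fails to be unique only when both a positive and a negative jump occur, i.e.\ $W^+\ne w_0$ and $W^-\ne w_0$; the constraint $\wmax-\wmin\le 1$ then forces $W^+=W^-$. The case $W^+=W^-=w_0+1$ is ruled out by the enumeration (it would require simultaneously $d=b+1$ and $d=a$). The only remaining case $W^+=W^-=w_0-1$ forces $w_0=2$, $d=a+1=b$, and $W^\pm=1$; then $w(\tau(n))=1$ for all $|n|$ sufficiently large, and Lemma~\ref{lem:trivial-knot-width} identifies $K$ as the trivial knot, contradicting the hypothesis. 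Therefore at most one jump occurs, and the integer $\ell$ at which it takes place is uniquely determined.
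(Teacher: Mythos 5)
Your proof is correct and follows the same core strategy as the paper's proof (Lemma~\ref{lem:general-stability} plus the no--blank--diagonals hypothesis), but it is organized rather differently and I think the differences are worth recording. The paper picks a base $m\ll 0$ at which the width has already stabilized and then does a case analysis on the position of the $\bF[\bZ/n\bZ]$ diagonal relative to $\Supp(\Khred(\tau(m)))$; you instead anchor at $m=0$ (where $\det(\tau(0))=0$ guarantees $w_0\ge 2$), use Lemma~\ref{lem:diagonal} and the remark following Lemma~\ref{lem:general-stability-negative} to pin down the distinguished gradings $d$ and $d-1$, and then run a monotonicity argument for the sequences $b_d(n)$ and $b_{d-1}(n)$ in both directions. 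This buys you two things the paper's version glosses over: (1) by working where $w_0\ge 2$ you never have to worry about the degenerate ``$k=1$'' version of the no--blank argument (the paper's case where $\bF^n$ sits above a one-diagonal support needs extra care, since there a support that grows to two diagonals and then slides back to one never produces a gap); and (2) your explicit enumeration of $(W^+,W^-)$ cleanly isolates the only scenario in which two jumps could occur, namely $W^+=W^-=w_0-1=1$, and you dispatch it by Lemma~\ref{lem:trivial-knot-width}. The paper uses that lemma only to handle the trivial-knot case at the outset, and its uniqueness claim (``once a new diagonal appears it is forced to persist\ldots'') is stated more tersely; your route makes the exclusion of the double-jump explicit. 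One small wording issue: ``for $n\le 0$ these monotonicities reverse'' is ambiguous --- as functions of $n$, $b_d$ is non-decreasing and $b_{d-1}$ is non-increasing across \emph{all} of $\bZ$; what reverses going to the negative side is which of the two is eventually forced to infinity by the splitting argument. Your subsequent use is consistent with the correct picture, so this is cosmetic rather than a gap, but it would be worth rephrasing.
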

\begin{proof}
First notice that the statement holds for the tangle associated with the quotient of the trivial knot by Lemma \ref{lem:trivial-knot-width}, since $w(\tau(0))=w(\unknot\sqcup\unknot)=2$ and $w(\tau(n))=1$ for all $n\ne 0$.

Now suppose the tangle is associated with a non-trivial knot. By stability of width, we may choose $m\ll 0$ so that $\Khred(\tau(m))\cong\bF^{b_1}\oplus\cdots\oplus\bF^{b_k}$ and $w(\tau(m'))=k$ is constant for all $m'\le m$. Now in the notation of Lemma \ref{lem:general-stability} we have that 
\[\Khred(\tau(m+n))\cong H_*\left(\Khred(\tau(m))\to\bF[\bZ/n\bZ]\right)\] 
for every $n>0$. Notice that since we assume that there are no blank diagonals in $\Khred(\tau(n))$, for any $n$, it must be that $\bF^n\cong\bF[\bZ/n\bZ]$ is supported on one of the $k$ $\delta$-gradings of $\Khred(\tau(m))$, or on a $\delta$-grading immediately adjacent to those of $\Khred(\tau(m))$. 

The case where \[\Khred(\tau(m+n))\cong H_*\left(
\raisebox{15pt}{\xymatrix@C=15pt@R=12pt{
	&\bF^{b_1} & \bF^{b_2} & \cdots & \bF^{b_{k}} \\
	 \bF^{n} &  &&& 
}}\right)\] may be immediately ruled out, however. Recall that $w(\tau(m'))=k$ for all $m'\le m$, so for $m'=m-1$ we would have that $\Khred(\tau(m))\cong\Khred(\tau(m-1+1)\cong \bF\oplus\bF^{b_1'}\oplus\cdots\oplus\bF^{b_k'}$, contradicting $w(\tau(m))=k$.

The case where  \[\Khred(\tau(m+n))\cong H_*\left(
\raisebox{15pt}{\xymatrix@C=15pt@R=12pt{
	\bF^{b_1}& \bF^{b_2} & \cdots & \bF^{b_{k}} \\
	 \bF^{n}   &&& 
}}\right)\] shows that $w(\tau(n))$ is constant for all $n$, since the connecting homomorphism is necessarily zero. Similarly, if $\bF^n\cong\bF[\bZ/n\bZ]$ is supported on $\delta$-gradings $3$ through $k$ in $\Khred(\tau(m))\cong\bF^{b_1}\oplus\cdots\oplus\bF^{b_k}$, then  $w(\tau(n))$ is constant for all $n$. In each of these cases, though the connecting homomorphism need not be zero, the width cannot change for grading reasons (in particular, since the connecting homomorphism raises $\delta$-grading by 1). 

There are two remaining cases. First consider the case where  \[\Khred(\tau(m+n))\cong H_*\left(
\raisebox{15pt}{\xymatrix@C=15pt@R=12pt{
	\bF^{b_1}\ar[dr] & \bF^{b_2} & \cdots & \bF^{b_{k}} \\
	& \bF^{n} &  & 
}}\right)\] It is possible for the width to decrease in this case. We claim that the $w(\tau(m+n))$ is no less than $k-1$. To see this, notice that a decrease in width by 2 would require that  $b_1=n$ and \[\Khred(\tau(m+n))\cong H_*\left(
\raisebox{15pt}{\xymatrix@C=15pt@R=12pt{
	\bF^{n}\ar[dr] & \{0\}& \cdots & \bF^{b_{k}} \\
	& \bF^{n} &  & 
}}\right)\] However, this shows that $\Khred(\tau(m))$ contains a blank diagonal ($b_2=0$), a contradiction.  

The final case is similar. If  \[\Khred(\tau(m+n))\cong H_*\left(
\raisebox{15pt}{\xymatrix@C=15pt@R=12pt{
	\bF^{b_1}& \bF^{b_2} & \cdots & \bF^{b_{k}}\ar[dr] & \\
	&  &  & &\bF^{n}
}}\right)\] the the possibility for width increase arises. Notice however that the possibility that the width drops by one before it increases is once again ruled out by the no blank diagonals hypothesis. Indeed if $b_k=n$ and \[\Khred(\tau(m+n))\cong H_*\left(
\raisebox{15pt}{\xymatrix@C=15pt@R=12pt{
	\bF^{b_1}& \bF^{b_2} & \cdots & \bF^{n}\ar[dr] & \\
	&  &  & &\bF^{n}
}}\right)\] then the case $w(\tau(m+n))=k-1$ implies that $\Khred(\tau(m+n+1))$ contains a blank diagonal. In particular, the width can only increase from $k$ to $k+1$ in this setting. 

We conclude from this case study that either $\wmax=\wmin$ or $\wmax=\wmin+1$, as claimed. Whenever $\wmax=\wmin+1$, the width either {\em expands} or {\em decays}. It follows from the argument above that when this is the case for a tangle associated with a non-trivial knot in $S^3$, there is a unique $\ell$ for which $w(\tau(\ell))$ and $w(\tau(\ell+1))$ differ. 

 More precisely, in the notation of Lemma \ref{lem:general-stability}, the width expands whenever 
\[\Khred(\tau(\ell+1))\cong H_*\left(
\raisebox{15pt}{\xymatrix@C=15pt@R=12pt{
	\bF^{b_1} & \bF^{b_2} & \cdots & \bF^{b_{\wmin}}\ar[dr]^{0} & \\
	& &  & & \bF
}}\right)\] with $b_{\wmin}=1$, and the  width decays whenever \[
\Khred(\tau(\ell+1))\cong H_*\left(
\raisebox{15pt}{\xymatrix@C=15pt@R=12pt{
	\bF\ar[dr]_{\cong} & \bF^{b_2} & \cdots & \bF^{b_{\wmax}} \\
	& \bF &  & 
}}\right)\] 
for some $\ell\in\bZ$. Notice that, since \[\Khred(\tau(\ell+n))\cong H_*\left(\Khred(\tau(\ell))\to\bF[\bZ/n\bZ]\right),\] once a new diagonal appears it is forced to persist, and when an old diagonal vanishes it cannot reappear, due to the identification of the $q$-gradings in $\bF[\bZ/n\bZ]$. As a result, the value $\ell\in\bZ$ in each setting  is unique. \end{proof}

For example, Berge knots (chosen so that the lens space surgeries are positive) give rise to a family of tangles for which the width decays (c.f. Proposition \ref{prp:width-bound-Berge}). 

\subsection{On determinants and resolutions}\label{sec:det-and-res}

In the arguments that follow, we will rely heavily on resolutions of terminal crossings (see Definition \ref{def:terminal-crossing}) in branch sets $\tau(\pq)$ for which $S^3_{p/q}(K)=\Br(B^3,\tau(\pq))$. As such, we remark that $\textstyle\det(\tau(\pq))=|H_1(S^3_{p/q}(K);\bZ)|=p$ for any $\pq>0$ (in all cases, we deal with negative surgeries by passing to the mirror image). Moreover if $\tau(\pqzero)$ and $\tau(\pqone)$ are the links obtained by resolving the terminal crossing, then $\textstyle\det(\tau(\pq))=\det(\tau(\pqzero))+\det(\tau(\pqone))$ by applying Property \ref{pro:end}.

As a result, $\Khred(\tau(\pq))$ may be studied by applying Proposition \ref{prp:mo} to the resolutions  $\tau(\pqzero)$ and $\tau(\pqone)$ whenever $\pq>1$. In the case $\pq\in(0,1)$ the same arguments work by using Proposition \ref{prp:mo-perturbed} when treating continued fractions of length $r=2$: here $\det(\tau(\pqzero))=\det(\tau(0))=0$.

By Lemma \ref{lem:general-stability} we have that \[\Khred(\tau(n))\cong H_*\left(\Khred(\tau(0))\to\bF[\bZ/n\bZ]\right)\] for a specific identification of $\bigoplus_n\Khred(\tau(\overzero))\cong \bF[\bZ/n\bZ]$ as a graded group. As a result, \[\textstyle\Khsig(\tau(n))\cong H_*\left(\Khsig(\tau(n-1))\to\Khsig(\tau(\overzero))\right)\]
whenever $n>1$, and 
\[\textstyle\Khsig(\tau(1))\cong H_*\left(\Khred(\tau(0))[-\half]\to\Khsig(\tau(\overzero))\right)\]
where $\Khsig(\tau(\overzero))=\Khred(\tau(\overzero))$ since the signature of the trivial knot is 0. In either case, \[\textstyle\Supp\left(\Khsig(\tau(n))\right)\subseteq\Supp\left(\Khsig(\tau(n+1))\right)\] or \[\textstyle\Supp\left(\Khsig(\tau(n+1))\right)\subseteq\Supp\left(\Khsig(\tau(n))\right)\] as absolutely $\bZ$-graded groups (where the fixed shifts are adjusted accordingly by $[-\half]$ in the case $n=0$). Notice that these inclusions are equalities whenever $w(\tau(n))=w(\tau(n+1))$, so that the inclusions are only relevant in the case when the width changes by one.

\subsection{An upper bound for width}\label{sub:upper}

\begin{proposition}\label{prp:width-upper-bound}
Let $K$ be a strongly invertible knot in $S^3$, with preferred associated quotient tangle $T=(B^3,\tau)$. Then $w(\tau(\pq))$ is bounded above by $\wmax$ for all $\pq\in\bQ$.
\end{proposition}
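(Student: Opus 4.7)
The plan is to induct on the length $r$ of the continued fraction $p/q=[a_1,\ldots,a_r]$ (with convention $a_1\ge 0$ and $a_i>0$ for $i>1$), applying the Manolescu--Ozsv\'ath skein exact sequence (Proposition~\ref{prp:mo}, together with its degenerate form Proposition~\ref{prp:mo-perturbed}) to the terminal crossing of $\tau(p/q)$. Since the mirror identification $\tau(p/q)\leftrightarrow\tau^\star(-p/q)$ preserves width and the bound by $\wmax$, it suffices to treat $p/q\ge 0$.

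The base cases $r=0$ and $r=1$ are immediate: $\tau(\overzero)$ is the trivial knot in the preferred representative, so $w=1\le\wmax$; and $r=1$ reduces to integer surgery, where the bound holds by definition of $\wmax$. For $r\ge 2$, Property~\ref{pro:end} allows us to assume $a_r\ge 2$, and Property~\ref{pro:resolve} identifies the two resolutions of the terminal crossing as $\tau(p_0/q_0)$ with $p_0/q_0=[a_1,\ldots,a_{r-1}]$ (length $r-1$) and $\tau(p_1/q_1)$ with $p_1/q_1=[a_1,\ldots,a_r-1]$ (length $r$ but with strictly smaller final entry, or length $r-1$ after applying Property~\ref{pro:end} if $a_r=2$). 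A secondary induction on $a_r$ for fixed $r$ therefore applies the inductive hypothesis to both resolutions.

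Since $\det(\tau(p/q))=\det(\tau(p_0/q_0))+\det(\tau(p_1/q_1))$ as explained in Section~\ref{sec:det-and-res}, both summands are positive whenever $p/q\ge 1$ and Proposition~\ref{prp:mo} then gives
\[\Khsig(\tau(p/q))\cong H_\ast\bigl(\Khsig(\tau(p_0/q_0))\to\Khsig(\tau(p_1/q_1))\bigr).\]
For $p/q\in(0,1)$ with $r=2$ we have $p_0/q_0=0$ and $\det(\tau(0))=0$, so Proposition~\ref{prp:mo-perturbed} supplies the analogous cone with a half-integer shift absorbed into the $\sigma$-normalization. In either case, $\Khsig(\tau(p/q))$ appears as the homology of a mapping cone whose two input terms have $\delta$-width at most $\wmax$ by the inductive hypothesis.

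The main obstacle is that a mapping cone can, in principle, increase the $\delta$-width by one, so the width bounds on the two inputs do not by themselves yield the desired bound on $\Khsig(\tau(p/q))$. To circumvent this I would strengthen the inductive statement to say that $\Supp(\Khsig(\tau(p/q)))$ is contained in the fixed interval $I=\bigcup_{n\in\bZ}\Supp(\Khsig(\tau(n)))$; this interval has length exactly $\wmax$ by Lemma~\ref{lem:max-min} combined with the nesting of integer supports recorded at the end of Section~\ref{sec:det-and-res}. The $\sigma$-normalized cones of Propositions~\ref{prp:mo} and~\ref{prp:mo-perturbed} are designed so that the two resolutions enter with compatible absolute $\delta$-gradings, and the connecting homomorphism cannot carry the support beyond $I$ without forcing a violation of the strengthened inductive containment for one of the resolutions (since any class in $\Khsig(\tau(p/q))$ outside $I$ must map to, or come from, a class outside $I$ in one of $\Khsig(\tau(p_i/q_i))$ via the long exact sequence). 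The strengthened induction then yields $w(\tau(p/q))\le\wmax$ directly.
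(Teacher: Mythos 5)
Your approach takes a genuinely different route from the paper. The paper strengthens the induction \emph{locally}: the inductive hypothesis tracks the nesting $\Supp(\Khsig(\tau[a_1,\ldots,a_{r-1}]))\subseteq\Supp(\Khsig(\tau[a_1,\ldots,a_{r-1}+1]))$ or vice versa between the two sibling resolutions at each stage, a nesting that flows from Lemma~\ref{lem:general-stability} alone and is propagated through the mapping cone. You instead strengthen \emph{globally}, asking that $\Supp(\Khsig(\tau(p/q)))$ lie in a single fixed interval $I$. Your inductive step is then arguably cleaner: since the cone in $\delta$-grading $i$ involves only $A_{i},A_{i-1},B_{i},B_{i+1}$, both inputs vanishing outside $I$ forces the cone to vanish outside $I$, with no case analysis on which sibling contains the other. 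This is a real structural simplification over the paper's proof.

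The one genuine gap is the justification that $|I|=\wmax$. You invoke Lemma~\ref{lem:max-min}, but that lemma carries the standing hypothesis that $\Khred(\tau(n))$ has no blank diagonals for all $n$ --- part of the genericity package (Definition~\ref{def:generic}) --- whereas Proposition~\ref{prp:width-upper-bound} is stated and used (e.g.\ in Proposition~\ref{prp:width-bound-Berge} and Theorem~\ref{thm:finite-bound}) without any such assumption. The paper's proof sidesteps this precisely because it never needs the union of \emph{all} integer supports to be controlled, only consecutive pairs. Your plan is salvageable, though, by replacing the citation with a direct argument from Lemma~\ref{lem:general-stability} and Lemma~\ref{lem:general-stability-negative}: as $n$ ranges over $\bZ$, the cone structure forces all $\delta$-gradings except the two adjacent distinguished ones (call them $\delta_0$ and $\delta_0-1$) to have constant rank, with the rank at $\delta_0$ non-decreasing in $n$ and the rank at $\delta_0-1$ non-increasing, and stability (Lemma~\ref{lem:split}) pins the two ends down. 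A short case analysis on the position of $\{\delta_0-1,\delta_0\}$ relative to the fixed part then gives $|I|=\wmax$ without any blank-diagonal hypothesis. You should also note (as the paper does) that in the degenerate $r=2$, $p/q\in(0,1)$ case, the $\sigma$-normalized support of $\tau(0)$ must enter $I$ with the extra $[-\tfrac12]$ shift from Proposition~\ref{prp:mo-perturbed}; this is a bookkeeping adjustment, not a conceptual one.
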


\begin{proof}
By taking mirrors, we suppose without loss of generality that $\pq\ge0$ and proceed in 2 cases. 

{\bf Case 1:} $1\le\pq$

By its definition, $\wmax$ provides the upper bound for $w(\tau(n))$ for any $n$. This provides a base for induction in $r$, the length of the continued fraction representation $\pq=[a_1,a_2,\ldots,a_r]$. 

First consider the case $\pq=[a_1,2]$. Here we have $\textstyle\det(\pq)=p=p_0+p_1=a_1+a_1+1=\det(\pqzero)+\det(\pqone)$ where $\pqzero=[a_1]$ and $\pqone=[a_1,1]=[a_1+1]$ by resolving the terminal crossing. In either case $w(\pqzero),w(\pqone)\le\wmax$, and by applying Proposition \ref{prp:mo} we have \[\textstyle\Khsig(\tau(\pq))\cong H_*\left(\Khsig(\tau(\pqzero))\to\Khsig(\tau(\pqone))\right).\] Moreover, according to Section \ref{sec:det-and-res} we have that either  \[\textstyle\Supp\left(\Khsig(\tau(\pqone))\right)\subseteq\Supp\left(\Khsig(\tau(\pqzero))\right)\] or \[\textstyle\Supp\left(\Khsig(\tau(\pqzero))\right)\subseteq\Supp\left(\Khsig(\tau(\pqone))\right)\] (depending on expansion or decay) as a consequence of Lemma \ref{lem:general-stability}. Therefore, 
\[w(\tau[a_1,2]) = \textstyle w(\tau(\frac{2a_1+1}{2})) \textstyle\le \max\{w(\tau\lfloor\frac{2a_1+1}{2}\rfloor),w(\tau\lceil\frac{2a_1+1}{2}\rceil)\} \le\wmax.\]
The same statement holds for $\pq=[a_1,a_2]$. By iterating Proposition \ref{prp:mo} we have \[\xymatrix@C=15pt@R=12pt{
	 {\textstyle\Khsig(\tau(\pq))}\ar[r]  &{\textstyle\Khsig(\tau(\pqzero))}\\
	 {\textstyle\Khsig(\tau(\frac{p_1'}{q_1'}))}\ar[u]\ar[r] & {\textstyle\Khsig(\tau(\pqzero))}\\
	 {\vdots}\ar[u] &\\
	{\textstyle\Khsig(\tau(\pqone))}\ar[u] &
}\]
where the connecting homomorphisms have been omitted (of course, there is no danger in doing so since these can only decrease the homological width). Once again, as a consequence of supports we conclude that $w(\tau[a_1,a_2])\le\max\{w(\tau(a_1)),w(\tau(a_1+1))\}\le\wmax$.

Now for induction on $r$: given $\pq=[a_1,a_2,\ldots,a_{r-1}]$ the inductive hypothesis is that $w(\tau(\pq))\le\wmax$ and one of \[\textstyle\Supp\left(\Khsig(\tau[a_1,a_2,\ldots,a_{r-1}])\right)\subseteq\Supp\left(\Khsig(\tau[a_1,a_2,\ldots,a_{r-1}+1])\right)\] or \[\textstyle\Supp\left(\Khsig(\tau[a_1,a_2,\ldots,a_{r-1}+1])\right)\subseteq\Supp\left(\Khsig(\tau[a_1,a_2,\ldots,a_{r-1}])\right)\] holds.

This being the case, we claim that 
\[w(\tau[a_1,a_2,\ldots,a_{r-1},a_{r}])\le\max\left\{w(\tau[a_1,a_2,\ldots,a_{r-1}]),w(\tau[a_1,a_2,\ldots,a_{r-1}+1])\right\}.\]

By resolving the terminal crossing of $\tau(\pq)$ and applying Proposition \ref{prp:mo} 
\[\textstyle\Khsig(\tau(\pq))\cong H_*\left(
\textstyle\Khsig(\tau(\pqzero))\to\textstyle\Khsig(\tau(\pqone))\right) \]
so that  $w(\tau(\pq))\le\max\{w(\tau(\pqzero)),w(\tau(\pqone))\}$ if $a_r=2$. By induction in $a_r$ we have that \[w(\tau[a_1,a_2,\ldots,a_{r-1},a_{r}])\le\max\{w(\tau[a_1,a_2,\ldots,a_{r+1}]),w(\tau[a_1,a_2,\ldots,a_{r-1}+1])\}\] by applying Property \ref{pro:end} together with the induction hypothesis on supports. 

Notice that, by construction of the mapping cone, we have that $\Supp(\Khred(\tau(\pq)))$ either contains or is contained by each of  $\Supp(\Khred(\tau(\pqzero)))$ and $\Supp(\Khred(\tau(\pqone)))$. Recall that which of the latter is relevant to the inductive step depends on the parity of $r$. 

As a result, by induction in length we have that $w(\tau(\textstyle\pq))\le\{w(\tau\lfloor\pq\rfloor),w(\tau\lceil\pq\rceil)\}\le\wmax$, concluding the proof in this case.

{\bf Case 2:} $0<\pq<1$
 
The proof in this case follows the same lines as the previous case, and differs only in passing from the case $r=2$ to $r=1$. Indeed, the argument here is identical, once we replace the use of Proposition \ref{prp:mo} with that of its degenerative counterpart, Proposition \ref{prp:mo-perturbed}. This is due to the fact that, while the determinants remain additive under resolution, $\det(\tau\lfloor\pq\rfloor)=0$ in this case.  
 
To see this, consider once again the case $\pq=[a_1,2]=[0,2]$. By applying Proposition \ref{prp:mo-perturbed} we have \[\textstyle\Khsig(\tau(\pq))\cong H_*\left(\Khsig(\tau(\pqzero))[-\half]\to\Khsig(\tau(\pqone))\right).\] Moreover, according to Section \ref{sec:det-and-res} we have that either  \[\textstyle\Supp\left(\Khsig(\tau(\pqone))[-\half]\right)\subseteq\Supp\left(\Khsig(\tau(\pqzero))\right)\] or \[\textstyle\Supp\left(\Khsig(\tau(\pqzero))\right)\subseteq\Supp\left(\Khsig(\tau(\pqone))[-\half]\right)\] as a consequence of Lemma \ref{lem:general-stability}. Therefore, 
\[w(\tau[a_1,2]) = \textstyle w(\tau(\frac{2a_1+1}{2})) \textstyle\le \max\{w(\tau\lfloor\frac{2a_1+1}{2}\rfloor),w(\tau\lceil\frac{2a_0+1}{2}\rceil)\} \le\wmax.\]
The same statement holds for $\pq=[a_1,a_2]$. By iterating Proposition \ref{prp:mo-perturbed} as in the previous case so that $w(\tau[a_1,a_2]) \le \max\{w(\tau(a_1)),w(\tau(a_1+1))\}\le\wmax$. 
\end{proof}

\begin{remark}\label{rmk:WLOG} Case 2, when $\pq\in(0,1)$, will be present in many of the arguments that follow. However, in every setting this case simply amounts to replacing Proposition \ref{prp:mo} with Proposition \ref{prp:mo-perturbed} in passing from half-integer (continued fractions of length 2) to integer surgeries, as in the above proof. Thus we will restrict, without loss of generality, to the case $\pq\ge1$ in the arguments below.\end{remark}

With the upper bound of Proposition \ref{prp:width-upper-bound} in hand, we conclude this section with the proof of Proposition \ref{prp:width-bound-Berge}. 

\begin{proof}[Proof of Proposition \ref{prp:width-bound-Berge}] For any Berge knot $K$, there is some $N$, positive up to taking mirrors, for which $S^3_N(K)$ is a lens space. By a result of Osborne \cite{Osborne1981}, K is a strongly invertible knot, so let $T=(B^3,\tau)$ be a representative for the associated quotient tangle compatible with the basis for surgery $(\mu,N\mu+\lambda)$, where $\lambda$ is the preferred longitude. Note that $S^3_N(K)\cong\Br(S^3,\tau(0))$ in this setting.

Now we have already seen that $w(\tau(n))=1$ for all $n\ge0$ as a result of Proposition \ref{prp:Berge-quasi-alternating}, since quasi-alternating knots are thin by a result of Manolescu and Ozsv\'ath \cite{MO2007}. On the other hand, $w(\tau(n))$ is at most 2 when $n<0$ in application of Lemma \ref{lem:max-min}. Therefore, for any Berge knot, the associated quotient tangle has $\wmin=1$ and $\wmax=2$.

The result now follows from an application of Proposition \ref{prp:width-upper-bound}: $w(\tau(\pq))$ is bounded above by $\wmax=2$. \end{proof}  


\section{Lower bounds and generic tangles}\label{sec:estimates}

\subsection{A lower bound for width}

\begin{proposition}\label{prp:width-lower-bound}
Let $K$ be a strongly invertible knot in $S^3$, with preferred associated quotient tangle $T=(B^3,\tau)$. If $\wmax=\wmin$ then $w(\tau(\pq))$ is bounded below by $\wmin$ for all $\pq\in\bQ$.
\end{proposition}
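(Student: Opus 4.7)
The plan is to show that, under the hypothesis $\wmax=\wmin$, every filling $\tau(\pq)$ has $\si$-normalized Khovanov homology supported on a common interval $I$ of $\delta$-gradings of length $\wmin$. Combined with Proposition \ref{prp:width-upper-bound}, this yields $w(\tau(\pq))=\wmin$ for every $\pq\in\bQ$, which in particular gives the claimed lower bound.

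The first step is to establish invariance of the support at integer fillings. By the discussion in Section \ref{sec:det-and-res}, consecutive supports $\Supp(\Khsig(\tau(n)))$ and $\Supp(\Khsig(\tau(n+1)))$ are nested, with equality precisely when $w(\tau(n))=w(\tau(n+1))$. Under our hypothesis all widths coincide, so by iteration $\Supp(\Khsig(\tau(n)))$ is a fixed interval $I=[\delta_1,\delta_k]$ of length $\wmin$ for every $n\in\bZ$.

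Next, we would induct on the length $r$ of the continued fraction $\pq=[a_1,\ldots,a_r]$, with a secondary induction on $a_r$, to prove $\Supp(\Khsig(\tau(\pq)))=I$ for all $\pq\ge 1$. The case $\pq\in(0,1)$ is handled in parallel, using Proposition \ref{prp:mo-perturbed} in place of Proposition \ref{prp:mo}, exactly as in Remark \ref{rmk:WLOG}. The base case $r=1$ is the previous paragraph. For the inductive step, resolving the terminal crossing of $\tau(\pq)$ yields $\pqzero$ and $\pqone$ with strictly shorter continued fraction expansions or strictly smaller $a_r$, and $\det(\tau(\pq))=\det(\tau(\pqzero))+\det(\tau(\pqone))$ with both summands positive. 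Proposition \ref{prp:mo} then applies, giving
\[\Khsig(\tau(\pq))\cong H_\ast\bigl(\Khsig(\tau(\pqzero))\longto\Khsig(\tau(\pqone))\bigr)\]
with connecting map $f$ raising $\delta$-grading by $1$, and both complexes in the cone have support $I$ by the inductive hypothesis.

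The crux is the following observation about extremal gradings in the cone. At $\delta=\delta_1$ the source $\Khsig^{\delta_1-1}(\tau(\pqzero))$ is trivial, forcing $f^{\delta_1-1}=0$, so $\Khsig^{\delta_1}(\tau(\pqone))\ne 0$ survives in the cokernel and embeds into $\Khsig^{\delta_1}(\tau(\pq))$. Symmetrically, at $\delta=\delta_k$ the target $\Khsig^{\delta_k+1}(\tau(\pqone))$ is trivial, so $\Khsig^{\delta_k}(\tau(\pqzero))\ne 0$ sits inside $\ker f^{\delta_k}$ and injects into $\Khsig^{\delta_k}(\tau(\pq))$. Combined with the upper bound $\Supp(\Khsig(\tau(\pq)))\subseteq I$ coming from Proposition \ref{prp:width-upper-bound} (or directly from the shape of the mapping cone), this gives $\Supp(\Khsig(\tau(\pq)))=I$, completing the induction. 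The main subtlety is to carry forward the stronger inductive hypothesis of \emph{equality} of $\si$-normalized supports rather than merely a width bound, so that this grading-based vanishing of $f$ at the extremes can be invoked; handling the degenerate case $\pq\in(0,1)$ with Proposition \ref{prp:mo-perturbed} then amounts to a routine shift in absolute grading.
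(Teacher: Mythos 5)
Your proposal is correct and follows essentially the same route as the paper: use Lemma \ref{lem:general-stability} to pin down a common $\si$-normalized support $I$ for all integer closures, then induct on the continued-fraction length via Proposition \ref{prp:mo} (and Proposition \ref{prp:mo-perturbed} for $\pq\in(0,1)$), observing that the extremal $\delta$-gradings of $I$ must survive the mapping cone for grading reasons, and carrying the equality of supports as the inductive hypothesis. The paper records the same extremal-survival observation as $b_1^*\ge b_1'$ and $b_w^*\ge b_w$ in the cone picture, which is exactly your vanishing-at-the-ends argument for the connecting homomorphism.
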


\begin{proof} Without loss of generality, assume that $\pq\ge1$.
Since $\wmax=\wmin=w$, we have that $w=w(\tau(n))$ for every $n\in\bZ$. In particular, 
\[\textstyle\Supp\left(\Khsig(\tau(n+1))\right)=\Supp\left(\Khsig(\tau(n))\right)\] as a consequence of Lemma \ref{lem:general-stability}. 
Thus, applying Proposition \ref{prp:mo} \[\textstyle\Khsig(\tau[a_1,2])\cong H_*\left(\Khsig(\tau(a_1))\to\Khsig(\tau(a_1+1))\right)\] so that
if $\Khred(\tau(a_1))\cong\bF^{b_1}\oplus\cdots\oplus\bF^{b_w}$ and $\Khred(\tau(a_1+1))\cong\bF^{b_1'}\oplus\cdots\oplus\bF^{b_w'}$ (note that $b_i\ne b_i'$ for precisely one value $1\le i\le w$) then  
\[\Khred(\tau[a_1,2])\cong H_*\left(
\raisebox{15pt}{\xymatrix@C=15pt@R=12pt{
	\bF^{b_1}\ar[dr] & \bF^{b_2}\ar[dr] & {\cdots}\ar[dr] & \bF^{b_w}\\
	\bF^{b_1'}& \bF^{b_2'} & \cdots & \bF^{b_w'} 
}}\right)\] as a relatively graded group, since the differential of the mapping cone raises $\delta$-grading by 1. Notice in particular that $b_1^*\ge b_1'$ and $b_w^*\ge b_w$  for $\Khred(\tau[a_1,2])=\bF^{b_1^*}\oplus\cdots\oplus\bF^{b_w^*}$, so that $w(\tau[a_1,2])=w$.

Similarly, for $\pq=[a_1,a_2]$ in general, we may iteratively apply Proposition \ref{prp:mo} $a_2-1$ times to the same end: 
\[\Khred([a_1,a_2])\cong H_*\left(\raisebox{-35pt}{$H_*\Bigg($}
\raisebox{45pt}{\xymatrix@C=15pt@R=12pt{
	\bF^{b_1} & \bF^{b_2} & {\cdots} & \bF^{b_w}\\
	\vdots & \vdots &  &\vdots \\
	\bF^{b_1}\ar[dr] & \bF^{b_2}\ar[dr] & {\cdots}\ar[dr] & \bF^{b_w}\\
	\bF^{b_1'}& \bF^{b_2'} & \cdots & \bF^{b_w'} 
}}\raisebox{-35pt}{$\Bigg)$}\right)\]
so that $b_1^*\ge b_1'$ and $b_w^*\ge b_w$ for $\Khred(\tau[a_1,a_2])=\bF^{b_1^*}\oplus\cdots\oplus\bF^{b_w^*}$, and once again $w(\tau[a_1,a_2])=w$.

To complete the proof then, we induct in $r$ with the assumption that $w(\tau(\pq))=w$ for all $\pq=[a_1,\ldots,a_{r-1}]$, and that   
\[\textstyle\Supp\left(\Khsig(\tau[a_1,a_2,\ldots,a_{r-1}])\right)=\Supp\left(\Khsig(\tau[a_1,a_2,\ldots,a_{r-1}+1]))\right)\] holds.
This being the case, we claim that 
\[w(\tau[a_1,a_2,\ldots,a_{r-1},a_{r}])\ge\min\left\{w(\tau[a_1,a_2,\ldots,a_{r-1}]),w(\tau[a_1,a_2,\ldots,a_{r-1}+1])\right\}.\]
Indeed, when $a_r=2$ we have that 
\[\textstyle\Khsig(\tau(\pq))\cong H_*\left(
\textstyle\Khsig(\tau(\pqzero))\to\textstyle\Khsig(\tau(\pqone))\right) \]
by applying Proposition \ref{prp:mo} so that in either case $w(\tau(\pq))= w(\tau(\pqzero)),w(\tau(\pqone))$ since the corresponding groups have the same support. By induction in $a_r$ we have that \[w(\tau[a_1,a_2,\ldots,a_{r-1},a_{r}])=w(\tau[a_1,a_2,\ldots,a_{r+1}]),w(\tau[a_1,a_2,\ldots,a_{r-1}+1])\] as before, by applying the induction hypothesis on supports. Notice that the new supports coincide by construction of the mapping cone. 

As a result, by induction in the length of the continued fraction expansion of $\pq$, we have that $w(\tau(\textstyle\pq))=w$ concluding the proof.
\end{proof}

Combining Proposition \ref{prp:width-lower-bound} with Proposition \ref{prp:width-upper-bound} we have immediately that $w(\tau(-)):\bQ\to\bN$ takes a single value $w\in\bN$ when $w=\wmax=\wmin$, where $T=(B^3,\tau)$ is the preferred representative for the quotient tangle associated with a strongly invertible knot in $S^3$.

Before turning attention to the case of changes in width, we remark that implicit in the above argument is the following:

\begin{proposition}\label{prp:lower-bound-on-interval}Let $K$ be a strongly invertible knot in $S^3$, with preferred associated quotient tangle $T=(B^3,\tau)$. If $w=w(\tau(n))=w(\tau(n+1))$ then $w(\tau(\pq))=w$ for all $\pq\in[n,n+1]$. \end{proposition}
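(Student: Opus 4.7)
The plan is to extract from the proof of Proposition \ref{prp:width-lower-bound} the observation that only widths at two consecutive integers are needed to control widths on the entire interval between them, and then mimic the induction on continued-fraction length in this localized setting.

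First I would reduce to the case $\pq \in [n,n+1]$ with $n \ge 0$ by the mirror reduction (Remark \ref{rmk:WLOG}), and note that any such rational has a continued-fraction expansion of the form $\pq = [a_1,a_2,\ldots,a_r]$ with $a_1 = n$ (the endpoint $\pq = n+1$ being handled by hypothesis directly). Next, I would establish the base of the induction: since by hypothesis $w(\tau(n)) = w(\tau(n+1)) = w$, Lemma \ref{lem:general-stability} applied in its $\si$-normalized form (as recorded in Section \ref{sec:det-and-res}) forces
\[\Supp\left(\Khsig(\tau(n))\right) = \Supp\left(\Khsig(\tau(n+1))\right),\]
because the two supports nest and have the same cardinality. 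This is the key ingredient that makes the local argument mirror the global one.

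For the inductive step I would run the same mapping-cone bookkeeping as in the proof of Proposition \ref{prp:width-lower-bound}, inducting on the length $r$ of the continued fraction $[n,a_2,\ldots,a_r]$, with the joint statement that both $w(\tau(\pq)) = w$ and that its $\si$-normalized support coincides with the common support of $\Khsig(\tau(n))$ and $\Khsig(\tau(n+1))$. The resolutions $\tau(\pqzero)$ and $\tau(\pqone)$ of the terminal crossing produce links with shorter continued fractions, still lying in $[n,n+1]$; Proposition \ref{prp:mo} then gives
\[\textstyle\Khsig(\tau(\pq)) \cong H_\ast\left(\Khsig(\tau(\pqzero)) \to \Khsig(\tau(\pqone))\right),\]
and the inductive hypothesis on supports means both terms occupy the same $w$ consecutive $\delta$-gradings, so the connecting differential (which raises $\delta$ by one) cannot annihilate the leftmost or rightmost diagonal. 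This preserves both width and support at the next stage. The secondary induction in $a_r$ (using Property \ref{pro:end} to pass between $[a_1,\ldots,a_{r-1}+1]$ and $[a_1,\ldots,a_{r-1},1]$) goes through exactly as before.

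The main obstacle, such as it is, is simply verifying that the support stability argument of Section \ref{sec:det-and-res}, which was phrased in terms of integer surgeries, localizes correctly: one must check that at every stage of the induction, the resolutions $\pqzero,\pqone$ remain rationals in $[n,n+1]$, so that the hypothesis $w(\tau(n)) = w(\tau(n+1))$ is actually the hypothesis being iterated. This is immediate from Property \ref{pro:resolve}: if $\pq = \frac{p_0+p_1}{q_0+q_1} \in [n,n+1]$ with both $\pqzero$ and $\pqone$ obtained by terminal resolution, then a mediant-style argument shows both lie in $[n,n+1]$ as well, so the induction never escapes the interval where the hypothesis is valid. Once this is in place the proof is essentially identical to that of Proposition \ref{prp:width-lower-bound}, with $\wmax = \wmin = w$ replaced by the local equality $w(\tau(n)) = w(\tau(n+1)) = w$.
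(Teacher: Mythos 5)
Your approach is correct and coincides with what the paper intends: the paper itself gives no explicit proof of this proposition, stating only that it is ``implicit in the above argument'' (the proof of Proposition \ref{prp:width-lower-bound}). You have spelled out exactly the right localization: use the equality of supports from Section \ref{sec:det-and-res} (the inclusions are equalities whenever $w(\tau(n))=w(\tau(n+1))$) as the base case, then run the same two-stage induction (on continued-fraction length $r$ and on the last entry $a_r$) with the strengthened inductive hypothesis tracking both width and $\si$-normalized support. The one place your justification is looser than it needs to be is the claim that terminal resolutions of $\pq\in[n,n+1]$ stay in $[n,n+1]$. Rather than a mediant argument (the mediant lying in an interval does not by itself pin down its parents, and the correct statement here uses the Stern--Brocot structure), this is immediate from Property \ref{pro:floor}: every continued fraction $[n,a_2,\ldots,a_s]$ arising in the induction has leading entry $n$ (after using Property \ref{pro:end} to ensure $a_s>1$), hence has floor $n$, hence lies in $[n,n+1]$, with the endpoints $n$ and $n+1$ reached precisely when the length drops to one. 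This is a matter of polish rather than a gap; the conclusion and overall structure are right.
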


It follows that, given any interval $[a,b]$ for integers $a,b$ on which the width $w=w(\tau(n))$ is constant for $n\in[a,b]\cap\bZ$, then $w=w(\tau(\pq))$ for all $\pq\in[a,b]$. 

\subsection{Expansion and genericity} The interesting case now corresponds to the setting wherein $w(\tau(n))$ and $w(\tau(n+1))$ differ. Under the assumption that there are no blank diagonals in the group $\Khred(\tau(n))$ for all $n$ (see Remark \ref{rmk:blank-diagonals}), we have that there is a unique integer $\ell$ where this possibility arrises (as shown in Lemma \ref{lem:max-min}). Since we deal with negative surgeries by passing to the mirror, and  the case $\ell=0$ by Remark \ref{rmk:WLOG}, we will treat the case $\ell>0$ in the arguments that follow, without loss of generality. 

First consider the case of width expansion. Then $w(\tau(\ell))+1=w(\tau(\ell+1))$ and 
\[\Khred(\tau(\ell)) \cong \bF^{b_1}\oplus\cdots\oplus\bF^{b_k} \] while
\[\Khred(\tau(\ell+1)) \cong \bF^{b_1}\oplus\cdots\oplus\bF^{b_k} \oplus \bF\] by Lemma \ref{lem:max-min}. Further, $\Supp(\Khsig(\tau(\ell))\subset\Supp(\Khsig(\tau(\ell+1))$ and 
\[\Khred(\tau(\ell+n)) \cong H_\ast \left(\raisebox{15pt}{\xymatrix@C=15pt@R=12pt{
	\bF^{b_1} & \bF^{b_2} & \cdots & \bF^{b_k}\ar[dr] & \\
	& &  & & \bF^n 
}}\right)\]
by Lemma \ref{lem:general-stability}. Consulting the proof of Proposition \ref{prp:width-upper-bound}, we see that resolving the terminal crossing of $\tau(\pq)$ to obtain a mapping cone in terms of the homologies of $\tau(\pqzero)$ and $\tau(\pqone)$, the support of the {\em shorter} of the two groups is always contained in the support of the {\em longer}. Moreover, the supports coincide on all but the highest $\delta$-grading. 

A potential for considerable decrease in width arises then whenever a mapping cone $H_\ast(A\to B)$ is considered wherein $A$ is supported on all but the highest $\delta$-grading of $B$. For example, if 
\[H_\ast \left(\raisebox{15pt}{\xymatrix@C=15pt@R=12pt{
	\bF^{b_1}\ar[dr]  & \bF^{b_2} \ar[dr] & \cdots\ar[dr]  & \bF^{b_k}\ar[dr] & \\
	\bF^{b_1'} & \bF^{b_2'} & \cdots & \bF^{b_k'} & \bF^{b_{k+1}'} 
}}\right)\]
the resulting width could be less than $k$. For this reason, we take into consideration the additional structure from the quantum grading in this setting. 

\begin{definition}\label{def:short-long}A relatively $\bZ\oplus\bZ$-graded group $A$ has expansion short form if there is a primary grading $\delta_{\max}$ and a secondary grading $q_{\max}$ with the properties that \begin{itemize}\item[(1)] $A_q=0$ for all $q>q_{\max}$ and \item[(2)]  $A^{\delta}_{q_{\max}}\ne 0$ if and only if $\delta= \delta_{\max}$. 
\end{itemize}
A relatively $\bZ\oplus\bZ$-graded group $B$ has expansion long form if, in addition to the conditions above, there is a grading $\delta_{\max}+1$ supporting non-trivial groups in secondary gradings $q<q_{\max}$ (but $B^{\delta_{\max}+1}_{q_{\max}}=0$).
\end{definition}

A schematic representation of this definition is shown in Figure \ref{fig:short-and-long}.

\begin{figure}[ht!]\begin{center}
\labellist\small
\endlabellist
\includegraphics[scale=0.5]{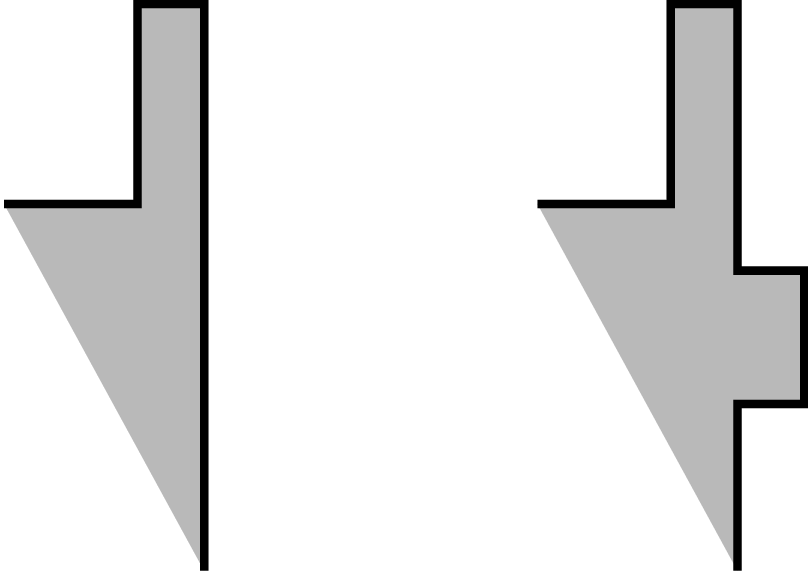}\caption{A schematic description of the bigraded support in the top gradings for expansion in short (left) and long (right) form.}\label{fig:short-and-long}\end{center}\end{figure}

\begin{proposition}\label{prp:short-to-long}Given a mapping cone of the form $H_\ast(A\to B)$ in which \begin{itemize} \item[(1)]$A$ is of width $k$ and  has expansion short form, 
\item[(2)]  $B$ is of width $k+1$ and has expansion long form and
\item[(3)]  $A$ is supported on the first $k$ $\delta$-gradings of $B$ in the expression $H_\ast(A\to B)$,
\end{itemize}
 the homology of the mapping cone has width at least $k$. Moreover, if the width is $k$ then the result has expansion short form and otherwise the width is $k+1$ and the result has expansion long form. 
\end{proposition}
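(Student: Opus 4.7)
The plan is to exploit the bigraded structure of the mapping cone. Since we are working over a field, $H_\ast(A\to B)\cong\ker(\phi)\oplus\operatorname{coker}(\phi)$ as a bigraded vector space, where $\phi\co A\to B$ is the connecting homomorphism. The crucial fact is that, with the normalization of shifts used in the skein mapping cone, $\phi$ raises the primary grading $\delta$ by $1$ and preserves the secondary grading $q$. Consequently the analysis splits $q$-grading by $q$-grading, and all of the action takes place at the top $q$-grading.

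First I would analyse the strip at $q=q_{\max}$. By expansion short form, $A^\delta_{q_{\max}}\ne 0$ only at $\delta=\delta_{\max}(A)$, which by hypothesis (3) coincides with the $k$-th $\delta$-grading of $B$; call it $\delta_k$. By expansion long form, $B^\delta_{q_{\max}}\ne 0$ only at $\delta=\delta_k$ as well (in particular $B^{\delta_k+1}_{q_{\max}}=0$). Since $\phi$ sends $A^{\delta_k}_{q_{\max}}$ into $B^{\delta_k+1}_{q_{\max}}=0$, the class of $A^{\delta_k}_{q_{\max}}$ survives in $\ker(\phi)$. Since the only possible preimage of $B^{\delta_k}_{q_{\max}}$ would be $A^{\delta_k-1}_{q_{\max}}$, which vanishes, the class of $B^{\delta_k}_{q_{\max}}$ survives in $\operatorname{coker}(\phi)$. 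Hence $H_\ast(A\to B)^{\delta_k}_{q_{\max}}\ne 0$.

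Next I would observe that $B$ at its lowest $\delta$-grading $\delta_1$ is never in the image of $\phi$, since this would require a class at $A$'s grading $\delta_0$, which is empty. So all of $B^{\delta_1}$ descends to $\operatorname{coker}(\phi)$, and in particular $H_\ast(A\to B)^{\delta_1}\ne 0$. Combined with the previous paragraph, the support contains both $\delta_1$ and $\delta_k$, and therefore the width is at least $k$.

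To conclude, I would note that the support is contained in $\delta_1,\ldots,\delta_{k+1}$, so the width is either $k$ or $k+1$. In both cases the verification of the form (Definition \ref{def:short-long}) is a direct reading of the previous paragraphs with $q_{\max}'=q_{\max}$ and $\delta_{\max}'=\delta_k$: the bound $H_\ast(A\to B)_q=0$ for $q>q_{\max}$ is inherited from $A$ and $B$, and $H_\ast(A\to B)^\delta_{q_{\max}}\ne 0$ exactly at $\delta=\delta_k$ (nothing at $\delta<\delta_k$ since neither $A$ nor $B$ supports there at $q_{\max}$, and nothing at $\delta_{k+1}$ since $B^{\delta_{k+1}}_{q_{\max}}=0$). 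If $H_\ast(A\to B)^{\delta_{k+1}}=0$ for all $q$, the width equals $k$ and the remaining data gives expansion short form; otherwise the width equals $k+1$, and since the support at $\delta_{k+1}$ lies strictly below $q_{\max}$, one obtains expansion long form. The only real care required here is bookkeeping the grading shifts; the essential mechanism is the $q$-preservation of $\phi$, which rigidly propagates the short/long form structure across the mapping cone.
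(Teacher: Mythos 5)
Your proof has a genuine gap: you implicitly identify the secondary gradings $q^A_{\max}$ and $q^B_{\max}$ as a single common $q_{\max}$, but nothing in the hypotheses forces these to coincide. Definition \ref{def:short-long} attaches $q_{\max}$ to a group \emph{internally}, and when $A$ and $B$ are placed in the same bigraded context inside the mapping cone (after the appropriate shifts), the two top secondary gradings will in general differ. This is precisely why the paper's proof denotes them $q^A$ and $q^B$ separately and proceeds by cases according to how $q^A_{\max}$ compares with $q^B_{\max}$, $q^B_{\rm top}$, and $q^B_{\rm bottom}$.

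Concretely, both of your survival claims can fail. If $q^A_{\max} < q^B_{\max}$, then your argument that ``$\phi$ sends $A^{\delta_k}_{q_{\max}}$ into $B^{\delta_k+1}_{q_{\max}}=0$'' breaks: at $q=q^A_{\max}$, expansion long form does \emph{not} guarantee $B^{\delta_k+1}_{q^A_{\max}}=0$, since $q^A_{\max}$ could lie in the interval $[q^B_{\rm bottom},q^B_{\rm top}]$ where $B^{\delta_{k+1}}$ is supported, and so $A^{\delta_k}_{q^A_{\max}}$ may die under $\phi$. Symmetrically, if $q^B_{\max} < q^A_{\max}$, your argument that $B^{\delta_k}_{q_{\max}}$ has no preimage breaks: the preimage $A^{\delta_k-1}_{q^B_{\max}}$ need not vanish, since $q^B_{\max}$ is not the top secondary grading of $A$ and the support of $A$ off its top $\delta$-grading is unconstrained at $q=q^B_{\max}$. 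The correct conclusion, which the paper establishes case by case, is that in each situation \emph{at least one} of $A^{\delta_k}$ or $B^{\delta_k}$ contributes a surviving class in the top $\delta$-grading supporting the homology, at secondary grading $\max(q^A_{\max}, q^B_{\max})$; your uniform claim that both survive at a shared $q_{\max}$ is simply false. Your observation about the lowest $\delta$-grading $\delta_1$ surviving in the cokernel is correct and is a mild simplification, but the heart of the argument --- the careful tracking of distinct secondary gradings on $A$ and $B$ --- is missing, and that is exactly what the genericity hypotheses are designed to feed into.
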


\begin{proof}First, by ignoring the secondary grading we have that by hypothesis the mapping cone $H_\ast(A\to B)$ is of the form \[H_\ast \left(\raisebox{15pt}{\xymatrix@C=15pt@R=12pt{
	\bF^{b_1}\ar[dr]  & \bF^{b_2} \ar[dr] & \cdots\ar[dr]  & \bF^{b_k}\ar[dr] & \\
	\bF^{b_1'} & \bF^{b_2'} & \cdots & \bF^{b_k'} & \bF^{b_{k+1}'} 
}}\right)\] As a result, it is immediate that the homology will have non-trivial groups supported in the first $\delta$-grading. To prove the claim then, we focus on the top ($k^{\rm th}$ and $(k+1)^{\rm st}$) $\delta$-gradings. This will require consideration of the secondary grading.

Since the relevant mapping cone is constructed without reference to the secondary grading, we proceed in cases. Denote the secondary grading on $A$ by $q^A$ and the secondary grading on $B$ by $q^B$. Let $q^B_{\rm top}$ be the largest grading in $B^{\delta_{\max}+1}$ supporting a non-trivial group. Note that  $q^B_{\rm top}<q^B_{\max}$. Similarly, let $q^B_{\rm bottom}$ be the smallest grading in $B^{\delta_{\max}+1}$ supporting a non-trivial group. There are 3 cases to consider; these are schematically illustrated in Figure \ref{fig:mapping-cone-cases}. 

Case 1: $q^A_{\max} < q^B_{\rm bottom}$. Notice that the connecting homomorphism $\bF^{b_k}\to\bF^{b'_{k+1}}$ is necessarily zero for grading reasons. As a result, the homology has expansion long form (hence width $k+1$), since groups in gradings $q^B_{\max}$ survive in homology.  

Case 2: $q^A_{\max} < q^B_{\max}$. In this case it may be that there are non-trivial connecting homomorphisms $\bF^{b_k}\to\bF^{b'_{k+1}}$, so groups supported in grading $\delta_{\max}+1$ will not necessarily survive in homology. In the case that all groups in this top grading collapse, notice that the group $B^{\delta_{\max}}_{q_{\max}}$ always survives in homology. As a result, the width may be no smaller than $k$ and the resulting homology has expansion short form. On the other hand, if a single group survives in grading $\delta_{\max}+1$, the width remains $k+1$ and the surviving group in  $B^{\delta_{\max}}_{q_{\max}}$ ensures that the result is of expansion long form.

Case 3: $q^B_{\max} < q^A_{\max}$. In this case, the group $A^{\delta_{\max}}_{q_{\max}}$ is the top-most group surviving in homology. This ensures that the resulting homology has expansion short form (in the case that the groups in grading $\delta_{\max}+1$ do not survive in homology) or expansion long form (in the case that some group in grading $\delta_{\max}+1$ survives). Note that for $q^B_{\max} \ll q^A_{\max}$ the latter is the expected behavior since the connecting homomorphism $\bF^{b_k}\to\bF^{b'_{k+1}}$ will be zero for grading reasons. In particular, this is the case whenever the $A^{\delta_{\max}}$ is supported in secondary gradings larger than $q^B_{\rm top}$. 
 \end{proof}

\begin{figure}[ht!]\begin{center}
\labellist\small
\endlabellist
\includegraphics[scale=0.5]{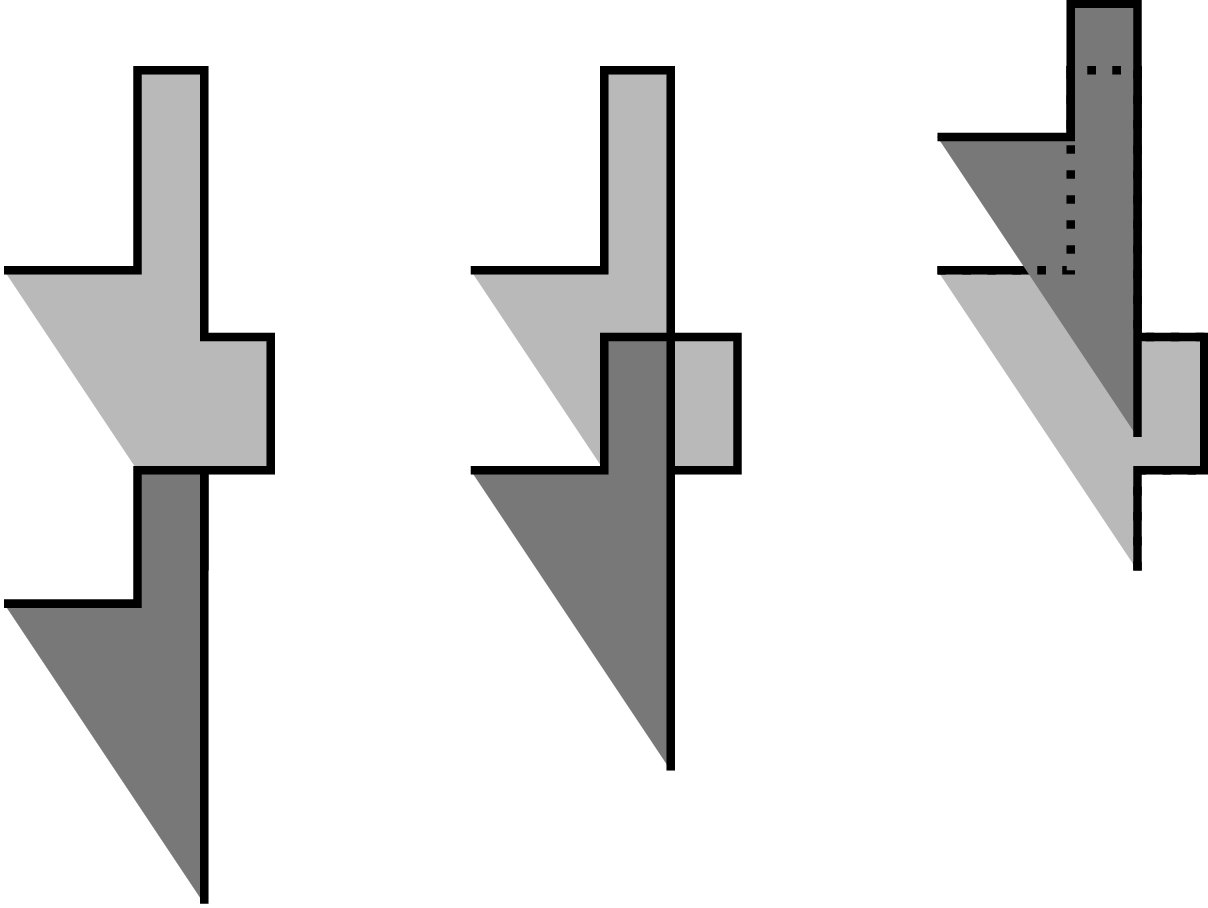}\caption{Various cases that arise in the proof of Proposition \ref{prp:short-to-long}. The top bi-gradings of the groups $A$ (darker) and $B$ (lighter) are shown schematically as combined in the expression $H_*(A\to B)$.}\label{fig:mapping-cone-cases}\end{center}\end{figure}

Similarly, we have:

\begin{proposition}\label{prp:reversed-possibility}
With hypothesis (1), (2) and (3) of Proposition \ref{prp:short-to-long}, the mapping cone $H_\ast(B\to A)$ has homology of width $k+1$ and is in expansion long form. 
\end{proposition}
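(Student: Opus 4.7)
The plan is to adapt the structural analysis of Proposition~\ref{prp:short-to-long} to the reversed mapping cone, exploiting a rigid asymmetry that automatically forces the width to stay at $k+1$. Writing $f\colon B \to A$ for the connecting map, which raises $\delta$-grading by $1$ and preserves $q$-grading, I would view $H_\ast(B \to A)$ as $\ker f \oplus \operatorname{coker} f$, since $A$ and $B$ carry vanishing internal differentials. The key observation is that, because $A$ is supported only on the $\delta$-gradings $\{1,\ldots,k\}$, the pieces $B^k$ and $B^{k+1}$ must lie in $\ker f$ --- their would-be images in $A^{k+1}$ and $A^{k+2}$ are zero --- while $A^1$ must survive in $\operatorname{coker} f$ since nothing in $B^0 = 0$ can hit it.

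The width computation is then immediate: the support of $H_\ast(B \to A)$ lies in $\{1,\ldots,k+1\}$, giving width at most $k+1$, while the surviving $B^{k+1}$ (with $b'_{k+1}>0$) contributes non-trivially at $\delta = k+1$, so the width is exactly $k+1$.

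For the expansion long form I would run a case analysis mirroring that of Proposition~\ref{prp:short-to-long}, divided according to the relative position of $q^A_{\max}$ and $q^B_{\max}$. In the case $q^B_{\max} \geq q^A_{\max}$, the top generator $B^{\delta^B_{\max}}_{q^B_{\max}}$ cannot be killed by $f$ because $A$ has no elements at secondary grading $q^B_{\max}$, so it survives uniquely in $\ker f$ and determines $\delta_{\max}$ of the result; the required lower-$q$ support at $\delta_{\max}+1$ is supplied by $B^{\delta^B_{\max}+1}$, which survives (at least partially) in the mapping cone. In the case $q^A_{\max} > q^B_{\max}$, the top generator $A^{\delta^A_{\max}}_{q^A_{\max}}$ cannot lie in the image of $f$ because $B$ has nothing at $q^A_{\max}$, so it descends to a non-zero class in $\operatorname{coker} f$ at $\delta = \delta^A_{\max}$; the surviving piece $B^{k+1}$ then supplies the required lower-$q$ support whenever $\delta^A_{\max}+1 = k+1$.

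The hard part will be establishing the long-form condition uniformly: ensuring that the grading $\delta_{\max}+1$ of the result always carries non-trivial lower-$q$ support, even when the top $q$ lands at some intermediate $\delta^A_{\max} < k$. I expect to resolve this by pinning down the positions of $\delta^A_{\max}$ and $\delta^B_{\max}$ using the rigid bigraded structure from Lemma~\ref{lem:general-stability} --- the extra $\bF$-generator appearing under width expansion (Lemma~\ref{lem:max-min}) sits at a specific $(\delta, q)$-position, which in the situations of interest forces $\delta^A_{\max} = \delta^B_{\max} = k$. With this alignment both branches of the case analysis collapse to the clean picture where $\delta_{\max}+1 = k+1$, and the surviving piece $B^{k+1}$ --- together with its non-vanishing lower-$q$ support guaranteed by $B$'s long form --- closes the argument.
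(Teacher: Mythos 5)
Your overall strategy mirrors the paper's: establish width $k+1$ from the generators forced to survive in $\ker f$ and $\operatorname{coker} f$, and then run a case analysis on $q^A_{\max}$ versus $q^B_{\max}$ to locate $q_{\max}$ of the homology and verify the long-form condition at $\delta_{\max}+1 = k+1$. The width computation is correct and is exactly the paper's ``immediate'' observation from the reversed mapping-cone diagram. So the approach is essentially the same, but there are two points where the proposal drifts.

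First, in the case $q^B_{\max} = q^A_{\max}$ the stated reason ``$A$ has no elements at secondary grading $q^B_{\max}$'' is false: by expansion short form $A$ does carry a non-trivial group at $q^A_{\max}$, concentrated at $\delta=\delta^A_{\max}$. The correct reason $B^{\delta^B_{\max}}_{q^B_{\max}}$ survives is that $f$ maps it into $A^{\delta^B_{\max}+1}$, which vanishes because (in the intended configuration) $\delta^B_{\max}$ is the top $\delta$-grading of $A$'s support; the $q$-grading plays no role in killing that particular class. The conclusion you reach is right, but for the wrong reason.

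Second, your proposed cure for the ``hard part'' --- invoking the explicit $(\delta,q)$-positions coming from Lemma~\ref{lem:general-stability} --- is not how the paper proceeds, and it would change the nature of the statement. Proposition~\ref{prp:reversed-possibility} is purely formal: it assumes only hypotheses (1)--(3) of Proposition~\ref{prp:short-to-long}, not that $A$ and $B$ arise from a surgery exact triangle. The paper's proof relies instead on the reading of Definition~\ref{def:short-long} conveyed by Figure~\ref{fig:short-and-long} and by hypothesis (3) on supports: $\delta_{\max}$ is the top diagonal of $A$, so $\delta_{\max}+1$ is the extra diagonal carried only by $B$, whence $C^{\delta_{\max}+1}=B^{k+1}$ supplies the required lower-$q$ support with $B^{k+1}_{q_{\max}}=0$. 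Either make that reading explicit and show your case analysis collapses to it, or prove the long-form condition directly when $\delta^A_{\max}$ is an interior grading; as written, the proposal identifies the gap but does not close it, and the route via Lemma~\ref{lem:general-stability} would prove something narrower than what is stated.
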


\begin{proof}This time ignoring the secondary grading the hypothesis for the mapping cone $H_\ast(A\to B)$ gives \[H_\ast \left(\raisebox{15pt}{\xymatrix@C=15pt@R=12pt{
	\bF^{b_1}\ar[dr]  & \bF^{b_2} \ar[dr] & \cdots\ar[dr]  & \bF^{b_k}&  \bF^{b_{k+1}} \\
	\bF^{b_1'} & \bF^{b_2'} & \cdots & \bF^{b_k'} & 
}}\right)\] 
As a result it is immediate that the homology has width $k+1$. To see that the homology has expansion long form notice first that either $q^A_{\max} \le q^B_{\max}$ or $q^B_{\max} < q^A_{\max}$. In the first case we have that $q^B_{\max}$ assumes the role of $q_{\max}$ in homology; in the second case $q^A_{\max}$ assumes the role of $q_{\max}$.

Regardless of which case occurs, the groups supported in the $(k+1)^{\rm st}$ $\delta$-grading satisfy the additional hypothesis of Definition \ref{def:short-long} for expansion long form.
 \end{proof}

\begin{proposition} \label{prp:remaining-possibilities}
A mapping cone $H_\ast(A\to A')$ where the supports of $A$ and $A'$ coincide and each is of expansion short form has homology of expansion short form. Similarly, a mapping cone $H_\ast(B\to B')$ where the supports of $B$ and $B'$ coincide and each is of expansion long form has homology of expansion long form.
\end{proposition}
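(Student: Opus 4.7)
My plan is to decompose the mapping cone by secondary grading and reduce the claim to two simple checks: that the extremes of the primary support survive in homology, and that the subcomplex at the top secondary grading has the required structure. Since the differential in either cone preserves the $q$-grading while raising the $\delta$-grading by one, the cone splits as a direct sum of subcomplexes $C_q$ indexed by $q$, so the conditions in Definition \ref{def:short-long}(1,2) for the homology can be verified on the single subcomplex at $q_{\max}^C=\max(q_{\max}^A,q_{\max}^{A'})$ (or the analogous maximum for $B,B'$).

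For the short form statement, I would first verify that the width of $H_\ast(A\to A')$ is exactly $k$. With both $A$ and $A'$ supported on $\delta$-gradings $\{1,\ldots,k\}$, the summand $A'^1$ has no incoming differential and contributes to $H^1$, while $A^k$ has no outgoing differential, since the map $A^k\to A'^{k+1}$ lands in the zero group, and it contributes to $H^k$; hence the primary support is exactly $\{1,\ldots,k\}$. Next, a short case analysis on the three relationships between $q_{\max}^A$ and $q_{\max}^{A'}$ shows that $C^\delta_{q_{\max}^C}$ is non-zero only when $\delta=k$: in each case, the expansion short form conditions on $A$ and $A'$ concentrate the relevant contribution at the top primary grading. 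Being supported on a single $\delta$-grading, this subcomplex has trivial differentials and passes unchanged to homology, which gives $H^\delta_{q_{\max}^C}(C)\ne 0$ if and only if $\delta=k$; together with $H_q(C)=0$ for $q>q_{\max}^C$, this is expansion short form.

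For the long form statement, essentially the same analysis applies, now with $B,B'$ supported on $\{1,\ldots,k+1\}$. I would argue that $B^{k+1}$ has neither incoming nor outgoing differential in the cone (the outgoing map lands in $B'^{k+2}=0$), so $B^{k+1}\ne 0$ survives in $H^{k+1}$ and the width is exactly $k+1$. For the top secondary behavior, the extra long form condition $B^{k+1}_{q_{\max}^B}=0$ (and its analog for $B'$) ensures $C^{k+1}_{q_{\max}^C}=0$, so the subcomplex at $q_{\max}^C$ is once again concentrated at $\delta=k$, yielding expansion short form at the top of $H_\ast(C)$ with $\delta_{\max}=k$. Coupled with the surviving copy of $B^{k+1}$, which sits entirely in $q<q_{\max}^B\le q_{\max}^C$, this is precisely expansion long form.

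The main obstacle is purely bookkeeping across the three sub-cases for which of $q_{\max}^A, q_{\max}^{A'}$ (respectively $q_{\max}^B, q_{\max}^{B'}$) dominates, but no case requires anything deeper than reading off the definitions and observing that any differential into or out of a zero group must vanish.
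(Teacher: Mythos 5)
Your argument is correct and takes essentially the same route as the paper: the paper simply inspects the shape of the mapping cone, asserts that the width is $k$ (resp. $k+1$) and that the maximal $q$-grading in the top $\delta$-grading is the larger of the two, whereas you make the same observations explicit by splitting the cone into $q$-indexed subcomplexes and running the short case analysis. The details you fill in (the differential preserving $q$, the survival of $A^k$, $A'^1$, $B^{k+1}$, and the vanishing of $C^{k+1}_{q^C_{\max}}$ from the long form hypothesis) are exactly the ones the paper leaves implicit by appeal to Propositions \ref{prp:short-to-long} and \ref{prp:reversed-possibility}.
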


\begin{proof}The argument follows along similar lines to that of Proposition \ref{prp:short-to-long} and Proposition \ref{prp:reversed-possibility}. 

Consider the case $H_\ast(A\to A')$. Since by hypothesis this mapping cone takes the form \[H_\ast \left(\raisebox{15pt}{\xymatrix@C=15pt@R=12pt{
	\bF^{b_1}\ar[dr]  & \bF^{b_2} \ar[dr] & \cdots\ar[dr]  & \bF^{b_k} \\
	\bF^{b_1'} & \bF^{b_2'} & \cdots & \bF^{b_k'} 
}}\right)\] the resulting homology must have width $k$. The maximal $q$-grading in the top $\delta$-grading is the larger of these gradings from $A$ and $A'$ as they appear in the mapping cone. 

The case $H_*(B\to B')$ is similar. \end{proof}

\begin{definition}\label{def:expansion}Let $K$ be a strongly invertible knot with associated quotient tangle $(B^3,\tau)$. This tangle is expansion generic if there are no blank diagonals in $\Khred(\tau(n))$ (see Remark \ref{rmk:blank-diagonals}) and if  $w(\tau(\ell))+1=w(\tau(\ell+1))$ then $\Khred(\tau(\ell))$ has expansion short form and $\Khred(\tau(\ell+1))$ has expansion long form (as in Definition \ref{def:short-long}). \end{definition}

\begin{proposition}\label{prp:expand}Given a strongly invertible knot with expansion generic associated quotient tangle, $\wmin$ provides a lower bound for $w(\tau(\pq))$.\end{proposition}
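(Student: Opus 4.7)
The plan is to induct on the length of the continued fraction representation of $\pq$, using Propositions \ref{prp:short-to-long}, \ref{prp:reversed-possibility}, and \ref{prp:remaining-possibilities} to carry the expansion short/long form through iterated terminal-crossing resolutions. By Remark \ref{rmk:WLOG} we may assume $\pq \ge 1$, and by mirroring we may assume $\pq > 0$. If $\wmax = \wmin$, the conclusion is immediate from Proposition \ref{prp:width-lower-bound}; otherwise $\wmax = \wmin + 1$, and by Lemma \ref{lem:max-min} there is a unique integer $\ell$ with $w(\tau(n)) = \wmin$ for $n \le \ell$ and $w(\tau(n)) = \wmax$ for $n \ge \ell+1$. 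If $\pq \in [n, n+1]$ for integer $n \ne \ell$, the two endpoints have equal width and Proposition \ref{prp:lower-bound-on-interval} applies directly. The remaining task is to handle $\pq \in (\ell, \ell+1)$, so the continued fraction has the form $[\ell, a_2, \ldots, a_r]$ with $r \ge 2$ and $a_2 \ge 2$.

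First I would establish that $\Khred(\tau(n))$ has expansion short form for every integer $n \le \ell$ and expansion long form for every integer $n \ge \ell + 1$. The cases $n = \ell, \ell+1$ are given by the expansion generic hypothesis; the remaining integers are handled by propagating the form using the iterated mapping cones of Lemma \ref{lem:general-stability} and Lemma \ref{lem:general-stability-negative}, together with the fact (Lemma \ref{lem:diagonal}) that the surviving copies of $\Khred(\tau(\overzero)) \cong \bF$ all lie in a single relative $\delta$-grading. Combined with Proposition \ref{prp:remaining-possibilities} applied one step at a time, this shows that the short/long dichotomy is preserved in each direction from $\ell$.

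Next, proceed by nested induction, outer on the length $r$ and inner on $a_r$. For $r=2$, resolving the terminal crossing of $\tau[\ell, a_2]$ with Proposition \ref{prp:mo} (or Proposition \ref{prp:mo-perturbed} when $\ell = 0$) produces, after iteration in $a_2$, a sequence of mapping cones whose constituent groups are $\Khred(\tau(\ell))$ (short, width $\wmin$) and $\Khred(\tau(\ell+1))$ (long, width $\wmin+1$). Propositions \ref{prp:short-to-long} and \ref{prp:reversed-possibility} handle the cones that mix the two forms, while Proposition \ref{prp:remaining-possibilities} handles any cone with matching supports; in every case the resulting homology has width at least $\wmin$ and retains short or long form. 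For the outer inductive step $r > 2$, resolving the terminal crossing of $\tau[\ell, a_2, \ldots, a_r]$ yields a mapping cone between $\tau[\ell, a_2, \ldots, a_{r-1}]$ (length $r-1$, known by outer induction) and $\tau[\ell, a_2, \ldots, a_r - 1]$ (same length, smaller last entry, known by inner induction and terminated by Property \ref{pro:end}). Applying the same three propositions once more yields width $\ge \wmin$ and preserves short/long form.

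The main obstacle will be verifying hypothesis~(3) of Proposition \ref{prp:short-to-long} at each stage, namely that the shorter of the two resolutions is supported on the first $k$ of the $k+1$ $\delta$-gradings of the longer. This is a grading bookkeeping step: after passing to the $\si$-normalization, the absolute-support inclusions furnished by Section \ref{sec:det-and-res} and Lemma \ref{lem:general-stability} give exactly the needed containment, just as in the proof of Proposition \ref{prp:width-upper-bound}. Threading this support inclusion through both inductions is what makes the three structural propositions applicable at every step, and it is the technical heart of the argument.
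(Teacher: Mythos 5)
Your proposal follows essentially the same route as the paper: reduce to $\pq \in (\ell, \ell+1)$ via Proposition \ref{prp:lower-bound-on-interval}, then run a nested induction on the continued fraction length and on the last entry, carrying expansion short/long form through the terminal-crossing mapping cones using Propositions \ref{prp:short-to-long}, \ref{prp:reversed-possibility}, and \ref{prp:remaining-possibilities}, with the support-inclusion hypothesis tracked as part of the induction. The one superfluous step is establishing the short/long dichotomy for \emph{all} integers $n$; since the continued fraction starts with $a_1 = \ell$, only $\tau(\ell)$ and $\tau(\ell+1)$ appear as integer base cases, and those are supplied directly by the expansion generic hypothesis.
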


\begin{proof}Let $\pq\ge0$ (negative surgeries are dealt with by passing to the mirror). By Lemma \ref{lem:max-min} there is a unique integer $\ell$ such that $w(\tau(\ell))+1 = w(\tau(\ell+1))$. If $\ell$ is negative there is nothing to show: the width is constant for tangles associated with positive integer surgeries and the result follows from Proposition \ref{prp:lower-bound-on-interval}. Suppose then that $\ell$ is non-negative, and note also that the case $\pq\notin[\ell,\ell+1]$ follows from Proposition \ref{prp:lower-bound-on-interval}.

We need only consider the case $\pq\in[\ell,\ell+1]$, and we may assume without loss of generality that $\ell>0$ by Remark \ref{rmk:WLOG}, as is now familiar. 

First notice that $\Khsig(\tau(\ell+1))\cong H_*(\Khsig(\tau(\ell))\to\bF)$ where $\Khsig(\tau(\ell))$ is supported on $k$ $\delta$-gradings by assumption, and term $\bF$ is supported on a $(k+1)^{\rm st}$ diagonal so that  $\tau(\ell+1)$ has width $k+1$. In particular, we observe that  $\Khsig(\tau(\ell))$ is supported on the first $k$ diagonals of the group $\Khsig(\tau(\ell+1))$. 

This provides a base case for induction on $r$, the length of the continued fraction expansion $\pq=[a_1,a_2,\ldots,a_r]$ with $a_1=\ell$ (so that $\lfloor\pq\rfloor = \ell$). Suppose for induction that \begin{itemize}
\item[(1)] $\Khsig(\tau(\pq))$ is of either expansion short form (with width $k$) or expansion long form (with width $k+1$) for any $\pq$ of length at most $r$, and
\item[(2)] any pair of branch sets $\tau[a_1,\ldots,a_s]$ and  $\tau[a_1,\ldots,a_s+1]$ have support of their respective $\sigma$-normalized Khovanov homologies coinciding on the first $k$ $\delta$-gradings (for any positive integer $a_s$ and any $1\le s\le r$). 
\end{itemize}

Now consider the branch set $\tau(\pq)=\tau[a_1,a_2,\ldots,a_r,a_{r+1}]$ in the case where where $a_1=\ell$ and $a_{r+1}=2$. By resolving the terminal crossing, we obtain $\tau(\pqzero)$ and $\tau(\pqone)$, where $\pqzero$ and $\pqone$ are the continued fractions $[a_1,a_2,\ldots,a_r]$ and  $[a_1,a_2,\ldots,a_r+1]$ (recall that which is $\pqzero$ depends on the parity of $r$). As a result, \[\textstyle\Khsig(\tau(\pq))\cong H_*\left(\Khsig(\tau(\pqzero))\to\Khsig(\tau(\pqone))\right)\] so that by our induction hypothesis  $\Khsig(\tau(\pq))$ is computed via a mapping cone involving groups that are each either expansion short form or expansion long form. Further, by the hypothesis (2) on supports, the groups involved in the mapping cone have support that coincides on the first $k$ $\delta$-gradings. 

We claim that the homology $\Khsig(\tau(\pq))$ is either supported on $k$ diagonals and has expansion short form, or it is supported on $k+1$ diagonals and has expansion long form. To see this, we proceed in cases and consider the mapping cone calculating this group. 

If $\Khsig(\tau(\pqzero))$ has expansion short form and $\Khsig(\tau(\pqone))$ has expansion long form then applying Proposition \ref{prp:short-to-long} the result follows. In particular, the result has either expansion short form (and width $k$) or expansion long form (and width $k+1$).

Similarly, if $\Khsig(\tau(\pqzero))$ has expansion long form and $\Khsig(\tau(\pqone))$ has expansion short form then applying Proposition \ref{prp:reversed-possibility} the result has expansion long form. The case where both groups have the same width is handled by Proposition \ref{prp:remaining-possibilities}. In both cases the result has the same form (expansion short or expansion long) as the groups involved in the mapping cone. 

Finally, we observe that by construction of the mapping cone  \[\textstyle\Khsig(\tau(\pq))\cong H_*\left(\Khsig(\tau(\pqzero))\to\Khsig(\tau(\pqone))\right),\] the branch sets $\tau[a_1,a_2,\ldots,a_r,1]=\tau[a_1,a_2,\ldots,a_r+1]$ and $\tau[a_1,a_2,\ldots,a_r,2]$ have $\sigma$-normalized Khovanov homologies coinciding on the first $k$ $\delta$-gradings.

To complete the inductive step in $r$, we appeal to a second induction in $a_{r+1}$. That is, suppose that for $1\le a_{r+1} \le n$ we have verified the induction in $r$ as in the case $a_{r+1}=2$ described above. That is, we suppose that (1) and (2) hold (additionally) for $\tau(\pq)$ whenever $\pq=[\ell,a_2,\ldots, a_r,a_{r+1}]$ where $1\le a_{r+1} \le n$. Now consider the branch set $\tau(\pq)$ with $\pq=[\ell,a_2,\ldots,a_r,n+1]$. Resolving the terminal crossing we have that \[\textstyle\Khsig(\tau(\pq))\cong H_*\left(\Khsig(\tau(\pqzero))\to\Khsig(\tau(\pqone))\right)\] where $\pqzero$ and $\pqone$ are described by continued fractions $[\ell,a_2,\ldots,a_r]$ and $[\ell,a_2,\ldots,a_r,n]$, with identification depending on the parity of $r$ as usual. Since each of $\Khsig(\tau(\pqzero))$ and $\Khsig(\tau(\pqone))$ has either expansion short form or expansion long form, and since the support of these groups coincides on the first $k$ $\delta$-gradings, we can apply Proposition \ref{prp:short-to-long}, Proposition \ref{prp:reversed-possibility} or Proposition \ref{prp:remaining-possibilities} as required by the various possibilities to conclude that $\Khsig(\tau(\pq))$ has expansion short form (and width $k$) or expansion long form (and width $k+1$). 

To conclude the proof then, we need only observe that by construction of the mapping cone  \[\textstyle\Khsig(\tau(\pq))\cong H_*\left(\Khsig(\tau(\pqzero))\to\Khsig(\tau(\pqone))\right),\]the branch sets $\tau[a_1,a_2,\ldots,a_r,n]$ and $\tau[a_1,a_2,\ldots,a_r,n+1]$ have $\sigma$-normalized Khovanov homologies coinciding on the first $k$ $\delta$-gradings.
\end{proof}

\subsection{Decay and genericity} As in the previous section, we consider changes in width this time in the decay setting. That is, for some integer $\ell$ we have $w(\tau(\ell))-1=w(\tau(\ell+1))$. As before, we will consider the case $\ell>0$. What follows is nearly identical to the last section, so we will give definitions and statements of the relevant propositions only, and leave the proofs to the reader. 

\begin{definition}\label{def:long-short}A relatively $\bZ\oplus\bZ$-graded group $B$ has decay short form if there is a secondary grading $q_{\min}$ with the property that \begin{itemize}\item[(1)] $B_q=0$ for all $q<q_{\min}$ and \item[(2)]  $B^{\delta}_{q_{\min}}\ne0$ if and only if $\delta= \delta_{\min}$. 
\end{itemize}
A relatively $\bZ\oplus\bZ$-graded group $A$ has decay long form if, in addition to the conditions above, there is a grading $\delta_{\min}-1$ supporting non-trivial groups in secondary gradings $q>q_{\min}$ (but $A^{\delta_{\min}-1}_{q_{\min}}=0)$.
\end{definition}

In analogy with the properties established in the previous section, we have:

\begin{proposition}\label{prp:long-to-short}Given an mapping cone of the form $H_\ast(A\to B)$ in which \begin{itemize} \item[(1)]$A$ is of width $k+1$ and  has decay long form, 
\item[(2)]  $B$ is of width $k$ and has decay short form and
\item[(3)]  $B$ is supported on the last $k$ $\delta$-gradings of $A$ in the expression $H_\ast(A\to B)$,
\end{itemize}
 the homology of the mapping cone has width at least $k$. Moreover, if the width is $k$ then the result has decay short form and otherwise the width is $k+1$ and the result has decay long form. 
\end{proposition}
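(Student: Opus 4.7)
The plan is to mirror the argument of Proposition \ref{prp:short-to-long}, with the roles of ``top'' and ``bottom'' interchanged in both the primary and secondary gradings, so that the dual notions of decay short form and decay long form (Definition \ref{def:long-short}) replace expansion short and long form throughout. First I would ignore the secondary grading and observe that the mapping cone takes the schematic form
\[H_\ast \left(\raisebox{15pt}{\xymatrix@C=15pt@R=12pt{
	\bF^{a_0}\ar[dr] & \bF^{a_1}\ar[dr]  & \cdots\ar[dr]  & \bF^{a_k}\ar[dr] &  \\
	 & \bF^{b_1}  & \cdots  & \bF^{b_k} & \bF^{b_{k+1}}
}}\right)\]
where the leftmost column of $A$ sits one $\delta$-grading below the leftmost surviving column of $B$ (by hypothesis (3)). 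Consequently the $k+1$ rightmost diagonals carry a mapping cone whose homology will automatically contain the generators from $B^{\delta_{\min}}_{q_{\min}}$ (the support in the lowest $\delta$-grading of $B$, forced by decay short form), so the width of the result is at least $k$.

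The heart of the argument is tracking what happens in the leftmost two $\delta$-gradings, namely $\delta_{\min}-1$ (supported only in $A$) and $\delta_{\min}$ (supported in both). For this I would reintroduce the secondary grading. Let $q^A_{\min}$ and $q^B_{\min}$ denote the minimal secondary gradings supporting non-trivial groups in $A$ and $B$ respectively, and let $q^A_{\text{bot}}$ be the smallest secondary grading in $A^{\delta_{\min}-1}$ on which a non-trivial group sits (which by decay long form satisfies $q^A_{\text{bot}} > q^A_{\min}$). The connecting homomorphism $\bF^{a_0} \to \bF^{b_1}$ from the leftmost column of $A$ into $B$ raises $\delta$-grading by one, and this is the only way any generator of $A^{\delta_{\min}-1}$ can fail to survive. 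I would then handle the three parallel cases: (i) if $q^A_{\min} > q^B_{\min}$ the bottom-most generator in $B^{\delta_{\min}}_{q^B_{\min}}$ survives, forcing decay short form; (ii) if $q^B_{\min} > q^A_{\min}$ the generator $A^{\delta_{\min}-1}_{q^A_{\min}}$ survives by grading reasons, forcing decay long form and width $k+1$; (iii) in the intermediate case $q^A_{\min} = q^B_{\min}$ (or more generally when $q^A_{\text{bot}} \le q^B_{\min}$ is possible), the connecting homomorphism may or may not kill all of $A^{\delta_{\min}-1}$, yielding either decay short form of width $k$ (if everything cancels, with the surviving $B^{\delta_{\min}}_{q^B_{\min}}$ playing the role of the witness) or decay long form of width $k+1$ (if a single generator survives in $\delta_{\min}-1$, together with the persistent $B^{\delta_{\min}}_{q^B_{\min}}$ witnessing Definition \ref{def:long-short}(1)-(2)).

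The only real subtlety, and what I expect to be the main obstacle, is checking in case (iii) that condition (2) of decay short form in Definition \ref{def:long-short} holds for the output, i.e.\ that whenever the homology has width exactly $k$ the minimal secondary grading $q_{\min}$ of the result is attained in a \emph{unique} $\delta$-grading. This requires that when a generator from $A^{\delta_{\min}-1}_{q}$ with $q < q^B_{\min}$ is killed by the connecting map, the target in $B^{\delta_{\min}}$ it hits does not create a new bottom-grading generator in $\delta_{\min}$ distinct from the one witnessing decay short form; equivalently, one must verify that no spurious generator is produced in a $\delta$-grading strictly above $\delta_{\min}$ at the secondary grading $q_{\min}$ of the output. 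I would handle this by exploiting the decay short form of $B$ (which pins down where $B^{\delta_{\min}}_{q^B_{\min}}$ lives) together with the fact that the connecting homomorphism strictly raises $\delta$. Once this is settled, assembling the three cases gives the width bound and the dichotomy between decay short and decay long form for the homology, completing the proof.
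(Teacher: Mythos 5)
The paper explicitly leaves the proof of this proposition (and the other decay-form propositions) to the reader, noting only that it is ``nearly identical'' to the proof of Proposition~\ref{prp:short-to-long}; so there is no paper proof to match against, and the right standard is whether your proposal is a correct dualization. Your overall plan --- mirror the expansion argument with ``top'' and ``bottom'' reversed in both gradings --- is the right one, but the execution contains a few real errors.

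First, your schematic is off by one column. With $A$ occupying $\delta$-columns $\{0,\dots,k\}$ and $B$, by hypothesis~(3), occupying the last $k$ of these, $B$ lives in columns $\{1,\dots,k\}$ and the differential $A^{k}\to B^{k+1}=0$ is zero; so the bottom row should end at $\bF^{b_k}$ directly under $\bF^{a_k}$, not overhang to a $\bF^{b_{k+1}}$. (As drawn, $B$ has $k+1$ columns, contradicting the hypothesis that $B$ has width $k$.) Relatedly, the group that always survives ``for free'' at the right end is $A^{k}=\bF^{a_k}$, not a generator of $B$.

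Second, and more seriously, your cases (i) and (ii) assert definite conclusions where a dichotomy is required, and case (ii) cites a group that vanishes identically. In the dual of the paper's Case~2 and Case~3, the comparison $q^A_{\min}$ versus $q^B_{\min}$ determines only \emph{where} the bottom-most surviving generator sits (in $A^{\delta^A_{\min}}_{q^A_{\min}}$ if $q^A_{\min}<q^B_{\min}$, in $B^{\delta^B_{\min}}_{q^B_{\min}}$ if $q^A_{\min}>q^B_{\min}$); in neither case does it decide whether the extra column $A^{\delta_{\min}-1}$ survives. Since $A^{\delta_{\min}-1}$ is supported in $q>q^A_{\min}$ and $B^{\delta^B_{\min}}$ in $q\ge q^B_{\min}$, the secondary gradings can overlap in both of your cases (i) and (ii), so the connecting map $A^{\delta_{\min}-1}\to B^{\delta^B_{\min}}$ may kill all, some, or none of the extra column; the correct conclusion in both cases is ``decay short form and width $k$ if the extra column dies, decay long form and width $k+1$ otherwise,'' exactly paralleling the paper's expansion Cases 2 and 3. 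Your case (ii) moreover cites $A^{\delta_{\min}-1}_{q^A_{\min}}$ as the surviving witness; by Definition~\ref{def:long-short} this group is \emph{zero}, so that step is vacuous. The dual of the paper's Case~1 --- the one case with a definite conclusion (width $k+1$, decay long form, because the relevant differential vanishes for grading reasons) --- is the condition $q^A_{\mathrm{top}} < q^B_{\min}$, where $q^A_{\mathrm{top}}$ is the largest secondary grading supporting $A^{\delta_{\min}-1}$; this is not the same as $q^B_{\min}>q^A_{\min}$, and it is the case your outline is missing. Finally, your concern about verifying condition~(2) of Definition~\ref{def:long-short} is resolved just as in Proposition~\ref{prp:short-to-long}: since the connecting map preserves $q$, the bottom-most $q$-grading of the result receives contributions only from whichever of $A$ or $B$ attains $\min(q^A_{\min},q^B_{\min})$, and that contribution is concentrated in a single $\delta$-grading by decay short form of $B$ (respectively, of $A$ as part of decay long form), so no spurious generator appears.
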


\begin{proposition}\label{prp:reversed-possibility-b}
With hypothesis (1), (2) and (3) of Proposition \ref{prp:long-to-short}, the mapping cone $H_\ast(B\to A)$ has homology of width $k+1$ and is in decay long form. 
\end{proposition}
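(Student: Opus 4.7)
The strategy is to mirror the proof of Proposition \ref{prp:reversed-possibility}, with the lowest $q$-grading now playing the role that $q_{\max}$ played there. First I would record the mapping cone picture for $H_\ast(B\to A)$. Hypothesis~(3) of Proposition \ref{prp:long-to-short} places the support of $B$ on the last $k$ $\delta$-gradings of $A$ in $H_\ast(A\to B)$; reversing to $H_\ast(B\to A)$ and using that the connecting homomorphism raises $\delta$ by $1$, the picture (ignoring secondary gradings) is
\[
\xymatrix@C=15pt@R=12pt{
 & \bF^{b_1}\ar[dr] & \bF^{b_2}\ar[dr] & \cdots & \bF^{b_{k-1}}\ar[dr] & \bF^{b_k} \\
\bF^{a_1} & \bF^{a_2} & \bF^{a_3} & \cdots & \bF^{a_k} & \bF^{a_{k+1}}
}
\]
From this the width bound is immediate: $\bF^{a_1}$ on the left has no incoming differential, and $\bF^{b_k}$ on the right has no outgoing differential (its potential target lies outside $A$'s support), so neither can be cancelled. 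The resulting homology therefore spans the full range of $k+1$ consecutive $\delta$-gradings and has width $k+1$.

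Next I would verify decay long form by a case analysis on $q_{\min}^A$ versus $q_{\min}^B$, parallel to the argument of Proposition \ref{prp:reversed-possibility}. The key observation driving the proof is that $\bF^{a_2}$, which corresponds to $A^{\delta_{\min}^A}$, has neither incoming nor outgoing differential in this cone (its only possible source $b_0$ is vacuous, and $A$ appears as the target of the connecting map). Consequently the full group $A^{\delta_{\min}^A}_{q_{\min}^A}$ survives in the homology at the second $\delta$-grading. Combined with the decay long-form hypothesis on $A$, which gives $A^\delta_{q_{\min}^A}=0$ for $\delta\ne\delta_{\min}^A$, one checks in each case that the second column is the unique $\delta$-grading realising $q_{\min}$ of the homology, while the first column inherits $A$'s long-form support at $q>q_{\min}$ untouched, since $B$ contributes nothing there.

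The main obstacle is the bookkeeping in the secondary grading: one must check that, when $q_{\min}^B<q_{\min}^A$, the $B$-contribution at $q_{\min}^B$ in the second column cannot be cancelled by a differential into $A$ at the third column. This is precisely where the decay long-form hypothesis on $A$ is used, and the argument is analogous to Case~3 of the proof of Proposition \ref{prp:short-to-long}: the target in $A$ at $q\le q_{\min}^A$ vanishes for all $\delta\ne\delta_{\min}^A$, so there is nowhere for the offending element to go. Once this verification is made, the remaining details mirror the proof of Proposition \ref{prp:reversed-possibility} essentially verbatim.
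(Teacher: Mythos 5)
Your proof is correct and takes the route the paper intends: the paper explicitly declines to spell out the decay-side proofs, noting they are "nearly identical to the last section," and your argument is precisely the mirror (under $q \mapsto -q$) of the written proof of Proposition \ref{prp:reversed-possibility}. Your mapping-cone picture correctly places $B$ on the last $k$ $\delta$-gradings of $A$ (so that $\bF^{a_1}$ has no incoming arrow and $\bF^{b_k}$ has no outgoing one, giving width $k+1$ immediately), and the case analysis on $q_{\min}^A$ versus $q_{\min}^B$ correctly isolates the second column as the unique grading realising $q_{\min}$ of the cone, with the first column supplying the long-form diagonal at $q>q_{\min}$.
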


\begin{proposition} \label{prp:remaining-possibilities-b}
A mapping cone $H_\ast(A\to A')$ where the supports of $A$ and $A'$ coincide and each is of decay long form has homology of decay long form. Similarly, a mapping cone $H_\ast(B\to B')$ where the supports of $B$ and $B'$ coincide and each is of decay short form has homology of decay short form.
\end{proposition}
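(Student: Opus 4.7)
The plan is to mirror the argument of Proposition \ref{prp:remaining-possibilities}, dualized from the maximal to the minimal secondary grading, exactly as Proposition \ref{prp:long-to-short} and Proposition \ref{prp:reversed-possibility-b} dualize Proposition \ref{prp:short-to-long} and Proposition \ref{prp:reversed-possibility}. No new structural input is needed; it is purely a matter of running the same analysis on the bottom $q$-grading instead of the top.

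First I would address $H_\ast(A\to A')$ with both summands in decay long form and coinciding supports of width $k+1$. Because the supports coincide as relatively $\bZ$-graded groups, the mapping cone occupies the same $k+1$ relative $\delta$-gradings and takes the schematic form
\[H_\ast\left(\raisebox{15pt}{\xymatrix@C=15pt@R=12pt{
\bF^{b_1}\ar[dr] & \bF^{b_2}\ar[dr] & \cdots\ar[dr] & \bF^{b_{k+1}} \\
\bF^{b_1'} & \bF^{b_2'} & \cdots & \bF^{b_{k+1}'}
}}\right)\]
with the connecting homomorphism raising $\delta$-grading by one. The leftmost class $\bF^{b_1'}$ has no incoming differential and the rightmost class $\bF^{b_{k+1}}$ has no outgoing differential, so each survives in homology and the width is exactly $k+1$. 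To verify decay long form, I would set $q_{\min}^\ast = \min\{q_{\min}^A, q_{\min}^{A'}\}$ and let $\delta_{\min}^\ast$ be the $\delta$-grading at which it is realized (by decay short/long form of the relevant summand this is a unique one-dimensional class). The differential into and out of this class at secondary grading $q_{\min}^\ast$ lands in the complementary summand, which is zero at that $q$-grading by minimality, so the class survives. The decay long form of the summand realizing $q_{\min}^\ast$ then furnishes non-trivial support at $\delta_{\min}^\ast - 1$ in secondary gradings strictly above $q_{\min}^\ast$, and this support persists in homology for the same grading reasons.

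The case $H_\ast(B\to B')$ with both summands of decay short form is strictly easier: the combined complex again occupies the same $k$ relative $\delta$-gradings, the leftmost and rightmost columns survive to give width $k$, and the minimality argument on $q_{\min}^\ast = \min\{q_{\min}^B, q_{\min}^{B'}\}$ produces a unique surviving $\delta$-grading supporting this secondary extremum, so the result has decay short form.

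The only real obstacle is the bookkeeping around the minimal $q$-grading and the verification that the connecting homomorphisms cannot disturb that structure; this is entirely parallel to the maximal-$q$ analysis in Proposition \ref{prp:remaining-possibilities} and follows from the definitions of decay short and decay long form together with the fact that the connecting homomorphism of the mapping cone preserves secondary grading.
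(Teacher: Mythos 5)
Your argument is correct and follows exactly the route the paper intends: dualizing Proposition \ref{prp:remaining-possibilities} from the maximal to the minimal secondary grading, where the width is preserved because the leftmost (bottom-row) and rightmost (top-row) classes carry no differentials, and the extremal $q$-grading of the cone is $\min\{q_{\min}^A, q_{\min}^{A'}\}$ (resp.\ $\min\{q_{\min}^B, q_{\min}^{B'}\}$), surviving at a unique $\delta$-grading for the same grading reasons as in the expansion case. One minor imprecision: the class realizing $q_{\min}^\ast$ is not necessarily one-dimensional — Definition \ref{def:long-short} only says it sits in a unique $\delta$-grading — but this does not affect the argument, since the survival claim rests purely on the vanishing of the target and source of the connecting homomorphism at that bigrading.
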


This leads to a similar notion of genericity in the decay setting.

\begin{definition}\label{def:decay}Let $K$ be a strongly invertible knot with associated quotient tangle $(B_3,\tau)$. This tangle is decay generic if there are no blank diagonals in $\Khred(\tau(n))$ (see Remark \ref{rmk:blank-diagonals}) and if  $w(\tau(\ell))-1=w(\tau(\ell+1))$ then $\Khred(\tau(\ell))$ has decay long form and $\Khred(\tau(\ell+1))$ has decay short form (as in Definition \ref{def:long-short}). \end{definition}

\begin{proposition}\label{prp:decay}Given a strongly invertible knot with decay generic associated quotient tangle, $\wmin$ provides a lower bound for $w(\tau(\pq))$.\end{proposition}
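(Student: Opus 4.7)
The plan is to mirror the proof of Proposition \ref{prp:expand} essentially verbatim, replacing the expansion short/long form dichotomy with the decay short/long form dichotomy and using Propositions \ref{prp:long-to-short}, \ref{prp:reversed-possibility-b}, and \ref{prp:remaining-possibilities-b} in place of their expansion counterparts. As always, we reduce to $\pq \ge 0$ by passing to the mirror, and invoke Remark \ref{rmk:WLOG} to assume $\pq \ge 1$.

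First I would apply Lemma \ref{lem:max-min} to find the unique integer $\ell$ where $w(\tau(\ell))-1=w(\tau(\ell+1))$; if $\ell<0$ or $\pq \notin [\ell,\ell+1]$ the conclusion follows immediately from Proposition \ref{prp:lower-bound-on-interval}, so we may assume $\ell \ge 1$ and $\pq \in [\ell,\ell+1]$. The base case for the induction comes from the decay genericity hypothesis together with the identification $\Khsig(\tau(\ell+1)) \cong H_*(\Khsig(\tau(\ell)) \to \bF)$ from Lemma \ref{lem:general-stability}: $\Khsig(\tau(\ell))$ is in decay long form of width $k+1$, $\Khsig(\tau(\ell+1))$ is in decay short form of width $k$, and the two groups share support on the top $k$ $\delta$-gradings (equivalently, $\Khsig(\tau(\ell+1))$ sits on the last $k$ diagonals of $\Khsig(\tau(\ell))$).

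Next I would run a double induction on $(r, a_{r+1})$ where $\pq = [\ell, a_2, \ldots, a_r, a_{r+1}]$. The inductive hypotheses to maintain are (1) for every such $\pq$ of length at most $r$, the group $\Khsig(\tau(\pq))$ has either decay short form (width $k$) or decay long form (width $k+1$), and (2) for any $1 \le s \le r$ and any positive $a_s$, the supports of $\Khsig(\tau[a_1,\ldots,a_s])$ and $\Khsig(\tau[a_1,\ldots,a_s+1])$ coincide on the last $k$ $\delta$-gradings. The inductive step resolves the terminal crossing to obtain
\[ \textstyle \Khsig(\tau(\pq)) \cong H_*\left( \Khsig(\tau(\pqzero)) \to \Khsig(\tau(\pqone)) \right), \]
and then applies whichever of Proposition \ref{prp:long-to-short}, Proposition \ref{prp:reversed-possibility-b}, or Proposition \ref{prp:remaining-possibilities-b} is appropriate to the combination of forms on $\tau(\pqzero)$ and $\tau(\pqone)$ (mixed long/short, mixed short/long, or matching). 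Each application both preserves the dichotomy and forces width $\ge k = \wmin$, completing the step, while the mapping cone construction itself guarantees that hypothesis (2) propagates to the next level. A second induction in $a_{r+1}$ iterates this argument, exactly as in the proof of Proposition \ref{prp:expand}.

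The main obstacle is essentially bookkeeping rather than a substantive new idea: I need to verify that the ``last $k$ $\delta$-gradings'' condition is the correct analogue of the ``first $k$ $\delta$-gradings'' condition of Proposition \ref{prp:expand}, and that the decay-form versions of the mapping cone lemmas feed into one another compatibly under the two orientations of the cone (i.e.\ that in both $H_*(A \to B)$ and $H_*(B \to A)$ the output lies in the correct form). Since the decay statements \ref{prp:long-to-short}--\ref{prp:remaining-possibilities-b} are set up precisely as mirrors of the expansion statements, this verification is routine, and the induction goes through with no change beyond reversing the roles of ``top'' and ``bottom'' $\delta$-gradings throughout.
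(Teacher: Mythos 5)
Your proposal is correct and matches the approach the paper intends: indeed, the paper explicitly leaves the proofs in this subsection to the reader, noting they are ``nearly identical'' to the expansion case, and your mirrored double induction (with Propositions \ref{prp:long-to-short}, \ref{prp:reversed-possibility-b}, \ref{prp:remaining-possibilities-b} and the ``last $k$ $\delta$-gradings'' support condition replacing the corresponding expansion ingredients) fills in that proof exactly as expected.
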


\subsection{Generic tangles} We now summarize the  results of this section by fixing the notion of generic tangle, as follows:

\begin{definition}\label{def:generic}Let $T=(B^3,\tau)$ be the preferred representative for the tangle associated with a strongly invertible knot in $S^3$. $T$ is generic if either \begin{itemize}\item[(1)] $\wmin=\wmax$, \item[(2)] $T$ is expansion generic (as in Definition \ref{def:expansion}) or \item[(3)] $T$ is decay generic (as in Definition \ref{def:decay}). \end{itemize} Recall that the case of expansion or decay assumes that for generic $T$, $\Khred(\tau(n))$ contains no blank diagonals for any $n\in\bZ$. \end{definition}

\begin{theorem}\label{thm:generic} For generic associated quotient tangles, $\wmin$ and $\wmax$ provide lower and upper bounds respectively for $w(\tau(\pq))$ for any reduced $\pq\in\bQ$. \end{theorem}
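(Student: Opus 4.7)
The plan is to observe that Theorem \ref{thm:generic} is essentially an assembly theorem: the upper bound part is already established unconditionally for strongly invertible knots in $S^3$, while the lower bound part has been proved separately in each of the three clauses defining a generic tangle. The argument is therefore a case analysis following Definition \ref{def:generic}, with no new technical content introduced at this stage.

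First I would dispense with the upper bound: for any reduced $\pq \in \bQ$, Proposition \ref{prp:width-upper-bound} gives $w(\tau(\pq)) \le \wmax$ with no genericity hypothesis whatsoever, so this direction requires nothing beyond quoting that result. Next I would turn to the lower bound and split into the three possibilities from Definition \ref{def:generic}. If $\wmax = \wmin$, then Proposition \ref{prp:width-lower-bound} directly yields $w(\tau(\pq)) \ge \wmin$ for all $\pq \in \bQ$. If $T$ is expansion generic, Proposition \ref{prp:expand} provides the lower bound. If $T$ is decay generic, Proposition \ref{prp:decay} provides the lower bound. In every case both inequalities hold, completing the proof.

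There is effectively no obstacle: Definition \ref{def:generic} was constructed precisely so that every generic tangle falls into one of these three mutually exhaustive cases for which the lower bound has been separately secured. The only point worth verbalizing is that the three cases genuinely cover all generic tangles by definition, so no residual case analysis (for instance, mixing expansion and decay) can arise for a single tangle: by Lemma \ref{lem:max-min}, once $T$ is not constant width, it exhibits either a single expansion step or a single decay step, never both. Thus the three branches of Definition \ref{def:generic} really are exhaustive for generic $T$, and the theorem follows immediately from combining Propositions \ref{prp:width-upper-bound}, \ref{prp:width-lower-bound}, \ref{prp:expand}, and \ref{prp:decay}.
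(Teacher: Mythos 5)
Your proposal matches the paper's proof exactly: both dispense with the upper bound by citing Proposition \ref{prp:width-upper-bound}, and both establish the lower bound by splitting along the three clauses of Definition \ref{def:generic}, invoking Propositions \ref{prp:width-lower-bound}, \ref{prp:expand}, and \ref{prp:decay} respectively. The additional remark about exhaustivity via Lemma \ref{lem:max-min} is a nice touch but does not change the substance of the argument.
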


\begin{proof} The upper bound as claimed is established in Section \ref{sec:width}; the lower bound follows from a summary of the results of this section.

First notice that if $\wmin=\wmax$ we are done, by applying Proposition \ref{prp:width-lower-bound}. The cases of expansion and decay follow from Proposition \ref{prp:expand} and Proposition \ref{prp:decay}, respectively. 
\end{proof}


\section{Surgery obstructions}\label{sec:obstructions}

\subsection{Width bounds for lens spaces}

Combining work of Hodgson and Rubinstein \cite{HR1985} with work of Lee \cite{Lee2005}, we have the following statement:

\begin{theorem}\label{thm:lens-bound}  If $Y$ is a lens space, then $Y$ is a two-fold branched cover of $S^3$ with branch set of width 1.\end{theorem}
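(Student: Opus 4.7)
The plan is to appeal directly to the two cited results. First I would recall the theorem of Hodgson and Rubinstein which identifies, up to homeomorphism, all the involutions on a lens space $Y$: every lens space is the two-fold branched cover of $S^3$ with branch set a two-bridge link (a $2$-bridge knot when $|H_1(Y;\bZ)|$ is odd, a two-component $2$-bridge link when it is even). The trivial case $Y\cong S^3$ is covered separately by taking the branch set to be the trivial knot, whose reduced Khovanov homology is $\bF$ supported in a single $\delta$-grading and hence has width $1$.

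Next, I would invoke the standard fact that every two-bridge link admits an alternating projection (this is immediate from the rational tangle description: the standard continued-fraction diagram of a two-bridge link is alternating). Moreover, non-triviality of the determinant $\det(\tau)=|H_1(Y;\bZ)|\neq 0$ (or the fact that a split link would give a reducible two-fold branched cover, not a lens space) rules out the split case, so the branch set is a non-split alternating link.

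At this point the result of Lee applies directly: the reduced Khovanov homology of a non-split alternating link is supported on a single $\delta$-diagonal, i.e.\ it is thin with $w=1$. Combining this with the identification of the branch set from Hodgson--Rubinstein gives the theorem.

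The main obstacle is really only bookkeeping: one must verify that the branch set produced by Hodgson--Rubinstein is truly non-split (so that Lee's theorem is applicable in its stated form) and handle the degenerate case $Y\cong S^3$ explicitly, but both are routine. Once these two points are noted, the proof is essentially a one-line combination of the two cited theorems.
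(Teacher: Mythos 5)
Your proof is correct and rests on exactly the two ingredients the paper flags before the theorem statement: Hodgson--Rubinstein to identify the branch set as a non-split two-bridge link, and Lee to conclude that non-split alternating links are thin. Your handling of the degenerate $S^3$ case and the non-splitness check are both fine (and the paper's own remark after the theorem notes that $S^2\times S^1$ is excluded because its branch set, the two-component unlink, has width 2 --- consistent with your observation that a split branch set cannot give a lens space). The one genuine difference is that the paper does not simply cite Lee: after attributing the thinness of two-bridge links to Lee, it re-derives that fact from its own machinery, realizing every two-bridge link as $\tau(\pq)$ for the preferred quotient tangle of the trivial knot and then running Lemma \ref{lem:general-stability} together with Proposition \ref{prp:width-upper-bound}. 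That re-derivation buys nothing logically --- your direct appeal to Lee is shorter and cleaner --- but it serves the paper as a sanity check and a warm-up illustration of the mapping-cone and width-bound techniques that are immediately afterward used in the finite-filling case (Theorem \ref{thm:finite-bound}), where no analogue of Lee's theorem is available to quote.
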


Note that this excludes the manifold $S^2\times S^1$ since it is branched over the 2-component trivial link having width 2. 

\begin{proof}[Proof of Theorem \ref{thm:lens-bound}]
By work of Hodgson and Rubinstein, only non-split two-bridge links arise as the branch sets of lens spaces \cite{HR1985}. As a result, to generate this collection of branch sets we need to consider surgery on the trivial knot in $S^3$; the associated quotient tangle is rational, and the preferred representative  is $(B^3,\zero)$ since $\det(\tau(0))=\det(\unknot\sqcup\unknot)=0$ (equivalently, $S^2\times S^1=\Br(S^3,\unknot\sqcup\unknot)$).

We have the material in place to show that this class of branch sets has thin Khovanov homology, a result that is due to Lee by virtue of the fact that non-split two-bridge links are alternating \cite{Lee2005}. Since both $\tau(\overzero)$ and $\tau(1)$ are the trivial knot, applying Lemma \ref{lem:general-stability} we have that $\bF\cong H_*(\Khred(\tau(0))\to\bF).$ Recall that $\Khred(\tau(0))\cong\bF\oplus\bF$ as a relatively $\bZ$-graded group. Now it follows that the branch sets corresponding to positive integer surgery have Khovanov homology \[
\Khred(\tau(n))\cong H_*(\Khred(\tau(0))\to\bF[\bZ/n\bZ])\cong \bF^n,\] hence $w(\tau(n))$ is thin for all $n\ne0$ (compare Theorem \ref{thm:trivial-knot}).

Without loss of generality, we consider $\Khred(\tau(\pq))$ for $\pq>0$. In fact, $\tau[0,a_2,a_3,\ldots,a_r]\simeq\tau[a_3,\ldots,a_r]$, so we need only consider $\pq\ge1$. Now it is a quick application of Proposition \ref{prp:width-upper-bound} to see that $\tau(\pq)$ is a thin link, for all $\pq\ne0$, since $\tau(n)$ is thin for all $n\ne 0$.  
\end{proof}

In constructing two-bridge links in this way, we  recover Schubert's normal form for this class \cite{Schubert1956}. Note that this proof that two-bridge knots are thin may be viewed as an adaptation of the proof that quasi-alternating knots are thin, due to Manolescu and Ozsv\'ath \cite{MO2007}, applied to two-bridge knots -- a class of knots whose members are alternating, hence quasi-alternating.  

\subsection{Width bounds for finite fillings}
Our main goal of this section is to prove an analogous statement in the case of manifolds with finite fundamental group. This will require the following result, implicit in work of Boileau and Otal \cite{BO1991}. 

\begin{theorem}\label{thm:Boileau-and-Otal} If $\Br(S^3,L)$ has finite fundamental group then $\Br(S^3,L)\cong \Br(S^3,L')$ implies $L\simeq L'$. In particular, manifolds with finite fundamental group arise uniquely as a two-fold branched covers of $S^3$. \end{theorem}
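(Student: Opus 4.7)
The plan is to translate the statement into one about conjugacy of involutions on $Y=\Br(S^3,L)$, and then appeal to the equivariant classification of involutions on spherical 3-manifolds. The two presentations $Y\cong\Br(S^3,L)\cong\Br(S^3,L')$ give rise to two orientation-preserving involutions $\sigma,\sigma'\co Y\to Y$ with $Y/\sigma\cong Y/\sigma'\cong S^3$, and with $L$ and $L'$ being the images in the quotient of $\Fix(\sigma)$ and $\Fix(\sigma')$, respectively. To establish $L\simeq L'$ it suffices to produce a self-homeomorphism $h$ of $Y$ with $h\sigma h^{-1}=\sigma'$, since such an $h$ descends to a homeomorphism of pairs $(S^3,L)\to(S^3,L')$.

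Next, I would exploit the finite fundamental group hypothesis: any 3-manifold with $|\pi_1(Y)|<\infty$ is a spherical space form $S^3/\Gamma$, with $\Gamma\subset SO(4)$ either cyclic, binary dihedral, or one of the binary polyhedral groups. The case $Y=S^3$ is handled directly by the Smith conjecture, which implies that any involution of $S^3$ is conjugate to an orthogonal one (so a unique one up to conjugation whose quotient is $S^3$, namely the one with fixed set an unknot). For non-trivial $\Gamma$, the manifold $Y$ carries a Seifert fibration, and I would pass through this geometric structure to reduce the problem to a question about symmetries of $S^3/\Gamma$.

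The main obstacle, and the essential content of the theorem, is the uniqueness up to conjugation of an orientation-preserving involution $\sigma\co Y\to Y$ whose quotient is $S^3$. I would approach this by case analysis on $\Gamma$: for each spherical space form one enumerates the isometric involutions of the round metric whose fixed-point set is a 1-manifold and whose quotient is a sphere, and verifies that such an involution is unique up to the action of $\operatorname{Isom}^+(S^3/\Gamma)$. The reduction from an arbitrary topological involution to an isometric one is the deepest step; in the pre-Perelman setting this is carried out via equivariant techniques of the sort developed by Boileau and Otal, working with Heegaard splittings of small Seifert fibered spaces and their mapping class groups, in combination with the equivariant loop theorem and sphere theorem to rigidify the involutions. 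Granted this conjugacy, any conjugating homeomorphism $h$ provides the required equivalence $(S^3,L)\to(S^3,L')$, and the theorem follows.
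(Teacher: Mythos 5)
Your proposal is correct and follows essentially the same route as the paper's sketch: reduce the statement to conjugacy of the two covering involutions on $Y$, use the Orbifold Theorem (equivariant geometrization) to make both involutions isometric for the spherical metric, and then invoke the Boileau--Otal classification of involutions on spherical space forms to produce the conjugating homeomorphism, which descends to a homeomorphism of pairs $(S^3,L)\to(S^3,L')$. The only organizational difference is that the paper disposes of all lens spaces at once via Hodgson--Rubinstein and then uses Scott's list for the remaining elliptic Seifert-fibred manifolds, whereas you treat only $S^3$ separately (via Smith) and fold the other lens spaces into the general case analysis on $\Gamma$; the content is the same.
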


\begin{proof}[Sketch of proof] Since geometrization has been established by Thurston when the manifold in question admits an involution with non-trivial fixed point set \cite{Thurston1982},  we have that $|\pi_1(\Br(S^3,L))|<\infty$ is equivalent to $\Br(S^3,L)$ admitting elliptic geometry (the work of Boileau and Porti \cite{BP2001} is devoted to a detailed proof of Thurston's Orbifold Theorem from which this fact follows).  

The case of lens spaces is treated by work of Hodgson and Rubinstein \cite{HR1985} as applied in the proof of Theorem \ref{thm:lens-bound}, so we assume without loss of generality that  $Y=\Br(S^3,L)$ is not a lens space. According to Scott, the remaining manifolds with elliptic geometry are Seifert fibred with 3 singular fibres and base orbifold $S^2$, and fall into two classes: either $S^2(2,2,n)$ for $n>1$ or $S^2(2,3,n)$ for $n=3,4,5$ \cite{Scott1983}. Further, the Seifert structure on $Y$ is unique (up to isotopy), and by Thurston's Orbifold Theorem we may assume that the strong inversion on $Y$ is an isometry. In particular, the involution preserves the Seifert structure. 

In this setting, there are two cases to consider: either the involution reverses orientation on the fibres or it preserves orientation on the fibres. In the case of the former, the branch set is a Montesinos link, while the latter give rise to a two-fold branched cover of a Seifert link (that is, a union of fibres in some Seifert fibration of $S^3$) \cite{BO1991,Montesinos1976,Montesinos1987}. 

However, for a given $Y$ (with finite fundamental group), the possible branch sets must be isotopic, since the pair of involutions on $Y$  are conjugate by work of Boileau and Otal \cite{BO1991} (see also \cite{Montesinos1987}).
\end{proof}

Particular instances of this observation may be established by hand. For example, the Poincar\'e homology sphere arises as two-fold branched cover of both the $(3,5)$-torus knot (a Seifert link) and the $(-2,3,5)$-pretzel knot (a Montesinos link). We leave as an exercise the task of establishing the equivalence between this pair of knots. It seems likely that the above result was known to Montesinos \cite{Montesinos1987}, though we learned the  proof (as outlined) from M. Boileau. With this in hand, we establish the following:

\begin{theorem}\label{thm:finite-bound} If $\Br(S^3,L)$ has finite fundamental group then $w(L)$ is at most 2. \end{theorem}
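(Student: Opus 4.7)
The plan is to reduce the statement to a finite computation, using the structure of manifolds with finite fundamental group together with the stability and upper-bound results from Section~\ref{sec:width}. First, if $\Br(S^3,L)$ is a lens space, then Theorem~\ref{thm:lens-bound} already gives $w(L)=1\le 2$, so we may assume $Y=\Br(S^3,L)$ has finite fundamental group but is not a lens space. Appealing to the geometric classification invoked in the sketch of Theorem~\ref{thm:Boileau-and-Otal}, $Y$ is Seifert fibred over $S^2$ with three exceptional fibres, with base orbifold either $S^2(2,2,n)$ or $S^2(2,3,n)$ (with $n=3,4,5$ in the latter case).

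Next, I would apply Proposition~\ref{prp:Montesinos} to write $Y=M(\alpha)$ where $M$ is a simple, strongly invertible knot manifold with base orbifold $D^2(p,q)$ for $(p,q)\in\{(2,2),(2,3)\}$. By Theorem~\ref{thm:Boileau-and-Otal} the branch set $L$ is uniquely determined by $Y$, so that $L\simeq\tau(\tfrac{p}{q})$ for the associated quotient tangle of this particular $M$ and some slope. Up to mirror images and isotopy there are exactly two such knot manifolds: the twisted $I$-bundle over the Klein bottle (the unique $D^2(2,2)$ Seifert structure) and the exterior of the trefoil (the $D^2(2,3)$ structure). In each case I would fix the preferred representative of the associated quotient tangle, which in the trefoil case is the tangle depicted in Figure~\ref{fig:trefoil-inversion}, and analogously for the Klein bottle case.

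At this point the problem becomes combinatorial. By Proposition~\ref{prp:width-upper-bound}, any filling $\tau(\tfrac{p}{q})$ satisfies $w(\tau(\tfrac{p}{q}))\le \wmax$, and by Lemma~\ref{lem:split} the width sequence $\{w(\tau(n))\}_{n\in\bZ}$ eventually stabilizes, so that $\wmax$ is attained on a finite window. I would therefore compute $\Khred(\tau(n))$ for a finite range of integers $n$ in each of the two cases (for example using {\tt JavaKh}), observe stabilization as predicted by Lemma~\ref{lem:general-stability}, and verify that in each case $\wmax\le 2$. Combined with Proposition~\ref{prp:width-upper-bound}, this yields $w(L)=w(\tau(\tfrac{p}{q}))\le 2$ for every filling producing a finite-fundamental-group manifold in either Seifert family.

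The main obstacle is the last step: identifying a workable representative of the associated quotient tangle for the twisted $I$-bundle over the Klein bottle (the trefoil case is already in hand from Figure~\ref{fig:trefoil-inversion}) and then performing and certifying the Khovanov calculations for enough integer surgeries to conclude $\wmax\le 2$. One must also be careful that the branch set for $M(\alpha)$ really is realized by the preferred tangle used for the computation; this is where Theorem~\ref{thm:Boileau-and-Otal} is essential, since it rules out the possibility of a \emph{different} link $L'$ (possibly of larger width) also double-branch-covering the same finite-fundamental-group manifold, forcing us back to the Seifert-fibred picture in which the computation was made.
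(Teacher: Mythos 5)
Your geometric reduction is the right one and matches the paper: reduce to elliptic Seifert fibred spaces via geometrization, invoke Proposition~\ref{prp:Montesinos} to express each as a filling of the twisted $I$-bundle over the Klein bottle ($D^2(2,2)$) or the trefoil exterior ($D^2(2,3)$), and use the uniqueness of the branch set from Theorem~\ref{thm:Boileau-and-Otal} to pin $L$ down to a $\tau(\pq)$. The gap is in the final step, where you defer to an unperformed {\tt JavaKh} computation to establish $\wmax\le 2$. No computation is needed, and in fact the paper's point is precisely that the integer widths are forced structurally: for the trefoil exterior, Proposition~\ref{prp:width-bound-Berge} already bounds the width of every filling of any torus knot by $2$; for the twisted $I$-bundle over the Klein bottle, Heil's theorem gives that $M(n\fibre+\lm)$ is a lens space for all $n\ne 0$, hence $w(\tau(n))=1$ for $n\ne 0$ by Theorem~\ref{thm:lens-bound}, and $w(\tau(0))=2$ since $\det(\tau(0))=0$. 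This pins down $\wmax=2$ directly, and one then pushes to rational fillings via the upper-bound argument.

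There is a second, more subtle gap in the Klein bottle case. You cite Lemma~\ref{lem:split}, Lemma~\ref{lem:general-stability}, and Proposition~\ref{prp:width-upper-bound}, but as written these are stated for the \emph{preferred} associated quotient tangle of a strongly invertible knot in $S^3$, where $\tau(\overzero)$ is the trivial knot and $\Khred(\tau(\overzero))\cong\bF$ fuels the stability argument. The twisted $I$-bundle over the Klein bottle is not a knot complement in $S^3$; working with the basis $(\fibre,\lm)$, the filling $M(\fibre)$ is a connected sum of lens spaces (not $S^3$), so $\tau(\overzero)$ is a connected sum of two-bridge links rather than the unknot, and the stability lemmas do not carry over verbatim. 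The paper's proof handles this by verifying the determinant additivity $\det(\tau(\pq))=\det(\tau(\pqzero))+\det(\tau(\pqone))$ directly in the basis $(\fibre,\lm)$ using Lemma~\ref{lem:cm}, and then arguing that the proof of Proposition~\ref{prp:width-upper-bound} goes through with this tangle in place of a preferred one. Without either that adaptation or the structural identification of the integer fillings as lens spaces, your appeal to $\wmax$ for this case is not justified.
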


\begin{proof}
Note that by Theorem \ref{thm:lens-bound} any lens space $\Br(S^3,L)$ satisfies the bound of Theorem \ref{thm:finite-bound} since $w(L)=1$ in this case. As in the proof of Theorem \ref{thm:Boileau-and-Otal}, the remaining manifolds with elliptic geometry are Seifert fibred with 3 singular fibres and base orbifold $S^2$, and fall into two classes: either $S^2(2,2,n)$ for $n>1$ or $S^2(2,3,n)$ for $n=3,4,5$ \cite{Scott1983}. The manifolds in each class may be constructed by considering fillings of Seifert fibred knot manifolds with base orbifold $D^2(2,2)$ (the twisted $I$-bundle over the Klein bottle) and $D^2(2,3)$ (the trefoil complement), respectively (see Heil \cite{Heil1974}, Montesinos \cite{Montesinos1976}).

To any Seifert fibred space $Y$ with base orbifold $S^3(p,q,r)$, Montesinos constructs a strong inversion so that $Y\cong\Br(S^3,L)$ (see Proposition \ref{prp:Montesinos}). Since $Y$ has finite fundamental group, the branch set $L$ is unique (up to isotopy). As a consequence, it suffices to construct the family of manifolds in each class in such a way that the branch set is made explicit. 

When filling the complement of the trefoil we appeal to Proposition \ref{prp:width-bound-Berge}: the branch set associated with filling any torus knot in $S^3$ has width at most 2. To complete the proof then, we are left to consider the case of filling the twisted $I$-bundle over the Klein bottle, $M$.   

When considered with Seifert structure $D^2(2,2)$, this manifold has the property that $\Delta(\fibre,\lm)=1$, where $\fibre$ is a regular fibre in the boundary. Note that $M(\lm)$ must be $S^2\times S^1$, and $M(n\fibre+\lm)$ is a lens space for all $n\ne0$ by work of Heil \cite{Heil1974}. By fixing a representative for the associated quotient tangle compatible with the basis for surgery $(\fibre,\lm)$ it follows that $w(\tau(n))=1$ for all $n\ne0$, and $w(\tau(0))=2$. Now resolving the terminal crossing in $\tau(\pq)$ we have, for $\pq\ge0$,
\begin{align*}
\det(\tau(\textstyle\pq)) 
&= |H_1(M(p\fibre+q\lm);\bZ)| \\
&= c_M\Delta(p\fibre+q\lm,\lm) \\
&= c_M|p\fibre\cdot\lm| \\
&= c_M|(p_0+p_1)\fibre\cdot\lm+(q_0+q_1)\lm\cdot\lm| \\
&= c_M|(p_0\fibre+q_0\lm)\cdot\lm|+c_M|(p_1\fibre+q_1\lm)\cdot\lm| \\
&= c_M\Delta(p_0\fibre+q_0\lm,\lm)+c_M\Delta(p_1\fibre+q_1\lm,\lm) \\
&= |H_1(M(p_0\fibre+q_0\lm);\bZ)|+|H_1(M(p_1\fibre+q_1\lm);\bZ)| \\
&= \det(\tau(\textstyle\pqzero)) + \det(\tau(\textstyle\pqone))
\end{align*}
in terms of $(\fibre,\lm)$.
This is enough to obtain the result, proceeding as in the proof of Proposition \ref{prp:width-bound-Berge} (by way of Proposition \ref{prp:width-upper-bound}), working with a tangle compatible with the basis $(\fibre,\lm)$ in place of the preferred basis $(\mu,\lambda)$.
\end{proof}

This result should be compared with \cite[Proposition 2.3]{OSz2005-lens}: Ozsv\'ath and Szab\'o show that manifolds with elliptic geometry are all L-spaces.

\subsection{Width obstructions}
We are now in a position to assemble the material developed to this point into obstructions to exceptional surgeries.

\begin{theorem}\label{thm:general} Let $M$ be a simple, strongly invertible knot manifold with associated quotient tangle $T=(B^3,\tau)$ compatible with some basis $(\alpha, \beta)$ in $\partial M$. Then $w(\tau(\pq))>1$ implies that $M(p\alpha+q\beta)$ is not a lens space, and $w(\tau(\pq))>2$ implies that $M(p\alpha+q\beta)$ has infinite fundamental group.  \end{theorem}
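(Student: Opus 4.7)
The plan is to recognize this theorem as an immediate packaging of results already assembled in the paper, applied contrapositively. The key identification is that $M(p\alpha+q\beta) \cong \Br(S^3, \tau(\pq))$, which is exactly the content of Proposition \ref{prp:pq} together with Corollary \ref{crl:basis}, once a compatible representative for $T$ has been fixed. Thus the two-fold branched cover of $S^3$ along $\tau(\pq)$ recovers the filling under consideration.

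First I would address the lens space statement. Suppose for contradiction that $M(p\alpha + q\beta)$ is a lens space. By Theorem \ref{thm:lens-bound}, this lens space arises as a two-fold branched cover of $S^3$ with branch set of width $1$ (a non-split two-bridge link, by Hodgson-Rubinstein). To conclude that the specific branch set $\tau(\pq)$ is also thin, one invokes uniqueness of branch sets for manifolds with elliptic geometry via Theorem \ref{thm:Boileau-and-Otal}; since lens spaces have finite fundamental group, $\tau(\pq)$ must be isotopic to the Hodgson-Rubinstein two-bridge branch set, and hence $w(\tau(\pq)) = 1$. This contradicts the hypothesis $w(\tau(\pq)) > 1$.

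For the second statement, I would proceed analogously. Suppose $M(p\alpha + q\beta)$ has finite fundamental group. By Theorem \ref{thm:Boileau-and-Otal}, the branch set for this cover is unique up to isotopy, and by Theorem \ref{thm:finite-bound}, any branch set $L$ with $\Br(S^3, L)$ of finite fundamental group satisfies $w(L) \le 2$. Applying uniqueness to identify this branch set with $\tau(\pq)$ yields $w(\tau(\pq)) \le 2$, contradicting the hypothesis.

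There is no genuine obstacle here; the work was done in establishing Theorems \ref{thm:lens-bound}, \ref{thm:Boileau-and-Otal}, and \ref{thm:finite-bound}, together with the cover-theoretic dictionary of Section \ref{sec:fillings}. The one point that deserves a brief remark in the write-up is why uniqueness of branch sets enters: Theorems \ref{thm:lens-bound} and \ref{thm:finite-bound} only assert the \emph{existence} of a branch set with small width, so without Theorem \ref{thm:Boileau-and-Otal} one could not conclude that the particular branch set $\tau(\pq)$ produced by the associated quotient tangle realizes that bound. Once this is noted, the theorem follows by the two-line contrapositive arguments above.
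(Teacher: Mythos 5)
Your proof is correct and takes essentially the same route as the paper, which simply cites Theorems \ref{thm:lens-bound} and \ref{thm:finite-bound} contrapositively. The one refinement you add --- explicitly re-invoking Theorem \ref{thm:Boileau-and-Otal} to pass from ``some branch set has small width'' to ``this particular branch set $\tau(\pq)$ has small width'' --- is a reasonable precaution but is already subsumed in the way those two theorems are proved: Theorem \ref{thm:finite-bound} is stated as a universal bound on $w(L)$ for \emph{any} $L$ with $\Br(S^3,L)$ of finite fundamental group (its proof uses uniqueness internally), and the proof of Theorem \ref{thm:lens-bound} rests on Hodgson--Rubinstein, which asserts that \emph{only} non-split two-bridge links arise as branch sets of lens spaces, so applying them directly to $L=\tau(\pq)$ via Proposition \ref{prp:pq} already closes the gap you flag.
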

\begin{proof}For $w>1$ the statement follows from Theorem \ref{thm:lens-bound}; for $w>2$ the statement follows from Theorem \ref{thm:finite-bound}.\end{proof}

This is an effective obstruction when combined with the results of Section \ref{sec:width} (in particular the stability of Lemma \ref{lem:general-stability}) and Section \ref{sec:estimates}. In particular, restricting to generic associated quotient tangles in the sense of Definition \ref{def:generic} yields the following:

\begin{theorem}\label{thm:lens} Let $K\into S^3$ be strongly invertible with generic preferred associated quotient tangle. Then $\wmin>1$ implies that $K$ does not admit lens space surgeries. Moreover, determining $\wmin$ is a finite check by stability.\end{theorem}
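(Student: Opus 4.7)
The plan is to combine the branch set identification for Dehn surgery with the width bounds of Section \ref{sec:estimates} and the stability results of Section \ref{sec:width}. Fix the preferred representative $T=(B^3,\tau)$ of the associated quotient tangle of $K$, so that $S^3_{p/q}(K)\cong\Br(S^3,\tau(\pq))$ for every reduced $\pq$. Since $T$ is generic by hypothesis, Theorem \ref{thm:generic} yields the lower bound $w(\tau(\pq))\ge\wmin$ for every $\pq\in\bQ\cup\{\overzero\}$. Under the assumption $\wmin>1$, this forces $w(\tau(\pq))>1$ for every slope, and Theorem \ref{thm:lens-bound} (which states that the branch set of any lens space has width $1$) immediately rules out $\Br(S^3,\tau(\pq))$ being a lens space; equivalently this is the conclusion of Theorem \ref{thm:general}. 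Since the non-trivial surgery slopes $\pq\in\bQ$ exhaust the non-trivial fillings of $K$, no such filling produces a lens space.

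For the finiteness claim, the strategy is to exploit the stable behaviour of integer surgery branch sets. Lemma \ref{lem:general-stability} describes $\Khred(\tau(m+n))$ as the homology of a mapping cone from $\Khred(\tau(m))$ into a graded copy of $\bF[\bZ/n\bZ]$, and Lemma \ref{lem:split} extracts from this the stabilization $\Khred(\tau(N+1))\cong\Khred(\tau(N))\oplus\bF$ for $N\gg0$, so that $w(\tau(n))$ becomes constant as $n\to+\infty$; an entirely analogous argument using Lemma \ref{lem:general-stability-negative} handles $n\to-\infty$. In the generic setting Lemma \ref{lem:max-min} then sharpens this picture considerably: the function $n\mapsto w(\tau(n))$ is locally constant on $\bZ$ with at most a single jump of size one, occurring at a unique integer $\ell$. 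Consequently, once one identifies a pair of integers $n_-\le\ell$ and $n_+\ge\ell+1$ at which the two sides have stabilized, the value $\wmin=\min\{w(\tau(n_-)),w(\tau(n_+))\}$ is determined by evaluating $w$ at only the finitely many integers in $[n_-,n_+]$.

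The main obstacle I anticipate is making the stability range effective. The hypotheses of Lemma \ref{lem:split} only give a qualitative $N\gg0$, so in practice one iterates the mapping cone presentation of Lemma \ref{lem:general-stability} starting from $\Khred(\tau(0))$ and continues until a copy of $\bF$ coming from $\Khred(\tau(\overzero))$ persists unchanged in the $q$-grading for two consecutive steps; persistence in one further step then certifies that the resulting width on that side is $w(\tau(N))$ for all $N$ beyond this point, by the polynomial ring description in Lemma \ref{lem:general-stability}. Combined with the symmetric check for negative integers, this produces in finitely many steps an interval of integers outside of which $w(\tau(n))$ is constant, and the genericity hypothesis guarantees via Lemma \ref{lem:max-min} that no further jumps can be hiding in between, completing the verification that $\wmin$ is computable by a finite procedure.
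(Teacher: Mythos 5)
Your argument is correct and follows essentially the same route as the paper's proof: Theorem \ref{thm:generic} supplies the lower bound $w(\tau(\pq))\ge\wmin$ (valid for slopes $\pq\in\bQ$, i.e.\ non-trivial fillings), Theorem \ref{thm:lens-bound} rules out lens spaces when $w>1$, and the stability results (Lemmas \ref{lem:general-stability}, \ref{lem:split}, \ref{lem:general-stability-negative}, \ref{lem:max-min}) reduce the determination of $\wmin$ to a finite computation. One small slip: you extend the lower bound to $\pq=\overzero$, but that case is excluded from Theorem \ref{thm:generic} (and indeed fails, since $\tau(\overzero)$ is the unknot with width $1$); this is harmless here because the trivial slope does not correspond to a surgery.
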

\begin{proof} This is an application of Theorem \ref{thm:lens-bound}, together with the observation that $\wmin$ is determined on some finite collection of integers as a result of Lemma \ref{lem:general-stability} and provides a lower bound for $w(\tau(\pq))$ according to Theorem \ref{thm:generic}.\end{proof}

\begin{theorem}\label{thm:finite} Let $K\into S^3$ be strongly invertible with generic preferred associated quotient tangle. Then $\wmin>2$ implies that $K$ does not admit finite fillings. Moreover, determining $\wmin$ is a finite check by stability.\end{theorem}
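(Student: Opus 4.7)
The plan is to run essentially the same argument as for Theorem \ref{thm:lens}, replacing the width bound coming from Theorem \ref{thm:lens-bound} (which caps the width at $1$ for lens space branch sets) with the bound coming from Theorem \ref{thm:finite-bound} (which caps it at $2$ for finite $\pi_1$ branch sets). First, since $K \into S^3$ is strongly invertible, Proposition \ref{prp:si} ensures that $M = S^3 \setminus \nu(K)$ is a simple, strongly invertible knot manifold. With $T = (B^3,\tau)$ the preferred representative of the associated quotient tangle, we have $S^3_{p/q}(K) \cong \Br(S^3,\tau(\pq))$ for every reduced slope $\pq$.

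Next I would argue the contrapositive: suppose $K$ admits a non-trivial filling with finite fundamental group, realized as $\Br(S^3,\tau(\pq))$ for some $\pq \in \bQ$. Theorem \ref{thm:finite-bound} forces $w(\tau(\pq)) \le 2$. On the other hand, since $T$ is generic, Theorem \ref{thm:generic} applies and provides the lower bound $\wmin \le w(\tau(\pq))$. Combining these gives $\wmin \le 2$, contradicting the hypothesis $\wmin > 2$. Hence under $\wmin > 2$ no such filling can exist.

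For the second assertion, that $\wmin$ is determined by a finite computation, the essential input is the stability machinery of Section \ref{sec:width}. Lemma \ref{lem:split} together with its negative-surgery counterpart (derived from Lemma \ref{lem:general-stability-negative}) shows that for $|n|$ sufficiently large, $\Khred(\tau(n+1)) \cong \Khred(\tau(n)) \oplus \bF$ with the extra $\bF$ supported in a single, fixed relative $\delta$-grading (Lemma \ref{lem:diagonal}). Consequently $w(\tau(n))$ stabilizes in both directions as $n \to \pm\infty$. Moreover, Lemma \ref{lem:max-min} shows that, under the no-blank-diagonals portion of genericity, there is at most one integer $\ell$ at which $w(\tau(n))$ changes value, and the change is by at most one. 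Thus computing $w(\tau(n))$ on any finite interval of integers large enough to exhibit the stable regime in both tails determines $\wmin$ explicitly.

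The substantive obstacle here is not really present in the proof itself: all the difficulty has been packaged into Theorem \ref{thm:finite-bound} (which in turn uses the uniqueness result Theorem \ref{thm:Boileau-and-Otal} of Boileau--Otal together with the width bound for branch sets of fillings of the two Seifert fibered pieces covering $S^2(2,2,n)$ and $S^2(2,3,n)$) and into Theorem \ref{thm:generic} (which requires the rather delicate case analysis underpinning Propositions \ref{prp:short-to-long}, \ref{prp:reversed-possibility}, \ref{prp:long-to-short}, and \ref{prp:reversed-possibility-b}). With those results in hand, the present theorem is a direct assembly: sandwich the width $w(\tau(\pq))$ of an arbitrary rational filling between $\wmin$ (from below, by genericity) and $2$ (from above, by the finite $\pi_1$ hypothesis), and observe that $\wmin$ is algorithmically accessible by stability.
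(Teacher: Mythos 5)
Your proposal is correct and follows essentially the same route as the paper's own (very brief) proof: cap $w(\tau(\pq))$ from above by $2$ using Theorem \ref{thm:finite-bound}, from below by $\wmin$ using genericity via Theorem \ref{thm:generic}, and invoke the stability results (Lemma \ref{lem:general-stability} and its consequences) to make the determination of $\wmin$ a finite computation. You have merely unpacked the paper's one-sentence argument into its constituent steps, which is accurate.
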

\begin{proof}Similarly, this is an application of Theorem \ref{thm:finite-bound}, together with the observation that $\wmin$ is determined on some finite collection of integers as a result of Lemma \ref{lem:general-stability} and provides a lower bound for $w(\tau(\pq))$ according to Theorem \ref{thm:generic}.\end{proof}

In the absence of the genericity hypothesis, homological width is still a useful obstruction. In light of the cyclic surgery theorem \cite{CGLS1987}, it is enough to check the integer fillings of $K$ when the question of lens space surgeries is of interest. Similarly, in the case of finite fillings only the integer and half-integer surgeries need to be considered in light of work of Boyer and Zhang, whenever the complement admits a hyperbolic structure \cite{BZ1996}. In practice however, genericity is easy to check and seems to be the rule and rather than the exception. In the generic setting (see examples given below), it is particularly interesting that Khovanov homology is able to give useful surgery obstructions, without relying on these powerful theorems. 

\section{Examples and applications}\label{sec:examples}

\subsection{A first example: the figure eight}

It is well known that the figure eight knot $K=4_1$ does not admit lens space surgeries. In fact, Thurston \cite{Thurston1980} classified the non-hyperbolic fillings of $S^3\smallsetminus\nu(K)$ and showed that they all have infinite fundamental group. That $K$ does not admit lens space surgeries has been reproved using the machinery of $SU(2)$-representation spaces \cite{KK1990, Klassen1991}, essential laminations \cite{Delman1995}, character varieties \cite{Tanguay1996} and most recently, Heegaard Floer homology \cite{OSz2005-lens}. As a first example of the width obstructions developed here, we show that Khovanov homology detects that $K$ does not admit finite fillings. 

\begin{figure}[ht!]
\begin{center}
\raisebox{20pt}{\includegraphics[scale=0.45]{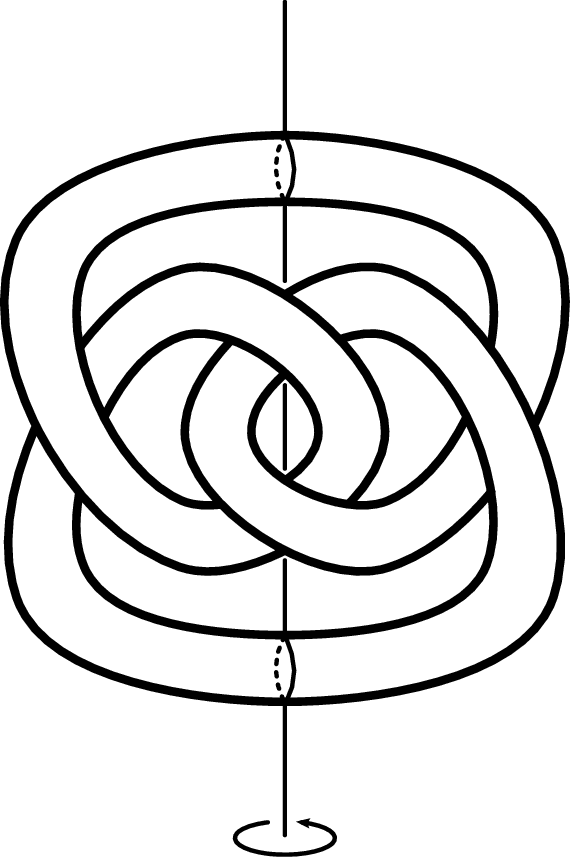}}\qquad
\raisebox{-40pt}{\includegraphics[scale=0.45]{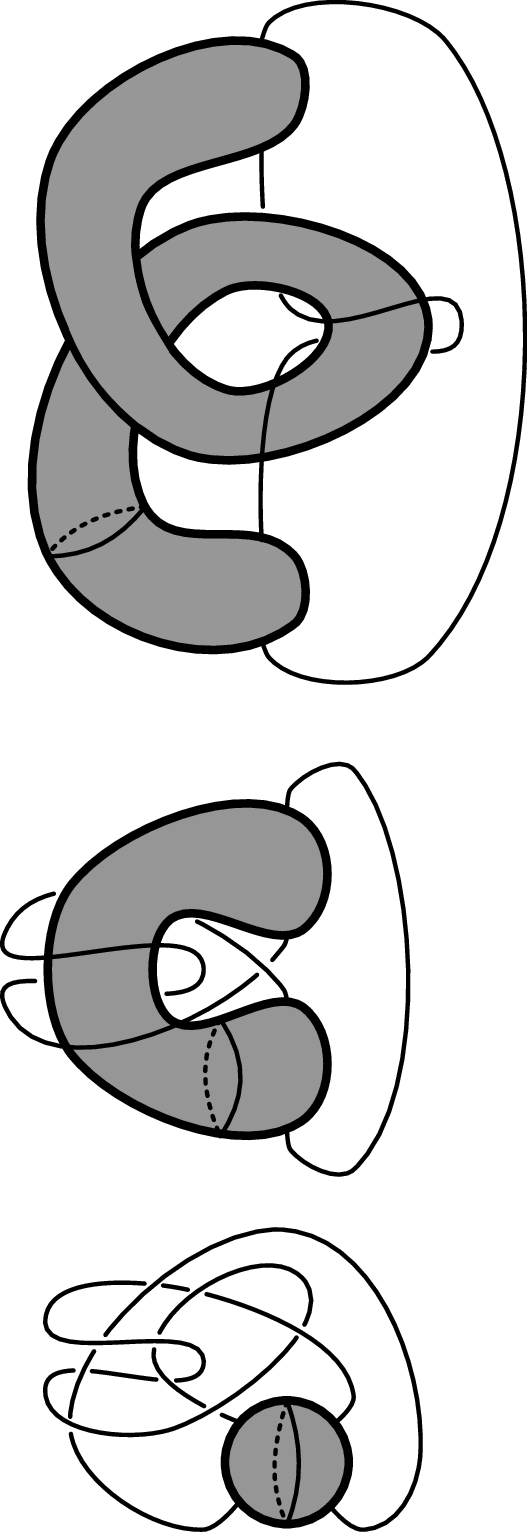}}\qquad
\labellist
	\pinlabel \rotatebox{-90}{$\cong$} at 210 425
\endlabellist
\raisebox{20pt}{\includegraphics[scale=0.45]{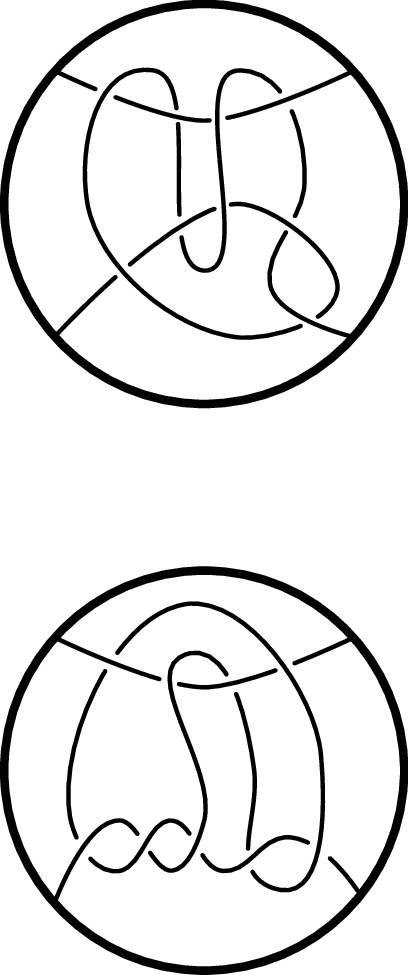}}
\end{center}
\caption{The strong inversion on the figure eight (left); isotopy of a fundamental domain (centre); and two representatives of the associated quotient tangle (right).}
\label{fig:figure8-inversion}\end{figure}

$K$ is a strongly invertible knot, and this symmetry is shown in Figure \ref{fig:figure8-inversion} together with the associated quotient tangle. We have given two equivalent views of the associated quotient tangle. The first of these shows that the branch sets for integer surgeries may be expressed as closed 3-braids. For \[\beta_n=\si_1^{-1}\si_2^{-2}\si_1^{-2}\si_2^{-2}\si_1^{-2}\si_2^{-4+n}\] we have that $\tau(n)\simeq\overline{\beta_n}$, the closure of $\beta_n$. The Khovanov homology groups $\Khred(\tau(-1))$,  $\Khred(\tau(0))$ and $\Khred(\tau(+1))$ are given in Figure \ref{fig:figure8-homology} (note in particular that $\chi(\Khred(\tau(0)))=\det(\tau(0))=0$). Notice that $\wmin=2$ and that the tangle is decay generic. It follows at once that $K$ does not admit lens space surgeries applying Theorem \ref{thm:lens}, and it seems worth pointing out that this result could have been inferred simply by inspection of the single Khovanov homology group $\Khred(\tau(0))$.

\begin{figure}[ht!]\begin{center}
\labellist\small
	\pinlabel $1$ at 161 306
	
	\pinlabel $1$ at 196 236 
	\pinlabel $1$ at 196 306
	\pinlabel $1$ at 196 342
	\pinlabel $1$ at 196 414
	
	\pinlabel $1$ at 233 342
	\pinlabel $1$ at 233 414
	\pinlabel $1$ at 233 450
	\pinlabel $1$ at 233 520
\endlabellist
\includegraphics[scale=0.35]{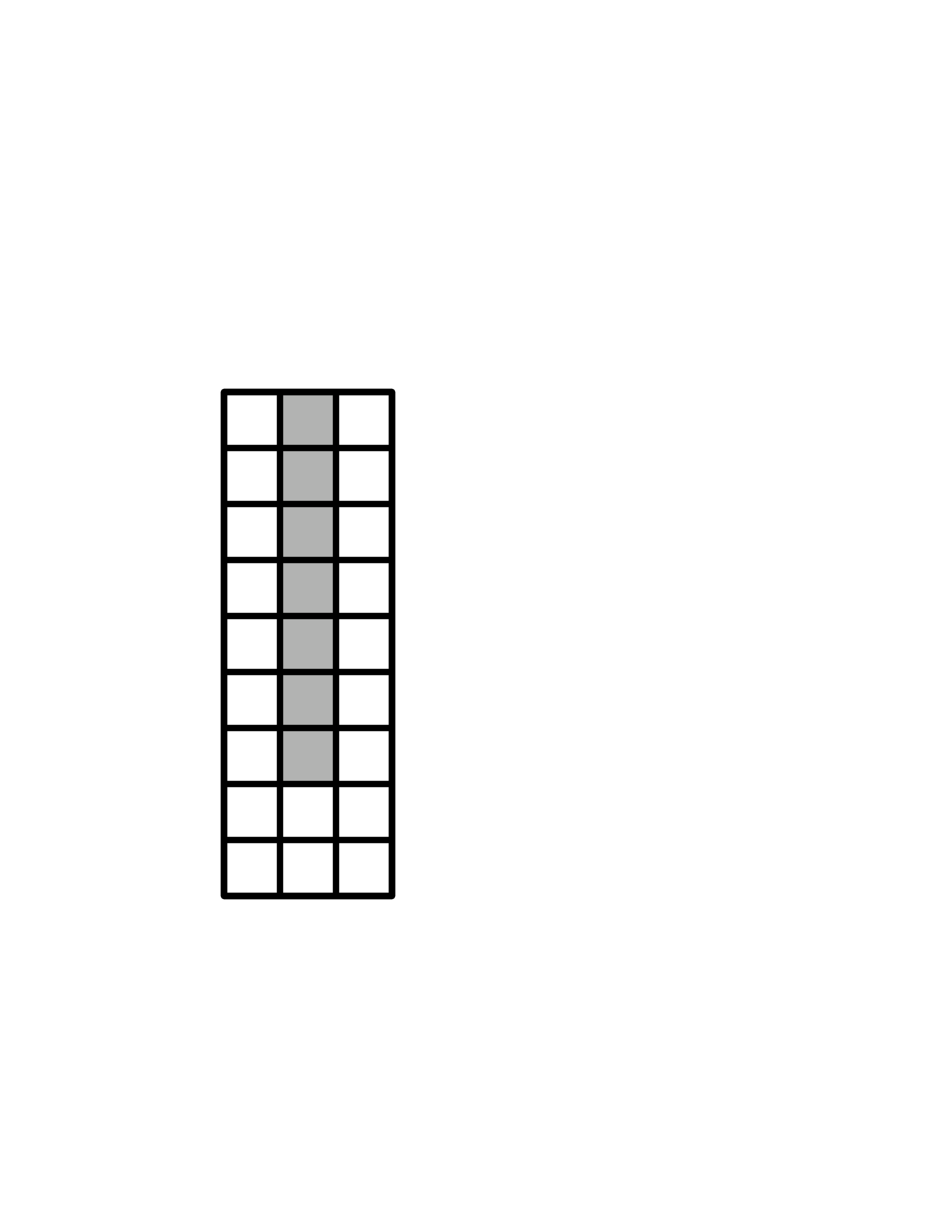} \quad
\labellist\small
	\pinlabel $1$ at 161 306
	
	\pinlabel $1$ at 196 236 
	\pinlabel $1$ at 196 269
	\pinlabel $1$ at 196 306
	\pinlabel $1$ at 196 342
	\pinlabel $1$ at 196 414
	
	\pinlabel $1$ at 233 342
	\pinlabel $1$ at 233 414
	\pinlabel $1$ at 233 450
	\pinlabel $1$ at 233 520
\endlabellist
\includegraphics[scale=0.35]{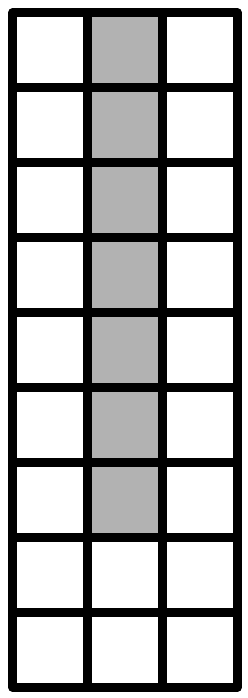} \quad
\labellist\small
	
	\pinlabel $1$ at 196 236 
	\pinlabel $1$ at 196 269
	\pinlabel $1$ at 196 306
	\pinlabel $1$ at 196 342
	\pinlabel $1$ at 196 414
	
	\pinlabel $1$ at 233 342
	\pinlabel $1$ at 233 414
	\pinlabel $1$ at 233 450
	\pinlabel $1$ at 233 520
\endlabellist
\includegraphics[scale=0.35]{figures/grid-4_1}
\qquad 
\includegraphics[scale=0.45]{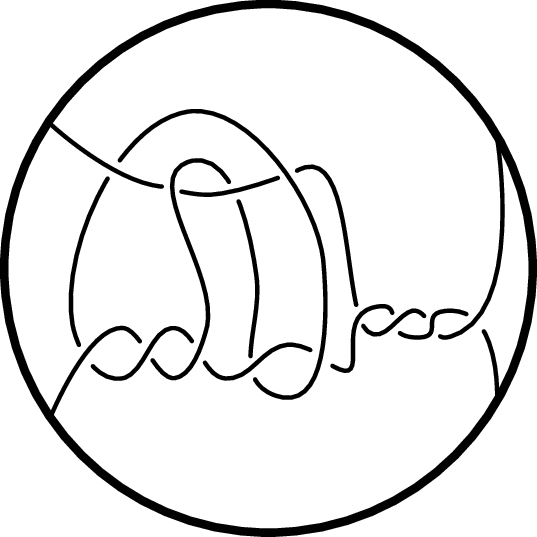}\caption{The preferred representative for the associated quotient tangle $T=(B^3,\tau)$ of the figure eight, and the reduced Khovanov homology groups $\Khred(\tau(-1))$, $\Khred(\tau(0))$ and $\Khred(\tau(1))$ (from left to right). The distinguished $\delta$-grading has been highlighted, in accordance with Lemma \ref{lem:general-stability} setting $m=0$.}\label{fig:figure8-homology}\end{center}\end{figure}

More generally, we may use Lemma \ref{lem:general-stability} to calculate:

\begin{proposition}\[\Khred(\tau(n))\cong
\begin{cases}
\phantom{\bF\oplus}\ \ \bF^{4+n}\!\oplus\bF^4 & n>0 \\
\ \ \ \! \bF\oplus\bF^5\oplus\bF^4 & n=0 \\
\bF^{|n|}\oplus\bF^4\oplus\bF^4 & n<0 
\end{cases}\]\end{proposition}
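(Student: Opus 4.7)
The plan is to apply Lemma \ref{lem:general-stability} and Lemma \ref{lem:general-stability-negative} with base $m=0$, using as input data the three Khovanov groups displayed in Figure \ref{fig:figure8-homology}. Labeling the three consecutive relative $\delta$-gradings supporting $\Khred(\tau(0))$ as $\delta=1,2,3$, the first step is to read off
\[
\Khred(\tau(-1))\cong\bF\oplus\bF^4\oplus\bF^4,\qquad
\Khred(\tau(0))\cong\bF\oplus\bF^5\oplus\bF^4,\qquad
\Khred(\tau(1))\cong\bF^5\oplus\bF^4,
\]
which already handles the case $n=0$.

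For $n>0$, Lemma \ref{lem:general-stability} supplies
\[
\Khred(\tau(n))\cong H_*\bigl(\Khred(\tau(0))\to\bF[\bZ/n\bZ]\bigr),
\]
with connecting homomorphism raising $\delta$-grading by one and with the $n$ summands of $\bF[\bZ/n\bZ]$ sharing a single $\delta$-grading at $n$ distinct consecutive $q$-gradings. Comparison with the $n=1$ answer forces this single $\delta$-grading to be $\delta=2$ and the connecting map $\bF_{\delta=1}\to\bF[\bZ/1\bZ]$ to be an isomorphism. For general $n$, the connecting map preserves $q$-grading, so it may only target the unique summand of $\bF[\bZ/n\bZ]$ at the $q$-grading of $\bF_{\delta=1}$; this summand corresponds to $1\in\bF[q]/q^n$, and the map on it is identified with the $n=1$ isomorphism by the stability behind Lemma \ref{lem:split}. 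Consequently the map has rank one: one summand of $\bF[\bZ/n\bZ]$ is cancelled against $\bF_{\delta=1}$, while the remaining $n-1$ summands survive at $\delta=2$, yielding $\Khred(\tau(n))\cong\bF^{4+n}\oplus\bF^4$.

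For $n<0$ with $k=|n|$, Lemma \ref{lem:general-stability-negative} supplies
\[
\Khred(\tau(-k))\cong H_*\bigl(\bF[\bZ/k\bZ]\to\Khred(\tau(0))\bigr),
\]
with connecting map again raising $\delta$ by one. Comparison with the $k=1$ answer $\bF\oplus\bF^4\oplus\bF^4$ places $\bF[\bZ/k\bZ]$ at $\delta=1$ and forces the connecting map into $\bF^5_{\delta=2}$ to have rank one. The same $q$-grading preservation together with the truncated polynomial identification in Lemma \ref{lem:general-stability-negative} then forces the rank to remain one for every $k\ge 1$: exactly one summand of $\bF[\bZ/k\bZ]$ pairs with one generator of $\bF^5$, while the other $k-1$ summands survive at $\delta=1$ (together with $\bF_{\delta=1}\subset\Khred(\tau(0))$). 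This yields $\Khred(\tau(-k))\cong\bF^k\oplus\bF^4\oplus\bF^4$.

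The main obstacle is the rank-one claim for the connecting map independent of $|n|$; once this is settled the rest reduces to an inspection of kernels and cokernels in a mapping cone concentrated in a pair of adjacent $\delta$-gradings. The rank-one claim is precisely what the truncated polynomial identifications in Lemmas \ref{lem:general-stability} and \ref{lem:general-stability-negative} encode, together with the high-$q$ survival argument of Lemma \ref{lem:split}, so no genuinely new input is required beyond the three base groups depicted in Figure \ref{fig:figure8-homology}.
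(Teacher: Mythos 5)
Your argument for $n>0$ is sound, but the $n<0$ case has a genuine gap, and it is precisely the gap the paper closes by computing one additional group.

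For $n>0$ the connecting map in $H_*\bigl(\Khred(\tau(0))\to\bF[\bZ/n\bZ]\bigr)$ has source $\Khred(\tau(0))_{\delta=1}\cong\bF$, a single generator at a single $q$-grading. Its rank is therefore at most one; the $n=1$ calculation pins its $q$-grading to the base summand of $\bF[q]/q^n$, and the iterated construction shows the restriction of the map to that summand is the same isomorphism for all $n$. So the rank is exactly one for all $n>0$, and the formula follows.

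For $n<0$ the roles reverse: the source is now $\bF[\bZ/k\bZ]$ with $k$ generators at $k$ consecutive $q$-gradings, and the target is $\Khred(\tau(0))_{\delta=2}\cong\bF^5$, which is distributed over five distinct $q$-gradings. Your assertion that ``the truncated polynomial identification forces the rank to remain one for every $k\ge1$'' is not a consequence of the cited lemmas. The truncated polynomial structure only says the $k$ summands of $\bF[\bZ/k\bZ]$ occupy consecutive $q$-gradings; it does not preclude two or more of these $q$-gradings from coinciding with $q$-gradings occupied by $\bF^5$, in which case the rank could exceed one (and the resulting formula would change: e.g.\ for $k=2$ one could a priori get $\bF\oplus\bF^3\oplus\bF^4$, which has the same determinant $2$). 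Since the $q$-gradings of $\bF^5$ are not all on one side of the $q$-grading killed at $k=1$, grading preservation alone cannot exclude this. The paper's own proof avoids this issue by explicitly computing $\Khred(\tau(-2))\cong\bF^2\oplus\bF^4\oplus\bF^4$, which verifies that the rank stays one at $k=2$; with the four groups $\Khred(\tau(-2)),\Khred(\tau(-1)),\Khred(\tau(0)),\Khred(\tau(1))$ in hand, Lemma~\ref{lem:general-stability} then pins down the whole sequence. To repair your argument you would need either that extra computation or a genuinely new argument showing that no additional $q$-grading of $\bF[\bZ/k\bZ]$ contributes to the connecting map, and the lemmas as stated do not supply that.
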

\begin{proof} The distinguished $\delta$-grading from Lemma \ref{lem:general-stability} is identified in Figure \ref{fig:figure8-homology}. By calculating that $\Khred(\tau(-2))\cong\bF^2\oplus\bF^4\oplus\bF^4$, Lemma \ref{lem:general-stability}, together with the groups \begin{align*}\Khred(\tau(-1)) &\cong \bF\oplus\bF^4\oplus\bF^4 \\ \Khred(\tau(0)) &\cong \bF\oplus\bF^5\oplus\bF^4 \\ \Khred(\tau(1)) &\cong \phantom{\bF\oplus\ }\bF^5\oplus\bF^4 \\ \end{align*} forces the result.\end{proof}

In fact, we have enough to recover Thurston's result:
\begin{theorem}Khovanov homology detects that the figure eight admits no finite fillings.\end{theorem}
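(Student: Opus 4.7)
The plan is to combine the width obstruction of Theorem \ref{thm:general} with the rigidity constraints on exceptional surgeries for hyperbolic knots. From the tabulated groups, the preferred associated quotient tangle $T$ has $\wmin = 2$, attained on $n \geq 1$, and $\wmax = 3$, attained on $n \leq 0$; moreover $T$ is decay generic (Definition \ref{def:generic}) with the unique integer where the width changes being $\ell = 0$. Consequently, Theorem \ref{thm:lens} immediately eliminates every lens space surgery on $K$, and only non-cyclic finite fillings remain to be addressed.

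For these, the first step is to invoke the hyperbolicity of $S^3 \smallsetminus K$ together with Boyer--Zhang \cite{BZ1996} to restrict the candidate slopes to integer and half-integer values. Amphichirality of $K$ furnishes the identification $S^3_{\pq}(K) \cong -S^3_{-\pq}(K)$, which is orientation-reversing but preserves the fundamental group, so finite fillings come in pairs by slope negation and it suffices to produce the obstruction on one side of $0$. Next, I would apply Proposition \ref{prp:lower-bound-on-interval} to the interval $(-\infty, 0]$, on which the integer widths are constantly $3$, to obtain $w(\tau(\pq)) = 3$ for every $\pq \leq 0$; Theorem \ref{thm:general} then delivers infinite fundamental group for $S^3_{\pq}(K)$ at each such $\pq$, and amphichirality propagates the conclusion to $\pq \geq 0$ whose negation is a Boyer--Zhang candidate. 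This completely handles integer slopes (directly for $n \leq 0$, via amphichirality for $n \geq 1$) together with half-integer slopes $\pq = p/2$ satisfying $|\pq| \geq 3/2$, since for $\pq \leq -3/2$ the slope lies in some interval $[n,n+1]$ with $n \leq -2$ whose endpoints both have width $3$.

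The only surviving candidate is then $\pq = 1/2$ (whose amphichiral partner $-1/2$ lies in the interval $(-1,0)$ where Proposition \ref{prp:lower-bound-on-interval} does not apply). Here $1/2 = [0,2]$, and resolving the terminal crossing of $\tau[0,2]$ produces $\tau(0)$ and $\tau(1)$. Since $\det(\tau(0)) = 0$ while $\det(\tau(1/2)) = 1 = \det(\tau(1))$, the degenerate form of the exact triangle, Proposition \ref{prp:mo-perturbed}, applies and gives
\[
\textstyle\Khsig(\tau(\tfrac{1}{2})) \cong H_\ast\bigl(\Khsig(\tau(0))[-\tfrac{1}{2}] \to \Khsig(\tau(1))\bigr).
\]
The hard part will be unpacking this single mapping cone: tracking the absolute $\delta$-gradings via the $\sigma$-normalization through Proposition \ref{prp:mo-perturbed}, using the tabulated groups $\Khred(\tau(0)) \cong \bF \oplus \bF^5 \oplus \bF^4$ and $\Khred(\tau(1)) \cong \bF^5 \oplus \bF^4$, to conclude $w(\tau(1/2)) = 3$. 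This is a finite bookkeeping exercise in one bi-grading at a time, and once it is completed Theorem \ref{thm:general} forces $S^3_{1/2}(K)$ to have infinite fundamental group, closing the last remaining case.
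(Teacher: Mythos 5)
Your proof contains an internal contradiction at the decisive step. In your second paragraph you correctly conclude, via Proposition \ref{prp:lower-bound-on-interval} applied to each interval $[n,n+1]$ with $n \le -1$, that $w(\tau(\pq)) = 3$ for every $\pq \le 0$. This already covers $-1/2$: we have $w(\tau(-1)) = w(\tau(0)) = 3$ and $-1/2 \in [-1,0]$, so the proposition applies. Combined with amphichirality, which gives $\pi_1(S^3_{p/q}(K)) \cong \pi_1(S^3_{-p/q}(K))$ for every slope, this finishes the proof: each $\pq > 0$ inherits infinite fundamental group from the already-covered slope $-\pq < 0$. This is exactly the paper's argument, and there is nothing further to check.

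Your third paragraph, however, retracts this, asserting that $\pq = 1/2$ survives because its amphichiral partner $-1/2$ lies in $(-1,0)$ where, you claim, Proposition \ref{prp:lower-bound-on-interval} does not apply. That claim is false. The proposition requires only that the two integer endpoints of the enclosing interval have equal width, which holds for $[-1,0]$; the width change at $\ell = 0$ happens between $0$ and $1$ and has no bearing on $[-1,0]$. As a consequence, the invocation of Boyer--Zhang to restrict to integer and half-integer slopes, and the proposed bookkeeping through Proposition \ref{prp:mo-perturbed} for $\tau(1/2)$, are both unnecessary --- and the restriction to candidate slopes is not even needed in the first place, since amphichirality disposes of \emph{all} positive $\pq$ once all $\pq \le 0$ are handled. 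The paper's short proof is essentially your second paragraph alone, without the detour.
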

\begin{proof} First notice that $w(\tau(n))=3$ for $n\le0$. As a result, a finite filling cannot arise by negative surgery on the figure eight (applying Theorem \ref{thm:finite-bound}), since $w(\tau(\pq))=3$ for any $\pq\le 0$ by Proposition \ref{prp:lower-bound-on-interval}. However, since the figure eight knot is amphicheiral, the same must be true for positive surgeries.\end{proof}

\subsection{Some pretzel knots that do not admit finite fillings}\label{sec:255} According to Mattman \cite{Mattman2000}, it is unknown if the $(-2,p,q)$-pretzel knots admit fillings with finite fundamental group for $q\ge p \ge 5$. When $p=q=5$ we have the following.

\begin{theorem}\label{thm:pretzel-finite-fillings} The $(-2,5,5)$-pretzel knot does not admit finite fillings.\end{theorem}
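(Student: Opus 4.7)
The plan is to exhibit a strong inversion on the $(-2,5,5)$-pretzel knot $K$, compute a finite collection of Khovanov homology groups $\Khred(\tau(n))$ for integer $n$ associated with the preferred representative of its quotient tangle $T=(B^3,\tau)$, verify that $\wmin(T)\ge 3$ and that $T$ is generic in the sense of Definition \ref{def:generic}, and then invoke Theorem \ref{thm:finite}.

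First I would exhibit the strong inversion on $K$. Pretzel knots of the form $(-2,p,q)$ with $p,q$ odd admit an obvious strong involution (rotation about an axis in the plane of a standard diagram); the quotient decomposes $S^3\smallsetminus\nu(K)$ as a two-fold cover of a tangle in $B^3$, giving a concrete diagram for $\tau$. Choosing the preferred framing (so that $\tau(\tfrac{1}{0})$ is the trivial knot and $S^3_n(K)=\Br(S^3,\tau(n))$), the branch sets $\tau(n)$ for integer $n$ may be written uniformly as closures of a family of braids, just as was done for $4_1$ in Figure \ref{fig:figure8-homology}. All subsequent calculations are then done with {\tt JavaKh}.

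Next I would compute $\Khred(\tau(n))$ for a small window of integers $n$ around $0$. The goal is to (i) read off $w(\tau(n))$ on this window, (ii) check using Lemma \ref{lem:general-stability} that the width has stabilized on both sides, and (iii) verify that the resulting pattern is generic (no blank diagonals, and at any jump the shorter group is of expansion or decay short form while the longer is of the corresponding long form as in Definitions \ref{def:short-long} and \ref{def:long-short}). Once the generators $\Khred(\tau(\tfrac{1}{0}))\cong\bF$ coming from the mapping cone of Lemma \ref{lem:general-stability} have been located in a fixed bigrading, the remaining $\Khred(\tau(n))$ are determined, and Lemma \ref{lem:max-min} ensures that the width is constant (equal to $\wmin$) for all sufficiently large $|n|$ on each side. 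In particular, verifying that $w(\tau(n))\ge 3$ for every integer in the transition window together with stability yields $\wmin\ge 3$.

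The main obstacle is computational rather than conceptual: one must ensure that enough groups have been computed that the mapping cone description of Lemma \ref{lem:general-stability} has genuinely stabilized, and that the genericity conditions hold on the nose. Assuming these calculations produce $\wmin\ge 3$ (which the analogous computation for $4_1$ already suggests is the expected behaviour for a hyperbolic pretzel knot of this type, and which is consistent with the Alexander polynomial obstructions used by Mattman), Theorem \ref{thm:finite} immediately implies that no Dehn filling $S^3_{p/q}(K)$ has finite fundamental group, completing the proof. If a mild failure of genericity were to occur, one could instead fall back on Theorem \ref{thm:general} combined with the cyclic and finite surgery theorems of \cite{CGLS1987,BZ1996}, checking $w(\tau(\pq))\ge 3$ only on integer and half-integer slopes, which again reduces to a finite calculation.
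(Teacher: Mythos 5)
Your proposal follows exactly the paper's strategy: exhibit the strong inversion, express the integer branch sets $\tau(n)$ as closures of a braid family, compute $\Khred(\tau(n))$ on a stabilizing window with {\tt JavaKh}, verify genericity via Lemma \ref{lem:general-stability} and Lemma \ref{lem:max-min}, and invoke Theorem \ref{thm:finite}. The paper carries this out concretely with the $4$-braids $\beta_n=\si_2^{-1}\si_3^{-1}\si_1\si_2\si_1^{14+n}\si_2\si_1\si_3^{-1}\si_2^{-1}(\si_2^{-1}\si_3^{-1}\si_1^{-1}\si_2^{-1})^3$ and finds $\wmin=\wmax=3$ (so genericity holds trivially via condition (1) of Definition \ref{def:generic}, making your expansion/decay and cyclic-surgery fallbacks unnecessary here), but the route is the same.
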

\begin{proof}
We begin by noting that the $(-2,5,5)$-pretzel knot, $K_5$, is strongly invertible in two ways as indicated in Figure \ref{fig:pretzel-inversion}. 
\begin{figure}[ht!]\begin{center}  
\includegraphics[scale=0.35]{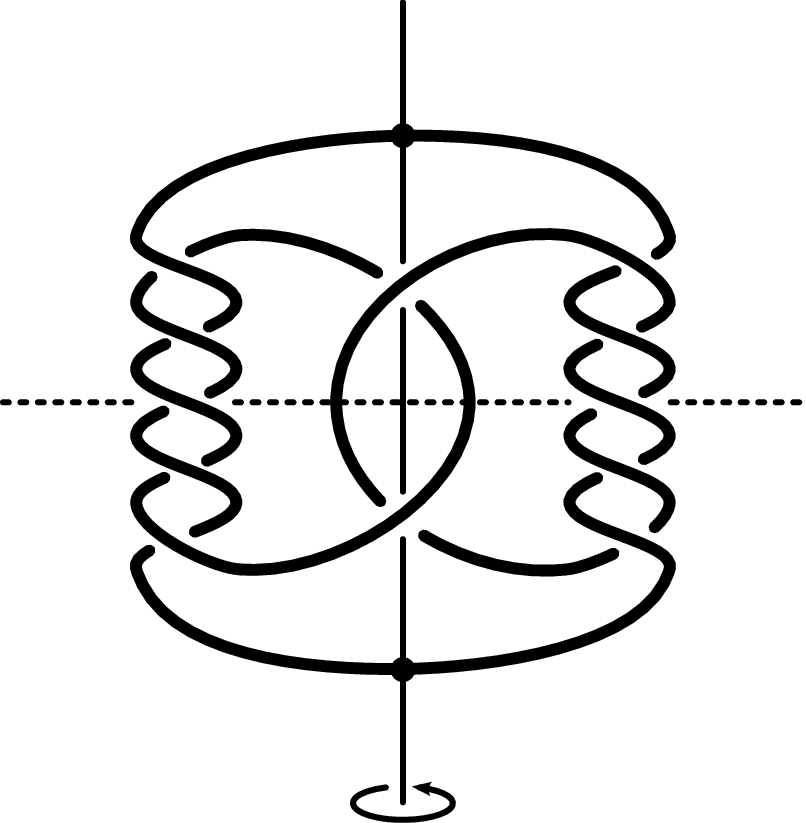}\caption{Two strong inversions on the $(-2,5,5)$-pretzel knot.}\label{fig:pretzel-inversion}\end{center}\end{figure}
We will make use of the inversion indicated by the solid vertical line; the associated quotient tangle is calculated in Figure \ref{fig:pretzel-isotopy}.
\begin{figure}[ht!]\begin{center}  
\includegraphics[scale=0.40]{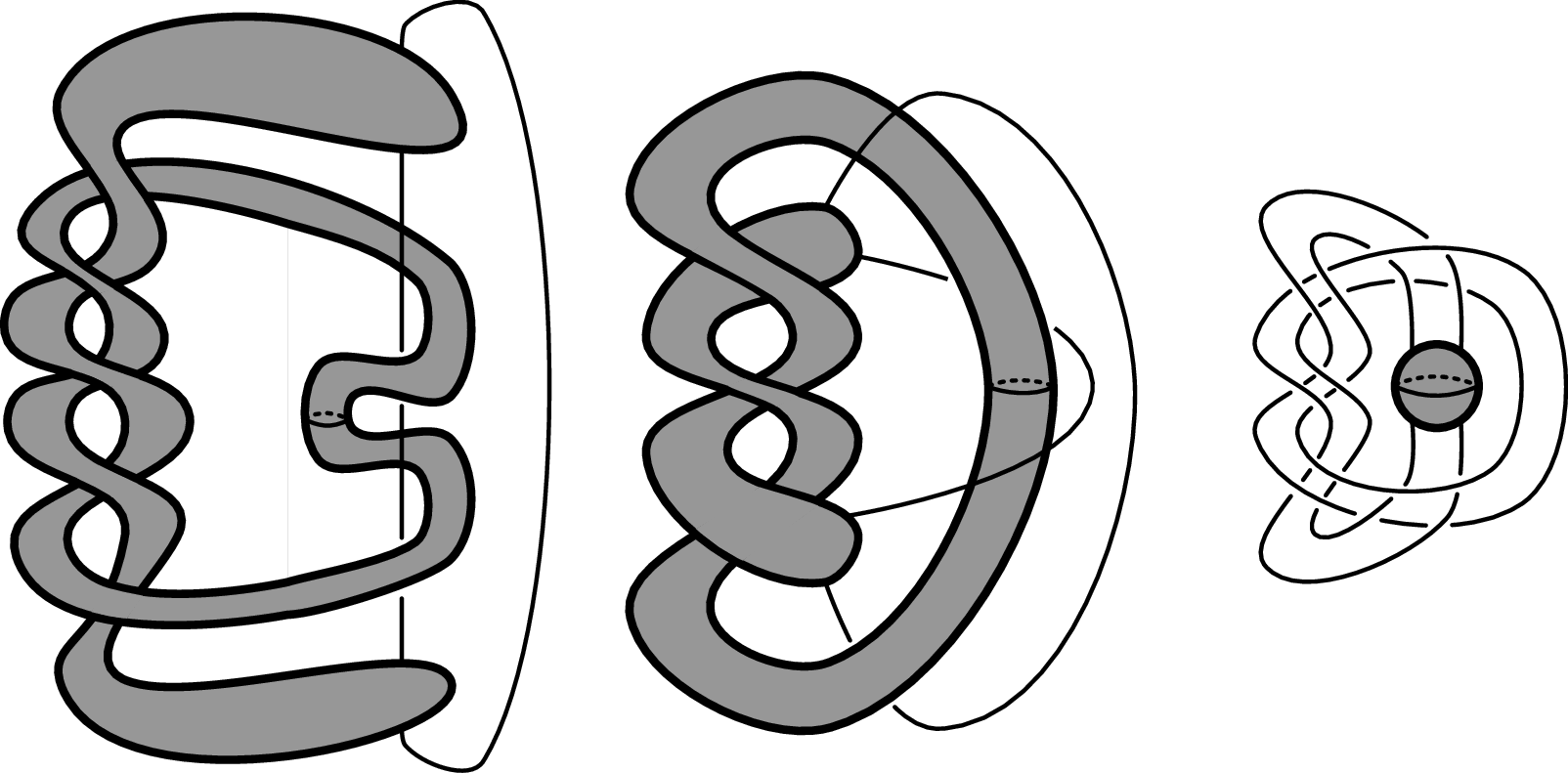}\caption{Isotopy of the fundamental domain for a strong inversion on the $(-2,5,5)$-pretzel knot. Notice that the resulting tangle has the property that integer closures are representable by closed 4-braids.}\label{fig:pretzel-isotopy}\end{center}\end{figure}
Notice that the associated quotient tangle in this case gives rise to an obvious collection of 4-braids, the closures of which give the branch sets for integer fillings. Setting \[\beta_n=\si_2^{-1}\si_3^{-1}\si_1\si_2\si_1^{14+n}\si_2\si_1\si_3^{-1}\si_2^{-1}(\si_2^{-1}\si_3^{-1}\si_1^{-1}\si_2^{-1})^3\] we have $\tau(n)=\overline{\beta_n}$ by verifying that $\Khred(\tau(0))\cong\bF^{16}\oplus\bF^{20}\oplus\bF^{4}$ so that $\det(\tau(0))=0$. The homologies of $\tau(n)$ for $n=-18,-17,-16,-15,-14$ are given in Figure \ref{fig:pretzel-homologies}.
\begin{figure}[ht!]\begin{center}
\labellist\small
	\pinlabel $1$ at 161 236 
	\pinlabel $2$ at 161 272
	\pinlabel $2$ at 161 307 
	\pinlabel $4$ at 161 342
	\pinlabel $3$ at 161 377
	\pinlabel $3$ at 161 414
	\pinlabel $2$ at 161 450
	\pinlabel $1$ at 161 486		
	
	\pinlabel $1$ at 196 342
	\pinlabel $1$ at 196 414
	\pinlabel $1$ at 196 450
	\pinlabel $1$ at 196 520 
	
	\pinlabel $1$ at 233 450
	\pinlabel $1$ at 233 520
	\pinlabel $1$ at 233 559
	\pinlabel $1$ at 233 632
\endlabellist
\includegraphics[scale=0.35]{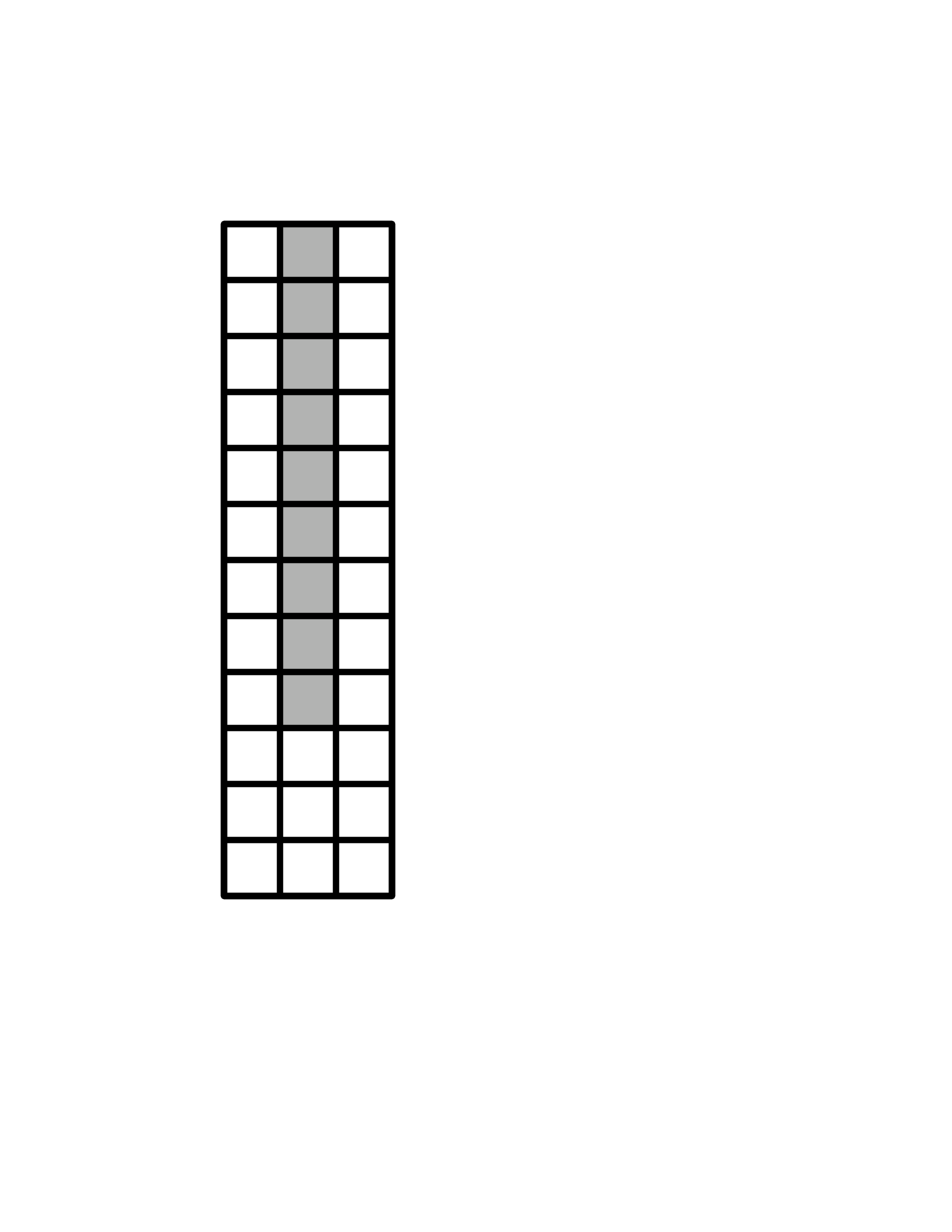} 
\qquad
\labellist\small
	\pinlabel $1$ at 161 236 
	\pinlabel $2$ at 161 272
	\pinlabel $2$ at 161 307 
	\pinlabel $3$ at 161 342
	\pinlabel $3$ at 161 377
	\pinlabel $3$ at 161 414
	\pinlabel $2$ at 161 450
	\pinlabel $1$ at 161 486		
	
	\pinlabel $1$ at 196 342
	\pinlabel $1$ at 196 414
	\pinlabel $1$ at 196 450
	\pinlabel $1$ at 196 520 
	
	\pinlabel $1$ at 233 450
	\pinlabel $1$ at 233 520
	\pinlabel $1$ at 233 559
	\pinlabel $1$ at 233 632
\endlabellist
\includegraphics[scale=0.35]{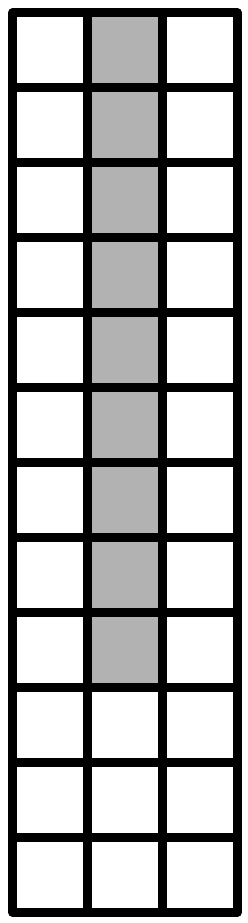} 
\qquad
\labellist\small
	\pinlabel $1$ at 161 236 
	\pinlabel $2$ at 161 272
	\pinlabel $2$ at 161 307 
	\pinlabel $3$ at 161 342
	\pinlabel $3$ at 161 377
	\pinlabel $3$ at 161 414
	\pinlabel $2$ at 161 450
	\pinlabel $1$ at 161 486		
	
	\pinlabel $1$ at 196 342
	\pinlabel $1$ at 196 377
	\pinlabel $1$ at 196 414
	\pinlabel $1$ at 196 450
	\pinlabel $1$ at 196 520 
	
	\pinlabel $1$ at 233 450
	\pinlabel $1$ at 233 520
	\pinlabel $1$ at 233 559
	\pinlabel $1$ at 233 632
\endlabellist
\includegraphics[scale=0.35]{figures/grid-pretzel-thick} 
\qquad
\labellist\small
	\pinlabel $1$ at 161 236 
	\pinlabel $2$ at 161 272
	\pinlabel $2$ at 161 307 
	\pinlabel $3$ at 161 342
	\pinlabel $3$ at 161 377
	\pinlabel $2$ at 161 414
	\pinlabel $2$ at 161 450
	\pinlabel $1$ at 161 486		
	
	\pinlabel $1$ at 196 342
	\pinlabel $1$ at 196 377
	\pinlabel $1$ at 196 414
	\pinlabel $1$ at 196 450
	\pinlabel $1$ at 196 520 
	
	\pinlabel $1$ at 233 450
	\pinlabel $1$ at 233 520
	\pinlabel $1$ at 233 559
	\pinlabel $1$ at 233 632
\endlabellist
\includegraphics[scale=0.35]{figures/grid-pretzel-thick} 
\qquad
\labellist\small
	\pinlabel $1$ at 161 236 
	\pinlabel $2$ at 161 272
	\pinlabel $2$ at 161 307 
	\pinlabel $3$ at 161 342
	\pinlabel $3$ at 161 377
	\pinlabel $2$ at 161 414
	\pinlabel $2$ at 161 450
	\pinlabel $1$ at 161 486		
	
	\pinlabel $1$ at 196 342
	\pinlabel $1$ at 196 377
	\pinlabel $1$ at 196 414
	\pinlabel $2$ at 196 450
	\pinlabel $1$ at 196 520 
	
	\pinlabel $1$ at 233 450
	\pinlabel $1$ at 233 520
	\pinlabel $1$ at 233 559
	\pinlabel $1$ at 233 632
\endlabellist
\includegraphics[scale=0.35]{figures/grid-pretzel-thick}\qquad
\labellist\small
	\pinlabel $1$ at 161 236 
	\pinlabel $2$ at 161 272
	\pinlabel $2$ at 161 307 
	\pinlabel $3$ at 161 342
	\pinlabel $3$ at 161 377
	\pinlabel $2$ at 161 414
	\pinlabel $2$ at 161 450
	\pinlabel $1$ at 161 486		
	
	\pinlabel $1$ at 196 342
	\pinlabel $1$ at 196 377
	\pinlabel $1$ at 196 414
	\pinlabel $2$ at 196 450
	\pinlabel $1$ at 196 486
	\pinlabel $1$ at 196 520 
	
	\pinlabel $1$ at 233 450
	\pinlabel $1$ at 233 520
	\pinlabel $1$ at 233 559
	\pinlabel $1$ at 233 632
\endlabellist
\includegraphics[scale=0.35]{figures/grid-pretzel-thick}
\caption{$\Khred(\tau(n))$ for $n=-18,-17,-16,-15,-14$ (from left to right).}\label{fig:pretzel-homologies}\end{center}\end{figure}
This data is enough to infer that \[\Khred(\tau(n))\cong\begin{cases} \bF^{-n}\oplus\bF^4\oplus\bF^4 & n<-16 \\ \bF^{17}\oplus\bF^5\oplus\bF^4 & n<-16 \\ 
\bF^{16}\oplus\bF^{20+n}\oplus\bF^4 & n>-16\end{cases}\] as relatively $\bZ$-graded groups. In particular, $\wmin=\wmax=3$ so the associated quotient tangle is generic. 
The result now follows from Theorem \ref{thm:finite}.
\end{proof}

Considering the same involution on the $(-2,p,p)$-pretzel knot $K_p$ for all $p\ge5$ we have that, in terms of the preferred associated quotient tangle, $\tau_p(n)=\overline{\beta_{n,p}}$ where \[\beta_{n,p}=
\si_2^{-1}\si_3^{-1}\si_1\si_2\si_1^{24+2p+n}\si_2\si_1\si_3^{-1}\si_2^{-1}(\si_2^{-1}\si_3^{-1}\si_1^{-1}\si_2^{-1})^{p-2}\] so that $S^3_n(K_p)\cong\Br(S^3,\tau_p(n))$. Notice that, aside from the exponent $24+2p+n$ corresponding to the surgery coefficient, this expression changes only the number of double-strand full-twists in the associated quotient tangle (see Figure \ref{fig:pretzel-isotopy}). From this expression, we calculate $\Khred(\tau_p(24+2p))$ for $p$ odd in the range $5\le p\le 31$.
 These calculations yield generic tangles in each case, with $w(\tau_p(24+2p))=p-2$, from which we conclude:
\begin{theorem}The $(-2,p,p)$-pretzel knots do not admit finite fillings for $5\le p\le 31$.\end{theorem}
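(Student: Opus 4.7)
The plan is to mirror the proof of Theorem \ref{thm:pretzel-finite-fillings} (the $p=5$ case) uniformly across the range of odd values $p \in \{5,7,\ldots,31\}$. The structural setup is $p$-independent: for each such $p$ I would choose the strong inversion from Figure \ref{fig:pretzel-inversion}, carry out the isotopy of a fundamental domain as in Figure \ref{fig:pretzel-isotopy}, and fix the preferred framing of the associated quotient tangle so that $\det(\tau_p(0)) = 0$, i.e.\ so that $\Br(S^3,\tau_p(0)) \cong S^3_0(K_p)$. The braid expression $\beta_{n,p}$ displayed above already exhibits $\tau_p(n)$ as the closure of an explicit $4$-braid for every integer $n$, and only the middle block $\sigma_1^{24+2p+n}$ and the number of ``double-strand'' twists depend on $p$.

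For each odd $p$ in the range, the strategy is to use {\tt JavaKh} to compute $\Khred(\tau_p(n))$ for a short window of integers $n$ bracketing the transition region predicted by the stability lemmas, in analogy with the window $n \in \{-18,\ldots,-14\}$ used for $p=5$; in general this window is centered near $n = -(24+2p)$. Once a sufficient initial segment is known, Lemma \ref{lem:general-stability} and Lemma \ref{lem:general-stability-negative} extend the calculation to all $n \in \bZ$, since the discrepancy between $\Khred(\tau_p(n))$ and $\Khred(\tau_p(n\pm k))$ is a truncated polynomial $\bF[\bZ/k\bZ]$ sitting in a single $\delta$-grading (up to overall shift).

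From this data I would extract two facts. First, that $\wmin = w(\tau_p(24+2p)) = p-2$, which is immediate from inspection of the computed groups once stability pins down all remaining integer fillings. Second, that the preferred associated quotient tangle is generic in the sense of Definition \ref{def:generic}: this amounts to checking that $\Khred(\tau_p(n))$ contains no blank diagonals for any $n$, and that at the unique integer $\ell_p$ where the width jumps, the homologies on either side fit the expansion-short-to-long or decay-long-to-short pattern of Definition \ref{def:short-long} or Definition \ref{def:long-short}. Both properties are visible in the $p=5$ calculation of Figure \ref{fig:pretzel-homologies}, and the expectation is that the analogous pattern persists for larger $p$ because the tangles differ only by full twists. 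Once these are verified, the hypotheses of Theorem \ref{thm:finite} are satisfied, and since $p-2 \ge 3$ for every $p \ge 5$, no finite filling can occur.

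The main obstacle is computational rather than conceptual: the length of $\beta_{n,p}$ grows linearly in $p$, and the determinants $\det(\tau_p(n))$ grow accordingly, so the Khovanov homology calculations become progressively more expensive. The upper bound $p \le 31$ presumably reflects the practical limits of {\tt JavaKh} on the relevant braid closures rather than any theoretical barrier; conditional on successful computation the same argument should go through for all odd $p \ge 5$, and indeed the uniform value $w(\tau_p(24+2p)) = p-2$ strongly suggests a pattern that one could hope to verify by a single $p$-independent argument.
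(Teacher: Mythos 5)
Your proposal follows the paper's own argument: the paper likewise exhibits $\tau_p(n)$ as an explicit $4$-braid closure, computes a finite window of $\Khred(\tau_p(n))$ groups with {\tt JavaKh}, invokes Lemma \ref{lem:general-stability} to propagate to all $n$, and observes that the resulting generic tangle has $\wmin = p-2 \ge 3$ so that Theorem \ref{thm:finite} applies. This matches the paper's reasoning essentially step for step.
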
 
It seems reasonable to conjecture that $w=p-2$ for the branch sets associated with surgery on $K_p$, for any $p\ge 5$, so that Khovanov homology obstructs finite fillings on this class of knots. We do not pursue this here, since the result may be shown by other means. Indeed, it is possible to use obstructions from Heegaard Floer homology to rule out L-space surgeries by considering the Alexander polynomials of the $(-2,p,p)$-pretzel knots, as pointed out to the author by M. Hedden. This has been carried out very recently by Ichihara and Jong completing Mattman's classification of Montesinos knots admitting finite fillings \cite{IJ2008}. Since then the result has received a different treatment by Futer, Ishikawa, Kabaya, Mattman and Shimokawa \cite{FIKMS2009}. 

We remark that Mattman's classification \cite{Mattman2000} using character variety methods illustrates some subtleties. Indeed, the $(-2,3,q)$-pretzel knots admit L-space surgeries for all $q\ge3$ (see \cite{OSz2005-lens}). Despite this fact however, Mattman shows that for $q>9$ none of these manifolds can have finite fundamental group. On the other hand, for the $(-2,p,p)$-pretzel knots the character variety methods of Mattman were inconclusive, but this is precisely the setting in which Heegaard Floer homology -- and, as seen here, Khovanov homology -- obstructs finite fillings. 

\subsection{Khovanov homology obstructions and Heegaard Floer homology}\label{sec:11n1493} In light of the discussion above, it is natural to put the obstructions from Khovanov homology in contrast with those coming from Heegaard Floer homology. The latter theory gives very stringent restrictions for the knot Floer homology of a knot admitting an L-space surgery \cite{OSz2005-lens}. As a consequence, Ozsv\'ath and Szab\'o give the following quickly implemented obstruction from the Alexander polynomial. 
\begin{theorem}[{Ozsv\'ath-Szab\'o \cite[Corollary 1.3]{OSz2005-lens}}]\label{thm:os} 
A knot $K\into S^3$ for which $S^3_n(K)$ is an L-space (for some $n\in\bZ$) has Alexander polynomial of the form \[\Delta_K(t)=(-1)^k+\sum_{j=1}^k(-1)^{k-j}(t^{-n_j}+t^{n_j})\] for some increasing sequence of integers $0<n_1<n_2<\cdots<n_k$. 
\end{theorem}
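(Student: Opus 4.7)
The plan is to use the Ozsv\'ath-Szab\'o surgery formula, which relates $\HFhat$ of Dehn fillings on $K$ to the knot Floer chain complex $\CFKhat(K)$, together with its refined filtered version $CFK^\infty(K)$. The strategy is first to promote the hypothesis $S^3_n(K)$ is an L-space to the statement that $S^3_N(K)$ is an L-space for all sufficiently large $N$, then apply the large surgery formula to deduce very restrictive bounds on the ranks of $\HFKhat(K,j)$ in each Alexander grading $j$, and finally read off the required shape of $\Delta_K(t)$ from its identification with the graded Euler characteristic of $\HFKhat(K)$.

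First, by the Ozsv\'ath-Szab\'o surgery exact triangle
$$\cdots\longto \HFhat(S^3)\longto\HFhat(S^3_N(K))\longto\HFhat(S^3_{N+1}(K))\longto\HFhat(S^3)\longto\cdots$$
combined with the universal rank bound $\rk\HFhat(Y)\ge |H_1(Y;\bZ)|$ for rational homology spheres, one checks that the L-space property is inherited by $S^3_{N+1}(K)$ from $S^3_{N}(K)$ (up to replacing $K$ by its mirror so that $n>0$). Thus it suffices to analyze the large-$N$ regime. For $N\gg 0$ the large surgery formula identifies, Spin$^c$-structure by Spin$^c$-structure,
$$\HFhat(S^3_N(K),\frks_k)\cong H_*(A_k),$$
where $A_k$ is the subquotient of $CFK^\infty(K)$ in bigradings $\{i\ge 0\}\cup\{j\ge k\}$ associated with a shift by the Alexander grading $k$.

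Next, the L-space condition forces $\rk\HFhat(S^3_N(K),\frks_k)=1$ for every $\frks_k$, so each $H_*(A_k)$ is a single tower. Analyzing the filtered structure of $CFK^\infty$ and the natural inclusions/quotients between adjacent $A_k$'s, one concludes that $\HFKhat(K,j)$ has rank at most one in each Alexander grading $j$; moreover, the Maslov grading supporting $\HFKhat(K,j)$ is determined recursively by the jumps between consecutive $A_k$'s. This is the technical heart of the argument and the step I expect to require the most bookkeeping, since it demands tracking both the $i$- and $j$-filtrations of $CFK^\infty(K)$.

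Finally, since $\Delta_K(t)=\sum_j \chi\bigl(\HFKhat(K,j)\bigr)\,t^j$ and each nontrivial $\HFKhat(K,j)\cong\bF$ sits in a specific Maslov grading, the list of Alexander gradings $\{n_j\}$ on which $\HFKhat(K,\cdot)$ is non-zero inherits the alternating sign pattern. Combined with the symmetry $\HFKhat(K,j)\cong\HFKhat(K,-j)$ and the fact that $\Delta_K(1)=\pm 1$, this yields the claimed normal form
$$\Delta_K(t)=(-1)^k+\sum_{j=1}^k(-1)^{k-j}\bigl(t^{-n_j}+t^{n_j}\bigr)$$
for a strictly increasing sequence $0<n_1<\cdots<n_k$. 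The main obstacle is the middle step: extracting the sharp one-dimensional conclusion on $\HFKhat$ from the L-space hypothesis on $S^3_N(K)$; once that is in hand, the remaining passage to the Alexander polynomial is essentially a graded Euler-characteristic computation.
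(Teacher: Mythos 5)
The paper does not prove Theorem \ref{thm:os}: it is a direct citation of \cite[Corollary 1.3]{OSz2005-lens}, quoted in Section \ref{sec:11n1493} to contrast the Khovanov width obstructions with what can be read off the Alexander polynomial. Your sketch follows the same strategy as Ozsv\'ath and Szab\'o's original argument: propagate the L-space condition to all large surgery coefficients via the surgery triangle together with the rank bound $\rk\HFhat(Y)\geq|H_1(Y;\bZ)|$, apply the large surgery formula to identify $\HFhat(S^3_N(K),\frks_k)$ with $H_*(\widehat{A}_k)$ for subquotients $\widehat{A}_k$ of $CFK^\infty(K)$, deduce from the one-dimensionality of each $H_*(\widehat{A}_k)$ that $\HFKhat(K,j)$ has rank at most one in each Alexander grading with alternating Maslov parities, and extract the normal form of $\Delta_K(t)$ from the graded Euler characteristic. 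Two small imprecisions: the hat-flavor subquotient is $\widehat{A}_k = C\{\max(i,j-k)=0\}$ rather than the region $\{i\geq 0\}\cup\{j\geq k\}$ you describe (the latter is the underlying index set for the plus-version $A_k^+$); and the propagation step must avoid slope $0$, since $S^3_0(K)$ is not a rational homology sphere and the rank bound fails there, so the triangle must be applied along nonzero integer slopes. As you candidly flag, the step you describe as the technical heart --- concluding the staircase structure of $CFK^\infty(K)$ from one-dimensionality of every $H_*(\widehat{A}_k)$ --- is precisely what occupies the bulk of Ozsv\'ath and Szab\'o's proof, and your sketch gestures at it without supplying the bookkeeping.
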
 
Manifolds elliptic geometry are known to be L-spaces \cite[Proposition 2.3]{OSz2005-lens}, so this gives a useful obstruction to finite fillings in the present context. However, the criteria given in Theorem \ref{thm:os} can fail. For example, \[\Delta_K(t)=t^{-3}-t^{-2}+t^{-1}-1+t-t^2+t^3\] where $K$ is the 14 crossing, non-alternating knot shown in Figure \ref{fig:14n11893-inversion}. 
\begin{figure}[ht!]\begin{center}  
\includegraphics[scale=0.35]{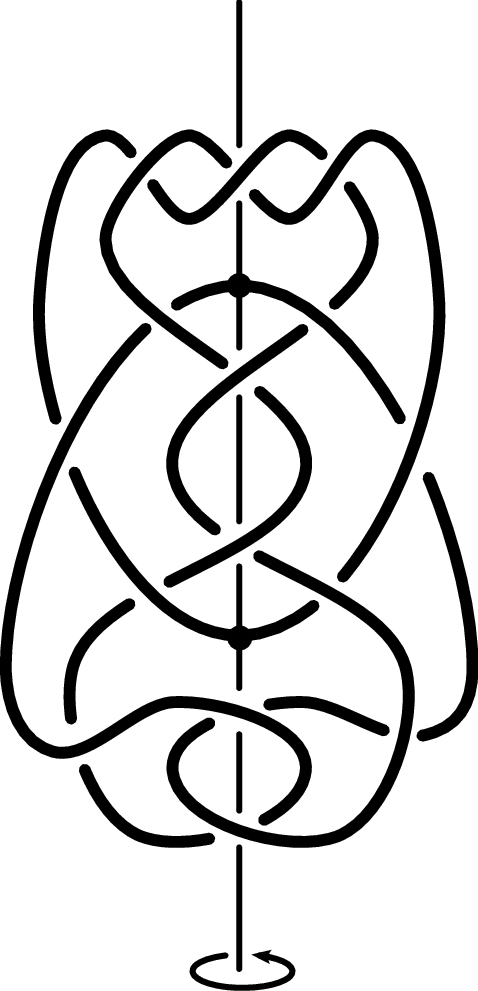}\caption{The strongly invertible knot $K=14^n_{11893}$ has Alexander polynomial $\Delta_K(t)=t^{-3}-t^{-2}+t^{-1}-1+t-t^2+t^3$.}\label{fig:14n11893-inversion}\end{center}\end{figure}
Since this is a strongly invertible knot, we are in a position to apply width obstructions from Khovanov homology. 
\begin{figure}[ht!]\begin{center}  
\includegraphics[scale=0.40]{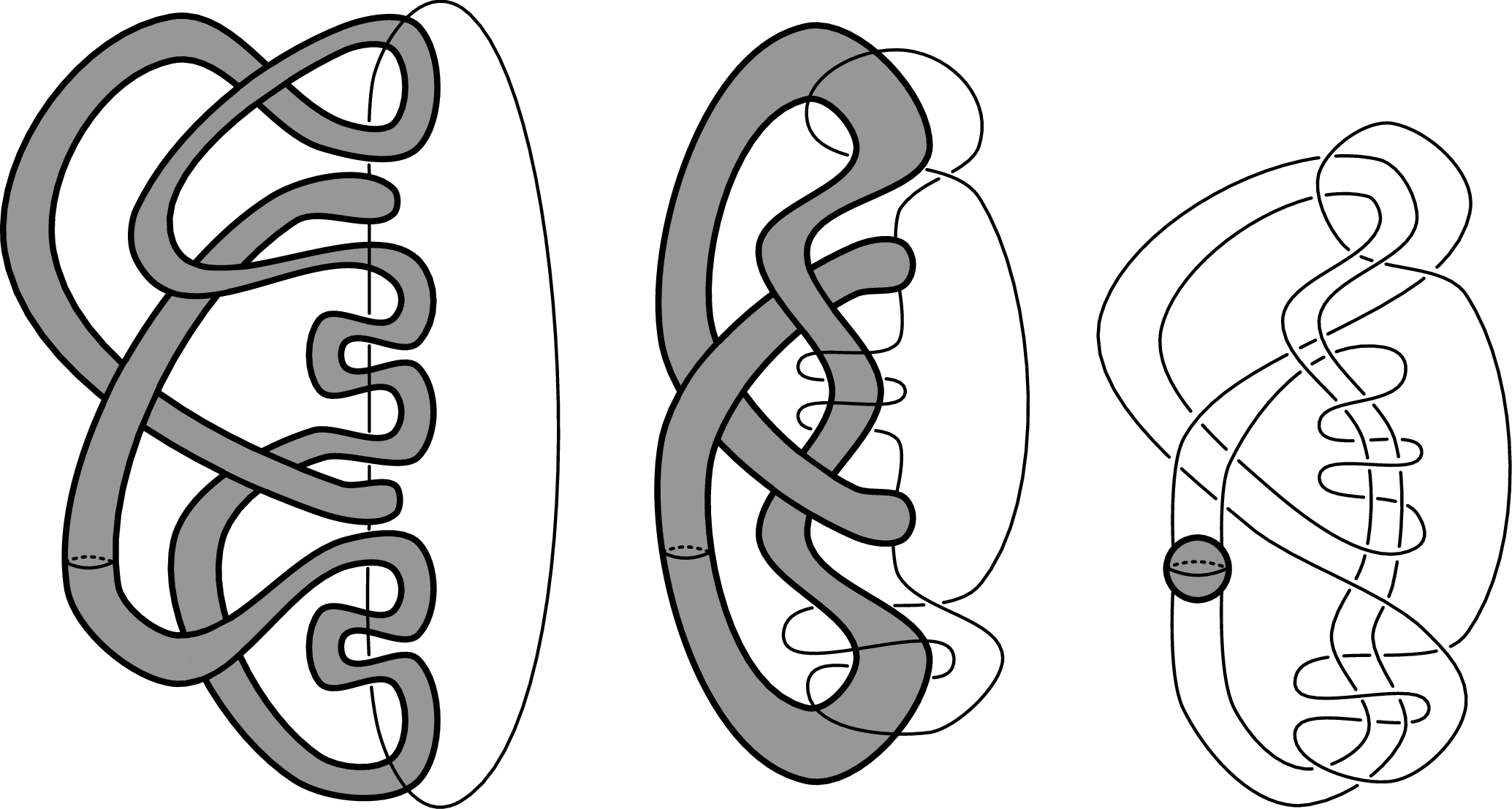}\caption{Isotopy of a fundamental domain for the involution on the complement of $14^n_{11893}$.}\label{fig:14n11893-isotopy}\end{center}\end{figure}
The associated quotient tangle is determined in Figure \ref{fig:14n11893-isotopy}: notice that by construction the trivial knot $\tau(\overzero)$ is obtained by connecting the endpoints of the arcs of $\tau$ with two horizontal arcs inside the small sphere shown. Therefore, without knowing the framing, we can be sure that the branch sets for integer surgeries result from adding vertical half-twists inside the sphere, as shown in Figure \ref{fig:14n11893-homology}.    
\begin{figure}[ht!]\begin{center}
\labellist\small
	\pinlabel $2$ at 161 236 
	\pinlabel $2$ at 161 271 	
	\pinlabel $3$ at 161 307 
	\pinlabel $4$ at 161 342
	\pinlabel $3$ at 161 377	
	\pinlabel $3$ at 161 414
	\pinlabel $2$ at 161 450
	\pinlabel $1$ at 161 486
		
	\pinlabel $3$ at 196 342
	\pinlabel $4$ at 196 377
	\pinlabel $5$ at 196 414
	\pinlabel $7$ at 196 450
	\pinlabel $6$ at 196 486
	\pinlabel $5$ at 196 520 
	\pinlabel $4$ at 196 557
	\pinlabel $2$ at 196 596
	
	\pinlabel $3$ at 233 450
	\pinlabel $5$ at 233 486
	\pinlabel $6$ at 233 520
	\pinlabel $7$ at 233 557	
	\pinlabel $7$ at 233 596
	\pinlabel $5$ at 233 632
	\pinlabel $4$ at 233 665
	\pinlabel $2$ at 233 701

	\pinlabel $1$ at 268 557
	\pinlabel $2$ at 268 596
	\pinlabel $2$ at 268 632
	\pinlabel $3$ at 268 665
	\pinlabel $3$ at 268 701
	\pinlabel $2$ at 268 739
	\pinlabel $2$ at 268 776
	\pinlabel $1$ at 268 810
\endlabellist
\includegraphics[scale=0.35]{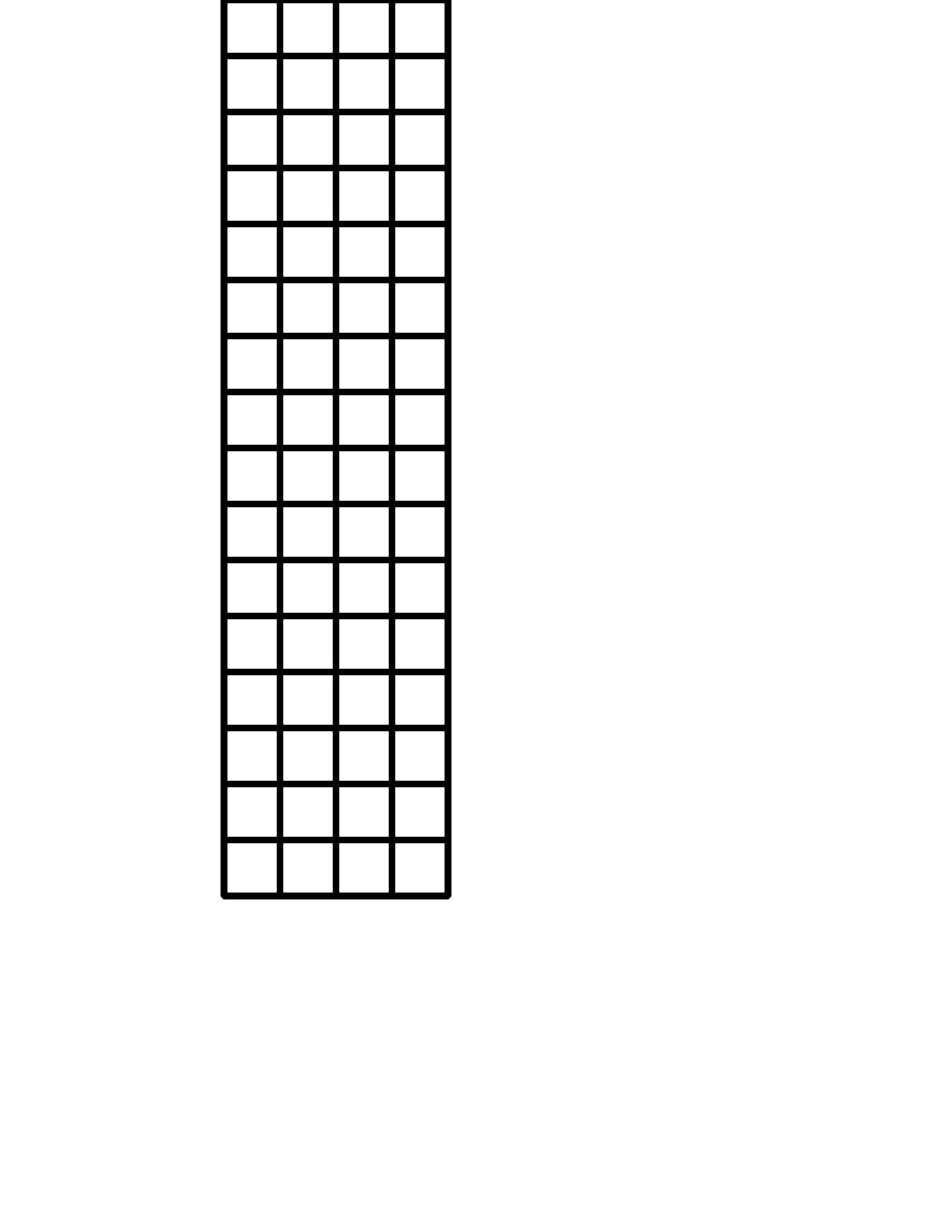}\quad 
\includegraphics[scale=0.35]{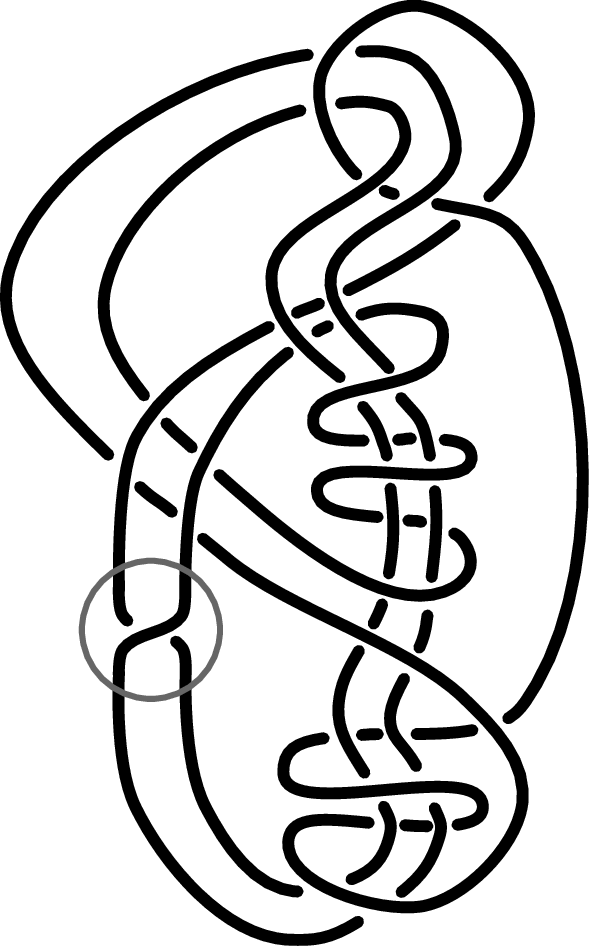} 
\caption{The branch set for some integer surgery $S^3_n(K)$. Note that $\Khred(\tau(n))\cong\bF^{20}\oplus\bF^{36}\oplus\bF^{39}\oplus\bF^{16}$ so that $\chi=59-52=7$ and $n=\pm7$.}\label{fig:14n11893-homology}\end{center}\end{figure}

By switching the circled crossing of Figure \ref{fig:14n11893-homology} from positive to negative, we can determine \begin{align*}
\Khred(\tau(-9)) &\cong \bF^{20}\oplus\bF^{36}\oplus\bF^{41}\oplus\bF^{16} \\
\Khred(\tau(-7)) &\cong \bF^{20}\oplus\bF^{36}\oplus\bF^{39}\oplus\bF^{16}
\end{align*}  to conclude that $\wmin=\wmax=4$ and, as $T$ is generic, this determines the width of the branch set for {\em any} surgery on $K$. We conclude: 

\begin{theorem}$14^n_{11893}$ does not admit finite fillings; two Khovanov homology groups suffice. \end{theorem}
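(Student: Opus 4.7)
The plan is to exhibit a strong inversion on $K = 14^n_{11893}$, extract the preferred associated quotient tangle $T = (B^3,\tau)$, compute $\Khred(\tau(n))$ for two well-chosen integer values of $n$, and then invoke Theorem~\ref{thm:finite}. The inversion of Figure~\ref{fig:14n11893-inversion} is isotoped to a fundamental domain in Figure~\ref{fig:14n11893-isotopy}; by construction of the isotopy the closure $\tau(\overzero)$ is unknotted, so the branch sets $\tau(n)$ for integer surgeries are obtained by inserting $n$ vertical half-twists in a small disk of the quotient, as in Figure~\ref{fig:14n11893-homology}.

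First I would use {\tt JavaKh} to compute $\Khred(\tau(n))$ for the diagram displayed in Figure~\ref{fig:14n11893-homology}. The resulting $|\chi| = 59 - 52 = 7$ together with $|H_1(S^3_n(K);\bZ)| = |n|$ identifies the surgery coefficient as $n = -7$ (up to a mirror symmetry we are free to impose). Switching the circled crossing in that diagram shifts the surgery coefficient by two, yielding a diagram whose Khovanov homology is $\Khred(\tau(-9)) \cong \bF^{20} \oplus \bF^{36} \oplus \bF^{41} \oplus \bF^{16}$, to be compared with $\Khred(\tau(-7)) \cong \bF^{20} \oplus \bF^{36} \oplus \bF^{39} \oplus \bF^{16}$. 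Both groups are of width $4$, supported on the same four consecutive $\delta$-gradings, with no blank diagonals.

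Next I would argue these two data points pin down the width for every filling. By Lemma~\ref{lem:general-stability} and Lemma~\ref{lem:general-stability-negative}, $\Khred(\tau(m))$ for arbitrary $m$ is computed by a mapping cone between $\Khred(\tau(-7))$ (respectively $\Khred(\tau(-9))$) and a group $\bF[\bZ/k\bZ]$ concentrated in a single $\delta$-grading. Because $w(\tau(-9)) = w(\tau(-7)) = 4$, Lemma~\ref{lem:max-min} forbids the width of $\tau(m)$ to change at any integer $m \le -9$ or $m \ge -7$; since its conclusion provides a \emph{unique} possible change-of-width integer $\ell$, the coincidence $w(\tau(-9)) = w(\tau(-7))$ rules out such an $\ell$ altogether. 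Hence $\wmin = \wmax = 4$, the tangle $T$ is generic in the sense of Definition~\ref{def:generic}(1), and Theorem~\ref{thm:generic} yields $w(\tau(\pq)) = 4$ for every reduced $\pq \in \bQ$. Invoking Theorem~\ref{thm:finite-bound} through Theorem~\ref{thm:finite}, no Dehn filling of $S^3 \smallsetminus \nu(K)$ can carry a metric of finite fundamental group.

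The only subtle step is choosing the pair of surgery coefficients. One needs a diagram in which a single crossing change realises the shift $\tau(n) \leadsto \tau(n \pm 2)$ without exceeding the practical limits of {\tt JavaKh}, and one wants the pair to straddle any hypothetical change-of-width integer from Lemma~\ref{lem:max-min}. The pair $\{-9,-7\}$ suggested by the data of Figure~\ref{fig:14n11893-homology} does both, so verifying genericity and thus the obstruction becomes a genuinely finite calculation, rather than one requiring unboundedly many Khovanov homology groups.
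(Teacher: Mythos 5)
Your overall strategy is the same as the paper's, and the computational set-up is right: exhibit the strong inversion, produce the preferred associated quotient tangle, compute $\Khred(\tau(-7))$ and $\Khred(\tau(-9))$, and feed $\wmin=\wmax=4$ into Theorem~\ref{thm:finite}. However, the step where you deduce $\wmin=\wmax=4$ from those two groups does not work as stated.

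You write that Lemma~\ref{lem:max-min} ``forbids the width of $\tau(m)$ to change at any integer $m \le -9$ or $m \ge -7$,'' and that the coincidence $w(\tau(-9))=w(\tau(-7))$ therefore ``rules out such an $\ell$ altogether.'' This is not what the lemma gives you. Lemma~\ref{lem:max-min} guarantees that \emph{if} $\wmax = \wmin+1$ then there is a unique integer $\ell$ so that $w(\tau(n))$ is constant for $n \le \ell$ and constant for $n \ge \ell+1$; it places no restriction on where $\ell$ sits. Knowing $w(\tau(-9))=w(\tau(-7))=4$ rules out $\ell\in\{-9,-8\}$, nothing more: the width could still jump to $3$ or to $5$ at $\ell=-100$ or at $\ell=+100$, with both observed widths sitting on the constant side. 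The conclusion $\wmin=\wmax$ is a genuine non sequitur from the two data points plus Lemma~\ref{lem:max-min} alone.

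What actually does the work --- and what makes ``two groups suffice'' an honest claim --- is the finer information contained in the bi-graded mapping cone $\Khred(\tau(-7)) \cong H_*\left(\Khred(\tau(-9))\to\bF[\bZ/2\bZ]\right)$ of Lemma~\ref{lem:general-stability}. Comparing $\bF^{20}\oplus\bF^{36}\oplus\bF^{41}\oplus\bF^{16}$ with $\bF^{20}\oplus\bF^{36}\oplus\bF^{39}\oplus\bF^{16}$ forces the $\bF^2$ to sit in the \emph{topmost} of the four diagonals and to be hit surjectively from the third. Once the distinguished diagonal is identified as the top one, Lemma~\ref{lem:general-stability} says that for every $m\ge -9$ the first diagonal carries $\bF^{20}$ and the fourth carries at least the untouched $\bF^{16}$, so $w(\tau(m))=4$ on that side; Lemma~\ref{lem:general-stability-negative} places the new generators for $m<-9$ in the third diagonal at secondary gradings strictly below the $q$-support of the fourth diagonal visible in Figure~\ref{fig:14n11893-homology}, so the top diagonal again cannot vanish. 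That locates the width on both sides and establishes $\wmin=\wmax=4$, after which Theorem~\ref{thm:finite} applies exactly as you say. Your write-up omits this identification of the distinguished diagonal and its $q$-support, which is precisely the content needed to close the argument.
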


Note that this example does not establish Khovanov homology obstructions as stronger than Heegaard Floer homology, however it does show that Khovanov homology can provide more information than obstructions arising from the Alexander polynomial (an obstruction that is a consequence of Heegaard Floer homology).  While it is possible that the full knot Floer homology of $K$ obstructs L-space surgeries, this example shows that in certain settings the Khovanov homology obstructions may be more convenient from a computational standpoint when the question of finite fillings is of interest. Further, these obstructions may allow one to rule out finite fillings among L-spaces, a distinction that can be subtle.

\bibliographystyle{plain}
\bibliography{obstructions}
\end{document}